\newcommand*{\OMExhaustive}{Exhaustive}
\newcommand*{\OMexhaustive}{exhaustive}
\renewcommand*{\geq}{\geqslant}
\renewcommand*{\leq}{\leqslant}
\newcommand*{\catMetProb}{\mathsf{MetProb}}
\newcommand*{\catSepMetProb}{\mathsf{SepMetProb}}
\newcommand*{\catSet}{\mathsf{Set}}
\newcommand*{\fM}{\mathfrak{M}}
\newcommand*{\cQ}{\mathcal{Q}}
\renewcommand*{\emptyset}{\varnothing}
\newcommand*{\bigO}{\mathcal{O}}
\newcommand*{\bigTheta}{\Theta}
\newcommand*{\littleOmega}{\omega}
\newcommand*{\Reals}{\mathbb{R}}
\newcommand*{\Naturals}{\mathbb{N}}
\newcommand*{\Leb}[1]{\mathcal{L}^{#1}}
\newcommand*{\ball}[2]{B_{#2}(#1)}
\newcommand*{\cball}[2]{\overline{B}_{#2}(#1)}
\newcommand*{\oball}[2]{\ball{#1}{#2}}
\newcommand*{\rcdf}[2]{\mu(\ball{#1}{#2})}
\newcommand*{\orcdf}[2]{\mu(\oball{#1}{#2})}
\newcommand*{\prob}[1]{\mathcal{P}(#1)}
\newcommand*{\ballsig}[1]{\mathcal{A}(#1)}
\newcommand*{\Borel}[1]{\mathcal{B}(#1)}
\DeclareMathOperator{\Uniform}{\mathcal{U}}
\newcommand*{\dirac}[1]{\delta_{#1}}
\newcommand*{\closure}[1]{\,\overline{\!{#1}}}
\newcommand*{\interior}[1]{\mathring{#1}}
\newcounter{ntodo}
\newcommand{\todo}[1]{\stepcounter{ntodo}\bgroup\color{red}#1\egroup}
\newcommand{\reporttodo}{\bgroup\color{red}There are \arabic{ntodo} ``todo'' commands in the text.\egroup}
\newcounter{nHL}
\newcommand{\HLsays}[1]{\stepcounter{nHL}\bgroup\color{orange}Hefin:~#1\egroup}
\newcommand{\reportHL}{\bgroup\color{orange}There are \arabic{nHL} ``HL/Hefin'' commands in the text.\egroup}
\newcounter{nIK}
\newcommand{\IKsays}[1]{\stepcounter{nIK}\bgroup\color{cyan}Ilja:~#1\egroup}
\newcommand{\reportIK}{\bgroup\color{cyan}There are \arabic{nIK} ``IK/Ilja'' commands in the text.\egroup}
\newcounter{nTJS}
\newcommand{\TJSsays}[1]{\stepcounter{nTJS}\bgroup\color{olive}Tim:~#1\egroup}
\newcommand{\reportTJS}{\bgroup\color{olive}There are \arabic{nTJS} ``TJS/Tim'' commands in the text.\egroup}
\newcommand{\Ratio}[4]{\ensuremath{\,\mathfrak{R}_{#3}^{#4}(#1,#2)}}
\newcommand*{\ns}{\ensuremath{\mathsf{ns}}}
\newcommand*{\as}{\ensuremath{\mathsf{as}}}
\newcommand*{\cp}{\ensuremath{\mathsf{cp}}}
\newcommand*{\cpas}{\ensuremath{\mathsf{cs}}}
\newcommand*{\proofpartorcasestyle}[1]{\textsf{(#1)}}
\newcommand*{\proofpartorcase}[1]{\medskip\noindent\proofpartorcasestyle{#1}}
\newcommand*{\swgs}{\textswab{gs:}}
\newcommand*{\swpgs}{\textswab{pgs:}}
\newcommand*{\swgps}{\textswab{gps:}}
\newcommand*{\swps}{\textswab{ps:}}
\newcommand*{\sws}{\textswab{s:}}
\newcommand*{\swgw}{\textswab{gw}}
\newcommand*{\swpgw}{\textswab{pgw}}
\newcommand*{\swpw}{\textswab{pw}}
\newcommand*{\sww}{\textswab{w}}
\newcommand*{\swwp}{\textswab{wp}}
\newcommand*{\swwg}{\textswab{wg}}
\newcommand*{\swgwp}{\textswab{gwp}}
\newcommand*{\swgpw}{\textswab{gpw}}
\newcommand*{\swpwg}{\textswab{pwg}}
\newcommand*{\swwpg}{\textswab{wpg}}
\newcommand*{\swwgp}{\textswab{wgp}}
\newcommand*{\swp}{\textswab{p}}
\newcommand*{\swg}{\textswab{g}}
\newcommand*{\swa}{\textswab{a}}
\newcommand*{\swe}{\textswab{e}}
\newcommand*{\swwa}{\textswab{wa}}
\newcommand*{\swwap}{\textswab{wap}}
\newcommand*{\swpwa}{\textswab{pwa}}
\newcommand*{\swwpa}{\textswab{wpa}}
\newcommand*{\swgwa}{\textswab{gwa}}
\newcommand*{\swwga}{\textswab{wga}}
\newcommand*{\swwag}{\textswab{wag}}
\newcommand*{\swpgwa}{\textswab{pgwa}}
\newcommand*{\swgpwa}{\textswab{gpwa}}
\newcommand*{\swgwpa}{\textswab{gwpa}}
\newcommand*{\swpwga}{\textswab{pwga}}
\newcommand*{\swgwap}{\textswab{gwap}}
\newcommand*{\swwpga}{\textswab{wpga}}
\newcommand*{\swwgpa}{\textswab{wgpa}}
\newcommand*{\swpwag}{\textswab{pwag}}
\newcommand*{\swwgap}{\textswab{wgap}}
\newcommand*{\swwpag}{\textswab{wpag}}
\newcommand*{\swwapg}{\textswab{wapg}}
\newcommand*{\swwagp}{\textswab{wagp}}
\newcommand*{\rMP}{\hyperref[item:def_mode_axioms_rMP]{\text{MP}}}
\newcommand*{\CP}{\hyperref[item:mode_axioms_CP]{\text{CP}}}
\newcommand*{\LP}{\hyperref[item:mode_axioms_Lebesgue]{\text{LP}}}
\newcommand*{\AP}{\hyperref[item:mode_axioms_Dirac]{\text{AP}}}
\newcommand*{\SP}{\hyperref[item:def_mode_axioms_SP]{\text{SP}}}
\newcommand*{\even}{\textup{even}}
\newcommand*{\odd}{\textup{odd}}
\newcommand*{\dist}[2]{\textup{dist}(#1,#2)}
\newcommand*{\Ball}[2]{B_{#2}(#1)}
\newcommand*{\sMass}[2]{M_{#2}^{#1}}
\newcommand*{\bR}{\mathbb{R}}
\newcommand*{\bN}{\mathbb{N}}
\newcommand*{\bZ}{\mathbb{Z}}
\newcommand*\quark{\mathpalette\quark@{.5}}
\newcommand*\quark@[2]{\mathbin{\vcenter{\hbox{\scalebox{#2}{$\; \m@th#1\bullet \;$}}}}}
\newcommand{\mylabel}[2]{#2\def\@currentlabel{#2}\label{#1}}
\newcolumntype{Y}{>{\fussy}X}
\newcommand*{\rd}{\mathrm{d}}
\newcommand*{\defterm}{\textbf}
\DeclareMathOperator{\supp}{supp}
\DeclareMathOperator*{\argmax}{arg\,max}
\DeclareMathOperator*{\argmin}{arg\,min}
\newcommand{\cmark}{\ding{51}}
\newcommand{\xmark}{\ding{55}}
\definecolor{mygray}{rgb}{0.65,0.65,0.65}
\definecolor{accent1}{HTML}{F6CF00}
\definecolor{accent2}{HTML}{56B4E9}
\definecolor{accent3}{HTML}{D55E00}
\definecolor{accent4}{HTML}{343434}
\definecolor{darkgreen}{rgb}{0,0.4,0}
\DeclarePairedDelimiter\abs{\lvert}{\rvert}
\DeclarePairedDelimiter\mynorm{\lVert}{\rVert}
\let\oldabs\abs
\def\abs{\@ifstar{\oldabs}{\oldabs*}}
\let\oldnorm\mynorm
\def\mynorm{\@ifstar{\oldnorm}{\oldnorm*}}
\newcommand*{\defeq}{\coloneqq}
\theoremstyle{plain}
\newtheorem{theorem}{\sffamily Theorem}[section]
\newtheorem{proposition}[theorem]{\sffamily Proposition}
\newtheorem{lemma}[theorem]{\sffamily Lemma}
\newtheorem{corollary}[theorem]{\sffamily Corollary}
\theoremstyle{definition}
\newtheorem{definition}[theorem]{\sffamily Definition}
\newtheorem{notation}[theorem]{\sffamily Notation}
\newtheorem{example}[theorem]{\sffamily Example}
\newtheorem{remark}[theorem]{\sffamily Remark}
\newcommand{\absval}[1]{\lvert #1 \rvert}
\newcommand{\innerprod}[2]{\langle #1 , #2 \rangle}
\newcommand{\norm}[1]{\lVert #1 \rVert}
\newcommand{\set}[2]{\{ #1 \mid #2 \}}
\newcommand{\bigabsval}[1]{\bigl\vert #1 \bigr\vert}
\newcommand{\bignorm}[1]{\bigl\Vert #1 \bigr\Vert}
\newcommand{\bigset}[2]{\bigl\{ #1 \,\big\vert\, #2 \bigr\}}
\newcommand{\Bigset}[2]{\Bigl\{ #1 \,\Big\vert\, #2 \Bigr\}}
\newcommand{\Set}[2]{\left\{ #1 \,\middle\vert\, #2 \right\}}
\numberwithin{equation}{section}
\numberwithin{figure}{section}
\numberwithin{table}{section}
\newcommand*{\redCref}[1]{\bgroup\color{red}\hypersetup{linkcolor=red}\Cref{#1}\egroup}
\newcommand*{\redref}[1]{\bgroup\color{red}\hypersetup{linkcolor=red}\ref{#1}\egroup}
\newcommand*{\EndExampleText}{\hfill$\blacklozenge$}
\newcommand*{\EndExampleEquation}{\tag*{$\blacklozenge$}}
\DeclareAcronym{pdf}{short=PDF, long=probability density function}
\DeclareAcronym{OM}{short=OM, long=Onsager--Machlup}
\DeclareAcronym{gs-mode}{short=\swgs-mode, long=generalised strong mode, cite=Clason2019GeneralizedMI}
\DeclareAcronym{pgs-mode}{short=\swpgs-mode, long=partial generalised strong mode}
\DeclareAcronym{gps-mode}{short=\swgps-mode, long=generalised partial strong mode}
\DeclareAcronym{ps-mode}{short=\swps-mode, long=partial strong mode}
\DeclareAcronym{s-mode}{short=\sws-mode, long=strong mode, cite=DashtiLawStuartVoss2013}
\DeclareAcronym{gw-mode}{short=\swgw-mode, long=generalised weak mode}
\DeclareAcronym{pgw-mode}{short=\swpgw-mode, long=partial generalised weak mode}
\DeclareAcronym{pw-mode}{short=\swpw-mode, long=partial weak mode}
\DeclareAcronym{w-mode}{short=\sww-mode, long=weak mode, cite=HelinBurger2015}
\DeclareAcronym{wa-mode}{short=\swwa-mode, long=weak approximating mode}
\DeclareAcronym{wp-mode}{short=\swwp-mode, long=weak partial mode}
\DeclareAcronym{wg-mode}{short=\swwg-mode, long=weak generalised mode}
\DeclareAcronym{gwp-mode}{short=\swgwp-mode, long=generalised weak partial mode}
\DeclareAcronym{gpw-mode}{short=\swgpw-mode, long=generalised partial weak mode}
\DeclareAcronym{pwg-mode}{short=\swpwg-mode, long=partial weak generalised mode}
\DeclareAcronym{wpg-mode}{short=\swwpg-mode, long=weak partial generalised mode}
\DeclareAcronym{wgp-mode}{short=\swwgp-mode, long=weak generalised partial mode}
\DeclareAcronym{wap-mode}{short=\swwap-mode, long=weak approximating partial mode}
\DeclareAcronym{gwa-mode}{short=\swgwa-mode, long=generalised weak approximating mode}
\DeclareAcronym{wga-mode}{short=\swwga-mode, long=weak generalised approximating mode}
\DeclareAcronym{wag-mode}{short=\swwag-mode, long=weak approximating generalised mode}
\DeclareAcronym{pgwa-mode}{short=\swpgwa-mode, long=partial generalised weak approximating mode}
\DeclareAcronym{gwpa-mode}{short=\swgwpa-mode, long=generalised weak partial approximating  mode}
\DeclareAcronym{pwga-mode}{short=\swpwga-mode, long=partial weak generalised approximating mode}
\DeclareAcronym{gwap-mode}{short=\swgwap-mode, long=generalised weak approximating partial mode}
\DeclareAcronym{wpga-mode}{short=\swwpga-mode, long=weak partial generalised approximating mode}
\DeclareAcronym{wgpa-mode}{short=\swwgpa-mode, long=weak generalised partial approximating mode}
\DeclareAcronym{pwag-mode}{short=\swpwag-mode, long=partial weak approximating generalised mode}
\DeclareAcronym{wgap-mode}{short=\swwgap-mode, long=weak generalised approximating partial mode}
\DeclareAcronym{wpag-mode}{short=\swwpag-mode, long=weak partial approximating generalised mode}
\DeclareAcronym{wapg-mode}{short=\swwapg-mode, long=weak approximating partial generalised mode}
\DeclareAcronym{e-mode}{short=\swe-mode, long=exotic mode}
\crefname{assumption}{Assumption}{Assumptions}
\crefname{conjecture}{Conjecture}{Conjectures}
\crefname{notation}{Notation}{Notation}
\crefname{openproblem}{Open Problem}{Open Problem}
\newcommand*{\arXiv}[1]{\bgroup\color{blue}\href{https://arxiv.org/abs/#1}{arXiv:#1}\egroup}
\newcommand*{\doi}[1]{\bgroup\color{blue}\href{https://doi.org/#1}{doi:#1}\egroup}
\newcommand*{\email}[1]{\bgroup\color{blue}\href{mailto:#1}{#1}\egroup}
\renewcommand*{\url}[1]{\bgroup\color{blue}\href{#1}{#1}\egroup}
\setlist[enumerate]{nosep}
\setlist[itemize]{nosep}
\newcommand{\proofheadfont}{\bfseries\sffamily}
\xpatchcmd{\proof}{\itshape}{\proofheadfont}{}{}
\let\oldtitle\title
\renewcommand{\title}[1]{\oldtitle{#1}\newcommand{\theshorttitle}{#1}}
\newcommand{\shorttitle}[1]{\renewcommand{\theshorttitle}{#1}}
\let\oldauthor\author
\renewcommand{\author}[1]{\oldauthor{#1}\newcommand{\theshortauthor}{#1}}
\newcommand{\shortauthor}[1]{\renewcommand{\theshortauthor}{#1}}
\newcommand{\theabstract}[1]{\par\bgroup\noindent\textbf{\textsf{Abstract.}} #1\egroup}
\newcommand{\thekeywords}[1]{\par\smallskip\bgroup\noindent\textbf{\textsf{Keywords.}}\newcommand{\and}{ $\bullet$ } #1\egroup}
\newcommand{\themsc}[1]{\par\smallskip\bgroup\noindent\textbf{\textsf{2020 Mathematics Subject Classification.}}\newcommand{\and}{ $\bullet$ } #1\egroup}
\newcommand*{\affilref}[1]{\ref{affiliation#1}}
\newcommand*{\affiliation}[3]{
	\footnotetext[#1]{\label{affiliation#2}#3}
}
\setlist{topsep=0.3ex, itemsep=0.3ex}
\title{Classification of small-ball modes and maximum\\a posteriori estimators in metric spaces}
\shorttitle{Classification of small-ball modes and MAP estimators}
\author{
	Ilja~Klebanov\textsuperscript{\affilref{FUB}}
	\and
	Hefin~Lambley\textsuperscript{\affilref{WarwickSingle}}
	\and
	T.~J.~Sullivan\textsuperscript{\affilref{WarwickJoint}}
}
\date{\today}
\begin{document}

\clearpage
\pagenumbering{arabic}
\setcounter{page}{1}

\maketitle
\affiliation{1}{FUB}{\raggedright Freie Universit{\"a}t Berlin, Arnimallee 6, 14195 Berlin, Germany (\email{klebanov@zedat.fu-berlin.de})}
\affiliation{2}{WarwickSingle}{\raggedright Mathematics Institute, University of Warwick, Coventry, CV4~7AL, United Kingdom (\email{hefin.lambley@warwick.ac.uk})}
\affiliation{3}{WarwickJoint}{\raggedright Mathematics Institute and School of Engineering, University of Warwick, Coventry, CV4~7AL, United Kingdom (\email{t.j.sullivan@warwick.ac.uk})}

\begin{abstract}\small
	\theabstract{A mode, or `most likely point', for a probability measure $\mu$ can be defined in various ways via the asymptotic behaviour of the $\mu$-mass of balls as their radius tends to zero.
Such points are of intrinsic interest in the local theory of measures on metric spaces and also arise naturally in the study of Bayesian inverse problems and diffusion processes.
Building upon special cases already proposed in the literature, this paper develops a systematic framework for defining modes through small-ball probabilities.
We propose `common-sense' axioms that such definitions should obey, including appropriate treatment of discrete and absolutely continuous measures, as well as symmetry and invariance properties.
We show that there are exactly ten such definitions consistent with these axioms, and that they are partially but not totally ordered in strength, forming a complete, distributive lattice.
We also show how this classification simplifies for well-behaved $\mu$.}
	\thekeywords{{Bayesian inverse problems}
\and
{local behaviour of measures}
\and
{maximum a posteriori estimation}
\and
{metric measure spaces}
\and
{modes}
}
	\themsc{28C15
\and
60B05
\and
62F10
\and
62F15
\and
62R20
\and
06A06
}
\end{abstract}

\section{Introduction}
\label{section:Introduction}

This article studies the problem of meaningfully defining modes, or `most likely points', for probability measures on metric spaces.
We do so by defining and analysing maps of the form
\begin{equation}
	\label{eq:intro_mode_map}
	\begin{split}
	\fM \colon \mathsf{U} \subseteq \catMetProb & \to \catSet \\
	(X, d, \mu) & \mapsto \{ \text{most likely points of $X$, or \emph{modes}, under $\mu$} \} ,
	\end{split}
\end{equation}
which we call \emph{mode maps}, where $\catMetProb$ is the class of metric probability spaces and $\catSet$ is the class of sets.
We are interested in the properties of, and relations among, such maps.
In particular, we focus on mode maps that have a `large enough' domain $\mathsf{U}$ to be interesting and also agree with the elementary cases of absolutely continuous and purely atomic measures, namely
\begin{align}
	\label{eq:intro_mode_map_LP}
	(\bR^{m}, d_{\text{Eucl}}, \rho \, \Leb{m}) & \mapsto \argmax_{x \in \bR^{m}} \rho(x) , \\
	\label{eq:intro_mode_map_AP}
	\Biggl( X, d, \sum_{j = 1}^{n} \alpha_{j} \delta_{x_{j}} \Biggr) & \mapsto \Set{ x_{j} }{ \alpha_{j} = \max_{1 \leq i \leq n} \alpha_{i} } .
\end{align}
In \eqref{eq:intro_mode_map_LP}, $\rho$ is a continuous \ac{pdf} with respect to Lebesgue measure $\Leb{m}$ on $\bR^{m}$, equipped with the Euclidean metric $d_{\text{Eucl}}$;
in \eqref{eq:intro_mode_map_AP}, $(X, d)$ is an arbitrary metric space, the coefficients $0 \leq \alpha_{j} \leq 1$ sum to $1$, the points $x_{j} \in X$ are distinct, and $\delta_{c}$ is the unit Dirac point mass at $c \in X$.

The task of determining the modes of a probability measure is of intrinsic interest in geometric measure theory on metric spaces, but also arises naturally across applied mathematics:
e.g., in Bayesian inference, modes of the posterior measure are \emph{maximum a posteriori estimators};
and in the study of random dynamical systems, modes of a diffusion process are \emph{minimum-action paths}.
For the classes of measures that typically arise in these applications---e.g., measures with a continuous density with respect to a Gaussian measure on a Banach space---there are several established notions of modes, characterised using small-ball probabilities in the inverse problems community \citep{DashtiLawStuartVoss2013,HelinBurger2015}, and as minimisers of an \emph{Onsager--Machlup functional} or \emph{Freidlin--Wentzell action} in the dynamical systems literature \citep{FreidlinWentzell1998,LiLi2021}.

These notions have different properties, and
may behave undesirably or not be defined at all outside of `nice' classes of measures \citep{LieSullivan2018,Clason2019GeneralizedMI,ayanbayev2021gamma}.
We therefore systematically study mode maps of the form \eqref{eq:intro_mode_map}, establishing a minimal collection of axioms that any sensible mode map should obey, among them \eqref{eq:intro_mode_map_LP} and \eqref{eq:intro_mode_map_AP}.
The axioms that we propose are by no means exhaustive, and a complete axiomatic characterisation of mode maps appears to be difficult, so we restrict attention to a specific class of maps that rely solely on small-ball probabilities.
We propose a large system of such maps, including the definitions from the Bayesian inverse problems literature, and we explicitly construct measures with fine structure that illustrate the subtle differences between definitions.
We also prove that, for some subclasses of measures, this system reduces further;
in particular, we find an unambiguous notion of small-ball mode for a large class of posteriors arising in Bayesian inverse problems.

\paragraph{Contributions.}
We make the following contributions to the study of modes of probability measures on metric spaces:
\begin{enumerate}[label=(C\arabic*),leftmargin=3em]
	\item
	\label{item:contributions_axioms}
	We define the notion of a mode map and propose axioms to enforce the correct handling of discrete and absolutely continuous probability measures, along with sensible behaviour under geometric transformations (\Cref{defn:axioms}).

	\item
	\label{item:contributions_small-ball_mode_map_notation}
	Since these axioms do not uniquely characterise a mode map, we study a class of maps that define modes using only the masses of balls in the small-radius limit.
	To describe this class we propose two notations.
	The notation of \Cref{defn:letter_notation} extends the established notions of generalised (\swg), strong (\sws), and weak (\sww) modes by representing combinations of these three properties and two new ones, `partial' (\swp) and `approximating' (\swa),  by strings in the alphabet $\{ \swg, \sws, \sww, \swp, \swa \}$.
	The more general notation of \Cref{defn:structured_definition} uses explicit strings of universal ($\forall$) and existential ($\exists$) quantifiers.

	\item
	\label{item:contributions_distinct_mode_maps}
	We verify the axioms of \ref{item:contributions_axioms} for the small-ball mode maps of \ref{item:contributions_small-ball_mode_map_notation}, finding that there are ten distinct definitions that satisfy our axioms (\Cref{thm:main}).
	We show that these definitions form a lattice when partially ordered by implication (\Cref{fig:periodic_table}).

	\item
	\label{item:contributions_counterexamples}
	We construct examples in $X = \bR$ and $X = \ell^{2}(\bN; \bR)$ to prove that no further implications among the notions in \ref{item:contributions_distinct_mode_maps} are possible.
	These examples make use of measures with fine structure, including measures for which the ball mass $\mu(\ball{u}{r})$ behaves increasingly irregularly as $r \to 0$ and measures for which, for some null sequence $(r_{n})_{n \in \bN}$, $\mu(\ball{u}{r_{n}})$ is asymptotically different from $\mu(\ball{u_{n}}{r_{n}})$ along some sequence $(u_{n})_{n \in \bN} \to u$.

	\item
	\label{item:contributions_reductions}
	We examine special cases in which the lattice of definitions in \ref{item:contributions_distinct_mode_maps} simplifies.
	Notably, in the application-relevant setting of measures with a density with respect to a Gaussian, we show that there is a unique notion of small-ball mode under mild conditions on the density.
	However, in general, multiple notions of mode may still arise even in this setting.
\end{enumerate}

\paragraph{Outline.}
\Cref{section:Setup_And_Notation} summarises the spaces of interest and defines the notation used throughout the article.
\Cref{sec:axioms} defines the notion of a mode map and states the axioms referenced in \ref{item:contributions_axioms}.
In \Cref{sec:small-ball_modes} we restrict attention to mode maps based on small-ball probabilities, state the systematic notations of \ref{item:contributions_small-ball_mode_map_notation}, and verify the axioms and equivalences needed to arrive at the ten mode maps described in \ref{item:contributions_distinct_mode_maps};
we defer the examples of \ref{item:contributions_counterexamples} to \Cref{section:Examples}.
\Cref{sec:Mode_definitions_coincide} addresses \ref{item:contributions_reductions}, investigating conditions under which many of the generally distinct mode maps coincide, and \Cref{section:Conclusion} gives some closing remarks.

\section{Setup and notation}
\label{section:Setup_And_Notation}

Throughout, $(X, d)$ will be a metric space.
The open ball centred at $u \in X$ with radius $r > 0$ is denoted $\Ball{u}{r} \defeq \set{ x \in X }{ d(x,u) < r }$, and $\dist{A}{B} \defeq \inf \set{ d(a,b) }{ a\in A,\, b\in B }$ denotes the distance between $A, B \subseteq X$.
The space $X$ will be equipped with its ball $\sigma$-algebra $\ballsig{X}$ generated by the open balls \citep[Section~1.7]{VanDerVaartWellner1996};
when $X$ is separable, $\ballsig{X}$ coincides with the Borel $\sigma$-algebra $\Borel{X}$ generated by the open sets.

The set of probability measures on $(X, \ballsig{X})$ is denoted $\prob{X}$.
Given $\mu \in \prob{X}$, the triple $(X, d, \mu)$ forms a metric probability space\footnote{Note that, unlike some authors, we do not insist in our definition of a metric probability space that $X$ be complete or separable.}, and $\catMetProb$ denotes the class of all such spaces.

The topological support of a probability measure $\mu$ on $(X, \Borel{X})$ is 
\begin{align}
	\label{eq:support}
	\supp(\mu) & \defeq \bigset{ x \in X }{ \text{for all $r > 0$, $\mu(\ball{x}{r}) > 0$} } \\
	\label{eq:support_open_set_version}
	& \hspace{0.5ex} = \bigset{ x \in X }{ \text{for all open neighbourhoods $U$ of $x$, $\mu(U) > 0$} } .
\end{align}
We use \eqref{eq:support} (but \emph{not} \eqref{eq:support_open_set_version}) to define the support of $\mu \in \prob{X}$ defined only on $\ballsig{X}$, noting that general open sets may not be $\ballsig{X}$-measurable.
We write $\sMass{\mu}{r} \defeq \sup_{x \in X} \mu(\Ball{x}{r})$.
When $X$ is separable, definitions \eqref{eq:support} and \eqref{eq:support_open_set_version} coincide, $\supp(\mu) \neq \emptyset$, and $\sMass{\mu}{r} > 0$ for all $r > 0$.

In the case $X = \bR^{m}$ for $m \in \bN$, we work with the Euclidean metric $d_{\text{Eucl}}$ and prescribe measures in terms of their \acp{pdf} with respect to the Lebesgue measure $\Leb{m}$.
Moreover, we denote by $\Uniform[a, b] \in \prob{\bR}$ the uniform distribution on $[a, b] \subset \bR$ and by $\dirac{c} \in \prob{X}$ the Dirac measure centred at $c \in X$.

Since ratios of ball masses will occur frequently in what follows, we define
\begin{equation}
	\label{eq:Ratio_wz}
	\Ratio{u}{v}{r}{\mu}
	\defeq
	\frac{\mu(B_r(u))}{\mu(B_r(v))}, \qquad
	\Ratio{u}{\sup}{r}{\mu}
	\defeq
	\frac{\mu(B_r(u))}{\sMass{\mu}{r}}, \qquad \text{$u, v \in X$.}
\end{equation}
If the denominator in either expression in \eqref{eq:Ratio_wz} is zero, then we set $\nicefrac{c}{0} \defeq \infty$ for $c>0$ and $\nicefrac{0}{0} \defeq 1$, with the latter convention reflecting the fact that neither $u$ nor $v$ strictly dominates the other for this specific radius.
The restriction $\mu|_{C} \in \prob{X}$ of $\mu$ to a set $C \in \ballsig{X}$ with $\mu(C) > 0$ is defined by
\[
	\mu|_{C}(E)
	\defeq
	\frac{\mu(C \cap E)}{\mu(C)},
	\qquad
	E \in \ballsig{X}.
\]
We write $\chi_{A} \colon X \to \bR$ for the indicator function of a subset $A\subseteq X$, taking value $1$ on $A$ and taking value $0$ otherwise.
In this manuscript, all radii $r$, $r_{n}$, and so forth are assumed to be strictly positive, and when we write $r \to 0$ or similar, only positive null sequences are meant.
We use the standard asymptotic notation for functions $f, g \colon \bN \to \bR$:
\begin{align*}
	f(n) \in \bigO\bigl(g(n)\bigr) & \iff \limsup_{n \to \infty} \frac{f(n)}{g(n)} < +\infty , \\
	f(n) \in \bigTheta\bigl(g(n)\bigr) &\iff \bigl( f(n) \in \bigO\bigl(g(n)\bigr) \text{ and } g(n) \in \bigO\bigl(f(n)\bigr) \bigr) \text{, and} \\
	f(n) \in \littleOmega\bigl(g(n)\bigr) & \iff \liminf_{n \to \infty} \frac{f(n)}{g(n)} = +\infty.
\end{align*}
The same notation applies to $f, g \colon (0, \infty) \to \bR$, with the limits $n \to \infty$ replaced with $r \to 0$.

\section{Mode maps: Definitions, examples, and axioms}
\label{sec:axioms}

This section makes precise the notion of a mode map $\fM$ referred to in \eqref{eq:intro_mode_map}.
We give some examples of mode maps that have proven useful in the literature, discuss their properties, and illustrate their differences.
With the aim of identifying a `correct' notion of a mode for a probability measure on a metric space, we then state axioms extending \eqref{eq:intro_mode_map_LP} and \eqref{eq:intro_mode_map_AP} that, in our view, should be imposed on any map claiming to identify `most likely points'.

\begin{definition}[Mode map]
	\label[definition]{defn:mode_map}
	We call any map $\fM \colon \mathsf{U} \subseteq \catMetProb \to \catSet$ satisfying $\fM(X, d, \mu) \subseteq X$ for every $(X, d, \mu) \in \mathsf{U}$ a \defterm{mode map}.
	Where no ambiguity arises, we write $\fM(\mu)$ instead of $\fM(X, d, \mu)$.
\end{definition}

This definition imposes no significant constraints on a mode map $\fM$;
in particular, $\fM$ need not agree with the elementary cases \eqref{eq:intro_mode_map_LP} and \eqref{eq:intro_mode_map_AP}, so one could take, e.g., one of the trivial mode maps $\fM(X, d, \mu) \defeq \emptyset$ or $\fM(X, d, \mu) \defeq X$.
From \cref{sec:small-ball_modes} onwards, we will focus on maps defined on $\catMetProb$ that meaningfully extend the elementary cases \eqref{eq:intro_mode_map_LP} and \eqref{eq:intro_mode_map_AP}.
We will examine when these mode maps agree on subclasses of $\catMetProb$ in \cref{sec:Mode_definitions_coincide}.

Before stating constraints to ensure that the notion of a `most likely point' is captured, we first give some important examples of mode maps.

\begin{example}[Most likely balls of fixed radius]
	\label[example]{ex:big-ball_modes}
	For fixed $r > 0$, one may define a mode of $\mu$ to be any point $u \in X$ for which the $\mu$-probability of the ball $\ball{u}{r}$ attains the supremal ball mass $M_{r}^{\mu} \defeq \sup_{x \in X} \mu(\ball{x}{r})$.
	This leads to the \defterm{positive-radius mode map} $\fM^{r} \colon \catMetProb \to \catSet$ given by
	\[
		\fM^{r}(\mu) \defeq \bigset{ u \in X }{ \mu(\ball{u}{r}) = M_{r}^{\mu} } .
	\]
	Points of $\fM^{r}(\mu)$ can be seen as regularised modes in which features of $\mu$ on scales smaller than $r$ are neglected.
	If $X$ is the separable dual of a Banach space, then every $\mu \in \prob{X}$ assigning zero mass to spheres has a mode of this kind, i.e., $\fM^{r}(\mu) \neq \emptyset$ \citep[Theorem~2.1]{Schmidt2024}.
	The mode map $\fM^{r}$ is not one of the trivial mode maps, but it still fails to satisfy \eqref{eq:intro_mode_map_LP} and \eqref{eq:intro_mode_map_AP}.\footnote{To see that $\fM^{r}$ does not satisfy \eqref{eq:intro_mode_map_AP}, consider $\mu \defeq \frac{1}{2} ( \dirac{-2 r \mathbin{/} 3} + \dirac{2 r \mathbin{/} 3} )$ on $\bR$, which has $0 \in \fM^{r}(\mu)$;
	similarly, for $r \leq \frac{1}{10}$, the Gaussian mixture $\mu \defeq \frac{1}{2} \bigl(N({-\frac{r}{2}}, r^{4}) + N(\frac{r}{2}, r^{4})\bigr)$ has a continuous \ac{pdf} with maxima at $\pm \frac{r}{2}$, but neither of these maximisers belongs to $\fM^{r}(\mu)$, whereas $0$ does, showing that $\fM^{r}$ does not satisfy \eqref{eq:intro_mode_map_LP}.}
	\EndExampleText
\end{example}

\begin{example}[Small-ball modes]
	\label[example]{ex:small-ball_modes}
	In the literature on Bayesian inverse problems, several notions of a mode for a probability measure $\mu$ on a metric space $X$ have been proposed.
	These notions induce very natural mode maps:
	\begin{enumerate}[label=(\alph*)]
		\item
		A \defterm{\ac{s-mode}} is any point $u \in X$ for which the $\mu$-probability of the ball $\ball{u}{r}$ asymptotically dominates the supremal ball mass $M_{r}^{\mu}$.
		Using the abbreviated notation \eqref{eq:Ratio_wz} for ball-mass ratios in which $\Ratio{u}{\sup}{r}{\mu} \defeq \mu(\ball{u}{r}) / M_{r}^{\mu}$, the associated mode map $\fM^{\sws} \colon \catMetProb \to \catSet$ is given by
		\begin{equation*}
			\fM^{\sws}(\mu) \defeq \Set{u \in X}{\liminf_{r \to 0} \Ratio{u}{\sup}{r}{\mu} \geq 1}.
		\end{equation*}

		\item
		A \defterm{\ac{w-mode}} is any point $u \in X$ for which $\mu(\ball{u}{r})$ asymptotically dominates $\mu(\ball{v}{r})$ for every other \defterm{comparison point} $v \in X \setminus \{ u \}$.
		Again using the abbreviated notation \eqref{eq:Ratio_wz} in which $\Ratio{u}{v}{r}{\mu} \defeq \mu(\ball{u}{r}) / \mu(\ball{v}{r})$, the associated mode map $\fM^{\sww} \colon \catMetProb \to \catSet$ is given by
		\begin{equation*}
			\fM^{\sww}(\mu) \defeq \Set{u \in X}{\liminf_{r \to 0} \Ratio{u}{v}{r}{\mu} \geq 1 \text{~for all $v \in X \setminus \{u\}$}}.
		\end{equation*}

		\item
		A \defterm{\ac{gs-mode}} is any point $u \in X$ such that, for every null sequence $(r_{n})_{n \in \bN}$ of radii, there exists an \defterm{approximating sequence} $(u_{n})_{n \in \bN} \to u$ for which the ball mass $\mu(\ball{u_{n}}{r_{n}})$ asymptotically dominates the supremal ball mass $M_{r_{n}}^{\mu}$.
		The associated mode map $\fM^{\swgs} \colon \catMetProb \to \catSet$ is given by
		\begin{equation*}
			\fM^{\swgs}(\mu) \defeq \Set{u \in X}{
			 \begin{aligned}
				 &\text{for all null $(r_{n})_{n \in \bN}$, there exists $(u_{n})_{n \in \bN} \to u$}  \\
				 &~~~~~~~~\text{such that~} \liminf_{n \to \infty} \Ratio{u_{n}}{\sup}{r_{n}}{\mu} \geq 1
			 \end{aligned}
		 	}.
		\end{equation*}
	\end{enumerate}
	These maps correctly identify the modes of a measure with continuous \ac{pdf}, as in \eqref{eq:intro_mode_map_LP}, and a purely atomic measure, as in \eqref{eq:intro_mode_map_AP}.
	Beyond these settings, however, the maps can behave very differently \citep{Clason2019GeneralizedMI,LieSullivan2018}, and distinguishing examples exist even in $X = \bR$ (see \Cref{example:subclass_classification}).

	We discuss the existing literature, and minor differences in our definitions, in \Cref{rk:existing_literature}.
	\EndExampleText
\end{example}

We now consider a different type of mode map arising in large-deviation theory, which characterises
modes as minimisers of some functional on the space $X$.
This mode map plays a central role in the analysis of random dynamical systems, but has the deficiency of being defined only on a subclass of $\catMetProb$.
This deficiency gives further motivation for a systematic study of more general mode maps that can be applied to any metric probability space.

\begin{example}[Minimum-action paths]
	\label[example]{ex:OM_minimisers}
	In the study of random dynamical systems, one is often interested the `most likely paths' of a diffusion conditioned on known endpoints.
	To exemplify the approach used to compute modes in this domain, consider the Brownian dynamics model \citep{LuStuartWeber2017} on the state space $\Omega = \bR^{m}$, in which the atomic configuration $x(t)$ at time $t$ is governed by the gradient flow of a potential $V \colon \Omega \to \bR$ perturbed by additive noise with temperature $\varepsilon > 0$.
	In this model, the process obeys the It\^{o} stochastic differential equation
	\begin{equation}
		\label{eq:Brownian_dynamics}
		\rd x(t) = -\nabla V(x(t)) \,\rd t + \sqrt{2\varepsilon} \,\rd w(t), \quad t \in [0, T], \quad x(0) = a \in \Omega, \quad x(T) = b \in \Omega,
	\end{equation}
	where $(w(t))_{t \geq 0}$ denotes the Wiener process and the terminal time $T > 0$ is fixed throughout.
	This induces a path measure $\mu$ on the path space $X \defeq \set{u \in C([0, T]; \Omega)}{u(0) = a, u(T) = b}$ equipped with the $\sup$-norm that, under mild conditions on $V$, is equivalent to the law of a Brownian motion conditioned on the endpoints $a$ and $b$.
	For fixed potential $V$ and temperature $\varepsilon > 0$, `most likely paths' are characterised as minimisers of the \defterm{\ac{OM} functional} $I_{V, \varepsilon} \colon X \cap H^{1}([0, T]; \Omega) \to \bR$, given, as in \citet[eq.~(1.9)]{LuStuartWeber2017}, by
	\[
		I_{V, \varepsilon}(h) \defeq \int_{0}^{T} \frac{1}{4\varepsilon} \bigabsval{\dot{h}(t)}^{2} + \frac{1}{4 \varepsilon} \bigabsval{\nabla V(h(t))}^{2} - \frac{1}{2} \Delta V(h(t)) \, \rd t .
	\]
	This induces a mode map $\fM^{\text{OM}}$, which is defined only on the subclass $\mathsf{U}$ of $\catMetProb$ consisting of all triples $(X, d, \mu)$ of the path space $X \subset C([0, T]; \Omega)$ equipped with the supremum metric $d$, with $\mu \defeq \mu_{V, \varepsilon}$ arising as the path measure of \eqref{eq:Brownian_dynamics} for some choice of potential $V$ and temperature $\varepsilon$:
	\begin{equation*}
		\fM^{\text{OM}}(\mu) \defeq \argmin_{h \in X \cap H^{1}([0, T]; \Omega)} I_{V, \varepsilon}(h) .
	\end{equation*}
	Here $\argmin$ is set-valued, allowing for the possibility of no minimisers, a unique minimiser, or multiple minimisers of $I_{V, \varepsilon}$.
	The \ac{OM} functional is derived by computing the $\mu$-probability of the ball $\ball{h}{r}$ in terms of $\mu(\ball{0}{r})$ using the Girsanov theorem \citep{DuerrBach1978};
	indeed
	\begin{equation*}
		\lim_{r \to 0} \Ratio{h}{k}{r}{\mu} = \exp\bigl(I_{V, \varepsilon}(k) - I_{V, \varepsilon}(h)\bigr)
		\quad
		\text{for } h, k \in X \cap H^{1}\bigl([0, T]; \Omega\bigr) .
	\end{equation*}
	Minimisers of $I_{V, \varepsilon}$ are closely related to \sww-modes---though in many cases $I_{V, \varepsilon}$ characterises the limiting behaviour of $\Ratio{h}{k}{r}{\mu}$ only for paths $h, k$ in a subspace $E \subset X$, as in this example where $E = X \cap H^{1}([0, T]; \Omega)$.
	Also related are minimisers of the \defterm{Freidlin--Wentzell action} \citep{FreidlinWentzell1998}, which can be viewed as `most likely paths' of \eqref{eq:Brownian_dynamics} in the zero-temperature limit $\varepsilon \to 0$.
	While such minimisers are of central importance in large-deviation theory, we cannot interpret them as modes of the path measure $\mu$ arising from \eqref{eq:Brownian_dynamics}, as we have taken the limit $\varepsilon \to 0$, and so we shall not define a `Freidlin--Wentzell mode map'.
	\EndExampleText
\end{example}

These examples illustrate that there are many plausible generalisations of `mode' to the setting of probability measures on metric spaces.
Therefore, as a first step in our search for a mode map with the most desirable properties, we propose some minimal axioms for mode maps;
we will regard any mode map satisfying them all as `meaningful' and any mode map that fails to satisfy even one of them as `meaningless'.

\begin{definition}[Axioms for mode maps]
	\label[definition]{defn:axioms}
	A mode map $\fM \colon \mathsf{U} \subseteq \catMetProb \to \catSet$ satisfies
	\begin{itemize}[leftmargin=3.5em]
		\item[(\text{LP})]
		\label{item:mode_axioms_Lebesgue}
		the \defterm{Lebesgue property} if, for any $(X, d, \mu) \in \mathsf{U}$ with $X = \bR^{m}$, $d$ the Euclidean metric, and $\mu \in \prob{\bR^{m}}$ having continuous Lebesgue \ac{pdf} $\rho$, the modes of $\mu$ are given by $\fM(\mu) = \argmax_{x \in X} \rho(x)$;

		\item[(\text{AP})]
		\label{item:mode_axioms_Dirac}
		the \defterm{atomic property} if, for any $(X, d, \mu) \in \mathsf{U}$ with
		\begin{equation*}
			\mu = \alpha_{0} \mu_{0} + \sum_{k = 1}^{K} \alpha_{k} \dirac{x_{k}}
		\end{equation*}
		for some $K\in\bN$, non-atomic $\mu_{0} \in \prob{X}$, distinct $x_{1},\dots,x_{K}\in X$, and $\alpha_{0},\dots,\alpha_{K}\in [0,1]$ with $\sum_{k=0}^{K} \alpha_{k} = 1$ and $\alpha_{0} \neq 1$, the modes of $\mu$ are given by
		\[
			\fM (\mu)
			=
			\Set{
			x_{k} }{ k=1,\dots,K \text{ with } \alpha_{k} = \max_{j = 1,\dots,K} \alpha_{j}
			};
		\]
		\item[(\text{CP})]
		\label{item:mode_axioms_CP}
		the \defterm{cloning property} if, for any $(X, d, \mu) \in \mathsf{U}$ for which $X$ is a vector space equipped with a translation-invariant metric $d$, and $\mu \in \prob{X}$ has non-empty support, and for any $\alpha \in [0,1]$ and any $b \in X$ such that $\dist{\supp(\mu)}{b + \supp(\mu)} > 0$, the space $(X, d)$ equipped with the convex combination $\nu = \alpha \mu + (1-\alpha) \mu(\quark - b)$ also lies in $\mathsf{U}$, i.e., $(X, d, \nu) \in \mathsf{U}$, and the modes of $\nu$ are given by
		\[
			\fM(\nu) 
			=
			\begin{cases}
				\fM(\mu), & \text{if } \alpha > \tfrac{1}{2} ,
				\\
				\fM(\mu)\cup (\fM(\mu)+b), & \text{if } \alpha = \tfrac{1}{2} ,
				\\
				\fM(\mu) + b, & \text{if } \alpha < \tfrac{1}{2} .
			\end{cases}
		\]
	\end{itemize}
\end{definition}

The axiom \LP\ enforces correct behaviour for measures with continuous Lebesgue \acp{pdf}, as in \eqref{eq:intro_mode_map_LP};
\AP\ slightly extends \eqref{eq:intro_mode_map_AP}, enforcing that the atom with the largest mass is the mode, and that the non-atomic part of the measure has no influence.
\CP\ enforces that, for a convex combination of two well-separated copies of a measure $\mu$, the copy with the larger weight determines the modes of the combination;
in particular, taking $\alpha = 0$ in \CP\ imposes the requirement that $\fM$ be equivariant with respect to translations.

These three axioms immediately eliminate some of the mode maps we have discussed as meaningless:
the mode map $\fM^{r} \colon \catMetProb \to \catSet$ of \cref{ex:big-ball_modes} violates all three axioms, even when considering just $(X, d, \mu) \in \catMetProb$ with $X = \bR$, and the trivial mode map $\fM(\mu) \equiv \emptyset$ defined on $\catMetProb$ violates \AP\ and \LP\ while vacuously satisfying \CP.

We point out that these axioms may vacuously hold if the domain $\mathsf{U}$ is not sufficiently rich.
For example, \LP\ holds immediately if $\mathsf{U}$ does not contain any metric probability spaces with $X = \Reals^{m}$, $m \in \Naturals$.
Our interest, however, will be determining meaningful maps on sufficiently large domains of definition $\mathsf{U}$.
Crucially, these axioms do not determine a unique meaningful mode map on $\mathsf{U} = \catMetProb$:
we shall see in \Cref{sec:small-ball_modes} that all of the mode maps from \Cref{ex:small-ball_modes} satisfy the axioms, even though their behaviour differs greatly, as do some more mode maps yet to be introduced.
While it would be highly desirable to axiomatically determine a unique mode map, this appears to be very challenging.

Therefore, this article confines itself to a more tractable problem that is mathematically rich and yet still relevant to applications:
understanding a class of \defterm{small-ball mode maps} that make use only of the $\mu$-masses of metric balls $\ball{u}{r}$, $u \in X$, in the limit $r \to 0$.
This class includes the \sws-, \sww-, and \swgs-modes along with several notions new to the literature;
notions that, from the perspective of our axiomatic approach, are as reasonable as those of \Cref{ex:small-ball_modes}.

\section{Small-ball mode maps}
\label{sec:small-ball_modes}

This section forms the core of the paper, in which we define, analyse, and classify a combinatorially large space of small-ball mode maps on $\catMetProb$.
In order to ease the transition from the mode maps of \Cref{ex:small-ball_modes} to our general setting, \Cref{subsec:s-modes} consists of a preliminary exploration of small-ball mode maps described by combinations of adjectives such as `strong' ($\sws$), `weak' ($\sww$), and `generalised' ($\swg$).
The system of small-ball maps that we establish has some deficiencies, which we discuss, but it serves as a useful warm-up for what follows.
In \Cref{subsec:structured_small-ball_modes} we introduce our comprehensive notation for small-ball mode maps.
\Cref{subsec:verification_of_axioms} contains our main result:
precisely ten out of the 282 definitions from \Cref{subsec:structured_small-ball_modes} actually satisfy the axioms of \Cref{defn:axioms} for a meaningful mode map, and we determine their partial-order (lattice) structure.
In \Cref{section:MP_discussion} we investigate some further properties that these ten meaningful small-ball mode maps do (or do not) enjoy.

\subsection{Preliminary exploration of small-ball modes}
\label{subsec:s-modes}

The common feature of the mode maps in \Cref{ex:small-ball_modes} is that a mode is characterised as a point $u$ with dominant small-ball mass, in the sense that the limit inferior of a certain ball mass ratio is greater than or equal to unity for all possible null sequences of radii $(r_{n})_{n \in \bN}$.
All that differs is the exact manner in which this limit is taken:
\begin{enumerate}[label=(\alph*)]
	\item
	an \sws-mode is a point for which the centred ball mass $\mu(\ball{u}{r_{n}})$ dominates the supremal radius-$r_{n}$ ball mass $\sMass{\mu}{r_{n}}$ over all centres as $n \to \infty$;

	\item
	a \sww-mode is a point for which the centred ball mass $\mu(\ball{u}{r_{n}})$ dominates the centred ball mass $\mu(\ball{v}{r_{n}})$ as $n \to \infty$ for all \emph{comparison points} $v \neq u$ separately;

	\item
	a \swgs-mode is a point such that, for each $(r_{n})_{n \in \bN}$, there exists an \emph{approximating sequence} $(u_{n})_{n \in \bN} \to u$ for which the non-centred ball mass $\mu(\ball{u_{n}}{r_{n}})$ dominates $\sMass{\mu}{r_{n}}$ as $n \to \infty$.
\end{enumerate}

In particular, we can interpret the $\swgs$-mode as the result of combining two concepts:
generalised ($\swg$), corresponding to the condition that `there exists an approximating sequence $(u_{n})_{n \in \bN} \to u$', and strong ($\sws$), corresponding to comparison to the supremal ball mass $M_{r_{n}}^{\mu}$.

A priori, there seems to be no reason not to combine these concepts of strong, weak, and generalised in other ways.
For example, one could consider a `\swgw-mode' or `\swwg-mode', where the order of the letters corresponds to the order of the corresponding \emph{for all} and \emph{there exists} statements.
But would such modes be the same, or even meaningful, or perhaps coincide with other mode types?
Moreover, it is natural to consider two new concepts not previously seen in the literature, motivated by the concepts we have just discussed:
\begin{itemize}
	\item
	a \emph{partial} (\swp) mode $u \in X$ would require $u$ to be dominant only along some (rather than every) null sequence $(r_{n})_{n\in\bN}$;

	\item
	an \emph{approximating} (\swa) mode would allow the comparison point $v \in X$ to have its own \emph{comparison sequence} $(v_{n})_{n \in \bN} \to v$, and the ball mass around $u$ would instead be required to dominate the non-centred ball mass $\mu(\ball{v_{n}}{r_{n}})$ over all possible $(v_{n})_{n \in \bN}$.
\end{itemize}

Many of these five adjectives, with the corresponding letters \sws, \sww, \swg, \swp, and \swa, can be combined into strings, and we stress right away that \emph{these adjectives do not generally commute}.
Further, many combinations are `grammatically incorrect':
e.g., \swa\ and \sws\ cannot be combined as there is no comparison point to be approximated;
and \swa\ cannot appear before \sww, since it would refer to a point $v$ not yet introduced.
These rules are summarised in the following definition, where, for a string $\textswab{S} = [S_{1} \ldots S_{K}]$, we write, e.g., $\sws \in \textswab{S}$ if and only if $S_{i} = \sws$ for some $i \in \{1, \dots, K\}$.

\begin{definition}[Small-ball mode maps {$\fM^{\textswab{S}}$}]
	\label[definition]{defn:letter_notation}
	Let $(X, d, \mu)$ be a metric probability space.
	Given a string \textswab{S} in the alphabet $\{ \swa, \swg, \swp, \sws, \sww \}$ containing precisely one of $\sws$ and $\sww$, with $\swa$ only ever appearing after $\sww$, and with no repeated letters, the point $u \in X$ is said to be an \defterm{\textswab{S}-mode} if it satisfies the logical sentence
	\begin{equation}
		\label{eq:S_liminf}
		\textswab{S}
		\colon\quad
		\liminf_{n \to \infty} \Ratio{u_{n}}{v_{n}}{r_{n}}{\mu} \geq 1,
	\end{equation}
	where the sequences $(u_{n})_{n \in \bN}$, $(v_{n})_{n \in \bN}$ and $(r_{n})_{n \in \bN}$ are determined by the following rules, applied by treating each letter in $\textswab{S}$ as a quantifier appearing in order of occurrence, with a special treatment of the letter $\sws$:
	
	\begin{enumerate}[label=(\alph*)]
		\item A mode can be generalised ($\swg \in \textswab{S}$), corresponding to the quantifier `there exists an approximating sequence $(u_{n})_{n \in \bN} \to u$', or non-generalised ($\swg \notin \textswab{S}$), in which case $u_{n} = u$ for all $n \in \bN$.

		\item A mode is either weak ($\sww \in \textswab{S}$), corresponding to the quantifier `for every comparison point $v \in X \setminus \{ u \}$', or strong ($\sws \in \textswab{S}$), in which case $v_{n}$ in \eqref{eq:S_liminf} must be replaced by $\sup$.
		If \sws\ does appear, then it is always placed last in the string \textswab{S} as a matter of convention.

		\item A mode with $\sww \in \textswab{S}$ can be approximating ($\swa \in \textswab{S}$), corresponding to the quantifier `for every comparison sequence $(v_{n})_{n \in \bN} \to v$', or non-approximating ($\swa \notin \textswab{S}$), in which case $v_{n} = v$ for all $n \in \bN$.
		As mentioned, the letter \swa\ must occur later than the letter \sww\ in $\textswab{S}$.

		\item A mode can be partial ($\swp \in \textswab{S}$), corresponding to the quantifier `there exists a null sequence $(r_{n})_{n \in \bN}$', in which case we often write that $u$ is a mode \emph{along} this particular null sequence of radii, or non-partial ($\swp \notin \textswab{S}$), meaning that we consider all possible null sequences, in which case the quantifier `for every null sequence $(r_{n})_{n \in \bN}$' comes first by default.
	\end{enumerate}
	We define the corresponding \defterm{mode map} $\fM^{\textswab{S}} \colon \catMetProb \to \catSet$ by
	\begin{equation}
		\label{eq:S_modemap}
		\fM^{\textswab{S}} (X, d, \mu)
		\defeq
		\bigset{u\in X}{u \text{ is an \textswab{S}-mode of } \mu }.
	\end{equation}
\end{definition}

\begin{remark}[Comparison to literature]
	\label[remark]{rk:existing_literature}
	Our definitions of $\sws$- and $\swgs$-modes follow the original proposals of \citet{DashtiLawStuartVoss2013} and \citet{Clason2019GeneralizedMI}.
	However, our definition of \acp{w-mode} subtly differs from that of \citet{HelinBurger2015}, which considered comparison points $v$ only in a subspace $E \subset X$, and used $\lim$ rather than $\liminf$.
	Our definition is the same as the \defterm{global weak mode} of \citet{ayanbayev2021gamma}, with the inconsequential difference that we do not allow the comparison point $v$ to equal the candidate mode $u$.
\end{remark}

This article's goal is to classify small-ball mode maps satisfying the axioms of \Cref{defn:axioms}, including those maps $\fM^{\textswab{S}}$ arising from \Cref{defn:letter_notation}, and the question of which mode maps satisfy the axioms is deferred to \Cref{subsec:verification_of_axioms}.
However, to illustrate our programme, we first consider a particular subclass of \textswab{S}-modes;
this subclass contains some, but not all, of the \textswab{S}-modes using only the letters $\sws$, $\sww$, $\swg$, and $\swp$.
This subclass involves just one of our two new letters and it admits a simple classification result that avoids the complexities of our full theory (\Cref{thm:main}) while still illustrating some key features.

\begin{figure}[t]
	\centering
	\begin{tikzpicture}[scale=0.9]
		\node (sws) at (0, 0) {$\sws$};
		\node (swgs) at (-2, -1) {$\swgs$};
		\node (sww) at (3, -1) {$\sww$};
		\node (swgw) at (1, -2) {$\swgw$};
		\node (swps) at (0, -3) {$\swps$};
		\node (swpgs) at (-2, -4) {$\swpgs$};
		\node (swpw) at (3, -4) {$\swpw$};
		\node (swpgw) at (1, -5) {$\swpgw$};
		\draw (sws) -- (swps) -- (swpw) -- (swpgw);
		\draw (sws) -- (swgs) -- (swpgs) -- (swpgw);
		\draw (sws) -- (sww) -- (swgw);
		\draw (swps) -- (swpgs);
		\draw (sww) -- (swpw);
		\draw[preaction={draw=white, line width=6pt}] (swgs) -- (swgw) -- (swpgw);
	\end{tikzpicture}
	\caption{The implications among the subclass of \textswab{S}-modes introduced in \Cref{example:subclass_classification} can be summarised in a Hasse diagram, in which a descending black line from $\textswab{S}_{i}$ to $\textswab{S}_{j}$ indicates that every $\textswab{S}_{i}$-mode is also an $\textswab{S}_{j}$-mode.
	The implications are all direct consequences of the respective definitions, while just three counterexamples (\Cref{example:e_but_not_pgs,example:gs_not_s_not_wp,example:ps_not_s_not_gw}) suffice to disprove the twelve converse implications.}
	\label{fig:CausalRelationsOfModeDefinitionsNew}
\end{figure}

\begin{example}[Classification of a subclass of \textswab{S}-modes]
	\label[example]{example:subclass_classification}
	Consider the \textswab{S}-mode types that use the letters \sws, \sww, \swg, and \swp\ (but not \swa) with either \sws\ or \sww\ in the final position.
	Since the existential quantifiers implicit in neighbouring `\swp' and `\swg' commute, there are eight such mode maps, namely
	\[
		\sws, \quad \swgs, \quad \swps, \quad \swpgs = \swgps, \quad \sww, \quad \swpw, \quad \swgw, \quad \text{and} \quad \swpgw = \swgpw .
	\]
	For example, given a metric probability space $(X, d, \mu)$, a point $u \in X$ is
	\begin{enumerate}[label=(\alph*)]
		\item a \ac{ps-mode} if there exists a null sequence $(r_{n})_{n \in \bN}$ with the property that $\liminf_{n \to \infty} \Ratio{u}{\sup}{r_{n}}{\mu} \geq 1$;
		\item a \ac{pgw-mode} if there exists a null sequence $(r_{n})_{n \in \bN}$ and an approximating sequence $(u_{n})_{n \in \bN}$ such that for all comparison points $v \neq u$, we have $\liminf_{n \to \infty} \Ratio{u_{n}}{v}{r_{n}}{\mu} \geq 1$.
	\end{enumerate}
	The implications among these eight mode types follow immediately from the definitions and are illustrated in \Cref{fig:CausalRelationsOfModeDefinitionsNew}.
	Moreover, these mode types are distinct, as shown by the following three distinguishing examples based on absolutely continuous measures on $X = \bR$:
	\begin{enumerate}[label=(\alph*)]
		\item
		\label{item:weak_vs_strong}
		\Cref{example:e_but_not_pgs} gives a measure with a $\sww$-mode that is not a $\swpgs$-mode;
		\item
		\label{item:generalised_vs_non_generalised}
		\Cref{example:gs_not_s_not_wp} gives a measure with a $\swgs$-mode that is not a $\swpw$-mode;
		and
		\item
		\label{item:partial_vs_non_partial}
		\Cref{example:ps_not_s_not_gw} gives a measure with a $\swps$-mode that is not a $\swgw$-mode.
	\end{enumerate}
	Even with this limited result it is clear that we cannot expect mode types to be totally ordered in strength:
	e.g., \swgs- and \swps-modes are incomparable, with neither implying the other.
	\EndExampleText
\end{example}

\begin{remark}
	\begin{enumerate}[label=(\alph*)]
		\item
		\Cref{example:e_but_not_pgs,example:gs_not_s_not_wp,example:ps_not_s_not_gw} show that differences between modes that are strong ($\sws \in \textswab{S}$) and modes that are weak ($\sww \in \textswab{S}$), and between modes that are partial ($\swp \in \textswab{S}$) and modes that are non-partial ($\swp \notin \textswab{S}$) are subtle.
		This is especially surprising for partial modes, given the replacement of the quantifier `for all null sequences' with the far less stringent quantifier `there exists a null sequence'.

		\item
		\Cref{example:e_but_not_pgs,example:gs_not_s_not_wp,example:ps_not_s_not_gw} have been carefully constructed to be as sharp as possible so that we can distinguish among the mode types using a minimal number of counterexamples.
		In terms of the partial order generated by the implications among mode types,  \Cref{example:e_but_not_pgs} distinguishes the `minimal' or `weakest' mode type using \sws\ from the `maximal' or `strongest' mode type using \sww.
		Similar remarks apply for \Cref{example:gs_not_s_not_wp,example:ps_not_s_not_gw}, and indeed we make a particular effort throughout this paper to supply such sharp counterexamples.
	\end{enumerate}
\end{remark}

Next we illustrate some further \textswab{S}-modes, including some that use the new letter $\swa$ in which comparison with the point $v$ is replaced by comparison with all sequences $(v_{n})_{n \in \bN} \to v$.
We defer all study of relations among these mode maps to the comprehensive treatment of \Cref{subsec:verification_of_axioms}.

\begin{example}[Some further \textswab{S}-modes]
	\label[example]{def:further_weak_modes}
	Let $(X, d, \mu)$ be a metric probability space.
	\begin{enumerate}[label = (\alph*)]
		\item
		A point $u\in X$ is called a \defterm{\ac{wp-mode}} if, for any comparison point $v \in X \setminus \{ u \}$, there exists a null sequence $(r_{n})_{n\in\bN}$, which we emphasise may depend on the point $v$, such that
		$\liminf_{n \to \infty} \Ratio{u}{v}{r_{n}}{\mu} \geq 1$.

		\item
		A point $u\in X$ is called a \defterm{\ac{wpag-mode}} if, for any comparison point $v \in X \setminus \{ u \}$, there exists a null sequence $(r_{n})_{n\in\bN}$ such that, for any comparison sequence $(v_{n})_{n\in\bN} \to v$, there exists an approximating sequence $(u_{n})_{n\in\bN} \to u$ such that
		$\liminf_{n \to \infty} \Ratio{u_{n}}{v_{n}}{r_{n}}{\mu} \geq 1$.
		\EndExampleText
	\end{enumerate}
\end{example}

\begin{remark}[\textswab{S}-modes versus explicit logical quantifiers]
	\label[remark]{rk:s-modes_as_quantifiers}
	\Cref{defn:letter_notation} allows us to translate a string \textswab{S} of the letters \swa, \swg, \swp, \sws, and \sww\ into a string $[Q_{1} \ldots Q_{K}]$ of universal ($\forall$) and existential ($\exists$) quantifiers $Q_{i}$ over null sequences $(r_{n})_{n \in \bN}$, approximating sequences $(u_{n})_{n \in \bN}$, comparison points $v \in X \setminus \{u\}$, and comparison sequences $(v_{n})_{n \in \bN}$.
	For example, the letter \sww\ is translated to the quantifier $Q_{i} = \text{`$\forall v \in X \setminus \{u\}$'}$;
	the letter \swp\ is translated to the quantifier $Q_{i} = \text{`$\exists (r_{n})_{n \in \bN}$'}$, but its absence means an implicit $Q_{1} = \text{`$\forall (r_{n})_{n \in \bN}$'}$ at the start of the sentence.

	The fact that one cannot obtain all `grammatically correct' strings $[Q_{1} \ldots Q_{K}]$ combining these concepts is an obvious limitation of \Cref{defn:letter_notation}.
	To address this, \Cref{subsec:structured_small-ball_modes} proposes a comprehensive notation that directly lists the quantifier string;
	this allows for many combinations not possible under \Cref{defn:letter_notation}.
	However, \Cref{defn:letter_notation} comes surprisingly close to capturing all \emph{meaningful} small-ball modes as classified by our main result, \Cref{thm:main}, other than the following deficiencies:
	
	\begin{enumerate}[label=(\alph*)]
		\item
		There is no representation of `there exists a comparison point $v \in X \setminus \{ u \}$' in \textswab{S}-mode notation.
		This turns out to be an unimportant omission, since it omits only meaningless mode maps that violate at least one of the axioms \AP, \CP, or \LP\ from \Cref{defn:axioms}.

		\item
		Neither `there exists a comparison sequence $(v_{n})_{n \in \bN} \to v$' nor `for all approximating sequences $(u_{n})_{n \in \bN} \to u$' can be expressed in \textswab{S}-mode notation.
		These too turn out to be unimportant omissions, since any meaningful mode map using them is equal to another mode map that does not use them.
		These equivalences are listed in \Cref{fig:periodic_table}(\subref{fig:table_of_21}).

		\item
		If present, the quantifier `for all null sequences $(r_{n})_{n \in \bN}$' comes first in \textswab{S}-mode notation.
		This turns out to be the sole important omission of \Cref{defn:letter_notation}:
		the systematic approach of \Cref{subsec:structured_small-ball_modes,subsec:verification_of_axioms} will uncover a single, novel, meaningful mode type strictly intermediate between \sws- and \sww-modes in which this quantifier appears in the (highly counterintuitive) final position.
		We dub this the \emph{exotic} mode owing to its unusual order of logical quantifiers and assign it a shorthand letter, $\swe$, \`a la \textswab{S}-mode notation.
	\end{enumerate}
\end{remark}

\begin{definition}
	\label[definition]{def:exotic_mode}
	Let $(X, d, \mu)$ be a metric probability space.
	A point $u \in X$ is an \defterm{\ac{e-mode}} if, for every comparison point $v \in X \setminus \{ u \}$ and comparison sequence $(v_{n})_{n \in \bN} \to v$, there exists an approximating sequence $(u_{n})_{n \in \bN} \to u$ such that, for every null sequence $(r_{n})_{n \in \bN}$, $\liminf_{n \to \infty} \Ratio{u_{n}}{v_{n}}{r_{n}}{\mu} \geq 1$.
\end{definition}

\subsection{Structured definitions of small-ball modes}
\label{subsec:structured_small-ball_modes}

\Cref{subsec:s-modes} introduced two new adjectives/qualities (partial and approximating) to complement those arising from the existing small-ball mode maps in the literature (generalised, strong, and weak);
\textswab{S}-mode notation is a simply a shorthand for certain combinations of these qualities and the logical quantifiers that they represent.
However, as \Cref{rk:s-modes_as_quantifiers} points out, \textswab{S}-mode notation does not describe \emph{all} possible logical combinations of these qualities.
Therefore, we now propose a more comprehensive notation $\fM[Q_{1} \ldots Q_{K}]$ for small-ball mode maps.
This notation directly lists the quantifiers $Q_{i}$ and we will specify rules to ensure that this leads to a valid logical sentence.
Doing so allows us to define and analyse a combinatorially large number of mode maps in a structured but easily readable way.

\begin{notation}
	\label[notation]{notation:comprehensive}
	Let $X$ be a metric space and let $u\in X$.
	We will use the following abbreviations:
	\begin{itemize}
		\item
		$\ns$: null sequence $(r_{n})_{n\in\bN} \subset (0, \infty)$ of radii;
		\item
		$\as$: approximating sequence $(u_{n})_{n\in\bN}$ converging to the candidate mode $u$ in $X$;
		\item
		$\cp$: comparison point $v \in X \setminus \{ u \}$;
		\item
		$\cpas$: comparison sequence $(v_{n})_{n\in\bN}$ converging to the $\cp$ $v$ in $X$.
	\end{itemize}
	Also, given a sequence $\mathcal{Q} = [Q_{1} \dots Q_{K}]$, we slightly abuse notation and write `$Q \in \mathcal{Q}$' when $Q = Q_{i}$ for some $i \in \{1, \dots, K\}$.
\end{notation}

\begin{definition}[Small-ball mode maps {$\fM [ Q_{1} \dots Q_{K} ] $}]
	\label[definition]{defn:structured_definition}
	Let $(X, d, \mu)$ be a metric probability space.
	Given a sequence $\mathcal{Q} = [Q_{1} \ldots Q_{K}]$, $K \leq 4$, of quantifiers
	\[
		Q_{k} \in \{ \forall \ns, \exists \ns, \forall \cp, \exists \cp, \forall \as, \exists \as, \forall \cpas, \exists \cpas \},
		\qquad
		k=1,\dots,K,
	\]
	we call $u\in X$ a \defterm{$\cQ$-mode} if
	\begin{equation}
		\label{eq:Q_liminf}
		Q_{1} \ldots Q_{K} \colon
		\qquad
		\liminf_{n \to \infty} \Ratio{\star_{1}}{\star_{2}}{r_{n}}{\mu}
		\geq
		1,
	\end{equation}
	where, recalling that the quantifiers $Q_{1}, \dots, Q_{K}$ may define $(u_{n})_{n \in \bN}$, $v$, and $(v_{n})_{n \in \bN}$, we take
	\begin{equation*}
		\star_{1} =
		\begin{cases}
			u_{n}, & \text{if $\forall \as \in \mathcal{Q}$ or $\exists \as \in \mathcal{Q}$,}
			\\
			u, & \text{otherwise;}
		\end{cases}\qquad
		\star_{2} =
		\begin{cases}
			v_{n}, & \text{if $\forall \cpas \in \mathcal{Q}$ or $\exists \cpas \in \mathcal{Q}$,}
			\\
			\sup, & \text{if neither $\forall \cp \in \mathcal{Q}$ nor $\exists \cp \in \mathcal{Q}$,}
			\\
			v, & \text{otherwise;}
		\end{cases}
	\end{equation*}
	and the sequence $\mathcal{Q}$ of quantifiers obeys the following rules:
	\begin{enumerate}[label=(\alph*)]
		\item
		\label{item:ns_must_appear}
		$\ns$ must appear: either $\forall \ns \in \mathcal{Q}$ or $\exists \ns \in \mathcal{Q}$;
		\item
		\label{item:no_cpas_before_cp}
		$\cpas$ can only appear after $\cp$: if $Q_{k} \in \{ \forall \cpas, \exists \cpas \}$ for some $k=1,\dots,K$, then $Q_{j} \in \{ \forall \cp, \exists \cp \}$ for some $j < k$;
		and
		\item each of $\ns,\cp,\as,\cpas$ appears at most once:
		e.g., if $\forall \ns \in \mathcal{Q}$, then $\exists \ns \notin \mathcal{Q}$, and so forth.
	\end{enumerate}
	We define the corresponding \defterm{mode map} $\fM [\cQ]\colon \catMetProb \to \catSet$ by
	\begin{equation}
		\label{eq:Q_modemap}
		\fM [\cQ](X, d, \mu)
		\defeq
		\bigset{u\in X}{u \text{ is a $\cQ$-mode of } \mu }.
	\end{equation}
\end{definition}

We now consider three examples of mode maps $\fM[\cQ]$.
The first and third are expressible in $\textswab{S}$-mode notation, but the second is only expressible in $\cQ$-mode notation.
As discussed in \Cref{rk:s-modes_as_quantifiers}, the added flexibility of the new notation results in some maps which appear very unnatural when compared to the existing small-ball mode maps of \Cref{ex:small-ball_modes}.

\begin{example}
	\begin{enumerate}[label=(\alph*)]
		\item
		A $[\exists \ns \exists \as]$-mode is any point $u \in X$ such that, for some \ns\ $(r_{n})_{n \in \bN}$, there exists an \as\ $(u_{n})_{n \in \bN} \to u$ such that \eqref{eq:Q_liminf} holds with $\star_{2} = \sup$.
		This can also be expressed in the \textswab{S}-mode notation of \Cref{defn:letter_notation} as a \ac{pgs-mode}.

		\item
		A $[\forall \cp \forall \cpas \exists \as \forall \ns]$-mode is precisely an \swe-mode.
		As discussed in \Cref{rk:s-modes_as_quantifiers}, \swe-modes prove to be the only meaningful mode type not expressible in \textswab{S}-mode notation.

		\item
		Moving `$\forall (r_{n})_{n \in \bN}$' from its final position in the definition of an \swe-mode to the more usual first position yields a $[\forall \ns \forall \cp \forall \cpas \exists \as]$-mode, which is precisely a \swwag-mode.
		This notion is distinct from the \swe-mode, emphasising the need to allow for mode types in which $\forall \ns$ does not appear first;
		moreover \swwag-modes turn out to violate \CP\ (\Cref{prop:mode_maps_violating_CP}\ref{item:violation_CP_e}).
		\EndExampleText
	\end{enumerate}
\end{example}

\subsection{Classification of small-ball mode maps}
\label{subsec:verification_of_axioms}

\begin{figure}[tp]
	\centering
	\begin{subfigure}{\textwidth}
        \centering
		\scalebox{0.95}{\begin{tabular}{|c|l|}
			\hline
			{Canonical representative} & {Equivalent definitions}  \\
			\hline
			$\sws = [\forall \ns]$   & $[\exists \as \forall \ns]$\\
			$\swgs = [\forall \ns \exists \as]$ & --- \\
			$\swps = [\exists \ns]$  & $[\forall \as \exists \ns]$ \\
			$\swpgs = [\exists \ns \exists \as]$ & --- \\
			$\swe = [\forall \cp \forall \cpas \exists \as \forall \ns]$ & --- \\
			$\sww = [\forall \ns \forall \cp]$  & $[\exists \as \forall \ns \forall \cp]$, $[\forall \cp \exists \as \forall \ns]$, $[\forall \cp \exists \cpas \forall \ns]$, $[\exists \as \forall \cp \exists \cpas \forall \ns]$, $[\forall \cp \exists \as \exists \cpas \forall \ns]$ \\
			$\swpw = [\exists \ns \forall \cp]$  & $[\forall \as \exists \ns \forall \cp]$\\
			$\swwp = [\forall \cp \exists \ns]$ & $[\forall \as \forall \cp \exists \ns]$, $\swwap = [\forall \cp \forall \cpas \exists \ns]$,  $[\forall \as \forall \cp \forall \cpas \exists \ns]$ \\
			$\swgwap = [\exists \as \forall \cp \forall \cpas \exists \ns]$  & --- \\
			$\swwgap = [\forall \cp \exists \as \forall \cpas \exists \ns]$  & --- \\
			\hline
		\end{tabular}}
		\vspace{1em}
		\caption{There are 282 valid mode maps $\fM[\cQ]$ satisfying  \Cref{defn:structured_definition}, as listed in \Cref{table:enumeration}.
		Of these 282 maps, 138 are trivially equal (as functions) to others and are not listed here, and only the 21 listed here satisfy the properties \AP, \LP, and \CP.
		By \Cref{prop:mode_map_equivalences}, these 21 definitions form ten equivalence classes.
		We have taken the canonical representative to be the earliest one in the enumeration of \Cref{table:enumeration}, which is also a shortest expression of the definition in the notation of \Cref{defn:structured_definition}.
		We also give the corresponding description of the mode in the alphabetical notation of \Cref{defn:letter_notation,def:exotic_mode}.
	}
		\label{fig:table_of_21}
    \end{subfigure}

    \begin{subfigure}{\textwidth}
        \centering
		\vspace{1em}
		\begin{tikzpicture}[scale=0.9]
			\draw[draw=none, use as bounding box] (-3.0, 0.5) rectangle (11.5, -6.5);
			\node (sws) at (0, 0) {$\sws$};
			\node (swe) at (2, -1) {$\swe$};
			\node (sww) at (2, -2) {$\sww$};
			\node (swgs) at (-2, -1) {$\swgs$};
			\node (swps) at (0, -2) {$\swps$};
			\node (swpw) at (2, -3) {$\swpw$};
			\node (swpgs) at (-2, -3) {$\swpgs$};
			\node (swwp) at (2, -4) {$\swwp$};
			\node (swgwap) at (0, -5) {$\swgwap$};
			\node (swwgap) at (0, -6) {$\swwgap$};
			\draw (sws) -- (swgs) -- (swpgs);
			\draw (sws) -- (swps) -- (swpgs);
			\draw[rounded corners] (swpgs) -- (-2, -4) -- (swgwap);
			\draw (sws) -- (swe) -- (sww) -- (swpw) -- (swwp) -- (swgwap) -- (swwgap);
			\draw (swps) -- (swpw);

			\node[red, anchor=west] (e_but_not_pgs) at (3.5,-0.5) {\footnotesize Example~\redref{example:e_but_not_pgs} (\swe\ but not \swpgs)};
			\draw[red, rounded corners] (e_but_not_pgs) -- (0.85,-0.5) -- (0.85, -3.425) -- (-1.0,-4.35) -- (-2.5,-4.35);

			\node[red, anchor=west] (w_not_wpg) at (3.5, -1.5) {\footnotesize Example~\redref{example:CP} (\sww\ but neither \swpgs\ nor \swe)};
			\draw[red, rounded corners] (w_not_wpg) -- (1.0,-1.5) -- (1.0,-3.5) -- (-1.0,-4.5) -- (-2.5,-4.5);

			\node[red, anchor=west]  (SuspensionBridge) at (3.5,-2.5) {\footnotesize Example~\redref{example:ps_not_s_not_gw} (\swps\ but neither \swgs\ nor \sww)};
			\draw[red, rounded corners] (SuspensionBridge) -- (2,-2.5) -- (0,-1.55) -- (-2,-2.5) -- (-2.5,-2.5);

			\node[red, anchor=west] (wp_not_pw_pgs) at (3.5,-3.5) {\footnotesize Example~\redref{example:wp_but_not_pw_or_pgs} (\swwp\ but neither \swpw\ nor \swpgs)};
			\draw[red, rounded corners] (wp_not_pw_pgs) -- (1.3,-3.5) -- (-1.0,-4.65) -- (-2.5,-4.65);

			\node[red, anchor=west] (GeneralisedModesNotStrong) at (3.5, -4.5) {\footnotesize Example~\redref{example:gs_not_s_not_wp} (\swgs\ but not \swwp)};
			\draw[red, rounded corners] (GeneralisedModesNotStrong) -- (0.9,-4.5) -- (-1.0,-3.5) -- (-1.0,-0.1);

			\node[red, anchor=west] (wgap_but_not_gwap) at (3.5, -5.5) {\footnotesize Example~\redref{example:wgap_but_not_gwap} (\swwgap\ but not \swgwap)};
			\draw[red] (wgap_but_not_gwap) -- (-2.5,-5.5);
		\end{tikzpicture}
		\caption{When partially ordered by implication, the ten distinct, meaningful, small-ball mode types form the complete, distributive lattice illustrated above.
		A descending black line from $\textswab{S}_{i}$ to $\textswab{S}_{j}$ indicates that every $\textswab{S}_{i}$-mode is also an $\textswab{S}_{j}$-mode.
		Each red line separates the lattice into two subsets:
		those mode types below the red line for which the indicated example is a mode, and those above the red line for which it is not.
		The six examples show that there are, in general, no further implications among these ten mode types.
		However, the lattice structure does simplify further for particular classes of measures, as shown in \Cref{fig:Hasse_CASIO,fig:Hasse_in_special_cases,fig:Hasse_dichotomy_2,fig:Hasse_Gaussian}.
		}
		\label{fig:Hasse_diagram_main}
    \end{subfigure}
	\caption{The ten meaningful small-ball mode definitions of \Cref{thm:main}, their equivalent formulations, and implication structure.}
	\label{fig:periodic_table}
\end{figure}

Not all of the mode maps $\fM[\cQ]$ can be expected to satisfy the axioms of \Cref{defn:axioms}.
For example, those containing the quantifier $\exists \cp$ involve comparison of the candidate mode $u \in X$ with just one \cp\ $v \in X \setminus \{ u \}$, and will inevitably violate \AP\ and \LP.

The following theorem, which is the main result of this article, completely classifies the small-ball mode maps that do satisfy the axioms \AP, \CP, and \LP, and the implications among them.
In what follows, we say two mode definitions are equivalent if the corresponding mode maps are equal as functions, and define equivalence classes of mode definitions correspondingly.

\begin{theorem}[Classification of small-ball mode maps]
	\label{thm:main}
	There are 282 small-ball mode maps $\fM [\cQ] \colon \catMetProb \to \catSet$ that satisfy \Cref{defn:structured_definition}.
	Of these, there are ten meaningful maps that satisfy the axioms \LP, \AP, and \CP\ stated in \Cref{defn:axioms}, namely with $\cQ$ given by
	\begin{align*}
		\sws &= [\forall \ns], &
		\swgs &= [\forall \ns \exists \as], &
		\swps &= [\exists \ns], &
		\swpgs &= [\exists \ns \exists \as], \\
		\swe &= [\forall \cp \forall \cpas \exists \as \forall \ns], &
		\sww &= [\forall \ns \forall \cp], &
		\swpw &= [\exists \ns \forall \cp], &
		\swwp &= [\forall \cp \exists \ns], \\
		\swgwap &= [\exists \as \forall \cp \forall \cpas \exists \ns], &
		\swwgap &= [\forall \cp \exists \as \forall \cpas \exists \ns].
	\end{align*}
	The ten meaningful definitions are listed, along with those equivalent to the ten meaningful definitions, in \Cref{fig:periodic_table}(\subref{fig:table_of_21}).
	Any definition not listed is either trivially equivalent to another in \Cref{fig:periodic_table}(\subref{fig:table_of_21}), or else it fails at least one of \LP, \AP, and \CP\ for some probability measure on $X = \bR$ or $X = \ell^{2}(\bN; \bR)$.
	When partially ordered by implication, the ten meaningful definitions form a complete, distributive lattice, as illustrated in \Cref{fig:periodic_table}(\subref{fig:Hasse_diagram_main}).
\end{theorem}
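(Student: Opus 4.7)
The plan is a combinatorial enumeration combined with axiom checking and counterexample construction. First I would enumerate the valid quantifier strings $[Q_1\ldots Q_K]$ allowed by Definition~\ref{defn:structured_definition}: $\ns$ must occur, each of $\ns,\as,\cp,\cpas$ occurs at most once, and $\cpas$ may appear only after $\cp$. A direct case split by the subset of $\{\as,\cp,\cpas\}$ present, by the position of each relative to $\ns$, and by the universal/existential flavor of each quantifier yields the total of 282.

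I would then collapse this list using two layers of equivalence. First, purely logical equivalences: adjacent quantifiers of the same type commute, and a quantifier whose bound variable does not occur in $\Ratio{\star_1}{\star_2}{r_n}{\mu}$ may be dropped; these reductions identify 138 maps as trivially equivalent to others, leaving 144 syntactically inequivalent candidates. Second, genuinely probabilistic equivalences (to be recorded as a separate proposition) account for the further identifications listed in Figure~\ref{fig:periodic_table}(\subref{fig:table_of_21}): for example, the \sww-mode has five distinct logical formulations, all equivalent because any approximating sequence of a \sww-mode eventually dominates every comparison sequence.

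The axioms then act as filters. Any definition containing $\exists \cp$ compares the candidate mode against a single adversarial point and so fails \LP\ and \AP\ on elementary densities or atomic measures on $\bR$; this alone eliminates the bulk of the 144 candidates. The axiom \CP\ is the most delicate, and I would verify it positively for each of the ten surviving maps by direct computation on $\alpha\mu + (1-\alpha)\mu(\quark - b)$, exploiting the separation hypothesis $\dist{\supp(\mu)}{b + \supp(\mu)} > 0$ to decouple ball masses at radii smaller than half the separation. For the definitions discarded on \CP\ grounds---notably $[\forall\ns\forall\cp\forall\cpas\exists\as]$, in which the approximating sequence can straddle the two copies of $\supp(\mu)$---I would exhibit explicit cloned measures on $\bR$ or $\ell^2(\bN;\bR)$ witnessing the failure.

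For the lattice structure, the direct implications (descending edges in Figure~\ref{fig:periodic_table}(\subref{fig:Hasse_diagram_main})) follow by trivial quantifier manipulation: specialising a universal to a constant sequence, or permuting outer universals past inner existentials, always weakens the mode definition. The six examples cited in the figure then rule out all non-implications: each splits the lattice into an upper and a lower subset, and together they witness every missing edge. Completeness is automatic from finiteness, and distributivity can be verified by checking the ten-element Hasse diagram for the absence of any $M_3$ or $N_5$ sublattice. The main obstacle is the construction of the six counterexamples in \Cref{section:Examples}: each must separate mode types differing in only one or two quantifier positions, which forces the use of measures whose ball-mass ratios $\Ratio{u_n}{v_n}{r_n}{\mu}$ exhibit carefully calibrated oscillations as $r_n \to 0$. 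Especially delicate is the example distinguishing \swe\ from both \sww\ and \swpgs, which must simultaneously violate two distinct limit behaviours of the ball masses.
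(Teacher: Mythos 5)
Your overall strategy matches the paper's: enumerate the 282 strings, reduce to 144 logical representatives by commuting adjacent like quantifiers, eliminate all but 21 via \AP\ and \CP\ counterexamples on $\bR$ and $\ell^{2}(\bN;\bR)$, collapse the survivors into ten classes via ball-mass continuity arguments, verify the axioms positively for those ten, and obtain the lattice from definition-chasing plus six sharp distinguishing examples. The positive verification, the role of the separation radius in \CP, and the finiteness/exhaustion argument for completeness and distributivity are all as in the paper.

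There is, however, one concrete gap. Your stated mechanism for the \CP-failure of $[\forall\ns\forall\cp\forall\cpas\exists\as]$ --- that ``the approximating sequence can straddle the two copies of $\supp(\mu)$'' --- cannot work: an \as\ converges to $u$, so under the hypothesis $\dist{\supp(\mu)}{b+\supp(\mu)}>0$ its tail lies entirely near one copy. The actual reason this mode type (and the other types in which $\ns$ and $\forall\cpas$ precede $\exists\as$, or $\ns$ and $\exists\as$ precede $\forall\cpas$) violates \CP\ is the existence of a measure $\mu$ on $\ell^{2}(\bN;\bR)$ for which \emph{no} \as\ of the mode is optimal: every \as\ can be strictly improved by a uniform factor $\zeta<1$, so in the convex combination $\alpha\mu+(1-\alpha)\mu(\quark-b)$ with $\alpha$ slightly below $\tfrac12$ the lighter copy still out-competes any fixed \as\ of the heavier copy by improving its own. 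The paper shows (\Cref{lem:no_locally_compact_example}) that no such measure exists on a locally compact space, so this counterexample genuinely forces the infinite-dimensional construction of \Cref{ex:no_optimal_approximating_sequence}; without identifying this improvement property your plan cannot eliminate these mode types, and the count of ten would not be reached. Relatedly, your claim that the $\exists\cp$ filter ``alone eliminates the bulk'' of the 144 candidates understates the work: the paper needs ten separate failure mechanisms (five for \AP, five for \CP) built on three distinct measures, and roughly half of the eliminated types require arguments other than the $\exists\cp$ one.
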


\begin{proof}
	One can directly compute that there are 282 small-ball mode maps $\fM [\cQ]$ that satisfy \Cref{defn:structured_definition};
	for completeness, they are listed in \Cref{table:enumeration}.
	Many of these are logically equivalent:
	where a string contains a pair of neighbouring universal ($\forall$) or existential ($\exists$) quantifiers, these can be permuted to obtain a logically equivalent string.
	For example,
	$[ \exists \ns \exists \as \forall \cp]$-modes (\swpgw-modes) coincide with $[ \exists \as \exists \ns \forall \cp]$-modes (\swgpw-modes).
	Under this relation there are 144 equivalence classes of valid strings, which can be verified by exhaustion.

	\Cref{prop:mode_maps_violating_AP,prop:mode_maps_violating_CP} show that all but 21 of these 144 violate \AP\ or \CP\ for some probability measure on $X = \bR$ or $X = \ell^{2}(\bN; \bR)$.
	\Cref{prop:mode_map_equivalences} shows that these 21 definitions fall into ten equivalence classes, with each definition being equivalent to the others within its group;
	we list these classes in \Cref{fig:periodic_table}(\subref{fig:table_of_21}) and select a canonical representative for each class, leaving ten essentially distinct definitions.
	\Cref{prop:mode_map_all_axioms} shows that these ten definitions satisfy properties \LP, \AP, and \CP\ for arbitrary metric probability spaces.

	The implications among the remaining ten mode types all follow trivially from their respective definitions, and \Cref{example:e_but_not_pgs,example:gs_not_s_not_wp,example:CP,example:ps_not_s_not_gw,example:wp_but_not_pw_or_pgs,example:wgap_but_not_gwap} in $X = \bR$ distinguish them.

	The completeness, distributivity, and lattice properties of the resulting partially ordered set of ten meaningful mode types can be verified by straightforward exhaustion, leading to the lattice structure displayed in \Cref{fig:periodic_table}(\subref{fig:Hasse_diagram_main}).
\end{proof}

\begin{remark}
	\label[remark]{remark:minimality_of_counterexamples}
	We use six examples to distinguish the ten meaningful small-ball mode maps.
	This collection is minimal:
	an exhaustive check reveals that, in the lattice illustrated in \Cref{fig:periodic_table}(\subref{fig:Hasse_diagram_main}), there is no collection of five downward-closed subsets $S_{1}, \dots, S_{5}$ such that, for any two distinct mode types $\fM_{i}$ and $\fM_{j}$, there is some $k$ with $\fM_{i} \notin S_{k} \ni \fM_{j}$.
\end{remark}

The next two results, \Cref{prop:mode_maps_violating_AP,prop:mode_maps_violating_CP}, show that the vast majority of the 144 (equivalence classes) of mode maps $\fM[\cQ]$ fail to satisfy \AP\ and \CP.

\begin{proposition}[Small-ball mode maps violating \AP]
	\label[proposition]{prop:mode_maps_violating_AP}
	All mode maps $\fM [\mathcal{Q}] \colon \catMetProb \to \catSet$ of the following forms violate \AP:

	\begin{enumerate}[label = (\alph*)]
		\item
		\label{item:violation_AP_a}
		any map with $\exists \cp \in \mathcal{Q}$;

		\item
		\label{item:violation_AP_b}
		any map with both
		$\forall \ns \in \mathcal{Q}$ and $\forall \as \in \mathcal{Q}$;

		\item
		\label{item:violation_AP_c}
		any map with both
		$\exists \cpas \in \mathcal{Q}$ and $\exists \ns \in \mathcal{Q}$;

		\item
		\label{item:violation_AP_d}
		any map with $\exists \ns$ appearing before $\forall \as$ in $\mathcal{Q}$;

		\item
		\label{item:violation_AP_e}
		any map with $\forall \ns$ appearing before $\exists \cpas$ in $\mathcal{Q}$.
	\end{enumerate}
\end{proposition}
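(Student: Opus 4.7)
The plan is to exhibit, for each of the five conditions (a)--(e), a probability measure on $\bR$ together with a candidate point witnessing a discrepancy between the mode set prescribed by \AP\ and the output of $\fM[\mathcal{Q}]$. The discrepancy always takes one of two complementary forms: either a point that \AP\ demands to be a mode is not detected (a \emph{missed true mode}) or a point that \AP\ excludes is nonetheless declared a mode (a \emph{spurious mode}). Which failure arises depends on the optional quantifiers accompanying the distinguished ones identified by the form, and the whole argument is governed by the ratio conventions $0/0 = 1$ and $c/0 = \infty$ for $c > 0$.

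For case (a), I would take $\mu = \dirac{0}$, for which \AP\ prescribes the mode set $\{0\}$. Given any candidate $u \neq 0$, the existential $\exists \cp \in \mathcal{Q}$ allows selecting a comparison point $v$ outside a neighbourhood of $\{0, u\}$, so $\mu(B_{r}(v)) = 0$ for small $r$. By the conventions, $\Ratio{u}{v}{r}{\mu}$ is then either $1$ (if the numerator also vanishes) or $\infty$, so the $\liminf \geq 1$ condition is trivially met. The remaining quantifiers are handled by choosing constant or slowly-convergent sequences for $\as$ and $\cpas$, so that the ratio evaluation persists along any null sequence of radii. Hence every $u \in \bR$ is a spurious mode, violating \AP.

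Cases (b)--(e) follow a common dependency-aware template. For case (b), with both $\forall \ns$ and $\forall \as$ in $\mathcal{Q}$, the adversary displaces $u_n$ from the candidate faster than $r_n$ shrinks: with $\mu = \dirac{0}$, $u = 0$, $u_n = 1/n$, and $r_n = 1/n^2$, the numerator $\mu(B_{r_n}(u_n))$ vanishes. When the denominator is $\sup$ (mass $1$), or a fixed comparison point with positive mass after switching to $\mu = \tfrac{1}{2} \dirac{0} + \tfrac{1}{2} \dirac{1}$ with $v = 1$ in the presence of $\forall \cp$, the ratio collapses to $0$ and the true mode $0$ is missed. If instead $\exists \cpas \in \mathcal{Q}$ is present (necessarily after some $\cp$), the construction inverts to a spurious-mode argument: for $\mu = \dirac{0}$ and a non-mode $u \neq 0$, choose $v_n = 2 r_n$ when $v = 0$ and $v_n = v$ otherwise, so that the denominator also vanishes and the ratio equals $1$. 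Cases (c), (d), and (e) use analogous constructions, all with $\mu = \dirac{0}$: (c) exploits $\exists \cpas \exists \ns$ to tune both $v_n$ and $r_n$ so that a non-mode $u \neq 0$ artificially passes; (d) uses the order $\exists \ns$ before $\forall \as$ to choose, for each $r_n$, an adversarial $u_n = 2 r_n$ zeroing the numerator at $u = 0$; (e) uses the order $\forall \ns$ before $\exists \cpas$ to let $v_n$ depend on the arbitrary $r_n$, yielding a spurious mode $u \neq 0$.

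The main obstacle is the combinatorial case analysis within each form: each condition (a)--(e) is satisfied by many $\mathcal{Q}$'s that differ in the presence, quantifier type, and position of the optional quantifiers drawn from $\{\as, \cp, \cpas\}$. The style of argument (missed true mode vs.\ spurious mode) and the choice of measure depend on whether $\exists \cpas$ is present to rescue a vanishing denominator. Tracking the dependency structure induced by quantifier order, and verifying the $\liminf$ condition for every sub-case with the right choice of sequences, is the delicate part of the proof, even though each individual construction is elementary.
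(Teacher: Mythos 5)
Your overall strategy matches the paper's — a small collection of explicit measures on $\bR$, with the conventions $\nicefrac{0}{0}=1$ and $\nicefrac{c}{0}=\infty$ doing the work — and your cases (a), (c), and (e) are sound. But two sub-cases fail because you commit to purely atomic measures. First, in case (b) with $\exists\cpas$ present, your spurious-mode witness $v_n = 2r_n$ presupposes that $\exists\cpas$ occurs \emph{after} the $\ns$ quantifier; for strings such as $[\forall\cp\,\exists\cpas\,\forall\ns\,\forall\as]$ the comparison sequence must be fixed before $(r_n)_{n\in\bN}$, and the adversary can then choose $r_n = 2\absval{v_n} + \tfrac{1}{n}$ so that the denominator $\dirac{0}(\ball{v_n}{r_n})$ equals $1$ infinitely often while the numerator vanishes — so $u\neq 0$ is in fact \emph{not} a mode of $\dirac{0}$ for these strings, and the spurious-mode route is closed. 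Second, in case (d) you cannot use $\mu=\dirac{0}$ when $\forall\cp\in\mathcal{Q}$ (e.g.\ $[\exists\ns\,\forall\as\,\forall\cp]$): every comparison point $v\neq 0$ then has $\mu(\ball{v}{r})=0$ for small $r$, all ratios degenerate to $\nicefrac{0}{0}=1$, and $u=0$ remains a mode, so $\dirac{0}$ alone witnesses no violation.

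Both failures have the same root: a purely atomic measure offers no comparison point whose ball mass stays positive under arbitrary recentring of the ball, which is exactly what is needed to keep denominators nonzero against an existentially quantified $\cpas$ and against adversarial approximating sequences. This is why the paper uses the single mixture $\tfrac12\dirac{0}+\tfrac12\Uniform[1,3]$ throughout: the point $v=2$ in the interior of the uniform part satisfies $\mu(\ball{v_n}{r_n})>0$ for \emph{every} $v_n\to 2$ and every null $(r_n)_{n\in\bN}$, which makes the missed-mode argument for $u=0$ in cases (b) and (d) independent of the quantifier order. Your two-atom measure can be patched in: it handles case (d) when $\exists\cpas\notin\mathcal{Q}$ (the $\exists\cpas$ sub-case being deferred to case (c)), and it handles the problematic sub-case of (b) provided the adversary chooses $r_n$ adaptively after seeing $v_n$ (which the quantifier order there permits). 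But this bookkeeping must actually be done; as written, the routing of sub-cases to constructions is wrong in the two places above.
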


\begin{proof}
	We will show that \AP\ is violated by all such mode types for
	\begin{equation*}
		\mu \defeq \frac{1}{2} \dirac{0} + \frac{1}{2} \Uniform[1, 3] \in \prob{\bR}.
	\end{equation*}
	Note that $\supp(\mu) = \{0\} \cup [1, 3]$, and that \AP\ mandates that $u = 0$ be the unique mode of $\mu$.

	\proofpartorcase{Case \ref{item:violation_AP_a}: $\exists \cp \in \mathcal{Q}$.}
	All such mode types declare every $u \in X$ to be a mode.
	To see this, take any \cp\ $v \notin \supp(\mu) \cup \{ u \}$.
	If $\forall \as$ or $\exists \as \in \mathcal{Q}$, then take $(u_{n})_{n \in \bN} \to u$ to be an arbitrary \as;
	otherwise, take $u_{n} = u$ for all $n \in \bN$.
	Similarly, if $\forall \cpas \in \mathcal{Q}$ or $\exists \cpas \in \mathcal{Q}$, then take $(v_{n})_{n \in \bN}$ to be an arbitrary \cpas;
	otherwise take $v_{n} = v$ for all $n \in \bN$.
	Then, regardless of the order of quantifiers, $\mu(\ball{v_{n}}{r_{n}}) = 0$ for all large $n$ as $v \notin \supp(\mu)$, and our convention that $\nicefrac{c}{0} \geq 1$ for all $c \geq 0$ implies that $\liminf_{n \to \infty} \Ratio{u_{n}}{v_{n}}{r_{n}}{\mu} \geq 1$,
	proving that $u$ is a mode.

	\proofpartorcase{Case \ref{item:violation_AP_b}: $\forall \ns \in \mathcal{Q}$ and $\forall \as \in \mathcal{Q}$.}
	Given the previous case, we may assume that $\exists \cp \notin \mathcal{Q}$;
	we will show that the remaining mode maps fail to declare $u = 0$ a mode.
	Moreover, it suffices to consider maps with $\forall \cp \in \mathcal{Q}$, as $\Ratio{u_{n}}{\sup}{r_{n}}{\mu} \leq \Ratio{u_{n}}{v_{n}}{r_{n}}{\mu}$ for any \cp\ and \cpas.
	Thus, take the \as\ $u_{n} \defeq - \frac{2}{n}$, the \ns\ $r_{n} \defeq \frac{1}{n}$, and the \cp\ $v \defeq 2$.
	For any \cpas\ $(v_{n})_{n \in \bN}$, including the constant \cpas\ $v_{n} = v$ to be taken if neither $\forall \cpas \in \mathcal{Q}$ nor $\exists \cpas \in \mathcal{Q}$, we have
	regardless of the order of quantifiers, that $\mu(\ball{v_{n}}{r_{n}}) > 0$ and $\mu(\ball{u_{n}}{r_{n}}) = 0$ for all large $n$, so $\liminf_{n \to \infty} \Ratio{u_{n}}{v_{n}}{r_{n}}{\mu} = 0$, meaning that $u = 0$ is not a mode.

	\proofpartorcase{Case \ref{item:violation_AP_c}: $\exists \cpas \in \mathcal{Q}$ and $\exists \ns \in \mathcal{Q}$.}
	As before we may assume that $\exists \cp \notin \mathcal{Q}$;
	as a $\cpas$ is declared, this means $\forall \cp \in \mathcal{Q}$.
	We will show that $u = 2$ is a mode for all of these types.
	Take the \ns\ $r_{n} \defeq \tfrac{1}{n}$, and note that, for any \cp\ $v \neq 0$, any \cpas\ $(v_{n})_{n \in \bN} \to v$, and any \as\ $(u_{n})_{n \in \bN} \to u$,
	\[
		\mu(\ball{v_{n}}{r_{n}}) \leq \tfrac{1}{2} r_{n} \qquad \text{and} \qquad \mu(\ball{u_{n}}{r_{n}}) = \tfrac{1}{2} r_{n}.
	\]
	Thus, all that remains is to consider the \cp\ $v = 0$, for which we take the \cpas\ $v_{n} \defeq -\tfrac{2}{n}$.
	If $\forall \as \in \mathcal{Q}$ or $\exists \as \in \mathcal{Q}$, then take $(u_{n})_{n \in \bN} \to u$ to be an arbitrary \as;
	otherwise take $u_{n} \defeq u$ for all $n \in \bN$.
	Then, for all $n$, $\mu(\ball{u_{n}}{r_{n}}) > 0$ and $\mu(\ball{v_{n}}{r_{n}}) = 0$, showing that $u = 2$ is a mode.

	\proofpartorcase{Case \ref{item:violation_AP_d}: $\mathcal{Q} = [\ldots \exists \ns \ldots \forall \as \ldots]$.}
	As in \ref{item:violation_AP_b}, we will show that $u = 0$ is not a mode, and we may assume that $\forall \cp \in \mathcal{Q}$.
	Hence, take the \cp\ $v \defeq 2$ and let the \ns\ $(r_{n})_{n \in \bN}$ be arbitrary.
	If $\forall \cpas \in \mathcal{Q}$ or $\exists \cpas \in \mathcal{Q}$, let $(v_{n})_{n \in \bN}$ be an arbitrary \cpas;
	otherwise let $v_{n} = v$.
	Consider the \as\ $u_{n} \defeq - 2 r_{n}$, for which $\mu(\ball{u_{n}}{r_{n}}) = 0$ and $\mu(\ball{v_{n}}{r_{n}}) > 0$ for all large $n$;
	this shows that $u = 0$ is not a mode.

	\proofpartorcase{Case \ref{item:violation_AP_e}: $\mathcal{Q} = [\ldots \forall \ns \ldots \exists \cpas \ldots]$.}
	The argument is similar to case \ref{item:violation_AP_c}, and it can be shown that $u = 2$ is a mode.
	As in this case, it is clear that $u$ dominates all $v \neq 0$;
	for the \cp\ $v = 0$, let $(r_{n})_{n \in \bN}$ be an arbitrary \ns\ and pick the \cpas\ $v_{n} = -2r_{n}$.
\end{proof}

The next result identifies a large number of mode types that violate \CP;
this includes some types, such as $[\forall \ns \forall \as \forall \cp \forall \cpas]$-modes, already found to be meaningless by \cref{prop:mode_maps_violating_AP}.

\begin{proposition}[Small-ball mode maps violating \CP]
	\label[proposition]{prop:mode_maps_violating_CP}
	All mode maps $\fM [\mathcal{Q}] \colon \catMetProb \to \catSet$ with $\forall \cp \in \mathcal{Q}$ of the following forms violate \CP:

	\begin{enumerate}[label=(\alph*)]
		\item
		\label{item:violation_CP_a}
		any map with $\forall \cpas$ appearing after $\ns$ in $\mathcal{Q}$, but $\exists \as \notin \mathcal{Q}$;

		\item
		\label{item:violation_CP_b}
		any map with $\exists \ns \in \mathcal{Q}$ and $\exists \as \in \mathcal{Q}$, but $\forall \cpas \notin \mathcal{Q}$;

		\item  
		\label{item:violation_CP_c}
		any map with $\forall \ns$ appearing before $\exists \as$ in $\mathcal{Q}$, but $\forall \cpas \notin \mathcal{Q}$;

		\item 
		\label{item:violation_CP_d}
		any map with both $\ns$ and $\exists \as$ appearing before $\forall \cpas$ in $\mathcal{Q}$;

		\item 
		\label{item:violation_CP_e}
		any map with both $\ns$ and $\forall \cpas$ appearing before $\exists \as$ in $\mathcal{Q}$.

	\end{enumerate}
\end{proposition}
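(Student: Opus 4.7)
My plan is to proceed case by case, for each of \ref{item:violation_CP_a}--\ref{item:violation_CP_e}, exhibiting a probability measure $\mu$, a scalar $\alpha \in [0,1]$, and a shift $b$ with $\dist{\supp(\mu)}{b + \supp(\mu)} > 0$ so that the $\mathcal{Q}$-mode set of $\mu' \defeq \alpha \mu + (1-\alpha) \mu(\quark - b)$ disagrees with the prescription of \CP. For each pattern it suffices to analyse a single admissible $\mathcal{Q}$ satisfying the hypothesis of that case, since any additional quantifiers needed to make $\mathcal{Q}$ well-formed can be discharged by parallel choices of the corresponding sequences.

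The overarching strategy is to pick a simple base measure $\mu$ (typically $\dirac{0}$, a uniform distribution on $\bR$, or a measure on $\ell^{2}(\bN;\bR)$ with irregular small-ball masses) whose $\mathcal{Q}$-modes can be computed directly, and then analyse $\mu'$. The key trick is that the ordering of the quantifiers in $\mathcal{Q}$ licenses sequence choices that ``unfairly'' favour one of the two well-separated clusters of $\supp(\mu')$: if $\exists \as$ follows $\forall \ns$, then the approximating sequence $(u_{n})_{n\in\bN}$ may be tailored to the radii $(r_{n})_{n\in\bN}$; and if $\forall \cpas$ follows both $\ns$ and $\exists \as$, then the comparison sequence $(v_{n})_{n\in\bN}$ may exploit the already-fixed $(u_{n})_{n\in\bN}$ and $(r_{n})_{n\in\bN}$. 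In each situation this flexibility should allow points from the smaller-mass cluster of $\mu'$ to be erroneously certified as modes. Because $\supp(\mu)$ and $b + \supp(\mu)$ are disjoint, for sufficiently small $r$ the ball $\ball{x}{r}$ meets at most one copy, so the relevant ratios $\Ratio{u_{n}}{v_{n}}{r_{n}}{\mu'}$ reduce to computations within a single copy.

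Concretely, for cases \ref{item:violation_CP_a}--\ref{item:violation_CP_c} I expect a purely atomic base to suffice: take $\mu \defeq \dirac{0}$, $\alpha = \tfrac{3}{4}$, and any admissible $b \neq 0$, so that $\mu' = \tfrac{3}{4} \dirac{0} + \tfrac{1}{4} \dirac{b}$ and \CP\ mandates the unique mode $0$. The quantifier freedom then lets us certify $u = b$ as a spurious $\mathcal{Q}$-mode by choosing either $(v_{n})_{n\in\bN}$ (case \ref{item:violation_CP_a}) or $(u_{n})_{n\in\bN}$ (cases \ref{item:violation_CP_b} and \ref{item:violation_CP_c}) in such a way that $\mu'(\ball{v_{n}}{r_{n}}) = 0$ while $\mu'(\ball{u_{n}}{r_{n}}) \geq \tfrac{1}{4}$ for all large $n$; the convention $\nicefrac{c}{0} = \infty$ then yields $\liminf_{n \to \infty} \Ratio{u_{n}}{v_{n}}{r_{n}}{\mu'} = \infty \geq 1$, contradicting \CP.

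Cases \ref{item:violation_CP_d} and \ref{item:violation_CP_e} are more delicate, since both $\exists \as$ and $\forall \cpas$ appear and a purely atomic $\mu$ cannot distinguish their relative order; I expect the main obstacle to lie here. The plan is to use a base measure with non-trivial small-ball behaviour---possibly on $\ell^{2}(\bN;\bR)$ by stacking bumps at widely separated scales, as is typical in the counterexamples of this paper---chosen so that in case \ref{item:violation_CP_d} the approximating sequence pre-empts every subsequent adversarial $(v_{n})_{n\in\bN}$, and in case \ref{item:violation_CP_e} the existential $\exists \as$ coming after $\forall \cpas$ is forced to react to the worst-case comparison sequence. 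In each instance a carefully designed asymmetric $\mu$ should allow a point from the smaller-mass cluster of $\mu' = \tfrac{3}{4} \mu + \tfrac{1}{4} \mu(\quark - b)$ to qualify as a $\mathcal{Q}$-mode, contradicting the singleton mode set mandated by \CP.
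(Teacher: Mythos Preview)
Your Dirac-based plan for cases \ref{item:violation_CP_a}--\ref{item:violation_CP_c} does not work, and the error is structural rather than technical.

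In case \ref{item:violation_CP_a} you write that you will ``choose $(v_{n})_{n\in\bN}$'', but the hypothesis is precisely that $\forall\cpas\in\mathcal{Q}$: to certify $u=b$ as a mode you must beat \emph{every} comparison sequence, not one of your choosing. Taking the constant $\cpas$ $v_{n}\equiv 0$ gives $\mu'(\ball{v_{n}}{r_{n}})=\tfrac{3}{4}$ and $\mu'(\ball{b}{r_{n}})=\tfrac{1}{4}$, so $\Ratio{b}{v_{n}}{r_{n}}{\mu'}=\tfrac{1}{3}<1$ and $b$ is \emph{not} a $\mathcal{Q}$-mode. Conversely, $0$ is easily seen to be the unique $\mathcal{Q}$-mode, so \CP\ is \emph{satisfied} for this $\mu'$.

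In cases \ref{item:violation_CP_b} and \ref{item:violation_CP_c} you do control $(u_{n})_{n\in\bN}$ via $\exists\as$, but for a purely atomic measure this buys nothing: any $\as$ $(u_{n})_{n\in\bN}\to b$ eventually has $\ball{u_{n}}{r_{n}}$ meeting only the atom at $b$, so $\mu'(\ball{u_{n}}{r_{n}})=\tfrac{1}{4}$ regardless. When $\cpas\notin\mathcal{Q}$ (e.g.\ $\mathcal{Q}=[\exists\ns\exists\as\forall\cp]$, which falls under \ref{item:violation_CP_b}) you are forced to compare against $v_{n}\equiv 0$, and again $\Ratio{u_{n}}{0}{r_{n}}{\mu'}=\tfrac{1}{3}$. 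The mode set is exactly $\{0\}$ and \CP\ holds.

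The underlying reason is that, for any measure at whose support points the constant $\as$ is optimal---in particular any finite sum of Diracs---none of the quantifier-order distinctions in \ref{item:violation_CP_a}--\ref{item:violation_CP_c} can be exploited. The paper therefore uses the one-sided density $\rho(x)=\tfrac{1}{2}x^{-1/2}\chi_{(0,1)}(x)$, for which $\mu(\ball{0}{r})=\sqrt{r}$ but $\mu(\ball{r}{r})=\sqrt{2r}$: shifting the ball centre off the singularity gains a factor $\sqrt{2}$ of mass. With $\alpha=0.49$ (so the two clusters are almost balanced), this $\sqrt{2}$ factor is exactly what allows the ``wrong'' cluster to win when the quantifier order permits a tailored $\as$ or $\cpas$, and what allows the adversary to defeat the ``right'' cluster in case \ref{item:violation_CP_a}.

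For \ref{item:violation_CP_d}--\ref{item:violation_CP_e} your instinct to pass to $\ell^{2}(\bN;\bR)$ is correct---the paper uses \Cref{ex:no_optimal_approximating_sequence}, where every $\as$ can be strictly improved by a fixed factor. But your stated strategy (``a point from the smaller-mass cluster qualifies as a mode'') only matches case \ref{item:violation_CP_e}; in case \ref{item:violation_CP_d} the adversarial $\forall\cpas$ comes last, so the paper instead shows that the point in the \emph{larger}-mass cluster fails to be a mode, because the adversary can translate any chosen $\as$ to the other cluster and then improve it.
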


\begin{proof}
	Consider the following absolutely continuous measure supported on $[0, 1] \subset \bR$:
	\begin{equation*}
		\mu = \rho \,\Leb{1} \in \prob{\bR},\qquad \rho(x) = \tfrac{1}{2} \absval{x}^{-\frac{1}{2}} \chi_{(0, 1)}(x).
	\end{equation*}
	We will show that \CP\ is violated by the mode types in \ref{item:violation_CP_a}--\ref{item:violation_CP_c} for the convex combination $\nu = \alpha \mu + (1 - \alpha) \mu(\quark - 2)$, with $\alpha = 0.49$.
	To do this, we note that, for all mode types under consideration, $0$ is the unique mode of $\mu$, so \CP\ demands that $2$ is the unique mode of $\nu$, but the constant approximating sequence under $\mu$ of $0$ is not optimal:
	\begin{equation*}
		\mu(\ball{0}{r}) = \sqrt{r},\qquad \mu(\ball{t}{r}) = \sqrt{r + t},\qquad t \in [-r, r], \qquad r < \tfrac{1}{2}.
	\end{equation*}
	These facts also apply, with the appropriate weighting, to the point $2$ under $\nu$.

	\proofpartorcase{Case \ref{item:violation_CP_a}: $\forall \cpas$ appears after $\ns$, but $\exists \as \notin \mathcal{Q}$.}
	We will show that $u = 2$ is not a mode.
	To do this, note that either $\forall \as \in \mathcal{Q}$, or $\as$ is not present in $\mathcal{Q}$, so we can take $u_{n} = u$ for all $n \in \bN$.
	Let $(r_{n})_{n \in \bN}$ be an arbitrary \ns, take the \cp\ $v = 0$, and take the \cpas\ $v_{n} = v + r_{n}$.
	Then
	\[
		\nu(\ball{u_{n}}{r_{n}}) = (1-\alpha) \sqrt{r_{n}},\qquad  \nu(\ball{v_{n}}{r_{n}}) = \alpha \sqrt{2r_{n}}\qquad \text{for all large $n$,}
	\]
	and the choice $\alpha = 0.49$ implies that $u = 2$ is not a mode as $\liminf_{n \to \infty} \Ratio{u_{n}}{v_{n}}{r_{n}}{\nu} < 1$.

	\proofpartorcase{Case \ref{item:violation_CP_b}: $\exists \ns \in \mathcal{Q}$ and $\exists \as \in \mathcal{Q}$, but $\forall \cpas \notin \mathcal{Q}$.}
	We will show that $u = 0$ is a mode.
	For any \cp\ $v \neq 2$, this is an immediate consequence of \cref{lem:mass_near_Lebesgue_point}:
	since $\rho$ is bounded away from $0$ and $2$, there is a constant $C > 0$ such that,
	for any \ns\ $(r_{n})_{n \in \bN}$ and \cpas\ $(v_{n})_{n \in \bN} \to v$,
	$\nu(\ball{v_{n}}{r_{n}}) \leq C r_{n}$ for all large $n$;
	but $\nu(\ball{0}{r}) = \alpha \sqrt{r}$.
	It remains to consider the \cp\ $v = 2$, for which we take the \cpas\ $v_{n} = v$, the \ns\ $r_{n} = n^{-1}$, and the \as\ $u_{n} = n^{-1}$.
	Then $\nu(\ball{u_{n}}{r_{n}}) = \alpha \sqrt{2r_{n}}$ and $\nu(\ball{v_{n}}{r_{n}}) = (1 - \alpha) \sqrt{r_{n}}$ for all large $n$;
	the choice $\alpha = 0.49$ ensures that $u = 0$ is a mode.

	\proofpartorcase{Case \ref{item:violation_CP_c}: $\forall \ns$ appears before $\exists \as$, but $\forall \cpas \notin \mathcal{Q}$.}
	This follows by a similar argument, with the only change being to take the \as\ $u_{n} = r_{n}$.

	\vspace{1ex}

	To show that \CP\ is violated for the mode types in \ref{item:violation_CP_d} and \ref{item:violation_CP_e}, we consider the measure $\mu \in \prob{\ell^{2}(\bN; \bR)}$ from \cref{ex:no_optimal_approximating_sequence}, which is supported on a ball of finite radius around the origin.
	For all such types, $u = 0$ is the unique mode of $\mu$, and any \as\ of $u$ can be strictly improved:
	there is a factor $\zeta \in (0, 1)$ such that
	\begin{equation} \label{eq:CP_improvement_property}
		\forall \ns\ (r_{n})_{n \in \bN} \, \forall \as\ (u_{n})_{n \in \bN} \to u \, \exists (u_{n}')_{n \in \bN} \to u\colon~ \mu(\ball{u_{n}}{r_{n}}) \leq \zeta \mu(\ball{u_{n}'}{r_{n}}).
	\end{equation}
	Consider the combination $\nu = \alpha \mu + (1 - \alpha) \mu(\quark - b)$, with $\alpha \in(0, \frac{1}{2})$, and with $b$ chosen such that $\supp \mu$ and $\supp \mu(\quark - b)$ are positively separated;
	note that the improvement property \eqref{eq:CP_improvement_property} applies equally to \as\ of $0$ and $b$ under $\nu$, and \CP\ demands that $b$ is the only mode of $\nu$.

	\proofpartorcase{Case \ref{item:violation_CP_d}: $\ns$ and $\exists \as$ appear before $\forall \cpas$.}
	We will show that $b$ is not a mode.
	Let $(r_{n})_{n \in \bN}$ be an arbitrary \ns\ and let $(u_{n})_{n \in \bN}$ be an arbitrary \as\ of $b$.
	Apply the property \eqref{eq:CP_improvement_property} repeatedly to find an \as\ $(u_{n}')_{n \in \bN} \to b$ with $\nu(\ball{u_{n}}{r_{n}}) < \frac{\alpha}{2(1-\alpha)} \nu(\ball{u_{n}'}{r_{n}})$;
	then

	take the \cp\ $v = 0$ and the \cpas\ $v_{n} = u_{n}' - b \to v$, for which
	\[
		\nu(\ball{v_{n}}{r_{n}}) = \alpha \mu(\ball{u_{n}' - b}{r_{n}}) > \alpha \cdot \frac{2(1-\alpha)}{\alpha} \mu(\ball{u_{n} - b}{r_{n}}) = 2\nu(\ball{u_{n}}{r_{n}}).
	\]
	We thus conclude that $b$ is not a mode, because $\liminf_{n \to \infty} \Ratio{u_{n}}{v_{n}}{r_{n}}{\nu} < \frac{1}{2}$.

	\proofpartorcase{Case \ref{item:violation_CP_e}: $\ns$ and $\forall \cpas$ appear before $\exists \as$.}
	A similar argument shows that $0$ is a mode, using the fact that the \as\ can be chosen after the \cpas\ is fixed, so \eqref{eq:CP_improvement_property} can again be used.
\end{proof}

We now reduce the collection of candidate mode maps yet further, by showing that many of the 21 definitions not already eliminated by \Cref{prop:mode_maps_violating_AP,prop:mode_maps_violating_CP} are equivalent:

\begin{proposition}[Equivalences among remaining small-ball mode maps]
	\label[proposition]{prop:mode_map_equivalences}
	Each of the following forms a class of equivalent mode maps $\fM[\cQ] \colon \catMetProb \to \catSet$, as displayed in \Cref{fig:periodic_table}(\subref{fig:table_of_21}):
	\begin{enumerate}[label = (\alph*)]
		\item
		\label{item:alternative_characterisation_strong_for_closed_balls}
		$\{\sws = [\forall \ns], [\exists \as \forall \ns]\}$;

		\item
		\label{item:alternative_characterisation_weak_for_closed_balls}
		$\bigl\{\text{(i)}~\sww =  [\forall \ns \forall \cp], \text{~(ii)}~[\exists \as \forall \ns \forall \cp], \text{~(iii)}~[\forall \cp \exists \as \forall \ns], \text{~(iv)}~[\forall \cp \exists \cpas \forall \ns], \text{~(v)}~[\exists \as \forall \cp \exists \cpas \forall \ns],$\\
		$\text{~(vi)}~[\forall \cp \exists \as \exists \cpas \forall \ns]\bigr\}$;

		\item
		\label{item:alternative_characterisation_partial_strong}
		$\{ \swps = [\exists \ns],  [\forall \as \exists \ns]\}$;

		\item
		\label{item:alternative_characterisation_partial_weak}
		$\{\swpw = [\exists \ns \forall \cp], [\forall \as \exists \ns \forall \cp]\}$;

		\item
		\label{item:alternative_characterisation_wp_for_spherically_non_atomic}
		$\bigl\{ \text{(i)~} \swwp = [\forall \cp \exists \ns], \text{~(ii)~} \swwap = [\forall \cp \forall \cpas \exists \ns], \text{~(iii)~}[\forall \as \forall \cp \exists \ns], \text{~(iv)~} [\forall \as \forall \cp \forall \cpas \exists \ns]\bigr\}$.
	\end{enumerate}
\end{proposition}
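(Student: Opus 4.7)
My plan is to verify each of the five claimed equivalences in turn. In every case one direction is immediate: the simpler-looking definition implies its richer counterpart by choosing the constant sequence $u_{n} = u$ to realise $\exists \as$, or $v_{n} = v$ to realise $\exists \cpas$, and by specialising to the constant sequence to discharge $\forall \as$ or $\forall \cpas$ quantifiers. This trivially handles one direction of each of (a)--(e), as well as the redundancies among the six items of (b) and the four items of (e). The substance of the proof lies in the converse direction, in which a varying sequence appearing in the ratio $\Ratio{\cdot}{\cdot}{r_{n}}{\mu}$ must be collapsed back to a constant.

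The key analytic tools are the semicontinuity properties of the map $x \mapsto \mu(\ball{x}{r})$ for fixed $r > 0$: lower semicontinuity $\liminf_{x \to x_{0}} \mu(\ball{x}{r}) \geq \mu(\ball{x_{0}}{r})$, and the closed-ball upper bound $\limsup_{x \to x_{0}} \mu(\ball{x}{r}) \leq \mu(\cBall{x_{0}}{r})$. I will combine these with the facts that the exceptional set $\{r > 0 : \mu(\cBall{x_{0}}{r}) > \mu(\ball{x_{0}}{r})\}$ is countable and that both $r \mapsto \mu(\ball{x_{0}}{r})$ and $r \mapsto \sMass{\mu}{r}$ are non-decreasing and left-continuous.

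When the collapsing sequence sits in the denominator, as in the fourth item of (b) and throughout (e), lower semicontinuity alone suffices. For example, the implication $[\forall \cp \exists \cpas \forall \ns] \Rightarrow \sww$ goes as follows: given $v$ and a witness $(v_{n}) \to v$, the uniform reformulation of $\forall \ns$ provides $\mu(\ball{v_{n}}{r}) \leq \mu(\ball{u}{r})/(1 - \epsilon)$ for all $n \geq N$ and $r \leq \delta$; then LSC at $v$ yields $\mu(\ball{v}{r}) \leq \liminf_{n} \mu(\ball{v_{n}}{r}) \leq \mu(\ball{u}{r})/(1 - \epsilon)$, which is exactly the $\sww$ bound for this $v$. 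A diagonalisation variant---setting $r_{n} := s_{K(n)}$ with $K(n) := \sup\{k : N_{k} \leq n\}$, where $N_{k}$ is the LSC-derived threshold along a witnessing null sequence $(s_{k})$---handles the implications in (c) and (d) where a $\forall \as$ quantifier must be introduced alongside $\exists \ns$, and similarly the remaining parts of (e).

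The subtlest case, and the main obstacle, is collapsing an $\exists \as$ quantifier against $\forall \ns$, as in (a) and items 2, 3, 5, 6 of (b). Here LSC points the wrong way, so I will invoke the closed-ball upper bound instead. From the uniform bound $\mu(\ball{u_{n}}{r}) \geq (1 - \epsilon) \sMass{\mu}{r}$ for $n \geq N$ and $r \leq \delta$, taking $\limsup$ in $n$ at fixed $r$ yields $\mu(\cBall{u}{r}) \geq (1 - \epsilon) \sMass{\mu}{r}$, and hence $\mu(\ball{u}{r}) \geq (1 - \epsilon) \sMass{\mu}{r}$ for every $r \leq \delta$ outside the countable exceptional set. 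To extend to an arbitrary small $s$, I use left-continuity of $\sMass{\mu}{\cdot}$ at $s$ and density of the cocountable good set to choose $r < s$ in the good set with $\sMass{\mu}{r} \geq (1 - \epsilon) \sMass{\mu}{s}$; monotonicity of $\mu(\ball{u}{\cdot})$ then gives $\mu(\ball{u}{s}) \geq \mu(\ball{u}{r}) \geq (1 - \epsilon)^{2} \sMass{\mu}{s}$, so letting $\epsilon \to 0$ completes the proof that $u$ is a $[\forall \ns]$-mode. The analogous implications in (b) replace $\sMass{\mu}{r}$ by $\mu(\ball{v}{r})$ throughout, but the argument is otherwise identical.
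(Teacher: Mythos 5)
Your overall architecture is sound and close in spirit to the paper's: the trivial directions via constant sequences, the uniform ($\epsilon$--$N$--$\delta$) reformulation of the $\liminf$ hypotheses, lower semicontinuity of $x \mapsto \mu(\ball{x}{r})$ for introducing $\forall \as$, and a threshold-diagonalisation for parts (c) and (d) are all exactly what is needed. For collapsing $\exists \as$ against $\forall \ns$ in parts (a) and (b) you take a genuinely different route from the paper: you pass to the closed-ball upper bound $\limsup_{n} \mu(\ball{u_{n}}{r}) \leq \mu(\cball{u}{r})$, discard the countable set of radii whose spheres are charged, and recover arbitrary radii via left-continuity and monotonicity. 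The paper instead shrinks the radius first---using left-continuity to pick $s_{n} < r_{n}$ with $\sMass{\mu}{s_{n}} \geq (1 - \tfrac{1}{n})\sMass{\mu}{r_{n}}$ and then waiting until $\ball{u_{k_{n}}}{s_{n}} \subseteq \ball{u}{r_{n}}$---which avoids the open/closed-ball discrepancy entirely. Your version works here precisely because the hypothesis $\forall \ns$ gives the bound at \emph{every} small radius, so you are free to restrict to the cocountable good set.

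There is, however, a genuine gap in part (e). You assert that ``lower semicontinuity alone suffices \dots\ throughout (e)'', but the nontrivial direction (i)$\Rightarrow$(iv) requires \emph{introducing} the universally quantified comparison sequence $(v_{n})_{n \in \bN} \to v$ into the denominator, which calls for an \emph{upper} bound on $\mu(\ball{v_{n}}{r_{n}})$ in terms of the centred ball mass at $v$; lower semicontinuity gives $\liminf_{n} \mu(\ball{v_{n}}{r}) \geq \mu(\ball{v}{r})$, an inequality pointing the wrong way. (It points the right way in (b)(iv), where the comparison sequence is an existential witness being collapsed, which is presumably the source of the confusion.) Moreover, your closed-ball/cocountable-radius trick does not transfer to (e): the hypothesis $[\forall \cp \exists \ns]$ only supplies domination along one fixed null sequence $(s_{k})_{k \in \bN}$, so you cannot perturb the radii into the good set and still invoke it. The repair is the paper's radius-shrinking argument: use left-continuity of $r \mapsto \mu(\ball{u}{r})$ at $s_{k}$ to choose $t_{k} < s_{k}$ with $\mu(\ball{u}{t_{k}}) \geq (1 - \tfrac{1}{k})\mu(\ball{u}{s_{k}})$, and then take $n$ large enough that $\ball{v_{n}}{t_{k}} \subseteq \ball{v}{s_{k}}$, so that $\mu(\ball{v_{n}}{t_{k}}) \leq \mu(\ball{v}{s_{k}})$ by monotonicity; this slots into your diagonalisation without further change. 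The tools are already in your stated kit, but the plan as written deploys the wrong one at this step.
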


\begin{proof}
	Throughout, we rely on the elementary facts, proven in \cref{lem:ball_masses}, that
	\begin{equation}
		\label{eq:ball_mass_left_limits}
		\lim_{r \nearrow s} \mu(\ball{x}{r}) = \mu(\ball{x}{s}) \qquad \text{and} \qquad \lim_{r \nearrow s} \sMass{\mu}{r} = \sMass{\mu}{s} \text{~~for all $x \in X$ and $s > 0$,}
	\end{equation}
	and that, for any fixed $r > 0$, the ball-mass map $x \mapsto \mu(\ball{x}{r})$ is lower semicontinuous, i.e.,
	\begin{equation}
		\label{eq:ball_mass_lsc}
		\liminf_{y \to x} \mu(\ball{y}{r}) \geq \mu(\ball{x}{r}) \text{~~for all $x \in X$.}
	\end{equation}

	\proofpartorcase{Case \ref{item:alternative_characterisation_strong_for_closed_balls}.}
	Any $[\forall \ns]$-mode is, by definition, a $[\exists \as \forall \ns]$-mode;
	it remains to prove the converse.
	Suppose that $u$ is a $[\exists \as \forall \ns]$-mode and let $(u_{n})_{n \in \bN}$ be any \as\ proving this;
	let $(r_{n})_{n \in \bN}$ be any \ns.
	Using \eqref{eq:ball_mass_left_limits}, pick a \ns\ $(s_{n})_{n \in \bN}$ with $s_{n} < r_{n}$ and $\sMass{\mu}{s_{n}} \geq \bigl(1 - \tfrac{1}{n}\bigr) \sMass{\mu}{r_{n}}$;
	then, using that $u_{n} \to u$, pick a strictly increasing sequence $(k_{n})_{n \in \bN}$ such that $\ball{u_{k_{n}}}{s_{n}} \subseteq \ball{u}{r_{n}}$ for all $n \in \bN$.
	Then $u$ is a $[\forall \ns]$-mode because
	\begin{equation*}
		\liminf_{n \to \infty} \Ratio{u}{\sup}{r_{n}}{\mu} \geq \liminf_{n \to \infty} \bigl(1 - \tfrac{1}{n} \bigr) \Ratio{u_{k_{n}}}{\sup}{s_{n}}{\mu} \geq 1.
	\end{equation*}

	\proofpartorcase{Case \ref{item:alternative_characterisation_weak_for_closed_balls}.}
	The implications (i)$\Rightarrow$(ii)$\Rightarrow$(iii)$\Rightarrow$(vi) and (i)$\Rightarrow$(iv)$\Rightarrow$(v)$\Rightarrow$(vi) hold by definition;
	it remains to show (vi)$\Rightarrow$(i).
	We use a similar argument to the previous case to `delete' the quantifier $\exists \as$, and we use the lower semicontinuity \eqref{eq:ball_mass_lsc} to `delete' the quantifier $\exists \cpas$.

	Suppose that $u$ is a $[\forall \cp \exists \as \exists \cpas \forall \ns]$-mode, and
	let $(u_{n})_{n \in \bN}$ and $(v_{n})_{n \in \bN}$ be any \as\ and \cpas\ proving this.
	Then, let $(r_{n})_{n \in \bN}$ be any \ns\ and let $v$ be any \cp.
	As before, using \eqref{eq:ball_mass_left_limits}, pick a \ns\ $(s_{n})_{n \in \bN}$ such that $s_{n} < r_{n}$ and $\mu(\ball{v}{s_{n}}) \geq (1 - \tfrac{1}{n}) \mu(\ball{v}{r_{n}})$;
	with this and \eqref{eq:ball_mass_lsc}, pick a strictly increasing sequence $(k_{n})_{n \in \bN}$ such that
	\[
		\ball{u_{k_{n}}}{s_{n}} \subseteq \ball{u}{r_{n}}
		\quad
		\text{and}
		\quad
		\mu(\ball{v_{k_{n}}}{s_{n}}) \geq \bigl(1 - \tfrac{1}{n}\bigr) \mu(\ball{v}{s_{n}})
		\text{~~for all $n \in \bN$.}
	\]
	Then $u$ is a $[\forall \ns \forall \cp]$-mode because
	\begin{align*}
		\liminf_{n \to \infty} \Ratio{u}{v}{r_{n}}{\mu} &\geq \liminf_{n \to \infty} \bigl(1 - \tfrac{1}{n} \bigr)  \Ratio{u_{k_{n}}}{v}{s_{n}}{\mu}
		\geq \liminf_{n \to \infty} \bigl(1 - \tfrac{1}{n}\bigr)^{2} \Ratio{u_{k_{n}}}{v_{k_{n}}}{s_{n}}{\mu} \geq 1.
	\end{align*}

	\proofpartorcase{Case \ref{item:alternative_characterisation_partial_strong}.}
	Clearly, any $[\forall \as \exists \ns]$-mode is a $[\exists \ns]$-mode;
	it remains to prove the converse.
	Suppose that $u$ is a $[\exists \ns]$-mode along the \ns\ $(s_{n})_{n \in \bN}$ and let $(u_{n})_{n \in \bN}$ be an arbitrary \as.
	Using the lower-semicontinuity property \eqref{eq:ball_mass_lsc}, take any strictly increasing sequence $(N_{k})_{k \in \bN}$ such that
	\[
		\mu\bigl(\ball{u_{n}}{s_{k}}\bigr)
		\geq
		\bigl(1 - \tfrac{1}{k}\bigr) \mu\bigl(\ball{u}{s_{k}}\bigr) \text{~~for all $k \in \bN$ and $n \geq N_{k}$.}
	\]
	For each $n \in \bN$, let $k_{n}$ be the unique index $k$ such that $N_{k} \leq n < N_{k + 1}$, discarding the finitely many $n < N_{1}$, and take $r_{n} = s_{k_{n}}$;
	it is easily verified that $(r_{n})_{n \in \bN}$ is a \ns, and by construction
	\[
		\liminf_{n \to \infty} \Ratio{u_{n}}{\sup}{r_{n}}{\mu} = \liminf_{n \to \infty} \Ratio{u_{n}}{\sup}{s_{k_{n}}}{\mu} \geq \liminf_{n \to \infty} \bigl(1 - \tfrac{1}{k_{n}}\bigr)\Ratio{u}{\sup}{s_{k_{n}}}{\mu}.
	\]
	Note that $(s_{k_{n}})_{n \in \bN}$ is not necessarily a subsequence of $(s_{n})_{n \in \bN}$;
	but since $(k_{n})_{n \in \bN}$ is merely the sequence $(k)_{k \in \bN}$ with finitely many repetitions of each term, and using the fact that $u$ is an $[\exists \ns]$-mode along $(s_{n})_{n \in \bN}$, we conclude that
	\[
		\liminf_{n \to \infty} \Ratio{u_{n}}{\sup}{r_{n}}{\mu} \geq \liminf_{n \to \infty} \bigl(1 - \tfrac{1}{k_{n}}\bigr)\Ratio{u}{\sup}{s_{k_{n}}}{\mu} = \liminf_{k \to \infty} \bigl(1 - \tfrac{1}{k} \bigr)\Ratio{u}{\sup}{s_{k}}{\mu} \geq 1.
	\]

	\proofpartorcase{Case \ref{item:alternative_characterisation_partial_weak}.}
	This follows from a similar argument to the previous case.

	\proofpartorcase{Case \ref{item:alternative_characterisation_wp_for_spherically_non_atomic}.}
 	The implications
	(iv)$\Rightarrow$(ii)$\Rightarrow$(i) and (iv)$\Rightarrow$(iii)$\Rightarrow$(i) hold by definition;
	it remains to prove (i)$\Rightarrow$(iv).
	We use a similar argument to \ref{item:alternative_characterisation_partial_strong} to `insert' the quantifier $\forall \as$, and we use \eqref{eq:ball_mass_left_limits} to `insert' the quantifier $\forall \cpas$.

	Suppose that $u$ is a $[\forall \cp \exists \ns]$-mode;
	then pick an arbitrary \as\ $(u_{n})_{n \in \bN}$, \cp\ $v$, and \cpas\ $(v_{n})_{n \in \bN}$.
	Let $(s_{n})_{n \in \bN}$ be any \ns, which may depend on $v$, that proves that $u$ is a $[\forall \cp \exists \ns]$-mode;
	then, using \eqref{eq:ball_mass_left_limits}, pick a \ns\ $(t_{n})_{n \in \bN}$ with $t_{n} < s_{n}$ and $\mu(\ball{u}{t_{n}}) > (1 - \tfrac{1}{n}) \mu(\ball{u}{s_{n}})$.
	As in \ref{item:alternative_characterisation_partial_strong}, and using that $v_{n} \to v$, construct a strictly increasing sequence $(N_{k})_{k \in \bN}$ with
	\[
		\mu\bigl( \ball{u_{n}}{t_{k}} \bigr) \geq \bigl(1 - \tfrac{1}{k} \bigr) \mu\bigl(\ball{u}{t_{k}}\bigr) \quad \text{and} \quad
		\ball{v_{n}}{t_{k}} \subseteq \ball{v}{s_{k}} \text{~~for all $n \geq N_{k}$.}
	\]
	Define $k_{n}$ to be the unique index $k$ such that $N_{k} \leq n < N_{k + 1}$, discarding the finitely many $n < N_{1}$, and take $r_{n} \defeq t_{k_{n}}$.
	By construction,
	\begin{align*}
		\liminf_{n \to \infty} \Ratio{u_{n}}{v_{n}}{r_{n}}{\mu} &= \liminf_{n \to \infty} \Ratio{u_{n}}{v_{n}}{t_{k_{n}}}{\mu}
		\\
		&\geq \liminf_{n \to \infty} \bigl(1 - \tfrac{1}{k_{n}}\bigr)\Ratio{u}{v_{n}}{t_{k_{n}}}{\mu} \\
		&\geq \liminf_{n \to \infty} \bigl(1 - \tfrac{1}{k_{n}}\bigr)^{2} \Ratio{u}{v}{s_{k_{n}}}{\mu} \\
		&= \liminf_{k \to \infty} \bigl(1 - \tfrac{1}{k}\bigr)^{2} \Ratio{u}{v}{s_{k}}{\mu}  \geq 1,
	\end{align*}
	where the penultimate equality follows by the argument used in \ref{item:alternative_characterisation_partial_strong}.
\end{proof}

We are now in a position to establish the positive result that the remaining 21 mode maps, in the form of their ten canonical representatives, do satisfy the axioms of \Cref{defn:axioms}.

\begin{proposition}[Verification of axioms for remaining mode maps]
	\label[proposition]{prop:mode_map_all_axioms}
	The axioms \AP, \CP, and \LP\ are satisfied by the ten equivalence classes of small-ball mode maps listed in \Cref{fig:periodic_table}(\subref{fig:table_of_21}) on the domain $\mathsf{U} = \catMetProb$.
\end{proposition}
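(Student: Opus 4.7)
The key simplification is the lattice structure in \Cref{fig:periodic_table}(\subref{fig:Hasse_diagram_main}): for the axioms \LP\ and \AP, whose prescribed mode sets do not depend on the particular mode map $\fM$, it suffices to prove two things, namely that every prescribed point is an $\sws$-mode (whence it is a mode of all ten types, since $\sws$ is maximal in the lattice), and that no non-prescribed point is a $\swwgap$-mode (whence it is a mode of none of the ten types, since $\swwgap$ is minimal). The axiom \CP\ cannot be reduced in this way, since its right-hand side depends on $\fM(\mu)$ itself, and must be checked type by type.

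For \LP\ I would rely on the uniform-on-compacta asymptotic
\[
	\mu(\ball{u}{r}) = \rho(u)\, \omega_{m} r^{m}\bigl(1 + o(1)\bigr) \qquad \text{as } r \to 0,
\]
valid for any continuous density $\rho$ on $\bR^{m}$, where $\omega_{m}$ denotes the volume of the Euclidean unit ball. This immediately yields $M_{r}^{\mu}/(\omega_{m} r^{m}) \to \sup \rho$, so any $u^{*} \in \argmax \rho$ satisfies $\Ratio{u^{*}}{\sup}{r}{\mu} \to 1$ and is an $\sws$-mode. Conversely, for any $u$ with $\rho(u) < \sup \rho$, I would pick $v \in \bR^{m}$ with $\rho(v) > \rho(u)$, take the constant comparison sequence $v_{n} = v$, and use the same asymptotic to obtain $\Ratio{u_{n}}{v_{n}}{r_{n}}{\mu} \to \rho(u)/\rho(v) < 1$ for every $(u_{n}) \to u$ and every $(r_{n}) \to 0$; this disproves that $u$ is a $\swwgap$-mode.

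For \AP\ the key ingredient is the fact that, for any non-atomic probability measure $\mu_{0}$ on a separable metric space, $\sMass{\mu_{0}}{r} \to 0$ as $r \to 0$ (a Dini-type argument on a compact set of mass close to one, using Ulam tightness and the pointwise convergence $\mu_{0}(\ball{x}{r}) \to 0$ from $\mu_{0}(\{x\}) = 0$). Combined with the finiteness and pairwise distinctness of the atoms $x_{1}, \dots, x_{K}$, this yields $\sMass{\mu}{r} \to \alpha^{*} \defeq \max_{k \geq 1} \alpha_{k}$ and $\mu(\ball{x_{k}}{r}) \to \alpha_{k}$, so the atoms of maximal weight are $\sws$-modes. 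For a non-prescribed $u$ I would take $v = x_{k^{*}}$ with $\alpha_{k^{*}} = \alpha^{*}$; a case split on whether $u$ is an atom of strictly smaller weight or not an atom at all shows that the ratio $\mu(\ball{u_{n}}{r_{n}})/\mu(\ball{v}{r_{n}})$ has liminf strictly less than one along any $(u_{n}) \to u$ and any $(r_{n}) \to 0$, so $u$ is not a $\swwgap$-mode.

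For \CP\ the decisive observation is that the positive separation of $\supp \mu$ and $b + \supp \mu$ forces any ball $\ball{y}{r}$ to meet at most one of these two supports, provided $r$ is small enough. Consequently, for $u$ near $\supp \mu$ one has $\nu(\ball{u}{r}) = \alpha\,\mu(\ball{u}{r})$ and $\sMass{\nu}{r} = \max(\alpha, 1-\alpha)\, \sMass{\mu}{r}$, with the analogous statement for $u$ near $b + \supp \mu$. Every ball-mass ratio in $\nu$ therefore reduces to the corresponding ratio in $\mu$ multiplied by one of the constants $1$, $\alpha/(1-\alpha)$, or $(1-\alpha)/\alpha$, and the three cases $\alpha > \tfrac{1}{2}$, $\alpha = \tfrac{1}{2}$, $\alpha < \tfrac{1}{2}$ of \CP\ correspond exactly to whether this constant is $\geq 1$, $= 1$, or $\leq 1$. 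The principal obstacle I expect is the bookkeeping for the four-quantifier types $\swe$, $\swgwap$, and $\swwgap$: there the approximating and comparison sequences must be restricted, after passing to tails, to neighbourhoods of their limits small enough that the support separation becomes effective at every relevant $n$. Once this reduction is in place, the quantifier structure passes through the constant-of-proportionality argument cleanly, and \CP\ follows for each of the ten types by routine case analysis.
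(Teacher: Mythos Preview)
Your approach to \LP\ and \AP\ via the lattice extremes is correct and actually tidier than the paper's: rather than verifying each definition separately, it suffices to show that every prescribed point is an $\sws$-mode and that no non-prescribed point is a $\swwgap$-mode.

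For \CP, however, there is a genuine gap. The reduction you describe is correct as far as it goes---every $\nu$-ratio is a constant times a $\mu$-ratio with possibly shifted centres---but one particular comparison does not reduce to any $\mu$-mode condition. When $u \in \supp(\mu)$ is compared in $\nu$ to the $\cp$ $v = u + b$, shifting back gives $v - b = u$, and mode definitions never compare a point to itself. Concretely, for a $\cpas$ $(v_{n}) \to u + b$ one has, for large $n$,
\[
\Ratio{u_{n}}{v_{n}}{r_{n}}{\nu} = \frac{\alpha}{1-\alpha}\cdot\frac{\mu(\ball{u_{n}}{r_{n}})}{\mu(\ball{v_{n} - b}{r_{n}})},
\]
which compares two sequences both approximating $u$ under $\mu$; this is not ``the corresponding ratio in $\mu$'' and is not covered by the hypothesis that $u$ is a mode of $\mu$. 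For the $\swe$-type in the direction $\fM(\mu) \subseteq \fM(\nu)$ the $\as$ may be chosen after the $\cpas$, so $u_{n} \defeq v_{n} - b$ collapses the ratio to $\alpha/(1-\alpha)$. But for $\swwgap$ and $\swgwap$ the $\as$ must be fixed before the $\cpas$, so no such cancellation is available; the paper resolves this via a separate pigeonhole lemma (\Cref{lemma:technical_lemma_for_SP}) producing, for any null $(\Delta_{n})$, a null sequence $(\tilde r_{n})$ along which $\mu(\ball{u}{\tilde r_{n} + \Delta_{n}}) \le (1 + \varepsilon_{n})\,\mu(\ball{u}{\tilde r_{n}})$ with $\varepsilon_{n} \to 0$. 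The same lemma is needed, with the roles of $\as$ and $\cpas$ reversed, to exclude modes on the lighter copy when $\alpha \neq \tfrac12$. So the obstacle is not the tail bookkeeping you anticipate but this self-comparison, and its resolution requires an idea beyond the constant-of-proportionality reduction.
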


\begin{proof}
	For the purpose of the proof, we place the ten equivalence classes, listed here using their canonical representatives, into four groups:
	\begin{enumerate}[label = (\alph*)]
		\item
		\label{item:axioms_strong_modes}
		\phantom{hide} \\[-2.8ex]
		\begin{tabularx}{\textwidth}{YYYY}
			$\sws = [\forall \ns ]$,
			&
			$\swps = [\exists \ns ]$,
			&
			$\swgs = [\forall \ns \exists \as ]$,
			&
			$\swpgs = [\exists \ns \exists \as ]$;
		\end{tabularx}
		\item
		\label{item:axioms_weak_modes}
		\begin{tabularx}{\textwidth}{YYYY}
			$\sww = [\forall \ns \forall \cp]$,
			&
			$\swpw = [\exists \ns \forall \cp]$,
			&
			$\swwp = [\forall \cp \exists \ns]$;
			&
		\end{tabularx}
		\vspace{-3ex}
		\item
		\label{item:axioms_exotic}
		\phantom{hide} \\[-2.8ex]
		\begin{tabularx}{\textwidth}{YYYY}
			$\swe = [\forall\cp\forall\cpas\exists\as\forall\ns]$;
			&&&
		\end{tabularx}
		\item
		\label{item:axioms_weak_generalised_approximating_and_partial_modes}
		\begin{tabularx}{\textwidth}{YYYY}
			$\swgwap = [\exists \as \forall \cp \forall \cpas \exists \ns]$,
			&
			$\swwgap = [\forall \cp \exists \as \forall \cpas \exists \ns]$.
			&&
		\end{tabularx}
	\end{enumerate}

	\proofpartorcase{Proof of \AP\ and \LP.}
	\AP\ and \LP\ are straightforward to verify for each definition in turn.
	For \LP, the argument uses the Lebesgue differentiation theorem:
	for any $\mu \in \prob{\bR^{m}}$ with continuous \ac{pdf} $\rho$, any $\ns$ $(r_{n})_{n\in\bN}$, and any $\as$ $(u_{n})_{n\in\bN}$ of $u\in X$,
	\[
		\lim_{n\to\infty}
		\frac{\mu(\Ball{u_{n}}{r_{n}})}{\Leb{m}(\Ball{0}{r_{n}})}
		=
		\rho(u),\qquad
		\lim_{n\to\infty}
		\frac{\sMass{\mu}{r_{n}}}{\Leb{m}(\Ball{0}{r_{n}})}
		=
		\sup_{x \in X} \rho(x) \in (0, \infty].
	\]

	\proofpartorcase{Proof of \CP.}
	Let $X$ be a vector space with translation-invariant metric $d$, let $\mu \in \prob{X}$ have non-empty support, and let $\nu \defeq \alpha \mu + (1-\alpha) \mu(\quark - b)$ and $\Delta \defeq \dist{\supp(\mu)}{b + \supp(\mu)} / 2 > 0$.
	Then $\supp(\nu) = \supp(\mu) \cup (b + \supp(\mu))$ and, for any $0 < r < \Delta$ and $x \in X$,
	\begin{equation}
	\label{equ:CP_verification_preliminary_considerations}
	\nu(\Ball{x}{r})
	=
	\begin{cases}
		\alpha \, \mu(\Ball{x}{r}),
		&
		\text{if } x\in \supp(\mu) + \Ball{0}{\Delta},
		\\
		(1-\alpha) \, \mu(\Ball{x-b}{r}),
		&
		\text{if } x\in \supp(\mu) + \Ball{0}{\Delta} + b,
		\\
		0,
		&
		\text{otherwise.}
	\end{cases}
	\end{equation}
	Moreover, for such $r$, $\sMass{\nu}{r} = \max(\alpha, (1-\alpha))\, \sMass{\mu}{r}$.
	Further, for any $u \in X$, any $\as$ $(u_{n})_{n\in\bN}$ of $u$, any $\ns$ $(r_{n})_{n\in\bN}$, and for sufficiently large $n\in\bN$,
	\[
	\Ball{u_{n}}{r_{n}}
	\subseteq
	\begin{cases}
		\supp(\mu) + \Ball{0}{\Delta},
		&
		\text{if } u\in \supp(\mu),
		\\
		\supp(\mu) + \Ball{0}{\Delta} + b,
		&
		\text{if } u\in \supp(\mu) + b,
		\\
		X\setminus (\supp(\nu) + \Ball{0}{\Delta})
		\text{ with } \nu(\Ball{u_{n}}{r_{n}}) = 0,
		&
		\text{otherwise.}
	\end{cases}
	\]
	
	These observations already imply \CP\ for all mode definitions in groups~\ref{item:axioms_strong_modes} and~\ref{item:axioms_weak_modes}.

	For~\ref{item:axioms_exotic} and~\ref{item:axioms_weak_generalised_approximating_and_partial_modes}, these observations imply $\fM(\nu) \subseteq \fM(\mu)\cup (\fM(\mu)+b)$.
	To verify the other claims of \CP\ we consider three cases:

	\proofpartorcase{Case~1: $\alpha = \frac{1}{2}$.}
	Let $u$ be a mode of $\mu$ and consider the measure $\nu$.
	Then $u$ clearly dominates every \cp\ $v\neq u$ with $v \in \supp(\mu)$ in the corresponding sense.
	By shifting the corresponding \as\ and \cpas\ by $b$ and $-b$ in a straightforward way, this implies that it also dominates every point $v \in \supp(\mu) + b$; but the point $v = u+b$ requires special treatment.
	If $u$ is an $\swe$-mode of $\mu$ and $v_{n}$ is an arbitrary \cpas\ of $v = u+b$, then simply choose the \as\ $u_{n} = v_{n} -b$ and \eqref{equ:CP_verification_preliminary_considerations} implies that $u$ dominates $v = u+b$, hence, $u$ is an $\swe$-mode of $\nu$.
	If $u$ is a $\swwgap$-mode (or $\swgwap$-mode, respectively) of $\mu$, then, for the constant \as\ $u_{n} = u$ and every \cpas\ $v_{n}$ of $v = u+b$, denote $\Delta_{n} = d(v_{n},v)$ and let $(\tilde{r}_{n})_{n\in\bN}$ and $(\varepsilon_{n})_{n\in\bN}$ be the corresponding null sequences from \Cref{lemma:technical_lemma_for_SP}.
	Since $\Ball{v}{\tilde{r}_{n}+\Delta_{n}} \supseteq \Ball{v_{n}}{\tilde{r}_{n}}$, we obtain
	\[
		\liminf_{n \to \infty}
		\Ratio{u_{n}}{v_{n}}{\tilde{r}_{n}}{\nu}
		\geq
		\liminf_{n \to \infty}
		\frac{\nu( \Ball{u}{\tilde{r}_{n}} )}{\nu( \Ball{v}{\tilde{r}_{n}+\Delta_{n}} )}
		\geq
		\liminf_{n \to \infty}
		\frac{\mu( \Ball{u}{\tilde{r}_{n}} )}{\mu( \Ball{u}{\tilde{r}_{n}+\Delta_{n}} )}
		\geq
		\liminf_{n \to \infty}
		(1+\varepsilon_{n})^{-1}
		=
		1,
	\]
	proving that $u$ dominates $v = u+b$, hence, $u$ is an $\swwgap$-mode (or $\swgwap$-mode, respectively) of $\nu$.
	In summary, $\fM(\mu) \subseteq \fM(\nu)$, which, by symmetry, also implies $\fM(\mu) + b \subseteq \fM(\nu)$, proving \CP\ in this case.

	\proofpartorcase{Case~2: $\alpha > \frac{1}{2}$.}
	The proof of $\fM(\mu) \subseteq \fM(\nu)$ is identical to Case 1.
	For the reverse inclusion we assume that there is a mode $u \in \supp(\mu)+b$ of $\nu$ and derive a contradiction, thereby proving \CP.
	For mode types in group~\ref{item:axioms_exotic} consider the $\cp$ $v = u-b$ and the constant $\cpas$ $v_{n}=v$.
	By assumption there exists an $\as$ $(u_{n})_{n\in\bN}$ of $u$ such that, for any $\ns$ $(r_{n})_{n\in\bN}$, $\liminf_{n\to\infty} \Ratio{u_{n}}{v}{r_{n}}{\nu} \geq 1$.
	On the other hand, denoting $\Delta_{n} = d(u_{n},u)$ and using \Cref{lemma:technical_lemma_for_SP}, there exist null sequences $(\tilde{r}_{n})_{n\in\bN}$ and $(\varepsilon_{n})_{n\in\bN}$ such that
	$\mu( \Ball{u}{\tilde{r}_{n}+\Delta_{n}} )
	\leq
	(1+\varepsilon_{n}) \, \mu( \Ball{u}{\tilde{r}_{n}} )$.
	Therefore, since $\Ball{u}{\tilde{r}_{n}+\Delta_{n}} \supseteq \Ball{u_{n}}{\tilde{r}_{n}}$,
	\[
		\liminf_{n \to \infty}
		\Ratio{u_{n}}{u}{\tilde{r}_{n}}{\mu}
		\leq
		\liminf_{n \to \infty}
		\frac{\mu( \Ball{u}{\tilde{r}_{n}+\Delta_{n}} )}{\mu( \Ball{u}{\tilde{r}_{n}} )}
		\leq
		\liminf_{n \to \infty}
		1+\varepsilon_{n}
		=
		1.
	\]
	Since $\Ratio{u_{n}}{v}{\tilde{r}_{n}}{\nu} = \frac{\alpha}{1-\alpha}\, \Ratio{u_{n}}{u}{\tilde{r}_{n}}{\mu}$ for sufficiently large $n\in\bN$ and $\alpha < \frac{1}{2}$, this yields a contradiction.
	For mode types in group~\ref{item:axioms_weak_generalised_approximating_and_partial_modes} consider the $\cp\ v=u-b$ and choose, for any $\as$ $(u_{n})_{n\in\bN}$ of $u$, the $\cpas$ $v_{n} = u_{n} - b$ of $v$.
	Then, for any $\ns$ $(r_{n})_{n\in\bN}$,
	$\Ratio{u_{n}}{v_{n}}{r_{n}}{\nu}
	\leq
	\frac{\alpha}{1-\alpha} < 1$ for sufficiently large $n\in\bN$, contradicting the assumption.

	\proofpartorcase{Case~3: $\alpha < \frac{1}{2}$.}
	Follows from Case 2 by symmetry.
	\qedhere
\end{proof}

\begin{remark}[The role of \LP\ in \Cref{thm:main}]
	\Cref{prop:mode_maps_violating_AP,prop:mode_maps_violating_CP} eliminate meaningless mode maps using only \AP\ and \CP;
	the remaining maps all satisfy \LP\ (\Cref{prop:mode_map_all_axioms}).
	This elimination process requires just three examples:
	one measure on $\bR$ for \AP, along with a measure with Lebesgue \ac{pdf} on $\bR$ and a measure on $\ell^{2}(\bN; \bR)$ (\Cref{ex:no_optimal_approximating_sequence}) for \CP.
	We could have instead eliminated mode maps violating \LP\ first, but all such maps turn out to violate either \AP\ or \CP.
	Nevertheless, \LP\ is a desirable property in its own right that requires verification.
\end{remark}

\subsection{Further properties of meaningful small-ball mode maps}
\label{section:MP_discussion}

We now show that---on the subclass $\mathsf{U} = \catSepMetProb$ of separable metric probability spaces, but not in general---the ten meaningful small-ball mode maps identified in \Cref{thm:main} enjoy an additional desirable property, namely that the modes of $\mu$ always lie in the support of $\mu$. 
However, we also establish a negative result that shows that none of these ten mode maps satisfy a \emph{merging property} under which the modes of $\mu$ on $X$ can be easily related to the modes of $\mu$ on well-separated positive-mass subsets of $X$.

\begin{definition}
	We say that a mode map $\fM \colon \mathsf{U} \subseteq \catMetProb \to \catSet$ satisfies
	\begin{itemize}[leftmargin=3.5em]
		\item[(\text{SP})]
		\label{item:def_mode_axioms_SP}
		the \defterm{support property} if $\fM(X, d, \mu) \subseteq \supp(\mu)$ for every $(X, d, \mu) \in \mathsf{U}$.
	\end{itemize}
\end{definition}

\begin{proposition}[Support property]
	\label[proposition]{proposition:meaningful_support_property}
	The restriction of each of the ten meaningful small-ball mode maps of \Cref{thm:main} to the domain $\mathsf{U} = \catSepMetProb \subset \catMetProb$ satisfies \SP.
\end{proposition}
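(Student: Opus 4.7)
The plan is to show that any $u \in X$ with $u \notin \supp(\mu)$ fails to be a mode under each of the ten definitions.  Since $u \notin \supp(\mu)$, there exists $r_{0} > 0$ with $\mu(\ball{u}{r_{0}}) = 0$.  From this we extract the following \emph{vanishing lemma}: for every null sequence $(r_{n})_{n \in \bN}$ and every sequence $(u_{n})_{n \in \bN} \to u$, one has $\mu(\ball{u_{n}}{r_{n}}) = 0$ for all sufficiently large $n$.  Indeed, once $r_{n} + d(u_{n}, u) < r_{0}$, the triangle inequality gives $\ball{u_{n}}{r_{n}} \subseteq \ball{u}{r_{0}}$, and monotonicity of measures forces the mass to vanish.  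Crucially this holds for \emph{every} choice of $(r_{n})$ and $(u_{n})$, so the conclusion will be available whether the associated quantifiers in a mode definition are universal or existential.

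The argument then splits according to whether the mode type uses the supremal ball mass or a comparison point.  For the four strong types ($\sws$, $\swgs$, $\swps$, $\swpgs$) the relevant ratio is $\Ratio{u_{n}}{\sup}{r_{n}}{\mu}$.  Because $X$ is separable, $\sMass{\mu}{r} > 0$ for every $r > 0$ (as recalled in \Cref{section:Setup_And_Notation}), so the vanishing lemma forces $\Ratio{u_{n}}{\sup}{r_{n}}{\mu} = 0$ eventually, and hence $\liminf_{n \to \infty} \Ratio{u_{n}}{\sup}{r_{n}}{\mu} = 0 < 1$ for every admissible choice of quantified sequences.  The defining inequality $\liminf \geq 1$ therefore fails, so $u$ is not a mode of any strong type.

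For the remaining six types ($\sww$, $\swpw$, $\swwp$, $\swe$, $\swgwap$, $\swwgap$) one inspects the list in \Cref{fig:periodic_table}(\subref{fig:table_of_21}) and notes that each involves a \emph{universally} quantified comparison point and, when present, a \emph{universally} quantified comparison sequence.  We therefore pick $v \in \supp(\mu)$ (possible since $\supp(\mu) \neq \emptyset$ and automatically $v \neq u$) and, when the definition uses a comparison sequence, take the constant sequence $v_{n} \defeq v$.  Then $\mu(\ball{v_{n}}{r_{n}}) = \mu(\ball{v}{r_{n}}) > 0$ for every $n$, while by the vanishing lemma $\mu(\ball{u_{n}}{r_{n}}) = 0$ eventually, so the ratio $\Ratio{u_{n}}{v_{n}}{r_{n}}{\mu}$ tends to $0$ and the defining inequality again fails.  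This handles all six types simultaneously.

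The main obstacle is purely combinatorial bookkeeping: one must correctly negate each of the ten quantifier strings and verify that the choices $v \in \supp(\mu)$ and $v_{n} \defeq v$ are licensed by the order in which the quantifiers appear (in particular, that $v$ can always be chosen after any universally quantified $(u_{n})$, and $(v_{n})$ after any universally quantified $(r_{n})$).  A convenient shortcut is the lattice of \Cref{fig:periodic_table}(\subref{fig:Hasse_diagram_main}): since $\swwgap$ sits at the bottom, every $\fM^{\textswab{S}}(\mu) \subseteq \fM^{\swwgap}(\mu)$, so it would in fact suffice to establish \SP\ for the single type $\swwgap$.  However, the vanishing lemma treats all ten types so uniformly that the direct check is scarcely longer.
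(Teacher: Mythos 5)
Your proposal is correct and follows essentially the same route as the paper: the key observation in both is that for $u \notin \supp(\mu)$ every sequence $(u_{n}) \to u$ has $\mu(\ball{u_{n}}{r_{n}}) = 0$ eventually, while a comparison point $v \in \supp(\mu)$ (or the supremal mass $\sMass{\mu}{r}$) stays positive, forcing the defining $\liminf$ to be $0$. The paper simply takes the shortcut you mention at the end — it verifies \SP\ only for the lattice-minimal type $\swwgap$ and lets the implications do the rest — whereas you carry out the uniform check over all ten types; both are valid.
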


\begin{proof}
	It suffices to prove the claim for the weakest meaningful mode type, namely \swwgap-modes, i.e., $[\forall \cp \exists \as \forall \cpas \exists \ns]$-modes.
	To this end, let $\mu \in \prob{X}$, let $u \in X \setminus \supp(\mu)$, and let $(u_{n})_{n \in \bN}$ be an arbitrary \as\ of $u$.
	In any separable metric probability space $(X, d, \mu)$, it is always the case that $\supp(\mu) \neq \emptyset$ \citep[Theorem~12.14]{AliprantisBorder2006}, so choose any \cp\ $v \in \supp(\mu)$, fix the constant \cpas\ $v_{n} = v$, and let the \ns\ $(r_{n})_{n \in \bN}$ be arbitrary.
	Since $X \setminus \supp(\mu)$ is open, for all sufficiently large $n$, $\Ball{u_{n}}{r_{n}} \cap \supp(\mu) = \varnothing$ and hence $\mu(\Ball{u_{n}}{r_{n}}) = 0$.
	Therefore, $u$ is not a \swwgap-mode, since
	\[
		\liminf_{n \to \infty} \Ratio{u_{n}}{v_{n}}{r_{n}}{\mu} = \liminf_{n \to \infty} \frac{0}{\mu(\Ball{v}{r_{n}})} = 0 < 1 .
		\qedhere
	\]
\end{proof}

\begin{remark}[\SP\ for non-separable spaces]
	It is necessary to restrict to separable metric probability spaces in the previous proposition;
	on non-separable spaces, the support property \SP\ may fail, as there exist metric probability spaces $(X, d, \mu)$ for which 
	\begin{equation*}
		\supp(\mu) = \bigset{x \in X}{\mu(\ball{x}{r}) > 0 \text{~for all $r > 0$}} = \emptyset.
	\end{equation*}
	For example, consider the infinite product space $X \defeq (0, 1)^{\bN} \subset \ell^{\infty}$ endowed with the supremum metric and the probability measure $\mu$ defined on $\ballsig{X}$ as the countably infinite product of one-dimensional Lebesgue measure.
	In this case, $\mu(\ball{x}{r}) = 0$ for all $x \in X$ and $0 \leq r \leq \tfrac{1}{2}$ and so $\supp(\mu) = \varnothing$ and $\sMass{\mu}{r} = 0$;
	however, due to our convention that $\nicefrac{0}{0} \defeq 1$, every point of $X$ is a \sws-mode of $\mu$.
	We emphasise here that we do not define $\mu$ on the Borel $\sigma$-algebra, and indeed characterising the support of Borel probability measures on non-separable metric spaces is a sensitive issue \citep[see][Section~7.2]{Bogachev2007_II}.
\end{remark}

The axiom \CP\ enforces a relationship between the modes of $\nu \in \prob{X}$ and those of its restrictions $\nu|_{A}$, $\nu|_{B}$ to positive-mass subsets $A, B \in \ballsig{X}$ with $\dist{A}{B} > 0$, provided that, for some $\mu \in \prob{X}$ with $\supp(\mu) \neq \emptyset$,
\begin{equation*}
	\nu = \alpha \mu + (1-\alpha) \mu(\quark - b),\qquad\alpha \in [0, 1],\qquad A = \supp(\mu),\qquad B = b + \supp(\mu).
\end{equation*}
In particular, \CP\ requires that, if $\fM(\nu|_{A}) \neq \emptyset$, then the set $\fM(\nu)$ of modes of the convex combination must also be non-empty.
We now consider an extension of this property in which $\nu|_{A}$ and $\nu|_{B}$ need not be shifted copies of $\mu$;
we again seek to encode the property that the merged measure $\nu$ has at least one mode if $\nu|_{A}$ and $\nu|_{B}$ have modes.
Unlike in \CP\ we shall not prescribe the exact value of $\fM(\nu)$;
we ask only that it be non-empty if $\fM(\nu|_{A})$ and $\fM(\nu|_{B})$ are.

\begin{definition}[Merging property]
	\label[definition]{def:mode_axioms_2}
	Suppose that $\mathsf{U} \subseteq \catMetProb$ is closed under restrictions to positive-mass subsets, i.e.,
	\begin{equation*}
		(X, d, \mu) \in \mathsf{U},~A \in \ballsig{X},~\mu(A) > 0 \implies (X, d, \mu|_{A}) \in \mathsf{U}.
	\end{equation*}
	Then we say that a mode map $\fM \colon \mathsf{U}  \to \catSet$ satisfies
	\begin{itemize}[leftmargin=3.5em]
		\item[(\text{MP})]
		\label{item:def_mode_axioms_rMP}
		the \defterm{merging property} if, for any $(X, d, \mu) \in \mathsf{U}$ and any subsets $A,B \in \ballsig{X}$ with $\mu(A)>0$, $\mu(B)>0$, and $\dist{A}{B} > 0$,
		\[
			\bigl( \fM(\mu|_{A}) \neq \emptyset \text{ and } \fM(\mu|_{B}) \neq \emptyset \bigr) \implies \fM(\mu|_{A\cup B}) \neq \emptyset .
		\]
	\end{itemize}
\end{definition}

It turns out that \rMP\ fails for all meaningful mode types identified in \Cref{thm:main}.
Before proving this, let us discuss one counterexample in more detail.
\Cref{example:ps_not_s_not_gw} was first used in our elementary classification result, \Cref{example:subclass_classification}, and provides an absolutely continuous measure $\mu \in \prob{\bR}$ with a $\swps$-mode that is not a $\swgw$-mode, and thus not an $\sws$-mode.
The density of this measure involves three singularities centred at $-2$, $0$, and $+2$, with the property that
\[
	\liminf_{r \to 0} \Ratio{-2}{+2}{r}{\mu} < 1 < \limsup_{r \to 0} \Ratio{-2}{+2}{r}{\mu}.
\]
Restricting to the intervals $A = (-3, -1)$ and $B = (1, 3)$, it can be shown that $\fM^{\sws}(\mu|_{A}) = \{-2\}$ and $\fM^{\sws}(\mu|_{B}) = \{+2\}$;
but $\fM^{\sws}(\mu|_{A \cup B}) = \emptyset$.
Thus, owing to the oscillation of the ratio $\Ratio{-2}{+2}{r}{\mu}$, this measure $\mu$ has the unsatisfactory property that the two modes of the restricted measures vanish when we take the union of the intervals $A$ and $B$.
We use oscillatory properties of this kind to show that \emph{none} of the ten meaningful mode types satisfies \rMP\ for $X = \bR$:

\begin{proposition}[Failure of \rMP\ for small-ball mode maps]
	\label[proposition]{prop:all_modes_violate_MP}
	None of the ten meaningful small-ball mode maps of \Cref{thm:main} satisfies \rMP on $\mathsf{U} = \catMetProb$.
\end{proposition}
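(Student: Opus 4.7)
The plan is to exhibit a single counterexample that defeats \rMP\ for all ten meaningful mode maps simultaneously. Because the ten mode types form a lattice (\Cref{fig:periodic_table}(\subref{fig:Hasse_diagram_main})) with $\sws$ at the top and $\swwgap$ at the bottom, it suffices to find a separable metric probability space $(X, d, \mu)$ and positively separated Borel sets $A, B \in \Borel{X}$ with $\mu(A), \mu(B) > 0$ for which $\fM^{\sws}(\mu|_A) \neq \emptyset$ and $\fM^{\sws}(\mu|_B) \neq \emptyset$, yet $\fM^{\swwgap}(\mu|_{A \cup B}) = \emptyset$; the restriction-side non-emptinesses then propagate downward to every weaker mode type, while the union-side emptiness propagates upward to every stronger one, refuting \rMP\ in all ten cases at once.

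The construction should combine the two obstruction mechanisms the paper has already developed: the radial oscillation of ball-mass ratios, as in \Cref{example:ps_not_s_not_gw}, which denies any single candidate point a permanent domination as $r \to 0$; and the positional non-existence of an optimal approximating sequence, as in \cref{ex:no_optimal_approximating_sequence}, which denies any candidate a definitive \as\ against a cleverly chosen \cpas. My proposal is to work in $X = \ell^{2}(\bN; \bR)$ and take two well-separated shifted copies of the measure of \cref{ex:no_optimal_approximating_sequence}, with their relative weight slowly oscillating between $\alpha$ and $1 - \alpha$ as $r \to 0$, letting $A$ and $B$ be the positively separated supports of the two copies.

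The verification then splits into three parts. First, each restricted measure is, up to normalisation, an isometric copy of the measure of \cref{ex:no_optimal_approximating_sequence}, for which the origin is a strong mode, so $\fM^{\sws}(\mu|_A)$ and $\fM^{\sws}(\mu|_B)$ are non-empty. Second, by \Cref{proposition:meaningful_support_property}, any prospective $\swwgap$-mode of $\mu|_{A \cup B}$ must lie in $A \cup B$; by symmetry, treat $u \in A$. Third, taking $v$ to be the symmetric partner of $u$ in $B$, for every \as\ $(u_n)_{n \in \bN} \to u$ the improvability property \eqref{eq:CP_improvement_property} of \cref{ex:no_optimal_approximating_sequence} supplies a \cpas\ $(v_n)_{n \in \bN} \to v$ whose ball masses asymptotically beat those of $(u_n)_{n\in\bN}$ by a fixed factor $\zeta^{-1} > 1$; coupling this with the oscillating relative weight would force $\liminf_{n \to \infty} \Ratio{u_n}{v_n}{r_n}{\mu|_{A \cup B}} < 1$ for \emph{every} null sequence $(r_n)_{n \in \bN}$, certifying that $u$ is not a $\swwgap$-mode.

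The main obstacle is calibrating the oscillating relative weight so that both restrictions retain their strong mode at the origin (which requires the weight to remain uniformly bounded away from $0$ on each side along cofinal radii) while the union nevertheless loses every $\swwgap$-mode (which requires the weight to swing far enough to absorb the factor $\zeta$ of positional improvement for \emph{every} null sequence). These two demands pull in opposite directions, and matching them is the core technical issue; however, the machinery assembled in \cref{ex:no_optimal_approximating_sequence} and already deployed for cases \ref{item:violation_CP_d}--\ref{item:violation_CP_e} of \Cref{prop:mode_maps_violating_CP} is exactly suited to this task.
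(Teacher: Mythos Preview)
Your single-counterexample strategy is logically appealing but cannot be executed: the target configuration is impossible. Suppose $\mu|_{A}$ has an \sws-mode $u_{A}$ and $\mu|_{B}$ has an \sws-mode $u_{B}$. For small $r$ the separation of $A$ and $B$ gives $M_{r}^{\mu|_{A\cup B}} = \max\bigl(\tfrac{\mu(A)}{\mu(A\cup B)} M_{r}^{\mu|_{A}}, \tfrac{\mu(B)}{\mu(A\cup B)} M_{r}^{\mu|_{B}}\bigr)$, and the set of radii on which the first term achieves the maximum is either cofinal at $0$ or its complement is. Along a null sequence $(r_{n})$ taken from whichever set is cofinal, the corresponding point $u_{A}$ (or $u_{B}$) satisfies
\[
	\Ratio{u_{A}}{\sup}{r_{n}}{\mu|_{A\cup B}} = \frac{\mu|_{A}(\ball{u_{A}}{r_{n}})}{M_{r_{n}}^{\mu|_{A}}} \xrightarrow[n\to\infty]{} 1,
\]
since $u_{A}$ is an \sws-mode of $\mu|_{A}$. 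Hence one of $u_{A}$, $u_{B}$ is always a \swps-mode of $\mu|_{A\cup B}$, and therefore a \swwgap-mode. So $\fM^{\swwgap}(\mu|_{A\cup B})$ can never be empty under your hypotheses, and no single example can refute \rMP\ for all ten types at once.

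Your specific construction also contains a concrete error: the origin is \emph{not} an \sws-mode of the measure in \cref{ex:no_optimal_approximating_sequence}. The improvability property \eqref{eq:CP_improvement_property} that you plan to use says precisely that the constant approximating sequence at $0$ is beaten by a factor $\zeta^{-1}>1$, so $\liminf_{r\to 0}\Ratio{0}{\sup}{r}{\mu}\leq\zeta<1$; iterating shows this ratio in fact tends to $0$. The two ingredients you want to combine --- the restriction having an \sws-mode at $0$, and every approximating sequence of $0$ being strictly improvable --- are mutually exclusive. This is why the paper splits the argument: \Cref{example:ps_not_s_not_gw} supplies \sws-modes for the restrictions but leaves \swps-modes (hence \swwgap-modes) in the union, handling only the non-partial types; \Cref{example:SuspensionBridgeExtended} supplies merely \swps-modes for the restrictions but, using countably many competing singularities, genuinely empties $\fM^{\swwgap}(\mu|_{A\cup B})$, handling the partial types.
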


\begin{proof}
	\Cref{example:ps_not_s_not_gw} is a counterexample for all mode types of \Cref{thm:main} that are not partial ($\sws$-, $\swe$-, $\sww$-, and $\swgs$-modes):
	taking $A = (-3, -1)$ and $B = (1, 3)$, the points $u_{A} \defeq -2$ and $u_{B} \defeq 2$ are $\sws$-modes of $\mu|_{A}$ and $\mu|_{B}$, respectively;
	but neither point is even a $\swgs$- or $\sww$-mode of $\mu|_{A \cup B}$.

	\Cref{example:SuspensionBridgeExtended} is a counterexample for all modes in \Cref{thm:main} that are partial ($\swps$-, $\swpgs$-, $\swpw$-, $\swwp$-, $\swgwap$-, and $\swwgap$-modes).
	Again, using the implications in \Cref{fig:periodic_table}(\subref{fig:Hasse_diagram_main}), it is sufficient to show that, for $A = (-\infty,-\frac{1}{2})$ and $B = (\frac{1}{2},\infty)$, $u_{A}=-1$ and $u_{B} = 1$ are $\swps$-modes of $\mu|_{A}$ and $\mu|_{B}$, respectively, and $u_{A}$ and $u_{B}$ are not \swwgap-modes of $\mu|_{A \cup B}$.
	Both statements follow directly from the derivations within \Cref{example:SuspensionBridgeExtended}.
\end{proof}

\begin{remark}[Connection to order theory]
	The \acp{w-mode} of $\mu \in \prob{X}$ are the greatest elements of a particular preorder on $X$ induced by $\mu$, the \defterm{small-radius limiting preorder} \citep[Definition~5.1]{LambleySullivan2022}.
	The failure of \rMP\ for \sww-modes is intimately connected with the non-totality of this preorder:
	the preorders for $\mu|_{A}$ and $\mu|_{B}$ may each be total and admit a greatest element (\sww-mode), while these properties fail for $\mu|_{A \cup B}$.
 	Moreover, the points identified by the mode map $\fM^{r}$ of \Cref{ex:big-ball_modes} correspond to the greatest elements of a total preorder, the \defterm{positive-radius preorder} \citep[Definition~4.1]{LambleySullivan2022}.
\end{remark}

\section{Conditions for mode maps to agree}
\label{sec:Mode_definitions_coincide}

While the ten mode definitions in \Cref{thm:main} are provably distinct in general, they often coincide in specific settings.
For example, when $X$ is a finite set, every \swwgap-mode is also an \sws-mode, and so all ten meaningful mode types coincide.
In this section, we consider various subclasses of $\catMetProb$ on which the ten meaningful small-ball mode maps of \cref{thm:main} coincide, and the relations among the remaining definitions.
As in \Cref{sec:small-ball_modes}, we say that two mode definitions are equivalent if they are equal as functions, and consider equivalence classes of definitions under this relation.

We emphasise that, in principle, some mode maps rejected as meaningless in \cref{sec:small-ball_modes} may well satisfy our axioms on a subclass of $\catMetProb$.
Since our goal is to find mode maps that maximally generalise the elementary cases \eqref{eq:intro_mode_map_LP} and \eqref{eq:intro_mode_map_AP}, we will consider only mode maps that are meaningful on all of $\catMetProb$, and then study equivalences between such mode types.

\Cref{sec:CASIO} studies a notion of optimality that makes certain \swg- and \swa-modes coincide with their non-\swg\ and non-\swa\ counterparts.
\Cref{sec:OM} considers measures admitting an Onsager--Machlup functional, which has the effect of making certain \swp-modes coincide with their non-\swp\ counterparts.
\Cref{sec:dichotomies} investigates the situation in which the existence of a stronger mode causes certain weaker modes to become stronger than they were a priori known to be.
Finally, \Cref{sec:Gaussian} applies the results of the previous three subsections to measures on Banach spaces that are dominated by a Gaussian measure, which is a class of considerable relevance in applications to inverse problems and diffusion processes.

\subsection{Constant optimal approximating sequences remove \texorpdfstring{\swa}{a}- and \texorpdfstring{\swg}{g}-modes}
\label{sec:CASIO}

\begin{definition}
	\label{def:CASIO}
	Let $(X, d, \mu)$ be a metric probability space.
	We say that the \defterm{constant approximating sequence is optimal} at $u \in X$ if the constant \as\ $(u)_{n \in \bN}$ dominates every other \as\ of $u$ along every possible \ns\ of radii, i.e.,
	\begin{equation}
		\label{eq:CASIO}
		\forall \ns\ (r_{n})_{n \in \bN} \, \forall \as \ (u_{n})_{n \in \bN} \to u\colon
		\quad
		\liminf_{n \to \infty} \Ratio{u}{u_{n}}{r_{n}}{\mu}
		\geq
		1.
	\end{equation}
\end{definition}

From the perspective of small-ball mode maps, measures $\mu$ satisfying \eqref{eq:CASIO} at every $u \in X$ are very well behaved.
As \Cref{lemma:CASIO_for_ac_in_Rm} shows, any $\mu \in \prob{\bR^{m}}$ with continuous, positive \ac{pdf} $\rho$ belongs to this class;
conversely, \eqref{eq:CASIO} can easily fail on the boundary of $\supp(\mu)$ even if $\rho$ is continuous.
Informally, for $\mu$ satisfying \eqref{eq:CASIO}, there is nothing to be gained by approximating a candidate mode, and so the terms $\swg = \exists \as$ and $\swa = \forall \cpas$ become redundant, vastly simplifying the lattice of mode types.
In fact, as we shall see, to remove \swg\ and \swa\ in this way it is enough that \eqref{eq:CASIO} should hold at every \swwgap-mode of $\mu$;
while it is no longer obvious that $\swa$ is redundant, this turns out to be the case for the ten meaningful mode maps.
The only additional advantage conferred by having \eqref{eq:CASIO} hold everywhere is that $\forall \cpas$ can then be removed entirely and \swe- and \sww-modes then coincide.

\begin{theorem}[Optimality of the constant \as\ and removal of \texorpdfstring{\swa}{a}- and \texorpdfstring{\swg}{g}-modes]
	\label{thm:modes_coincide_CASIO}
	On the subclass 
	\begin{equation*}
		\Bigset{(X, d, \mu) \in \catMetProb}{\text{\eqref{eq:CASIO} holds at every $\swwgap$-mode of $\mu$}},
	\end{equation*}
	the meaningful small-ball mode maps of \cref{thm:main} fall into six equivalence classes, partially ordered by implication, illustrated in  \Cref{fig:Hasse_CASIO}, namely
	\begin{equation*}
		\{ \sws, \swgs \}, \quad \{ \swps, \swpgs \},\quad \{ \swe\}, \quad \{\sww\}, \quad \{\swpw\}, \text{~~~~and~~~~} \{\swwp, \swgwap, \swwgap\}.
	\end{equation*}
	Moreover, on the subclass
	\begin{equation*}
		\Bigset{(X, d, \mu) \in \catMetProb}{\text{\eqref{eq:CASIO} holds at every $x \in X$}},
	\end{equation*}
	the mode types $\sww$ and $\swe$ coincide, yielding a lattice of five equivalence classes of mode types partially ordered by implication.
\end{theorem}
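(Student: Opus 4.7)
The plan is to obtain each collapse by factoring a ball-mass ratio into two pieces whose $\liminf$s are individually bounded below by $1$: one piece coming from \eqref{eq:CASIO} and one from the weaker mode property that is being strengthened. The underlying elementary fact is that, for positive sequences with $\liminf_{n \to \infty} a_{n} \geq 1$ and $\liminf_{n \to \infty} b_{n} \geq 1$, one has $\liminf_{n \to \infty} a_{n} b_{n} \geq 1$. Since every meaningful mode is a \swwgap-mode, by the lattice in \Cref{fig:periodic_table}(\subref{fig:Hasse_diagram_main}), \eqref{eq:CASIO} holds at every candidate mode under consideration, and so can be invoked freely below.

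I would verify the three non-trivial reverse implications in turn. For $\swgs \Rightarrow \sws$, given a \swgs-mode $u$ and an arbitrary \ns\ $(r_{n})_{n \in \bN}$, let $(u_{n})_{n \in \bN} \to u$ be the \as\ supplied by the \swgs-property and write
\begin{equation*}
    \Ratio{u}{\sup}{r_{n}}{\mu} = \Ratio{u}{u_{n}}{r_{n}}{\mu} \cdot \Ratio{u_{n}}{\sup}{r_{n}}{\mu} ;
\end{equation*}
the first factor has $\liminf \geq 1$ by CASIO at $u$ and the second by the \swgs-property. The argument for $\swpgs \Rightarrow \swps$ is essentially identical, the only change being that $(r_{n})_{n \in \bN}$ is now the particular \ns\ supplied by the $\swpgs$-property. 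For $\swwgap \Rightarrow \swwp$, I would use the equivalent form $[\forall \cp \forall \cpas \exists \ns]$ of $\swwp$ given by \Cref{prop:mode_map_equivalences}\ref{item:alternative_characterisation_wp_for_spherically_non_atomic}: given a \cp\ $v$ and a \cpas\ $(v_{n})_{n \in \bN} \to v$, extract from the \swwgap-property an \as\ $(u_{n})_{n \in \bN} \to u$ (depending on $v$) and then a \ns\ $(r_{n})_{n \in \bN}$ (depending on both), and factor
\begin{equation*}
    \Ratio{u}{v_{n}}{r_{n}}{\mu} = \Ratio{u}{u_{n}}{r_{n}}{\mu} \cdot \Ratio{u_{n}}{v_{n}}{r_{n}}{\mu} .
\end{equation*}
The forward implications in each pair are contained in the lattice of \Cref{fig:periodic_table}(\subref{fig:Hasse_diagram_main}), so the six equivalence classes are established, and the inherited partial order between them is that of \Cref{fig:Hasse_CASIO}.

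For the second claim, assuming \eqref{eq:CASIO} at every $u \in X$, I would show that $\sww \Rightarrow \swe$, which, together with the lattice implication $\swe \Rightarrow \sww$, merges $\{\swe\}$ and $\{\sww\}$. Given a \sww-mode $u$, a \cp\ $v$, a \cpas\ $(v_{n})_{n \in \bN} \to v$, and an arbitrary \ns\ $(r_{n})_{n \in \bN}$, take the constant \as\ $u_{n} = u$ and factor
\begin{equation*}
    \Ratio{u}{v_{n}}{r_{n}}{\mu} = \Ratio{u}{v}{r_{n}}{\mu} \cdot \Ratio{v}{v_{n}}{r_{n}}{\mu} .
\end{equation*}
The $\liminf$ of the first factor is at least $1$ by the \sww-property at $u$ with comparison point $v$, and that of the second by CASIO at $v$. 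This calculation also explains why CASIO only at the mode is in general insufficient to merge $\sww$ with $\swe$: the second factor depends on ball masses around the comparison point $v$, which is typically not a mode.

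The principal technical point to watch is quantifier bookkeeping: the witnessing sequences supplied by the weaker mode property must be read off in a consistent order and then handed as inputs to CASIO, which is legitimate because CASIO is universally quantified over both $(r_{n})_{n \in \bN}$ and $(u_{n})_{n \in \bN}$. Handling of points $v \notin \supp(\mu)$ is routine: by \Cref{proposition:meaningful_support_property} any mode lies in $\supp(\mu)$, and for a \cp\ outside the support both the numerator and the denominator of the relevant ratio vanish eventually, so the ratio is $\geq 1$ by the convention $\nicefrac{0}{0} \defeq 1$. Non-collapse of the remaining equivalence classes, and the failure of total ordering, can be verified by checking that suitable counterexamples from \Cref{section:Examples} (chosen to satisfy CASIO, e.g.\ those with continuous positive Lebesgue densities) distinguish the classes in \Cref{fig:Hasse_CASIO}.
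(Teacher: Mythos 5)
Your proposal is correct and follows essentially the same route as the paper: each collapse is obtained by factoring the relevant ball-mass ratio into a CASIO factor and a factor supplied by the weaker mode property, using $\liminf(a_n b_n) \geq \liminf(a_n)\liminf(b_n)$, and the observation that every meaningful mode is a \swwgap-mode justifies invoking \eqref{eq:CASIO} throughout. The only (cosmetic) deviations are that the paper proves $\swwgap \Rightarrow \swwp$ directly in the $[\forall \cp \exists \ns]$ form via the constant \cpas, and proves $\sww \Rightarrow \swe$ via the $[\forall \cp \exists \as \forall \ns]$ characterisation rather than the constant \as; both variants are equivalent to yours.
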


\begin{figure}[t]
	\centering
	\begin{tikzpicture}[scale=0.9]
		\draw[draw=none, use as bounding box] (-3.0,0.5) rectangle (5.0,-5.5);
		\node (sws) at (0, 0) {$\sws = \swgs$};
		\node (swe) at (2, -1.25) {$\swe$};
		\node (sww) at (2, -2.5) {$\sww$};
		\node (swps) at (-2, -1.875) {$\swps = \swpgs$};
		\node (swpw) at (0, -3.75) {$\swpw$};
		\node (swwp) at (0, -5) {$\swwp = \swgwap = \swwgap$};
		\draw[rounded corners] (sws) -- (-2, -1.25) -- (swps) -- (-2, -2.5) -- (swpw);
		\draw (sws) -- (swe) -- (sww) -- (swpw) -- (swwp);

		\draw[red, rounded corners] (-2.0, -0.4) -- (2.5, -0.4);
		\node[red, anchor=west] at (2.5, -0.4) {\footnotesize Ex.~\redref{ex:Gaussian_swps_not_sws}};

		\draw[red, rounded corners] (2.5, -1.875) -- (0.3, -1.875) -- (-2, -3.3125);
		\node[red, anchor=west] at (2.5, -1.875) {\footnotesize Ex.~\redref{example:CP}};

		\draw[red, rounded corners] (-2.0, -3.125) -- (1.64, -0.85) -- (2.5, -0.85);
		\node[red, anchor=west] at (2.5, -0.85) {\footnotesize Ex.~\redref{ex:Gaussian_swe_not_swps}};

		\draw[red, rounded corners] (2.5, -3.125) -- (2.0, -3.125) -- (-2.0, -0.625);
		\node[red, anchor=west] at (2.5, -3.125) {\footnotesize Ex.~\redref{example:ps_not_s_not_gw}};

		\draw[red] (-2.0, -4.375) -- (2.5, -4.375);
		\node[red, anchor=west] at (2.5, -4.375) {\footnotesize Ex.~\redref{example:wp_but_not_pw_or_pgs}};
	\end{tikzpicture}
	\caption{The lattice of distinct, meaningful, small-ball mode types from \Cref{fig:periodic_table}(\subref{fig:Hasse_diagram_main}) simplifies on the subclass of measures satisfying the optimality condition \eqref{eq:CASIO} at each \swwgap-mode (\Cref{thm:modes_coincide_CASIO}).
	Also, $\swe = \sww$ on the subclass for which the constant \as\ is optimal everywhere.}
	\label{fig:Hasse_CASIO}
\end{figure}

\begin{proof}
	The essential idea is that the optimality condition \eqref{eq:CASIO} ensures that any definition containing `$\exists \as$' is equivalent to the corresponding definition with `$\exists \as$' deleted.
	To see that $\swwgap$-, $\swgwap$-, and $\swwp$-modes coincide, suppose that $u$ is a \swwgap-mode, i.e., a $[\forall \cp \exists \as \forall \cpas \exists \ns]$-mode.
		Let $v \in X$ be arbitrary.
		By the hypothesis that $u$ is a \swwgap-mode, there exists an \as\ $(u_{n})_{n \in \bN} \to u$ such that, for $v_{n} \equiv v$, there is an \ns\ $(r_{n})_{n \in \bN}$ for which
		\begin{equation}
			\label{eq:step_in_wgap_implies_wp}
			\liminf_{n \to \infty} \Ratio{u_{n}}{v_{n}}{r_{n}}{\mu} \geq 1 .
		\end{equation}
		Hence, $u$ is also a \swwp-mode, since
		\begin{align*}
			\liminf_{n \to \infty} \Ratio{u}{v}{r_{n}}{\mu}
			& = \liminf_{n \to \infty} \bigl( \Ratio{u}{u_{n}}{r_{n}}{\mu} \Ratio{u_{n}}{v_{n}}{r_{n}}{\mu} \bigr) && \text{(since $v_{n} \equiv v$)} \\
			& \geq \underbrace{ \liminf_{n \to \infty} \Ratio{u}{u_{n}}{r_{n}}{\mu} }_{\geq 1 \text{ by optimality}} \underbrace{ \liminf_{n \to \infty} \Ratio{u_{n}}{v_{n}}{r_{n}}{\mu} }_{\geq 1 \text{ by \eqref{eq:step_in_wgap_implies_wp}}} \geq 1 .
		\end{align*}

		To see that $\swpgs$- and $\swps$-modes coincide, suppose that $u$ is a \swpgs-mode, i.e., a $[\exists \ns \exists \as]$-mode.
		By hypothesis, there exists an \ns\ $(r_{n})_{n \in \bN}$ and an \as\ $(u_{n})_{n \in \bN} \to u$ such that
		\begin{equation}
			\label{eq:step_in_pgs_implies_ps}
			\liminf_{n \to \infty} \Ratio{u_{n}}{\sup}{r_{n}}{\mu} \geq 1 .
		\end{equation}
		Hence, $u$ is also a \swps-mode, since
		\begin{align*}
			\liminf_{n \to \infty} \Ratio{u}{\sup}{r_{n}}{\mu}
			& \geq \underbrace{ \liminf_{n \to \infty} \Ratio{u}{u_{n}}{r_{n}}{\mu} }_{\geq 1 \text{ by optimality}} \underbrace{ \liminf_{n \to \infty} \Ratio{u_{n}}{\sup}{r_{n}}{\mu} }_{\geq 1 \text{ by \eqref{eq:step_in_pgs_implies_ps}}} \geq 1 .
		\end{align*}

		The proof that $\swgs$- and $\sws$-modes coincide is similar to the previous cases and is omitted.

		To see that, under the additional hypothesis \eqref{eq:CASIO}, $\swe$- and $\sww$-modes coincide,
		let $u$ be a \sww-mode and consider any \cp\ $v \in X \setminus \{ u \}$ with \cpas\ $(v_{n})_{n \in \bN} \to v$.
		Using the fact that a \sww-mode is equivalently characterised as a $[\forall \cp \exists \as \forall \ns]$-mode (\Cref{prop:mode_map_equivalences}\ref{item:alternative_characterisation_weak_for_closed_balls}), let $(u_{n})_{n \in \bN}$ be an \as\ of $u$ such that, for any \ns\ $(r_{n})_{n \in \bN}$,
		\begin{equation}
			\label{eq:step_in_w_implies_e}
			\liminf_{n \to \infty} \Ratio{u_{n}}{v}{r_{n}}{\mu} \geq 1 .
		\end{equation}
		Then, appealing to optimality \eqref{eq:CASIO} at the arbitrary \cp\ $v$, we see that $u$ is an $\swe$-mode as
		\begin{align*}
			\liminf_{n \to \infty} \Ratio{u_{n}}{v_{n}}{r_{n}}{\mu}
			& \geq \underbrace{ \liminf_{n \to \infty} \Ratio{u_{n}}{v}{r_{n}}{\mu} }_{\geq 1 \text{ by \eqref{eq:step_in_w_implies_e}}} \underbrace{ \liminf_{n \to \infty} \Ratio{v}{v_{n}}{r_{n}}{\mu} }_{\geq 1 \text{ by optimality}} \geq 1 . \qedhere
		\end{align*}
\end{proof}

\subsection{Onsager--Machlup functionals remove certain \texorpdfstring{\swp}{p}-modes}
\label{sec:OM}

\begin{definition}
	Let $(X, d, \mu)$ be a metric probability space, and let $\emptyset \neq E \subseteq X$.
	A function $I \colon E \to \bR$ is called an \defterm{\acl{OM} (\acs{OM}) functional} for $\mu$ if, for all $u, v \in E$,
	\begin{equation}
		\label{eq:OM_functional}
		\lim_{r \to 0} \Ratio{u}{v}{r}{\mu} = \exp\bigl( I(v) - I(u) \bigr).
	\end{equation}
	We call an OM functional \defterm{\OMexhaustive} if \defterm{property~$M(\mu, E)$} holds:
	for all $u \in E$ and $v \notin E$,
	$\lim_{r \to 0} \Ratio{v}{u}{r}{\mu} = 0$ \citep{ayanbayev2021gamma}.
\end{definition}

An \ac{OM} functional can be viewed as a formal negative log-density for $\mu$ on the subspace $E$, and property $M(\mu, E)$, which is implicit in much of the literature, ensures that $E$ is large enough that points outside of $E$ can be ignored as they have negligible small-ball mass.
The advantages of the \ac{OM} perspective on modes are already well known in the literature:

\begin{proposition}[{\citealp[Proposition~4.1]{ayanbayev2021gamma}}]
	\label[proposition]{prop:OM_minimisers_w-modes}
	Let $(X, d, \mu)$ be a metric probability space.
	If $\mu$ admits an \OMexhaustive\ \ac{OM} functional $I \colon E \to \bR$, then the \acp{w-mode} of $\mu$ coincide with the global minimisers of $I$, where $I$ is understood to take the value ${+ \infty}$ on $X \setminus E$.
\end{proposition}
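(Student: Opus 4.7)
The plan is to prove both inclusions: every global minimiser of $I$ (extended by $+\infty$ outside $E$) is a \sww-mode, and conversely every \sww-mode is a global minimiser of $I$. The two ingredients that will do the work are the OM identity \eqref{eq:OM_functional}, which converts the limiting behaviour of ball-mass ratios inside $E$ into an exponential of a difference of $I$-values, and property $M(\mu,E)$, which handles comparison points outside $E$.

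For the forward direction I would take a global minimiser $u$, which must lie in $E$ since $I$ is $+\infty$ off $E$. Given any \cp\ $v \in X \setminus \{u\}$, I would split into cases. If $v \in E$, then \eqref{eq:OM_functional} gives $\lim_{r \to 0} \Ratio{u}{v}{r}{\mu} = \exp(I(v) - I(u)) \geq 1$, since $I(u) \leq I(v)$. If $v \notin E$, exhaustiveness gives $\lim_{r \to 0} \Ratio{v}{u}{r}{\mu} = 0$, so the reciprocal limit is $+\infty$ (consistent with the convention $c/0 = +\infty$ for $c>0$ introduced in \Cref{section:Setup_And_Notation}). In either case $\liminf_{r \to 0} \Ratio{u}{v}{r}{\mu} \geq 1$, so $u$ is a \sww-mode.

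For the reverse direction I would suppose $u$ is a \sww-mode. The first substep is to show that $u \in E$. Assume for contradiction $u \notin E$ and pick any $v \in E$, which is possible because $E \neq \emptyset$; necessarily $v \neq u$. Property $M(\mu,E)$ applied with $v \in E$ and $u \notin E$ gives $\lim_{r \to 0} \Ratio{u}{v}{r}{\mu} = 0$, contradicting $\liminf_{r \to 0} \Ratio{u}{v}{r}{\mu} \geq 1$. Hence $u \in E$. Then, for each $v \in E \setminus \{u\}$, the OM identity together with the \sww-mode property yields $\exp(I(v) - I(u)) \geq 1$, i.e.\ $I(u) \leq I(v)$; and for $v \notin E$ the inequality $I(u) < +\infty = I(v)$ is trivial. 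Therefore $u$ is a global minimiser.

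The only delicate step is the argument that every \sww-mode must lie in $E$: without the exhaustiveness hypothesis a point $u \notin E$ could in principle have locally dominant ball mass relative to every other point in $X$, so the \sww-modes could strictly contain the minimisers of $I$. Beyond this, the proof is a routine case analysis on whether the \cp\ lies in $E$ or not, combined with the monotonicity of $\exp$.
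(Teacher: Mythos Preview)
Your proof is correct and follows the natural two-inclusion strategy. Note, however, that the paper does not supply its own proof of this proposition: it is quoted as \citet[Proposition~4.1]{ayanbayev2021gamma} and used as a black box, so there is no in-paper argument to compare against. Your argument is the standard one and matches what one finds in the cited reference.
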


The next result shows that, for measures admitting an \OMexhaustive\ \ac{OM} functional, the lattice of mode types simplifies:
\swpw-modes coincide with ordinary \sww-modes.
However, the existence of an \OMexhaustive\ \ac{OM} functional does not entirely void the class of partial modes:
\Cref{subsec:Gaussian_s_ps_w} exhibits a measure dominated by a non-degenerate Gaussian, satisfying the hypotheses of both \Cref{thm:modes_coincide_CASIO,thm:modes_coincide_OMpM}, with a \ac{ps-mode} that is not a \ac{s-mode}.

\begin{theorem}[\OMExhaustive\ \ac{OM} functionals and certain \swp-modes]
	\label{thm:modes_coincide_OMpM}
	On the subclass 
	\begin{equation*}
		\Bigset{(X, d, \mu) \in \catMetProb}{ \text{$\mu$ admits an exhaustive \ac{OM} functional} },
	\end{equation*}
	the meaningful small-ball mode maps of \cref{thm:main} fall into eight equivalence classes, partially ordered by implication, illustrated in \Cref{fig:Hasse_in_special_cases}(\subref{subfig:Hasse_in_special_cases_OMpM}), namely
	\begin{equation*}
		\{\sws\}, \quad \{\swgs\}, \quad \{\swpgs\}, \quad \{\swe\}, \quad \{\swps\}, \quad \{\sww, \swpw, \swwp\}, \quad \{\swgwap\}, \text{~~~~and~~~~} \{\swwgap\}.
	\end{equation*}
\end{theorem}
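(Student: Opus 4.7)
The essential observation is that the OM relation \eqref{eq:OM_functional} upgrades ``$\liminf \geq 1$ along some null sequence'' to ``$\lim \geq 1$'', which collapses the partial/non-partial distinction in the weak setting. I would therefore prove the single non-trivial new implication $\swwp \Rightarrow \sww$; combined with the chain $\sww \Rightarrow \swpw \Rightarrow \swwp$ already recorded in \Cref{fig:periodic_table}(\subref{fig:Hasse_diagram_main}), this yields the three-element equivalence class $\{\sww, \swpw, \swwp\}$ while leaving the remaining seven meaningful mode types as singleton classes.

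The collapse is carried out in two steps. First, I would show that any $\swwp$-mode $u$ of $\mu$ must lie in $E$: otherwise, picking any $v \in E$ (which is available since $I$ is defined), property $M(\mu, E)$ applied to the pair $(v, u)$ with $v \in E$, $u \notin E$ yields $\lim_{r \to 0} \Ratio{u}{v}{r}{\mu} = 0$, so every null sequence $(r_{n})_{n \in \bN}$ satisfies $\liminf_{n \to \infty} \Ratio{u}{v}{r_{n}}{\mu} = 0 < 1$, contradicting the $\swwp$ property at $v$. Second, suppose $u \in E$ is a $\swwp$-mode and fix any comparison point $v \in X \setminus \{u\}$. If $v \in E$, then the OM relation forces the genuine limit $\lim_{r \to 0} \Ratio{u}{v}{r}{\mu} = \exp(I(v) - I(u))$ to exist; since the $\swwp$ hypothesis supplies some null sequence along which $\liminf \geq 1$, the value of this common limit must itself be $\geq 1$, hence $\liminf_{n \to \infty} \Ratio{u}{v}{r_{n}}{\mu} \geq 1$ along \emph{every} null sequence. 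If instead $v \notin E$, property $M$ gives $\lim_{r \to 0} \Ratio{v}{u}{r}{\mu} = 0$, so (using the convention $\nicefrac{c}{0} = \infty$ for $c > 0$) $\Ratio{u}{v}{r_{n}}{\mu} \to \infty$ along every null sequence and the $\sww$ condition at $v$ holds trivially. Both cases together establish $\swwp \Rightarrow \sww$.

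The remaining task is to verify that no additional collapse occurs among the other seven classes. All implications asserted by the diagram in \Cref{fig:Hasse_in_special_cases}(\subref{subfig:Hasse_in_special_cases_OMpM}) are inherited from \Cref{fig:periodic_table}(\subref{fig:Hasse_diagram_main}), so it remains only to produce, for each covering relation that does \emph{not} collapse, a measure admitting an \OMexhaustive\ OM functional in which the stronger mode is strictly stronger. I would draw these separating examples from the Gaussian-dominated constructions of \Cref{sec:Gaussian}, in which the existence of an \OMexhaustive\ OM functional is established and examples such as \Cref{ex:Gaussian_swps_not_sws,ex:Gaussian_swe_not_swps} already live. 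The main potential obstacle is purely bookkeeping --- checking that a sufficient suite of separating measures does in fact admit an \OMexhaustive\ OM functional --- rather than anything conceptual: the collapse itself reduces to the one-line observation that the OM relation makes $\liminf$ statements along a single null sequence equivalent to $\lim$ statements along every null sequence, while nothing in the OM framework controls the supremum $\sMass{\mu}{r}$, the approximating sequence $(u_{n})_{n \in \bN}$, or the comparison sequence $(v_{n})_{n \in \bN}$, so the classes distinguished by $\sws$, $\swg$, or $\swa$ structure remain distinct.
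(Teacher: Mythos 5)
Your proposal is correct and follows essentially the same route as the paper: the only substantive step is $\swwp \Rightarrow \sww$, proved by first placing any $\swwp$-mode in $E$ via property $M(\mu,E)$, then using the \ac{OM} relation to upgrade the $\liminf$ along one null sequence to the genuine limit $\exp(I(v)-I(u)) \geq 1$ for $v \in E$, and handling $v \notin E$ by property $M$; the separating examples for the remaining classes are exactly those cited in \Cref{fig:Hasse_in_special_cases}(\subref{subfig:Hasse_in_special_cases_OMpM}). The paper merely states ``property $M(\mu,E)$ implies $u \in E$'' where you spell out the argument, which is a harmless elaboration.
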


\begin{figure}[t]
	\centering
	\begin{subfigure}[t]{0.48\textwidth}
		\centering
		\begin{tikzpicture}[scale=0.9]
			\draw[draw=none, use as bounding box] (-3.0,0.5) rectangle (5.0,-5.5);
			\node (sws) at (0, 0) {$\sws$};
			\node (swe) at (2, -1) {$\swe$};
			\node (sww) at (2, -3) {$\sww = \swpw = \swwp$};
			\node (swgs) at (-2, -1) {$\swgs$};
			\node (swps) at (0, -2) {$\swps$};
			\node (swpgs) at (-2, -3) {$\swpgs$};
			\node (swgwap) at (0, -4) {$\swgwap$};
			\node (swwgap) at (0, -5) {$\swwgap$};
			\draw (sws) -- (swps);
			\draw (swps) -- (swpgs);
			\draw (sws) -- (swgs);
			\draw (swe) -- (sww);
			\draw (swps) -- (sww);
			\draw (sww) -- (swgwap);
			\draw (sws) -- (swe);
			\draw (swgwap) -- (swwgap);
			\draw (swpgs) -- (swgwap);
			\draw (swgs) -- (swpgs);

			\draw[red, rounded corners] (-1, -0.2) -- (-1, -2.5) -- (1.4, -3.7) -- (2.5, -3.7);
			\node[red, anchor=west] at (2.5, -3.7) {\footnotesize Ex.~\redref{example:gs_not_s_not_wp}};

			\draw[red, rounded corners] (-2.5, -2) -- (-2, -2)  -- (2.0, 0.0) -- (2.5, 0.0);
			\node[red, anchor=west] at (2.5, 0) {\footnotesize Ex.~\redref{ex:Gaussian_swps_not_sws}};

			\draw[red, rounded corners] (2.5, -0.5) -- (1.3, -0.5) -- (1, -0.65) -- (1, -2.3) -- (-1.8, -3.7) -- (-2.5, -3.7);
			\node[red, anchor=west] at (2.5, -0.5) {\footnotesize Ex.~\redref{ex:Gaussian_swe_not_swps}};

			\draw[red, rounded corners] (2.5, -2.0) -- (2.0, -2.0) -- (-1.4, -3.7);
			\node[red, anchor=west] at (2.5, -2.0) {\footnotesize Ex.~\redref{example:CP}};

			\draw[red] (-1.4, -4.5) -- (2.5, -4.5);
			\node[red, anchor=west] at (2.5, -4.5) {\footnotesize Ex.~\redref{example:wgap_but_not_gwap}};
		\end{tikzpicture}
		\caption{\raggedright On the subclass of metric probability spaces where $\mu$ has an \OMexhaustive\ \ac{OM} functional $I \colon E \to \bR$ (\Cref{thm:modes_coincide_OMpM}).}
		\label{subfig:Hasse_in_special_cases_OMpM}
	\end{subfigure}
	\hspace{1em}
	\begin{subfigure}[t]{0.48\textwidth}
		\centering
		\begin{tikzpicture}[scale=0.9]
			\draw[draw=none, use as bounding box] (-3.0,1.0) rectangle (4.0,-5.0);
			\node (sws) at (0, 0) {$\sws = \swgs$};
			\node (swps) at (-2, -2.0) {$\swps = \swpgs$};
			\node (swe) at (2, -2.0) {$\swe$};
			\node (sww) at (0, -4.0) {$\sww = \cdots = \swwgap$};
			\draw (sws) -- (swps);
			\draw (sws) -- (swe);
			\draw (swps) -- (sww);
			\draw (swe) -- (sww);

			\draw[red, rounded corners] (-2.9, -2.75) -- (-0.75, -2.75) -- (0.75, -1.25) -- (2.5, -1.25);
			\node[red, anchor=west] at (2.5, -1.25) {\footnotesize Ex.~\redref{ex:Gaussian_swe_not_swps}};

			\draw[red, rounded corners] (-2.9, -0.75) -- (2.5, -0.75);
			\node[red, anchor=west] at (2.5, -0.75) {\footnotesize Ex.~\redref{ex:Gaussian_swps_not_sws}};

			\draw[red, rounded corners] (2.5, -3.25) -- (-2.9, -3.25);
			\node[red, anchor=west] at (2.5, -3.25) {\footnotesize Ex.~\redref{example:CP}};
		\end{tikzpicture}
		\caption{\raggedright On the subclass of metric probability spaces where (\subref{subfig:Hasse_in_special_cases_OMpM}) holds and the constant \as\ is optimal at each \swwgap-mode (\Cref{cor:modes_coincide_CASIO_and_OMpM}).
		Also, $\swe = \sww$ if the constant \as\ is optimal everywhere.}
		\label{subfig:Hasse_in_special_cases_CASIO_and_OMpM}
	\end{subfigure}
	\caption{The lattice of distinct, meaningful, small-ball mode types from \Cref{fig:periodic_table}(\subref{fig:Hasse_diagram_main}) simplifies for measures possessing an \OMexhaustive\ \ac{OM} functional, and possibly also satisfying the optimality condition \eqref{eq:CASIO}.}
	\label{fig:Hasse_in_special_cases}
\end{figure}

\begin{proof}
	It suffices to show that any $\swwp$-mode is a $\sww$-mode.
	So, suppose that $I \colon E \to \Reals$ is an \OMexhaustive\ \ac{OM} functional and let $u \in X$ be a $\swwp$-mode;
	property $M(\mu, E)$ implies that $u \in E$.
	Take an arbitrary \cp\ $v \in X$.
	If $v \notin E$, then property $M(\mu, E)$ ensures that, for any \ns\ $(s_{n})_{n \in \bN}$, $\liminf_{n \to \infty} \Ratio{u}{v}{s_{n}}{\mu} \geq 1$.
	Otherwise, if $v \in E$, then since $u$ is a $\swwp$-mode we may find an \ns\ $(r_{n})_{n \in \bN}$, possibly depending on $v$, such that
	\begin{equation*}
		\liminf_{n \to \infty} \Ratio{u}{v}{r_{n}}{\mu} \geq 1.
	\end{equation*}
	Since both $u$ and $v$ lie in $E$, we know that, for any \ns\ $(s_{n})_{n \in \bN}$,
	\begin{equation*}
		\lim_{n \to \infty} \Ratio{u}{v}{s_{n}}{\mu} = \exp\bigl(I(v) - I(u)\bigr),
	\end{equation*}
	and this limit is seen to be at least unity by taking $s_{n} = r_{n}$.
	Thus, $u$ is a $\sww$-mode.
\end{proof}

\begin{corollary}
	\label[corollary]{cor:modes_coincide_CASIO_and_OMpM}
	On the subclass 
	\begin{equation*}
		\Set{(X, d, \mu) \in \catMetProb}{
		\begin{aligned}
			&\text{$\mu$ admits an exhaustive OM functional and}\\
			&\text{\eqref{eq:CASIO} holds at every $\swwgap$-mode of $\mu$}
		\end{aligned}},
	\end{equation*}
	 the meaningful small-ball mode maps of \cref{thm:main} fall into four equivalence classes, partially ordered by implication, illustrated in \Cref{fig:Hasse_in_special_cases}(\subref{subfig:Hasse_in_special_cases_CASIO_and_OMpM}), namely
	\begin{equation*}
		\{\sws, \swgs\},\quad \{ \swps, \swpgs \}, \quad \{\swe\}, \text{~~~~and~~~~} \{\sww, \swpw, \swwp, \swgwap, \swwgap\}.
	\end{equation*}
	Moreover, on the subclass
	\begin{equation*}
		\Set{(X, d, \mu) \in \catMetProb}{
		\begin{aligned}
			&\text{$\mu$ admits an exhaustive OM functional and}\\
			&\text{\eqref{eq:CASIO} holds at every $x \in X$}
		\end{aligned}},
	\end{equation*}
	the mode types $\sww$ and $\swe$ coincide, yielding a lattice of three mode types totally ordered by implication.
\end{corollary}

\begin{proof}
	This follows straightforwardly from \Cref{thm:modes_coincide_CASIO,thm:modes_coincide_OMpM}.
\end{proof}

Recall that every \sws-mode is a \sww-mode, and that every \swps-mode is a \swpw-mode.
In the situation considered in this subsection, we obtain the following unexpected relationship between \emph{partial} strong modes and \emph{non-partial} weak modes, which would ordinarily be incomparable, as well as a condition under which \swps-modes and \sws-modes coincide:

\begin{corollary}[\swps-, \sww-, and \sws-modes]
	\label[corollary]{cor:ps_implies_w_or_even_s}
	Let $(X, d, \mu) \in \catMetProb$ and suppose that $\mu$ admits an exhaustive \ac{OM} functional.
	\begin{enumerate}[label=(\alph*)]
		\item
		\label{item:ps_implies_w}
		Every \swps-mode of $\mu$ is also a \sww-mode.\footnote{Indeed, this part holds whenever the \swpw- and \sww-modes of $\mu$ coincide, for which the existence of an \OMexhaustive\ \ac{OM} functional is sufficient (by \Cref{thm:modes_coincide_OMpM}) but not necessary.}

		\item
		\label{item:ps_implies_s}
		If, for each \ns\ $(r_{n})_{n \in \bN}$, $\mu$ has a \swps-mode along some subsequence of $(r_{n})_{n \in \bN}$, then its $\swps$- and $\sws$-modes coincide.
	\end{enumerate}
\end{corollary}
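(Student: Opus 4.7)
The plan is to establish part \ref{item:ps_implies_w} by passing through the intermediate notion of a \swpw-mode, and part \ref{item:ps_implies_s} by exploiting the fact that the existence of the OM functional together with the given hypothesis forces every \swps-mode to have the minimal OM value, which in turn equalises all the ball-mass ratios in the limit.

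For part \ref{item:ps_implies_w}, observe that whenever $u$ is a \swps-mode of $\mu$ along a null sequence $(r_n)_{n\in\bN}$, then by the elementary pointwise inequality
\[
	\Ratio{u}{v}{r_n}{\mu} = \frac{\mu(\ball{u}{r_n})}{\mu(\ball{v}{r_n})} \geq \frac{\mu(\ball{u}{r_n})}{\sMass{\mu}{r_n}} = \Ratio{u}{\sup}{r_n}{\mu}
\]
valid for every $v \in X\setminus\{u\}$, $u$ is automatically a \swpw-mode along the same null sequence. By \Cref{thm:modes_coincide_OMpM}, the existence of an \OMexhaustive\ OM functional collapses the classes of \swpw- and \sww-modes, so $u$ is a \sww-mode.

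For part \ref{item:ps_implies_s}, let $u$ be a \swps-mode. By part \ref{item:ps_implies_w}, $u$ is a \sww-mode, and hence, by \Cref{prop:OM_minimisers_w-modes}, $u \in E$ and $I(u) = \inf_{E} I =: I_\star$. Now fix an arbitrary null sequence $(r_n)_{n\in\bN}$. By hypothesis, there exists a \swps-mode $w = w(r_n)$ along $(r_n)_{n\in\bN}$, i.e.\ $\liminf_{n\to\infty} \Ratio{w}{\sup}{r_n}{\mu} \geq 1$. Applying part \ref{item:ps_implies_w} to $w$ and invoking \Cref{prop:OM_minimisers_w-modes} once more yields $w \in E$ and $I(w) = I_\star = I(u)$. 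Consequently, by \eqref{eq:OM_functional},
\[
	\lim_{n\to\infty} \Ratio{u}{w}{r_n}{\mu} = \exp\bigl(I(w) - I(u)\bigr) = 1.
\]

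It remains to chain the ratios multiplicatively: since $\Ratio{u}{\sup}{r_n}{\mu} = \Ratio{u}{w}{r_n}{\mu}\cdot\Ratio{w}{\sup}{r_n}{\mu}$, the standard inequality for $\liminf$ of products of non-negative sequences gives $\liminf_{n\to\infty} \Ratio{u}{\sup}{r_n}{\mu} \geq 1 \cdot 1 = 1$. Since $(r_n)_{n\in\bN}$ was arbitrary, $u$ is an \sws-mode by the characterisation $\sws = [\forall \ns]$ from \Cref{prop:mode_map_equivalences}. The converse inclusion $\sws \subseteq \swps$ is immediate, so the two classes of modes coincide. The main subtlety is simply bookkeeping: the \swps-mode $w$ may depend on the null sequence, but that is harmless because we only use it to witness the inf of $I$ and to estimate the ratio $\Ratio{w}{\sup}{r_n}{\mu}$ along that same sequence.
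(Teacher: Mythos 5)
Your proof is correct and follows essentially the same route as the paper: part \ref{item:ps_implies_w} via the pointwise inequality $\Ratio{u}{v}{r}{\mu} \geq \Ratio{u}{\sup}{r}{\mu}$ plus \Cref{thm:modes_coincide_OMpM}, and part \ref{item:ps_implies_s} by using \Cref{prop:OM_minimisers_w-modes} to place all \swps-modes among the minimisers of $I$ in $E$ and then chaining $\Ratio{u}{\sup}{r_{n}}{\mu} = \Ratio{u}{w}{r_{n}}{\mu}\,\Ratio{w}{\sup}{r_{n}}{\mu}$ with the sequence-dependent witness $w$. No gaps.
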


\begin{proof}
	Every \swps-mode is a \swpw-mode (by definition), and every \swpw-mode is a \sww-mode under the stated hypotheses (by \Cref{thm:modes_coincide_OMpM}), which establishes \ref{item:ps_implies_w}.

	For \ref{item:ps_implies_s}, let $I \colon E \to \Reals$ be an exhaustive OM functional.
	By assumption, for each \ns\ $\underline{r} = (r_{n})_{n \in \bN}$, there exists a subsequence $\underline{r}' = (r_{n}')_{n \in \Naturals}$ and a \swps-mode $u^{\underline{r}'} \in X$ such that $\liminf_{n \to \infty} \Ratio{u^{\underline{r}}}{\sup}{r_{n}'}{\mu} \geq 1$.
	Part \ref{item:ps_implies_w} implies that each such $u^{\underline{r}'}$ is a \sww-mode and, by \Cref{prop:OM_minimisers_w-modes}, \sww-modes all lie in $E$ and are global minimisers of $I$.
	So, fix one \swps-mode $u^{\underline{s}} \in E$ of this form.
	Let $\underline{r} = (r_{n})_{n \in \bN}$ be any \ns\ and let $u^{\underline{r}'} \in E$ be an associated \swps-mode along some subsequence $\underline{r}'$.
	Then, using the elementary fact that, if every subsequence of a real sequence has a further subsequence converging to the same limit, then the whole sequence converges to that limit, we conclude that $u^{\underline{s}}$ is in fact an \sws-mode, since
	\begin{align*}
		\liminf_{n \to \infty} \Ratio{u^{\underline{s}}}{\sup}{r_{n}'}{\mu}
		& = \lim_{n \to \infty} \Ratio{u^{\underline{s}}}{u^{\underline{r}'}}{r_{n}'}{\mu} \liminf_{n \to \infty} \Ratio{u^{\underline{r}'}}{\sup}{r_{n}'}{\mu} \\
		& = \frac{\exp(- \min_{E} I)}{\exp(- \min_{E} I)} \liminf_{n \to \infty} \Ratio{u^{\underline{r}'}}{\sup}{r_{n}'}{\mu} \geq 1 .
		\qedhere
	\end{align*}
\end{proof}

\subsection{Coincidences induced by the existence of a stronger mode}
\label{sec:dichotomies}

It is natural to ask whether a single $\mu$ could simultaneously possess modes of all ten meaningful types.
The answer to this question is no:
by the strong--weak dichotomy of \citet[Lemma~2.2]{Lambley2023}, if $\mu$ has an \sws-mode, then any \sww-modes must in fact be \sws-modes as well.
That is, the existence of a mode of a stronger kind grants additional strength to modes that are, a priori, only known to be of a weaker kind.
The next theorem states results of a similar flavour:

\begin{theorem}[Coincidence of mode types when a stronger mode exists]
	\label{thm:s--ps--w_dichotomy}
	Consider the following subclasses of $\catMetProb$:
	
	\begin{enumerate}[label=(\alph*)]
		\item
		\label{item:s--ps--w_dichotomy_s--w}
		On
		the subclass
		\begin{equation*}
			\Bigset{(X, d, \mu) \in \catMetProb}{\text{$\mu$ has a $\sws$-mode}},
		\end{equation*}
		the ten meaningful small-ball mode types of \cref{thm:main} fall into four distinct equivalence classes, illustrated in \Cref{fig:Hasse_dichotomy_2}(\subref{subfig:Hasse_dichotomy_exists_s}), namely
		\[
			\{ \sws, \swe, \sww \}, \quad
			\{ \swgs \}, \quad
			\{ \swps, \swpw, \swwp \}, \quad \text{and} \quad
			\{ \swpgs, \swgwap, \swwgap \} .
		\]

		\item
		On the subclass
		\begin{equation*}
			\Bigset{(X, d, \mu) \in \catMetProb}{\text{$\mu$ has a $\sww$-mode}},
		\end{equation*}
		the mode types \swwp\ and \swpw\ coincide.

		\item
		\label{item:s--ps--w_dichotomy_ps--w}
		Let $(X, d, \mu) \in \catMetProb$.
		If $\mu$ has a \swps-mode along the \ns\ $(r_{n})_{n \in \bN}$, then any \sww-mode is a \swps-mode along $(r_{n})_{n \in \bN}$.		

		\item
		\label{item:s--ps--w_dichotomy_ps--s}
		On the subclass
		\begin{equation*}
			\Bigset{(X, d, \mu) \in \catMetProb}{\text{for every \ns, $\mu$ has a \swps-mode along that \ns}},
		\end{equation*}	
		the mode types \sww, \swe, and \sws\ coincide; moreover, the mode types \swps\ and \swpw\ coincide.
	\end{enumerate}
\end{theorem}

\begin{proof}
	\begin{enumerate}[label=(\alph*)]
		\item
		The claim that \sws-, \swe-, and \sww-modes coincide is precisely the strong--weak dichotomy of \citet[Lemma~2.2]{Lambley2023}:
		let $u \in X$ be a \sww-mode and let $v \in X$ be an \sws-mode; then $u$ is an \sws-mode because
		\begin{equation*}
			\liminf_{r \to 0} \Ratio{u}{\sup}{r}{\mu} \geq \liminf_{r \to 0} \Ratio{u}{v}{r}{\mu} \liminf_{r \to 0} \Ratio{v}{\sup}{r}{\mu} \geq 1.
		\end{equation*}

		Secondly, let $u$ be a \swwp-mode and let $v$ be a \sws-mode.
		Then, since $u$ is a \swwp-mode, there exists some \ns\ $(r_{n})_{n \in \bN}$ such that $\liminf_{n \to \infty} \Ratio{u}{v}{r_{n}}{\mu} \geq 1$.
		Then $u$ is a \swps-mode, since
		\[
			\liminf_{n \to \infty} \Ratio{u}{\sup}{r_{n}}{\mu} \geq \liminf_{n \to \infty} \Ratio{u}{v}{r_{n}}{\mu} \liminf_{n \to \infty} \Ratio{v}{\sup}{r_{n}}{\mu} \geq 1 .
		\]

		Thirdly, let $u \in X$ be a \swwgap-mode and let $v \in X$ be an \sws-mode.
		Then, for this \cp\ $v$, there exists an \as\ $(u_{n})_{n \in \bN} \to u$ such that, for the constant \cpas\ $v_{n} = v$, there exists an \ns\ $(r_{n})_{n \in \bN}$ satisfying $\liminf_{n \to \infty} \Ratio{u_{n}}{v}{r_{n}}{\mu} \geq 1$.
		Hence, $u$ is a \swpgs-mode, since, along this \as\ and \ns,
		\[
			\liminf_{n \to \infty} \Ratio{u_{n}}{\sup}{r_{n}}{\mu}
			\geq
			\liminf_{n \to \infty} \Ratio{u_{n}}{v}{r_{n}}{\mu}
			\liminf_{n \to \infty} \Ratio{v}{\sup}{r_{n}}{\mu}
			\geq
			1.
		\]

		Finally, \Cref{example:ps_not_s_not_gw,example:gs_not_s_not_wp} show that the four classes of modes are, in general, distinct.

		\item
		Let $u$ be a \swwp-mode and let $w$ be a \sww-mode.
		Then there exists an \ns\ $(r_{n})_{n \in \bN}$, possibly depending upon $w$, such that $\liminf_{n \to \infty} \Ratio{u}{w}{r_{n}}{\mu} \geq 1$.
		But then $u$ is also a \swpw-mode since, for this \ns\ and any \cp\ $v \in X$,
		\[
			\liminf_{n \to \infty} \Ratio{u}{v}{r_{n}}{\mu} \geq \liminf_{n \to \infty} \Ratio{u}{w}{r_{n}}{\mu} \liminf_{n \to \infty} \Ratio{w}{v}{r_{n}}{\mu} \geq 1 .
		\]

		\item
		Let $u$ be a \sww-mode, and let $v$ be a \swps-mode along the \ns\ $(r_{n})_{n \in \bN}$.
		Then $u$ is also a \swps-mode along that \ns, since
		\begin{equation*}
			\liminf_{n \to \infty} \Ratio{u}{\sup}{r_{n}}{\mu} \geq \liminf_{n \to \infty} \Ratio{u}{v}{r_{n}}{\mu} \liminf_{n \to \infty} \Ratio{v}{\sup}{r_{n}}{\mu} \geq 1.
		\end{equation*}

		\item
		By the hypotheses and \ref{item:s--ps--w_dichotomy_ps--w}, any $\sww$-mode is a $\swps$-mode along every \ns\ $(r_{n})_{n \in \Naturals}$, and is thus both an $\sws$- and $\swe$-mode.
		To see that any \swpw-mode is also a \swps-mode, let $u$ be a \swpw-mode along the \ns\ $(r_{n})_{n \in \bN}$ and let $v$ be a $\swps$-mode along this \ns.
		Then $u$ is also a \swps-mode along the same \ns, since
		\begin{equation*}
			\liminf_{n \to \infty} \Ratio{u}{\sup}{r_{n}}{\mu} \geq \liminf_{n \to \infty} \Ratio{u}{v}{r_{n}}{\mu} \liminf_{n \to \infty} \Ratio{v}{\sup}{r_{n}}{\mu} \geq 1.
			\qedhere
		\end{equation*}
	\end{enumerate}
\end{proof}

\begin{corollary}
	\label[corollary]{cor:dichotomy_exists_s_CASIO_OMpM}
	\begin{enumerate}[label=(\alph*)]
		\item
		\label{item:dichotomy_exists_s_CASIO}
		On the subclass
		\begin{equation*}
			\Bigset{(X, d, \mu) \in \catMetProb}{\text{$\mu$ has a $\sws$-mode and \eqref{eq:CASIO} holds at every $\swwgap$-mode of $\mu$}},
		\end{equation*}
		
		the meaningful small-ball modes of \cref{thm:main} fall into two equivalence classes, namely $\{ \sws, \swe, \sww, \swgs \}$ and $\{ \swps, \swpgs, \swpw, \swwp, \swgwap, \swwgap \}$, as illustrated in \Cref{fig:Hasse_dichotomy_2}(\subref{subfig:Hasse_dichotomy_exists_s_CASIO}).

		\item
		\label{item:dichotomy_exists_s_OMpM}
		On the subclass 
		\begin{equation*}
			\Bigset{(X, d, \mu) \in \catMetProb}{\text{$\mu$ has a $\sws$-mode and admits an exhaustive \ac{OM} functional}},
		\end{equation*}
		
		the meaningful small-ball modes of \cref{thm:main} fall into three equivalence classes, namely $\{ \sws, \swe, \sww, \swpw, \swwp, \swps \}$, $\{ \swgs \}$, and $\{ \swpgs, \swgwap, \swwgap \}$, as illustrated in \Cref{fig:Hasse_dichotomy_2}(\subref{subfig:Hasse_dichotomy_exists_s_OMpM}).

		\item
		\label{item:dichotomy_exists_s_CASIO_OMpM}
		On the subclass 
		\begin{equation*}
			\Set{(X, d, \mu) \in \catMetProb}{\begin{aligned}
				&\text{$\mu$ has a $\sws$-mode and admits an exhaustive \ac{OM} functional and }\\
				&\text{\eqref{eq:CASIO} holds at every $\swwgap$-mode of $\mu$}
			\end{aligned}},
		\end{equation*}
		
		all ten meaningful small-ball modes of $\mu$ coincide, as illustrated in \Cref{fig:Hasse_dichotomy_2}(\subref{subfig:Hasse_dichotomy_exists_s_CASIO_OMpM}).
	\end{enumerate}
\end{corollary}

\begin{figure}[t]
	\centering
	\begin{subfigure}[t]{0.45\textwidth}
		\centering
		\begin{tikzpicture}[scale=0.9]
			\draw[draw=none, use as bounding box] (-3.0,0.5) rectangle (4.5,-4.5);
			\node (sws) at (0,0) {$\exists \, \sws = \swe = \sww$};

			\node[red,anchor=west] at (2.5, -0.6) {\footnotesize Ex.~\redref{example:ps_not_s_not_gw}};
			\draw[red, rounded corners] (2.5, -0.6) -- (1.5, -0.6) -- (-1.5, -3.4) -- (-2.5, -3.4);

			\node[red, anchor=west] at (2.5, -3.4) {\footnotesize Ex.~\redref{example:gs_not_s_not_wp}};
			\draw[red, rounded corners] (-2.5, -0.6) -- (-1.5, -0.6) -- (1.5, -3.4) -- (2.5, -3.4);

			\node (swps) at (2,-2) {$\swps = \swpw = \swwp$};
			\node (swgs) at (-2,-2) {$\swgs$};
			\node (swpgs) at (0,-4) {$\swpgs = \swgwap =\swwgap$};
			\draw (sws) -- (swps) -- (swpgs);
			\draw (sws) -- (swgs) -- (swpgs);
		\end{tikzpicture}
		\caption{On the subclass of metric probability spaces where $\mu$ has an \sws-mode.}
		\label{subfig:Hasse_dichotomy_exists_s}
	\end{subfigure}
	\hspace{1em}
	\begin{subfigure}[t]{0.45\textwidth}
		\centering
		\begin{tikzpicture}[scale=0.9]
			\draw[draw=none, use as bounding box] (-2.5, 1.5) rectangle (4.0, -3.5);
			\node (sws) at (0,0) {$\exists \, \sws = \swe = \sww = \swgs$};
			\node (swps) at (0,-2) {$\swps = \swpgs = \cdots = \swwgap$};
			\draw (sws) -- (swps);

			\node[red, anchor=west] at (2.5, -1) {\footnotesize Ex.~\redref{example:ps_not_s_not_gw}};
			\draw[red] (-2.5, -1) -- (2.5, -1);
		\end{tikzpicture}
		\caption{\raggedright On the subclass of metric probability spaces where $\mu$ has an \sws-mode and the constant \as\ is optimal at each \swwgap-mode.}
		\label{subfig:Hasse_dichotomy_exists_s_CASIO}
	\end{subfigure}
	\hspace{1em}
	\begin{subfigure}[t]{0.45\textwidth}
		\centering
		\begin{tikzpicture}[scale=0.9]
			\draw[draw=none, use as bounding box] (-3.0, 0.5) rectangle (4.5, -3.5);
			\node (sws) at (0,0) {$\exists \, \sws = \swe = \cdots = \swps$};
			\node (swgs) at (0,-1.5) {$\swgs$};
			\node (swpgs) at (0,-3) {$\swpgs = \swgwap = \swwgap$};
			\draw (sws) -- (swgs) -- (swpgs);

			\node[red, anchor=west] at (2.5, -0.75) {\footnotesize Ex.~\redref{example:gs_not_s_not_wp}};
			\draw[red] (-2.5, -0.75) -- (2.5, -0.75);

			\node[red, anchor=west] at (2.5, -2.25) {\footnotesize Ex.~\redref{example:pgs_not_gs_under_s_mode_and_OM}};
			\draw[red] (-2.5, -2.25) -- (2.5, -2.25);
		\end{tikzpicture}
		\caption{\raggedright On the subclass of metric probability spaces where $\mu$ has an \OMexhaustive\ \ac{OM} functional and an $\sws$-mode.}
		\label{subfig:Hasse_dichotomy_exists_s_OMpM}
	\end{subfigure}
	\hspace{1em}
	\begin{subfigure}[t]{0.45\textwidth}
		\centering
		\begin{tikzpicture}[scale=0.9]
			\draw[draw=none, use as bounding box] (-2.5, 2.0) rectangle (4.0, -2.0);
			\node (sws) at (0,0) {$\exists \, \sws = \swe = \cdots = \swwgap$};
		\end{tikzpicture}
		\caption{\raggedright On the subclass of metric probability spaces where $\mu$ has an \OMexhaustive\ \ac{OM} functional and an $\sws$-mode, and the constant \as\ is optimal at each \swwgap-mode.}
		\label{subfig:Hasse_dichotomy_exists_s_CASIO_OMpM}
	\end{subfigure}
		\caption{By \Cref{thm:s--ps--w_dichotomy,cor:dichotomy_exists_s_CASIO_OMpM}, the lattice of distinct, meaningful, small-ball mode types from \Cref{fig:periodic_table}(\subref{fig:Hasse_diagram_main}) simplifies if $\mu$ possesses an \sws-mode and an \OMexhaustive\ \ac{OM} functional and/or the optimality condition \eqref{eq:CASIO} holds.}
	\label{fig:Hasse_dichotomy_2}
\end{figure}

\begin{proof}
	This follows immediately from combining \Cref{thm:modes_coincide_CASIO,thm:modes_coincide_OMpM,thm:s--ps--w_dichotomy}.
\end{proof}

\subsection{Gaussian-dominated measures on Banach spaces}
\label{sec:Gaussian}

Many measures $\mu \in \prob{X}$ of practical interest in Bayesian inference and the study of diffusion processes are absolutely continuous with respect to a Gaussian measure $\mu_{0}$ on a separable Banach space $X$.
It is often advantageous to express the Radon--Nikodym derivative $\rho$ of $\mu$ with respect to $\mu_{0}$ in exponential form as
\begin{equation}
	\label{eq:density_and_potential}
	\mu(\rd u)
	=
	\rho(u) \, \mu_{0}(\rd u)
	=
	\frac{1}{Z} \exp(-\Phi(u)) \, \mu_{0}(\rd u),
	\qquad
	Z
	\defeq
	\int_{X} \exp(-\Phi(u)) \, \mu_{0}(\rd u),
\end{equation}
for some continuous $\Phi \colon X \to \bR$, called the \defterm{potential} of $\mu$ with respect to $\mu_{0}$, and normalising constant $0 < Z < \infty$.
This representation enforces positivity of $\rho$ and also simplifies statements of regularity conditions on $\mu$ through assumptions on $\Phi$.

If $X$ has finite dimension, then \eqref{eq:density_and_potential} implies that $\mu$ has a continuous, strictly positive Lebesgue density;
thus all mode maps must coincide as the set of modes is prescribed by \LP.
But, in the case that $X$ has infinite dimension---as frequently arises in applications---we show in this section that \Cref{cor:modes_coincide_CASIO_and_OMpM} applies to such measures, reducing the number of mode definitions to just three distinct ones.
In doing so, we give the first examples in the literature distinguishing among \sws-, \swps-, and \sww-modes for $\mu$ of the form \eqref{eq:density_and_potential} with merely continuous potential $\Phi$.
Furthermore, under mild conditions on $\Phi$, which we explore in \Cref{thm:Gaussian_prior_combined}\ref{item:Gaussian_prior_better_potential}, even these three definitions coincide, giving an unambiguous notion of small-ball mode for such measures when $X$ has infinite dimension.
Conversely, the distinguishing examples for merely continuous $\Phi$ show that the conditions on $\Phi$ used by \citet[Proposition~4.1]{Lambley2023} and \Cref{thm:Gaussian_prior_combined}\ref{item:Gaussian_prior_better_potential} are not universally satisfied.
The resulting lack of canonical mode map in the case of continuous $\Phi$ emphasises the need for further research into the `correct' definition for applications communities.

We first briefly recall some important properties of Gaussian measures and discuss how these properties transfer to the reweighted measure \eqref{eq:density_and_potential};
we refer the reader to standard references, e.g., \citet{Bogachev1998Gaussian}, for further details.
Let $X$ be a separable Banach space, which for simplicity we assume to be a vector space over $\bR$.
A measure $\mu_{0}$ on $X$ is a (centred, non-degenerate) Gaussian measure if the pushforward measure $\mu_{0} \circ f^{-1}$ on $\bR$ is a (mean-zero, positive-variance) normal distribution for each continuous linear functional $f \colon X \to \bR$.
Since $\mu_{0}$ does not charge spheres, one can equivalently consider closed or open balls under $\mu_{0}$, because $\mu_{0}(\ball{u}{r}) = \mu_{0}(\cball{u}{r})$ for all $u \in X$ and $r > 0$.
The behaviour of $\mu_{0}$ is fully determined by a compactly embedded Hilbert subspace $(E, \norm{\quark}_{E})$ called the \emph{Cameron--Martin space}.

The map $I_{0} \colon E \to \bR$ defined by $I_{0}(u) \defeq \frac{1}{2} \norm{u}_{E}^{2}$ is vital in understanding small-ball probabilities under $\mu_{0}$, as
$I_{0}$ is an \ac{OM} functional for $\mu_{0}$ \citep[see][Section~4.7]{Bogachev1998Gaussian}.
Moreover, it forms part of a quantitative estimate of the ball-mass ratio known as the \emph{explicit Anderson inequality} \citep[Proposition~3.1]{Lambley2023}:
\begin{equation}
	\label{eq:explicit_Anderson_Gaussian}
	\Ratio{u}{0}{r}{\mu_{0}}
	\leq
	\exp\bigg(-\min_{h \in E \cap \cball{u}{r}} I_{0}(h)\bigg)
	\text{~~for any $u \in X$ and $r > 0$}.
\end{equation}
Note that, since $E \cap \cball{u}{r}$ is $E$-weakly closed and $h \mapsto \norm{h}_{E}^{2}$ is $E$-weakly lower semicontinuous, the minimum in \eqref{eq:explicit_Anderson_Gaussian} is attained.
Property $M(\mu_{0}, E)$ is a straightforward consequence of \eqref{eq:explicit_Anderson_Gaussian}, so balls centred in the subspace $E$ are dominant in the small-radius limit, even though $\mu_{0}(E) = 0$.
In fact, even more is true:
by \citet[Corollary~3.3(a)]{Lambley2023}, whenever $(u_{n})_{n \in \bN}$ is an \as\ in $X$ of some $u$, and $(r_{n})_{n \in \bN}$ is any \ns,
\begin{equation}
	\label{eq:Lambley2023Corollary3.3a}
	\lim_{n \to \infty} u_{n} = u \in X \setminus E \implies \lim_{n \to \infty} \min_{h \in E \cap \cball{u_{n}}{r_{n}}} I_{0}(h) = \infty ,
\end{equation}
and so \emph{sequences} of balls whose centres have limit outside $E$ are negligible in the small-radius limit.
We appeal to \eqref{eq:explicit_Anderson_Gaussian} and \eqref{eq:Lambley2023Corollary3.3a} often in \Cref{thm:Gaussian_prior_combined} and the supporting \Cref{lemma:cts_pot_Gaussian_EAI_and_wgap}.

\begin{theorem}[Small-ball modes for Gaussian prior and well-behaved potential]
	\label{thm:Gaussian_prior_combined}
	Consider the following subclasses of $\catMetProb$,
	\begin{equation*}
		\mathsf{U}^{j} \defeq \Set{(X, d, \mu) \in \catMetProb}{
			\begin{aligned}
				&\text{$X$ is a separable Banach space, $\mu_{0} \in \prob{X}$ is}\\
				&\text{a centred non-degenerate Gaussian,}\\
				&\text{$\Phi \colon X \to \bR$ satisfies $(C_{j})$, and $\mu$ is given by \eqref{eq:density_and_potential}}  
		\end{aligned}},
	\end{equation*}
	with the following conditions on the potential $\Phi$:
	\begin{enumerate}[label = ($C_{\arabic*}$)]
		\item
		\label{item:Phi_merely_continuous}
		$\Phi \colon X \to \bR$ is continuous;
		\item
		\label{item:Phi_uniformly_continuous}
		$\Phi \colon X \to \bR$ is uniformly continuous;
		\item
		\label{item:Phi_lower_bound}
		for all $\eta > 0$, there exists $K(\eta) > 0$ such that, for all $u \in X$, $\Phi(u) \geq K(\eta) - \eta \norm{u}_{X}^{2}$.
	\end{enumerate}
	\vspace{1ex}
	Then the ten meaningful small-ball mode maps of \cref{thm:main} are classified as follows:
	\begin{enumerate}[label=(\alph*)]
		\item
		\label{item:Gaussian_prior_continuous_potential}
		On $\mathsf{U}^{1}$ the ten meaningful small-ball mode types fall into three equivalence classes, totally ordered by implication, illustrated in \Cref{fig:Hasse_Gaussian}(\subref{subfig:Hasse_Gaussian_cts_potential}), namely
		\[
		\{ \sws, \swgs \}, \quad
		\{ \swps, \swpgs \}, \quad \text{and} \quad
		\{ \swe, \sww, \swpw, \swwp, \swgwap, \swwgap \}.
		\]
		
		\item
		\label{item:Gaussian_prior_better_potential}
		On $\mathsf{U}^{2} \cup \mathsf{U}^{3}$, all ten meaningful small-ball mode maps coincide, as illustrated in \Cref{fig:Hasse_Gaussian}(\subref{subfig:Hasse_Gaussian_very_good_potential}).
		Further, each $(X,d,\mu) \in \mathsf{U}^{3}$ has an \sws-mode.
	\end{enumerate}	
\end{theorem}

\begin{figure}[t]
	\centering
	\begin{subfigure}[t]{0.48\textwidth}
		\centering
		\begin{tikzpicture}[scale=0.9]
			\draw[draw=none, use as bounding box] (-3.0,0.5) rectangle (3.0,-3.5);
			\node (sws) at (0, 0) {$\sws = \swgs$};
			\node (swps) at (0, -1.5) {$\swps = \swpgs$};
			\node (swe) at (0, -3) {$\swe = \sww = \cdots = \swwgap$};
			\draw (sws) -- (swps) -- (swe);

			\draw[red, rounded corners] (-2.0, -0.75) -- (2.0, -0.75);
			\node[red, anchor=east] at (-2.0, -0.75) {\footnotesize Ex.~\redref{ex:Gaussian_swps_not_sws}};

			\draw[red, rounded corners] (-2.0,-2.25) -- (2.0,-2.25);
			\node[red, anchor=east] at (-2.0,-2.25) {\footnotesize Ex.~\redref{ex:Gaussian_swe_not_swps}};
		\end{tikzpicture}
		\caption{\raggedright On the subclass $\mathsf{U}^{1}$ where $\mu$ has a continuous potential with respect to a non-degenerate Gaussian prior $\mu_{0}$ (\Cref{thm:Gaussian_prior_combined}\ref{item:Gaussian_prior_continuous_potential}).}
		\label{subfig:Hasse_Gaussian_cts_potential}
	\end{subfigure}
	\hspace{1em}
	\begin{subfigure}[t]{0.48\textwidth}
		\centering
		\begin{tikzpicture}[scale=0.9]
			\draw[draw=none, use as bounding box] (-3.0,2.0) rectangle (3.0,-2.0);
			\node (sws) at (0, 0) {$\sws = \swe = \cdots = \swwgap$};
		\end{tikzpicture}
		\caption{\raggedright If $\mu$ has a uniformly continuous or quadratically-lower-bounded potential with respect to $\mu_{0}$ (\Cref{thm:Gaussian_prior_combined}\ref{item:Gaussian_prior_better_potential}), or if $X$ has finite dimension.}
		\label{subfig:Hasse_Gaussian_very_good_potential}
	\end{subfigure}
	\caption{The lattice of distinct, meaningful, small-ball mode types from \Cref{fig:periodic_table}(\subref{fig:Hasse_diagram_main}) simplifies considerably for Gaussian-dominated measures on Banach spaces.}
	\label{fig:Hasse_Gaussian}
\end{figure}

\begin{proof}
	Let $(X, d, \mu) \in \mathsf{U}^{1}$, that is, the potential $\Phi$ is continuous by \ref{item:Phi_merely_continuous}.
	By \Cref{lemma:cts_pot_Gaussian_EAI_and_wgap}\ref{item:cts_pot_Gaussian_OM_and_M}, $\mu$ admits an \ac{OM} functional $I \defeq \Phi + I_{0} \colon E \to \bR$ and property $M(\mu, E)$ is satisfied.
	By \Cref{lemma:cts_pot_Gaussian_EAI_and_wgap}\ref{item:cts_pot_Gaussian_wgap} and \ref{item:cts_pot_Gaussian_CASIO}, the constant \as\ is optimal in the sense of \eqref{eq:CASIO} at every point of the Cameron--Martin space $E$, which contains all the \swwgap-modes of $\mu$.
	Therefore, the claimed coincidences of mode types follow from \Cref{cor:modes_coincide_CASIO_and_OMpM}---with the exception of $\sww = \swe$, which we now verify as a special case.

	Let $u \in X$ be a \sww-mode of $\mu$, which by \Cref{prop:mode_map_equivalences}\ref{item:alternative_characterisation_weak_for_closed_balls} is equivalently a $[\forall \cp \exists \as \forall \ns]$-mode.
	To show that $u$ is also an \swe-mode, let $v \in X$ and let $(v_{n})_{n \in \bN}$ be an arbitrary \cpas\ of $v$, and consider the cases $v \in E$ and $v \in X \setminus E$ separately.

	\proofpartorcase{The case $v \in E$.}
	Since $u$ is a $[\forall \cp \exists \as \forall \ns]$-mode, there exists an \as\ $(u_{n}^{v})_{n \in \bN}$ such that, for any \ns\ $(r_{n})_{n \in \bN}$, $\liminf_{n \to \infty} \Ratio{u_{n}^{v}}{v}{r_{n}}{\mu} \geq 1$.
	As already noted, \Cref{lemma:cts_pot_Gaussian_EAI_and_wgap}\ref{item:cts_pot_Gaussian_CASIO} shows that the constant \as\ is optimal in the sense of \eqref{eq:CASIO} at every point of $E$.
	Therefore,
	\[
		\liminf_{n \to \infty} \Ratio{u_{n}^{v}}{v_{n}}{r_{n}}{\mu} \geq \underbrace{ \liminf_{n \to \infty} \Ratio{u_{n}^{v}}{v}{r_{n}}{\mu} }_{\geq 1} \underbrace{ \liminf_{n \to \infty} \Ratio{v}{v_{n}}{r_{n}}{\mu} }_{\geq 1 \text{ by optimality}} \geq 1.
	\]

	\proofpartorcase{The case $v \in X \setminus E$.}
	In this case, combining \eqref{eq:Lambley2023Corollary3.3a} with the explicit Anderson inequality \eqref{eq:explicit_Anderson_Gaussian} implies that $\Ratio{0}{v_{n}}{r_{n}}{\mu_{0}} \to \infty$, and an elementary estimate using the continuity of $\Phi$ at $v$ and $0$ shows that $\Ratio{0}{v_{n}}{r_{n}}{\mu} \to \infty$ as well.
	Now, since $u$ is a $[\forall \cp \exists \as \forall \ns]$-mode, there exists an \as\ $(u_{n}^{0})_{n \in \bN}$ such that, for any \ns\ $(r_{n})_{n \in \bN}$, $\liminf_{n \to \infty} \Ratio{u_{n}^{0}}{0}{r_{n}}{\mu} \geq 1$.
	Thus,
	\[
		\liminf_{n \to \infty} \Ratio{u_{n}^{0}}{v_{n}}{r_{n}}{\mu} \geq \underbrace{ \liminf_{n \to \infty} \Ratio{u_{n}^{0}}{0}{r_{n}}{\mu} }_{\geq 1} \underbrace{ \lim_{n \to \infty} \Ratio{0}{v}{r_{n}}{\mu} }_{= \infty} = \infty > 1.
	\]
	Therefore, the \sww-mode $u$ is in fact an \swe-mode, as claimed.
	
	\vspace{2ex}

	Finally, \Cref{ex:Gaussian_swe_not_swps,ex:Gaussian_swps_not_sws} provide measures with continuous potentials with respect to a non-degenerate Gaussian prior where, in the former, the measure has an \swe-mode that is not a \swps-mode, and, in the latter, the measure has a \swps-mode that is not an \sws-mode.
		
	This finishes the proof of \ref{item:Gaussian_prior_continuous_potential}.
	Let us now turn to \ref{item:Gaussian_prior_better_potential} and consider the cases $(X, d, \mu) \in \mathsf{U}^{2}$ and $(X, d, \mu) \in \mathsf{U}^{3}$, that is, conditions \ref{item:Phi_uniformly_continuous} and \ref{item:Phi_lower_bound} on the potential $\Phi$, separately:
	
	\proofpartorcase{$(X, d, \mu) \in \mathsf{U}^{2}$.}
	In view of \ref{item:Gaussian_prior_continuous_potential}, it suffices to show that every \sww-mode of $\mu$ is also an \sws-mode.	
	Let $u$ be a \sww-mode of $\mu$, and let $(u_{r})_{r > 0}$ be any net such that $\Ratio{u_{r}}{\sup}{r}{\mu} \geq 1 - \varepsilon(r)$ for some increasing function $\varepsilon \colon (0, \infty) \to (0, \infty)$ with $\lim_{r \to 0} \varepsilon(r) = 0$.
	Then
	\begin{align*}
		\liminf_{r \to 0} \Ratio{u}{\sup}{r}{\mu}
		=
		\liminf_{r \to 0} \Ratio{u}{u_{r}}{r}{\mu}
		\geq \liminf_{r \to 0} \exp\left(-\sup_{\ball{u}{r} \cap E} \Phi + \inf_{\ball{u_{r}}{r} \cap E} \Phi \right) \Ratio{u}{u_{r}}{r}{\mu_{0}}.
	\end{align*}
	The explicit Anderson inequality \eqref{eq:explicit_Anderson_Gaussian} implies that
	\begin{align}
		\notag
		\liminf_{r \to 0} \Ratio{u}{\sup}{r}{\mu} & \geq \liminf_{r \to 0} \exp\left(-\sup_{\ball{u}{r} \cap E} \Phi + \inf_{\ball{u_{r}}{r} \cap E} \Phi + \min_{\cball{u_{r}}{r} \cap E} I_{0} \right) \Ratio{u}{0}{r}{\mu_{0}} \notag \\
		\label{eq:Gaussian_s_w_coincide}
		& = \exp\left(-I_{0}(u) -\sup_{\ball{u}{r} \cap E} \Phi + \inf_{\ball{u_{r}}{r} \cap E} \Phi + \min_{\cball{u_{r}}{r} \cap E} I_{0} \right) .
	\end{align}
	Since $\Phi$ is uniformly continuous by \ref{item:Phi_uniformly_continuous}, it possesses a modulus of continuity $\varpi\colon (0, \infty) \to (0,\infty)$.
	For each $r > 0$, let $h_{r} \in \argmin_{\cball{u_{r}}{r} \cap E} I_{0}$.
	Then
	\begin{align*}
		\exp\left(-\sup_{\ball{u}{r} \cap E} \Phi \right) & \geq \exp \bigl(-\Phi(u) - \varpi(r) \bigr), \text{~and}\\
		\exp\left(\inf_{\ball{u_{r}}{r} \cap E} \Phi + I_{0}(h_{r}) \right) & \geq \exp \bigl( \Phi(h_{r}) + I_{0}(h_{r}) - \varpi(r) \bigr) .
	\end{align*}
	Using these lower bounds in \eqref{eq:Gaussian_s_w_coincide}, and the fact that $u$ globally minimises the \ac{OM} functional $I(u) = \Phi(u) + \frac{1}{2} \norm{u}_{E}^{2}$ for $\mu$, we conclude that $u$ is an \sws-mode, since
	\begin{equation*}
		\liminf_{r \to 0} \Ratio{u}{\sup}{r}{\mu} \geq \liminf_{r \to 0} \exp \bigl( I(h_{r}) - I(u) - 2 \varpi(r) \bigr) \geq \lim_{r \to 0} \exp(-2 \varpi(r)) = 1 .
	\end{equation*}
	
	\proofpartorcase{$(X, d, \mu) \in \mathsf{U}^{3}$.}
	The lower bound \ref{item:Phi_lower_bound} implies that $\mu$ must have an \sws-mode \citep[Theorem~1.1]{Lambley2023}.
	Therefore, combining \ref{item:Gaussian_prior_continuous_potential} with \Cref{thm:s--ps--w_dichotomy}\ref{item:s--ps--w_dichotomy_s--w} and \Cref{cor:dichotomy_exists_s_CASIO_OMpM}\ref{item:dichotomy_exists_s_CASIO_OMpM}, all ten meaningful small-ball mode types must coincide for $\mu$.
\end{proof}

Thus, in the case of a continuous potential $\Phi$, only three distinct mode definitions remain: \sws, \swps, and \sww;
when the potential $\Phi$ is even better behaved, these three classes collapse into one.

The two conditions \ref{item:Phi_uniformly_continuous} and \ref{item:Phi_lower_bound} on the potential $\Phi$ in \Cref{thm:Gaussian_prior_combined} act as different ways of preventing $\mu$ from concentrating large amounts of mass relative to the prior $\mu_{0}$ in a small region in exactly the way that the measures in \Cref{subsec:Gaussian_s_ps_w} exploit:
uniform continuity \ref{item:Phi_uniformly_continuous} prevents $\Phi$ from varying too quickly, whereas the lower bound \ref{item:Phi_lower_bound} allows $\Phi$ to have an arbitrary modulus of continuity so long as it does not assign too much weight relative to the prior.

As a result of \Cref{thm:Gaussian_prior_combined}\ref{item:Gaussian_prior_better_potential}, we may conclude that all mode types in \Cref{fig:periodic_table}(\subref{fig:Hasse_diagram_main}) coincide for a very broad class of posterior measures arising in Bayesian inverse problems.
The following example illustrates the very common case of a quadratic misfit, which corresponds to an inverse problem with finite-dimensional Gaussian observational noise.
(Note that the case of infinite-dimensional data and observational noise is more subtle, as discussed by, e.g., \citet[Remark~3.8]{Stuart2010}, \citet[Remark~9]{Lasanen2012}, and \citet{KasanickyMandel2017}.)

\begin{example}[Unique notion of small-ball mode for some Bayesian inverse problems]
	Consider the inverse problem of recovering an unknown in a real, separable, and possibly infinite-dimensional Banach space $X$ from a finite-dimensional observation corrupted by additive noise.
	Suppose that the observed data, conditional upon a possible value $u \in X$ of the unknown, is given by
	\begin{equation*}
		y = G(u) + \eta ,
		\quad
		\text{i.e.}
		\quad
		y|u \sim N(G(u), \Gamma) ,
	\end{equation*}
	where the forward/observation operator $G \colon X \to \bR^{J}$ is Lipschitz and $\eta$ is Gaussian with mean zero and symmetric, positive-definite covariance matrix $\Gamma \in \bR^{J \times J}$, independent of $u$.
	For a given observed value of $y$, this setting corresponds to the potential
	\begin{equation*}
		\Phi(u) = \tfrac{1}{2} \bignorm{ \Gamma^{-1 \mathbin{/} 2} ( G(u) - y ) }^{2} .
	\end{equation*}
	If a centred non-degenerate Gaussian prior $\mu_{0} \in \prob{X}$ is posited, then the corresponding Bayesian posterior is $\mu(\rd u) = \frac{1}{Z} \exp (-\Phi(u)) \, \mu_{0}(\rd u)$ for an appropriate normalisation constant $Z$.
	There is an umambiguous notion of small-ball MAP estimator, i.e., a small-ball mode of the posterior $\mu$:
	since $\Phi$ is globally bounded below, all mode types in \Cref{fig:periodic_table}(\subref{fig:Hasse_diagram_main}) coincide by \Cref{thm:Gaussian_prior_combined}\ref{item:Gaussian_prior_better_potential}.
	\EndExampleText
\end{example}

\section{Examples and counterexamples}
\label{section:Examples}

This section provides examples and counterexamples to the concepts and implications discussed in \Cref{sec:small-ball_modes,sec:Mode_definitions_coincide}.
In the analysis of these examples, we make frequent use of the following elementary fact:

\begin{lemma}
	\label[lemma]{lem:mass_near_Lebesgue_point}
	If $\mu \in \prob{\bR}$ has a \ac{pdf} $\rho \colon \bR \to [0, \infty]$ that is bounded by $C > 0$ on some ball $\ball{u}{R}$ with $R > 0$, then, for any sequences $(u_{n})_{n \in \bN} \to u$ and $(r_{n})_{n \in \bN} \to 0$,
	\begin{equation}
		\label{eq:mass_near_Lebesgue_point}
		\text{for all large enough $n$,} \quad \mu(\ball{u_{n}}{r_{n}}) \equiv \int_{\ball{u_{n}}{r_{n}}} \rho(x) \, \rd x \leq 2 C r_{n} .
	\end{equation}
\end{lemma}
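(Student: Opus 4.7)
The plan is essentially a one-line triangle-inequality argument followed by a trivial integral estimate; I expect no real obstacle.

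First I would note that, since $u_{n} \to u$ and $r_{n} \to 0$, there exists $N \in \bN$ such that $\absval{u_{n} - u} + r_{n} < R$ for all $n \geq N$. For any such $n$ and any $x \in \ball{u_{n}}{r_{n}}$, the triangle inequality gives $\absval{x - u} \leq \absval{x - u_{n}} + \absval{u_{n} - u} < r_{n} + \absval{u_{n} - u} < R$, so $\ball{u_{n}}{r_{n}} \subseteq \ball{u}{R}$. Then I would invoke the hypothesis that $\rho \leq C$ on $\ball{u}{R}$ together with the fact that $\ball{u_{n}}{r_{n}}$ is an interval of Lebesgue measure $2 r_{n}$ in $\bR$, and conclude
\[
	\mu(\ball{u_{n}}{r_{n}}) = \int_{\ball{u_{n}}{r_{n}}} \rho(x) \, \rd x \leq C \cdot \Leb{1}(\ball{u_{n}}{r_{n}}) = 2 C r_{n}
\]
for all $n \geq N$, which is precisely the claim. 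The only very minor subtlety is that the hypothesis allows $\rho$ to take the value $+\infty$ a priori; but the boundedness on $\ball{u}{R}$ rules this out on the set where the integration actually happens, so there is nothing further to worry about.
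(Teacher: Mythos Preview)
Your proof is correct. The paper states this lemma as an ``elementary fact'' and does not give a proof; your triangle-inequality argument showing $\ball{u_{n}}{r_{n}} \subseteq \ball{u}{R}$ for large $n$, followed by the trivial bound on the integral, is exactly the intended justification.
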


\subsection{Distinguishing weak, exotic, and strong modes}

We begin with an example of a \sww-mode that is not an \sws-mode, complementing earlier examples of \citet[Example~4.4]{LieSullivan2018} and \citet[Example~B.5]{ayanbayev2021gamma}.
The idea behind the construction of the measure $\mu$ is that it has exactly one singularity at $u = 0$, which, naturally, dominates each other point in terms of small ball probabilities $\mu(\Ball{u}{r})$ as $r \to 0$, making it a \sww-mode, but, for each fixed radius $r > 0$, there exists a point $v$ such that the centred ball mass $\mu(\Ball{v}{r})$ is greater, meaning that $u$ cannot be an \sws-mode.
Furthermore, `\sww' here can be strengthened to `\swe', and `\sws' can be weakened to `\swpgs'.

\begin{example}[$\swe$ but not $\swpgs$, even with an \OMexhaustive\ OM functional]
	\label[example]{example:e_but_not_pgs}
	Consider the truncated $\absval{x}^{-1/4}$-singularity $\sigma(x) \defeq \tfrac{3}{4} \absval{ x }^{-1 \mathbin{/} 4} \chi_{[-\nicefrac{1}{2}, \nicefrac{1}{2}]} (x)$ and let $\tau(x) \defeq \max \{ 0, 2 - 4 \absval{x} \}$;
	then let $\tau_{k}(x) = k\tau(k^{3} x)$ and
	let $\mu \in \prob{\bR}$ have the unnormalised \ac{pdf} $\rho$ shown in \Cref{fig:e_and_w_distinguishing}(\subref{figure:e_but_not_pgs}) and given by
	\[
		\rho(x) \defeq \sigma(x) + \sum_{k \in \bN} \tau_{k}(x - k).
	\]
	Taking $Z \defeq \int_{\bR} \rho(x) \,\rd x < \infty$, we note that
	$\mu(\ball{0}{r}) = \tfrac{1}{Z} r^{3 \mathbin{/} 4}$ for $0 \leq r \leq \frac{1}{2}$;
	moreover we note that $\tau_{k}$ is supported on $\ball{0}{\frac{k^{-3}}{2}}$ and $\mu(\ball{k}{r}) = \tfrac{1}{Z} k^{-2}$ for radii $\tfrac{1}{2} k^{-3} \leq r \leq \tfrac{1}{2}$.

	\proofpartorcase{$u = 0$ is an \swe-mode and the unique \swwgap-mode.}
	Let $v \neq 0$ and $(v_{n})_{n \in \bN} \to v$ be arbitrary.
	Consider the constant \as\ $(u)_{n \in \bN} \to u = 0$ and let $(r_{n})_{n \in \bN}$ be any \ns.
	Since $\rho$ is bounded by some $C = C(\rho, v)$ on $\ball{v}{\absval{v} \mathbin{/} 2}$, \Cref{lem:mass_near_Lebesgue_point} yields
	\begin{align*}
		\liminf_{n \to \infty} \Ratio{u}{v_{n}}{r_{n}}{\mu} \geq  \liminf_{n \to \infty} \frac{\tfrac{1}{Z} r_{n}^{3/4}}{\tfrac{1}{Z} C r_{n}}  = \infty \geq 1 .
	\end{align*}
	A minor variation of this argument (using the \cp\ $v = 0$) shows that no $u \neq 0$ can be a \swwgap-mode.

	\proofpartorcase{The constant \as\ is optimal at at all \swwgap-modes.}
	The \ac{pdf} $\rho$ is symmetric and unimodal within $(-\tfrac{1}{2}, \tfrac{1}{2})$, so \Cref{lemma:CASIO_for_ac_in_Rm}\ref{item:CASIO_at_local_symmetric_modes} implies that the constant \as\ of $u = 0$ is optimal in the sense of \eqref{eq:CASIO}, and $u = 0$ has just been shown to be the only \swwgap-mode of $\mu$.

	\proofpartorcase{\OMExhaustive\ \ac{OM} functional.}
	The measure $\mu$ has an \ac{OM} functional $I \colon E \to \bR$ on $E = \{ 0 \}$ given by $I(0) \defeq 0$, and property $M(\mu, E)$ follows from \Cref{lem:mass_near_Lebesgue_point}.

	\proofpartorcase{$u = 0$ is not a \swpgs-mode.}
	By \Cref{thm:modes_coincide_CASIO}, $\swps$- and $\swpgs$-modes are equivalent for this $\mu$,.
	Thus, it suffices to show that $u = 0$ is not a \swps-mode, i.e., that for any \ns\ $(r_{n})_{n \in \bN}$, $\liminf_{n \to \infty} \Ratio{u}{\sup}{r_{n}}{\mu} < 1$.
	We shall show the stronger statement that $\lim_{r \to 0} \Ratio{u}{\sup}{r}{\mu} = 0$.
	Set $k(r) \defeq \lceil (2 r)^{-1 \mathbin{/} 3} \rceil$, so that $\tfrac{1}{2} k(r)^{-3} \leq r < \tfrac{1}{2} ( k(r) - 1 )^{-3}$.
	Then
	\begin{align*}
		\liminf_{r \to 0} \Ratio{u}{\sup}{r}{\mu}
		\leq \liminf_{r \to 0} \Ratio{u}{k(r)}{r}{\mu}
		= \liminf_{r \to 0} \frac{\frac{1}{Z} r^{3 \mathbin{/} 4}}{\frac{1}{Z} k(r)^{-2}}
		\leq \liminf_{r \to 0} r^{3/4} \bigl((2r)^{-1/3} + 1\bigr)
		= 0.
		\EndExampleEquation
	\end{align*}
\end{example}

\begin{figure}[t!]
	\centering
	\begin{subfigure}[t]{0.49\textwidth}
		\centering
		\begin{tikzpicture}[scale=0.7]
			\begin{axis}[
				axis lines=middle,
				xlabel={$x$},
				ylabel={$\rho(x)$},
				xtick={1, 2, 3},
				ytick={1, 2, 3, 4},
				xmin=-0.2, xmax=3.4,
				ymin=0, ymax=4.0,
				samples=1024,
				domain=-0.5:3.0,
				restrict y to domain=0:4,
				clip=false,
				enlargelimits,
				xscale=1.5,
				xlabel style={at={(axis cs:3.8,0.0)}, anchor=west},
				ylabel style={at={(axis cs:0.0,4.5)}, anchor=south}
			]
				\addplot[domain=-0.5:-0.001, thick, blue] {0.75 * (-x)^(-1/4)};
				\addplot[domain=0.001:0.5, thick, blue] {0.75 * x^(-1/4)};
				\draw[thick, blue] (axis cs:-0.6, 0.0) -- (axis cs:-0.5, 0.0) -- (axis cs:-0.5, 0.891);
				\draw[thick, blue] (axis cs:0.5, 0.891) -- (axis cs:0.5, 0.0) -- (axis cs:1.0, 1) -- (axis cs:1.5, 0.0) -- (axis cs:1.5, 0.0) -- (axis cs:1.9375, 0.0) -- (axis cs:2.0, 2) -- (axis cs:2.0625, 0.0) -- (axis cs:2.9815,0.0) -- (axis cs:3.0, 3.0) -- (axis cs:3.0185,0.0) -- (axis cs:3.5,0.0);
			\end{axis}
		\end{tikzpicture}
		\subcaption{\raggedright Part of the unnormalised \ac{pdf} $\rho$ defined in \Cref{example:e_but_not_pgs}, which has an \swe-mode at $0$ but does not have any \swpgs-modes.}
		\label{figure:e_but_not_pgs}
	\end{subfigure}
	\begin{subfigure}[t]{0.49\textwidth}
		\centering
		\begin{tikzpicture}[scale=0.7]
			
			\begin{axis}[
				axis lines=middle,
				xlabel={$x$},
				ylabel={$\rho(x)$},
				xtick={1, 2, 3},
				ytick={1, 2, 3, 4},
				xmin=-0.2, xmax=3.4,
				ymin=0, ymax=4.0,
				samples=1024,
				domain=-0.5:3.0,
				restrict y to domain=0:4,
				clip=false,
				enlargelimits,
				xscale=1.5,
				xlabel style={at={(axis cs:3.8,0.0)}, anchor=west},
				ylabel style={at={(axis cs:0.0,4.5)}, anchor=south}
			]
				\addplot[domain=-0.5:-0.001, thick, blue] {0.75 * (-x)^(-1/4)};
				\addplot[domain=0.001:0.5, thick, blue] {0.75 * x^(-1/4)};
				
				\draw[thick, blue] (axis cs:-0.6, 0.0) -- (axis cs:-0.5, 0.0) -- (axis cs:-0.5, 0.891);
				\draw[thick, blue] (axis cs:0.0, 3.0) -- (axis cs:0.0, 4.0);
				\draw[thick, blue] (axis cs:0.5, 0.891) -- (axis cs:0.5, 0.0) -- (axis cs:1.0, 0.0) -- (axis cs:1.0, 3.9);
				\addplot[domain=1.001:1.5, thick, blue] {2*0.75*(1 - 0.25) * (x - 1)^(-1/4)};
				\draw[thick, blue] (axis cs:1.5, 1.337) -- (axis cs:1.5, 0.0) -- (axis cs:2.0, 0.0) -- (axis cs:2.0, 3.98);
				\addplot[domain=2.001:2.25, thick, blue] {2*0.75*(1 - 0.125) * (x - 2)^(-1/4)};
				\draw[thick, blue] (axis cs:2.25, 1.856) -- (axis cs:2.25, 0.0) -- (axis cs:3.0, 0.0) -- (axis cs:3.0, 3.95);
				\addplot[domain=3.001:3.125, thick, blue] {2*0.75*(1 - 0.0833) * (x - 3)^(-1/4)};
				\draw[thick, blue] (axis cs:3.125, 2.313) -- (axis cs:3.125, 0.0) -- (axis cs:3.6, 0.0);
			\end{axis}
		\end{tikzpicture}
		\subcaption{\raggedright Part of the unnormalised \ac{pdf} $\rho$ defined in \cref{example:CP}, which has a $\sww$-mode at $0$ that is neither a $\swpgs$- nor an $\swe$-mode.}
		\label{fig:CP_violation}
	\end{subfigure}
	\caption{\Cref{example:e_but_not_pgs,example:CP} serve as distinguishing examples in the lattices of mode types, making use of (possibly one-sided) $\absval{\quark}^{-1/4}$-singularities and hat functions.}
	
	\label{fig:e_and_w_distinguishing}
\end{figure}

\begin{example}[$\sww$ but neither $\swpgs$ nor $\swe$, even with an \OMexhaustive\ OM functional]
	\label[example]{example:CP}
	Define the one-sided, truncated $\absval{x}^{-1 \mathbin{/} 4}$-singularity $\rho_{r}(x) = \frac{3}{4} x^{-1 \mathbin{/} 4} \chi_{(0, r]}$, $r > 0$, and define $\mu \in \prob{\bR}$ through the unnormalised \ac{pdf} shown in \Cref{fig:e_and_w_distinguishing}(\subref{fig:CP_violation}) and given by
	\begin{equation*}
		\rho(x) = \rho_{2^{-1}}(x) + \rho_{2^{-1}}(-x) + 2\sum_{k = 1}^\infty \biggl(1-\frac{1}{4k}\biggr)\rho_{2^{-k}}(x - k).
	\end{equation*}

	\proofpartorcase{Ball masses under $\rho_{r}$.}
	A straightforward calculation using the density $\rho_{r}$ reveals that
	\begin{equation}
		\label{equ:CP_violation_example_properties}
		\int_{\ball{0}{s}} \rho_{r}(x) \,\rd x
		=
		s^{3/4}
		~~
		\text{for $0 \leq s \leq r$, and}
		~~
		\int_{\ball{s}{s}} \rho_{r}(x) \,\rd x
		=
		(2s)^{3/4}
		~~
		\text{for $0 \leq s \leq \tfrac{r}{2}$.}
	\end{equation}
	These facts will be used to construct non-centred approximating sequences which dominate the constant approximating sequence centred at $0$.

	\proofpartorcase{$\rho$ is normalisable.}
	Using \eqref{equ:CP_violation_example_properties} and the definition of $\rho$, it is straightforward to check that the normalisation constant $Z = \int_{\bR} \rho(x) \,\rd x < \infty$, so $\mu \in \prob{\bR}$.

	\proofpartorcase{\OMExhaustive\ \ac{OM} functional.}
	The measure $\mu$ has an \ac{OM} functional $I \colon E \to \bR$ defined on $E = \bN \cup \{0\}$, given by
	\begin{equation*}
		I(x) = \begin{cases}
			0, & \text{if $x = 0$,}\\
			-\log \left(1 - \frac{1}{4x}\right), & \text{if $x \in \bN$,}
		\end{cases}
	\end{equation*}
	which can be seen using elementary calculations using the definition \eqref{eq:OM_functional} of an \ac{OM} functional and the density $\rho$;
	property $M(\mu, E)$ follows because $\mu(\ball{u}{r}) \in \bigO(r)$ for $u \notin E$ by \eqref{eq:mass_near_Lebesgue_point}.

	\proofpartorcase{$u = 0$ is the unique $\sww$- and $\swwgap$-mode.}
	Since $u = 0$ is the unique minimiser of $I$ and property $M$ holds, it is the unique $\sww$-mode by \Cref{prop:OM_minimisers_w-modes};
	it is also a $\swwgap$-mode by \Cref{thm:main}.

	To see that there is no other $\swwgap$-mode,
	first note that no $u \notin \bN$ can be a $\swwgap$-mode:
	given any \as\ $(u_{n})_{n \in \bN}$ of such $u$ and any \ns\ $(r_{n})_{n \in \bN}$, we have $\mu(\ball{u_{n}}{r_{n}}) \in \bigO(r_{n})$ by \eqref{eq:mass_near_Lebesgue_point};
	however, the constant \cpas\ $(0)_{n \in \bN}$ satisfies $\mu(\ball{0}{r_{n}}) \in \littleOmega(r_{n})$, and so $\liminf_{n \to \infty} \Ratio{u_{n}}{0}{r_{n}}{\mu} = 0$.
	It remains to show that no $u \in \bN$ is a $\swwgap$-mode.
	To see this, take the \cp\ $v = u + 1$ and the \cpas\ $v_{n} = u_{n} + 1$, and let $(r_{n})_{n \in \bN}$ be any \ns.
	Then $u$ is not a $\swwgap$-mode because
	\[
		\Ratio{u_{n}}{v_{n}}{r_{n}}{\mu} = \frac{1 - \tfrac{1}{4u}}{1 - \tfrac{1}{4(u+1)}} < 1 \text{~~~for all large enough $n$.}
	\]

	\proofpartorcase{The constant $\as$ is optimal at all $\swwgap$-modes.}
	The point $u = 0$ is the unique $\swwgap$-mode, and, since $\rho$ is symmetric and has convex superlevel sets on $\ball{0}{1/2}$, \Cref{lemma:CASIO_for_ac_in_Rm}\ref{item:CASIO_at_local_symmetric_modes} implies that the constant \as\ is optimal here.

	\proofpartorcase{$u = 0$ is neither an $\swe$-mode nor a $\swpgs$-mode.}
	Let $(r_{n})_{n \in \bN}$ be an arbitrary \ns\ and note that
		the constant \as\ $u_{n} = u$ is optimal.
		Then, take the \cp\ $v = 1$ and the \cpas\ $v_{n} = 1 + r_{n}$;
		 we see that
		\begin{equation} \label{eq:example_CP_ball_ratio_e}
			\Ratio{u_{n}}{v_{n}}{r_{n}}{\mu} = \frac{2r_{n}^{3/4}}{2\bigl(1-\tfrac{1}{4}\bigr) (2r_{n})^{3/4}} < 1 \text{~~for all large enough $n$.}
		\end{equation}
		This immediately implies that $u = 0$ is not an \ac{e-mode};
		as \eqref{eq:example_CP_ball_ratio_e} holds for all \ns\ and $\Ratio{u_{n}}{\sup}{r_{n}}{\mu} \leq \Ratio{u_{n}}{v_{n}}{r_{n}}{\mu} < 1$, the claim also follows for \acp{pgs-mode}.
	\EndExampleText
\end{example}

\subsection{Distinguishing some partial and non-partial modes; the merging property}
\label{subsec:examples_partial--non-partial}

The following example illustrates the difference between partial and non-partial modes by considering two mild perturbations of the integrable singularity $\absval{ x }^{-1 \mathbin{/} 2}$ on $\bR$.
The perturbations, centred in this example at $\pm 2$, are chosen in such a way that the mass of the radius-$r$ ball centred at $\pm 2$ is $\bigTheta(\sqrt{r})$, but the \emph{ratio} of these two ball masses oscillates either side of unity as $r \to 0$.
This makes the points $\pm 2$ into \swps-modes but not \sws-modes, nor even \swgw-modes, even when we include an additional unperturbed $\absval{ x }^{-1 \mathbin{/} 2}$ singularity at the origin that is an \sws-mode.
This oscillation phenomenon also has the side effect of disproving the merging property \rMP\ for non-partial modes;
the more sophisticated \Cref{example:SuspensionBridgeExtended} will disprove \rMP\ for partial modes.

\begin{example}[{$\swps$ but not $\swgw$, even with an $\sws$-mode;
	meaningful non-$\swp$-modes fail \rMP}]
	\label[example]{example:ps_not_s_not_gw}
	Consider the \ac{pdf} $\sigma(x) \defeq \frac{1}{4} \absval{x}^{-1 \mathbin{/} 2} \chi_{[{-1}, 1]} (x)$ with the property that $\int_{-r}^{r} \sigma(x) \, \rd x = \sqrt{r}$ for $0 \leq r \leq 1$.
	Now let $\sigma_{\even}$ be the symmetric density supported on $[-1, 1]$ that is uniquely determined by the property that $r \mapsto \int_{-r}^{r} \sigma_{\even}(x) \, \rd x$ agrees with $\sqrt{r}$ for $r = r_{n} \defeq 2^{-2 n}$, $n \in \bN \cup \{ 0 \}$, and interpolates linearly in between these knots;
	define $\sigma_{\odd}$ similarly, but supported on $[-\tfrac{1}{2}, \tfrac{1}{2}]$ and with agreement at $r = 2^{-2 n - 1}$ for $n \in \bN \cup \{ 0 \}$.
	Note that $\sigma_{\text{even}}$ has unit mass by construction, but $\sigma_{\text{odd}}$ does not.
	Finally, let $\mu \in \prob{\bR}$ have the unnormalised \ac{pdf} shown in \Cref{fig:sigma_even_odd_plots}(\subref{subfig:ps_but_neither_gs_nor_w}) and given by
	\[
		\rho(x) \defeq \frac{1}{3} \sigma_{\even}(x + 2) + \frac{1}{3} \sigma(x) + \frac{1}{3} \sigma_{\odd}(x - 2),
	\]
	and let $Z \defeq \int_{\bR} \rho(x) \,\rd x < \infty$ be the normalising constant.

	\proofpartorcase{The constant \as\ is optimal at all \swwgap-modes.}
	Every point $x \in \bR \setminus \{ {-2}, 0, 2 \}$ is a Lebesgue point for $\mu$, with $\mu(\ball{x}{r}) \in \bigO(r)$, by \eqref{eq:mass_near_Lebesgue_point}.
	On the other hand, $\mu(\ball{u}{r}) \in \bigTheta(\sqrt{r})$ for $u \in \{ {-2}, 0, 2 \}$, so it follows that these points $u$ are the only possible \swwgap-modes of $\mu$.
	By the same appeal to Anderson's theorem \citep[Theorem~1]{Anderson1955} as is used in \Cref{lemma:CASIO_for_ac_in_Rm}\ref{item:CASIO_at_local_symmetric_modes},
	\begin{equation}
		\label{eq:Anderson_consequence_for_ps_not_s_not_gw}
		\forall \as\, (u_{n})_{n \in \bN} \to u \in \{ {-2}, 0, 2 \} , \,
		\forall \ns\, (r_{n})_{n \in \bN} : \quad
		\mu(\ball{u_{n}}{r_{n}}) \leq \mu(\ball{u}{r_{n}}) \text{ eventually,}
	\end{equation}
	and in particular the constant \as\ of $u$ is optimal at all \swwgap-modes in the sense of \eqref{eq:CASIO}.

	\proofpartorcase{$u = 0$ is an \sws-mode.}
	Simply observe that, for each $0 < r < 1$, $\mu(\ball{0}{r}) = \frac{1}{3Z} \sqrt{r} = \sMass{\mu}{r}$.

	\proofpartorcase{$u = \pm 2$ are \swps-modes.}
	For $r_{n} \defeq 2^{-2 n}$, $\mu(\ball{-2}{r_{n}}) = \frac{1}{3Z} \sqrt{r_{n}} = \sMass{\mu}{r_{n}}$, and so $u = {-2}$ is a \swps-mode along this \ns.
	A similar argument shows that $u = 2$ is a \swps-mode along the \ns\ $(\frac{r_{n}}{2})_{n \in \bN}$.

	\proofpartorcase{$u = \pm 2$ are not \swgw-modes.}
	A direct calculation shows that
	\begin{equation} \label{eq:ps_but_neither_gs_nor_w_ball-mass_ratio}
		\Ratio{-2}{2}{2^{-2n}}{\mu} = \frac{2\sqrt{2}}{3} < 1 < \frac{3}{2\sqrt{2}} = \Ratio{-2}{2}{2^{-2n-1}}{\mu} \quad \text{for all $n \in \bN$}.
	\end{equation}
	This shows that $\liminf_{r \to 0} \Ratio{-2}{2}{r}{\mu} < 1 < \limsup_{r \to 0} \Ratio{-2}{2}{r}{\mu}$.
	Now, to see that $u = {-2}$ is not a \swgw-mode, let us again consider $r_{n} \defeq 2^{-2 n}$.
	Then, for any \as\ $(u_{n})_{n \in \bN} \to u = {-2}$, we consider the \cp\ $v = 2$.
	Then, using \eqref{eq:Anderson_consequence_for_ps_not_s_not_gw},
	\begin{equation*}
		\liminf_{n \to \infty} \Ratio{u_{n}}{v}{\frac{r_{n}}{2} }{\mu} \leq \liminf_{n \to \infty} \Ratio{u}{v}{\frac{r_{n}}{2}}{\mu}  < 1 .
	\end{equation*}
	Thus, $u = -2$ is not a \swgw-mode;
	this means that it is neither a \swgs-mode nor a \sww-mode.
	The argument for $u = 2$ is similar, with \cp\ $v = {-2}$ and \ns\ $(r_{n})_{n \in \bN}$.

	\proofpartorcase{Merging properties.}
	Take $A \defeq (-3, -1)$ and $B \defeq (1, 3)$.
	Then $u_{A} \defeq {-2}$ and $u_{B} \defeq 2$ are \sws-modes of $\mu|_{A}$ and $\mu|_{B}$ respectively, while almost all arguments regarding $\mu$ above also apply to $\mu|_{A \cup B}$:
	its only \swwgap-modes are $u_{A}$ and $u_{B}$, and they are \swps-modes, but neither is a \swgw-mode.
	This disproves \rMP\ for all meaningful non-partial modes (\Cref{prop:all_modes_violate_MP}).
	\EndExampleText
\end{example}

\begin{figure}[t!]
	\centering
	\begin{subfigure}[t]{0.49\textwidth}
		\centering
		\begin{tikzpicture}[scale=0.7]
			\begin{axis}[
				axis lines=middle,
				xlabel={$x$},
				ylabel={$\rho(x)$},
				xtick={-2,2},
				ytick={1,2,3,4},
				xmin=-2.2, xmax=2.8,
				ymin=0, ymax=3.9,
				samples=1024,
				domain=-1.0:1.0,
				restrict y to domain=0:4,
				clip=false,
				enlargelimits,
				xscale=1.5,
				xlabel style={at={(axis cs:3.3,0.0)}, anchor=west},
				ylabel style={at={(axis cs:0.0,4.3)}, anchor=south}
			]
				\addplot[domain=-1:-0.001, thick, blue] {0.25 * (-x)^(-1/2)};
				\addplot[domain=0.001:1, thick, blue] {0.25 * x^(-1/2)};
				\draw[thick, blue] (axis cs:0.0,3) -- (axis cs:0.0, 4.2);
				\draw[thick, blue] (axis cs:-1, 0.0) -- (axis cs:-1, 0.25);
				\draw[thick, blue] (axis cs:1, 0.25) -- (axis cs:1.0, 0.0);

				\draw[thick, accent4] (axis cs:3, 0) -- (axis cs:3, 0.3333333333333333) -- (axis cs:2.25, 0.3333333333333333);
				\draw[thick, accent4] (axis cs:1, 0) -- (axis cs:1, 0.3333333333333333) -- (axis cs:1.75, 0.3333333333333333);
				\draw[thick, accent4] (axis cs:2.25, 0.3333333333333333) -- (axis cs:2.25, 0.6666666666666666) -- (axis cs:2.0625, 0.6666666666666666);
				\draw[thick, accent4] (axis cs:1.75, 0.3333333333333333) -- (axis cs:1.75, 0.6666666666666666) -- (axis cs:1.9375, 0.6666666666666666);
				\draw[thick, accent4] (axis cs:2.0625, 0.6666666666666666) -- (axis cs:2.0625, 1.3333333333333333) -- (axis cs:2.015625, 1.3333333333333333);
				\draw[thick, accent4] (axis cs:1.9375, 0.6666666666666666) -- (axis cs:1.9375, 1.3333333333333333) -- (axis cs:1.984375, 1.3333333333333333);
				\draw[thick, accent4] (axis cs:2.015625, 1.3333333333333333) -- (axis cs:2.015625, 2.6666666666666665) -- (axis cs:2.00390625, 2.6666666666666665);
				\draw[thick, accent4] (axis cs:1.984375, 1.3333333333333333) -- (axis cs:1.984375, 2.6666666666666665) -- (axis cs:1.99609375, 2.6666666666666665);
				\draw[thick, accent4] (axis cs:2.00390625, 2.6666666666666665) -- (axis cs:2.00390625, 4.2) -- (axis cs:2.0009765625, 4.2);
				\draw[thick, accent4] (axis cs:1.99609375, 2.6666666666666665) -- (axis cs:1.99609375, 4.2) -- (axis cs:1.9990234375, 4.2);
				\draw[thick, accent3] (axis cs:-1.5, 0) -- (axis cs:-1.5, 0.9428090415820634) -- (axis cs:-1.875, 0.9428090415820634);
				\draw[thick, accent3] (axis cs:-2.5, 0) -- (axis cs:-2.5, 0.9428090415820634) -- (axis cs:-2.125, 0.9428090415820634);
				\draw[thick, accent3] (axis cs:-1.875, 0.9428090415820634) -- (axis cs:-1.875, 1.8856180831641267) -- (axis cs:-1.96875, 1.8856180831641267);
				\draw[thick, accent3] (axis cs:-2.125, 0.9428090415820634) -- (axis cs:-2.125, 1.8856180831641267) -- (axis cs:-2.03125, 1.8856180831641267);
				\draw[thick, accent3] (axis cs:-1.96875, 1.8856180831641267) -- (axis cs:-1.96875, 3.7712361663282534) -- (axis cs:-1.9921875, 3.7712361663282534);
				\draw[thick, accent3] (axis cs:-2.03125, 1.8856180831641267) -- (axis cs:-2.03125, 3.7712361663282534) -- (axis cs:-2.0078125, 3.7712361663282534);
				\draw[thick, accent3] (axis cs:-1.9921875, 3.7712361663282534) -- (axis cs:-1.9921875, 4.2) -- (axis cs:-1.998046875, 4.2);
				\draw[thick, accent3] (axis cs:-2.0078125, 3.7712361663282534) -- (axis cs:-2.0078125, 4.2) -- (axis cs:-2.001953125, 4.2);
				\draw[thick, accent3] (axis cs:-1.998046875, 4.2) -- (axis cs:-1.998046875, 4.2) -- (axis cs:-1.99951171875, 4.2);
				\draw[thick, accent3] (axis cs:-2.001953125, 4.2) -- (axis cs:-2.001953125, 4.2) -- (axis cs:-2.00048828125, 4.2);
			\end{axis}
		\end{tikzpicture}
		\subcaption{\raggedright Part of the unnormalised \ac{pdf} $\rho$ from \Cref{example:ps_not_s_not_gw}, which consists of singularities $\sigma_{\text{even}}$ (in black, centred at $2$) and $\sigma_{\text{odd}}$ (in orange, centred at $-2$) and has \swps-modes at $\pm 2$ that are neither \swgs- nor \sww-modes, even with an \sws-mode at $0$.}
		\label{subfig:ps_but_neither_gs_nor_w}
	\end{subfigure}
	\begin{subfigure}[t]{0.49\textwidth}
		\centering
		\begin{tikzpicture}[scale=0.7]
			\begin{axis}[
				axis lines=middle,
				xlabel={$x$},
				ylabel={$\rho(x)$},
				xtick={-5,-4,-3,-1, 1,3,4,5},
				ytick={1, 2, 3, 4},
				xmin=-4.6, xmax=4.6,
				ymin=0, ymax=3.9,
				samples=1024,
				domain=-5.5:5.5,
				restrict y to domain=0:3,
				clip=false,
				enlargelimits,
				xscale=1.5,
				xlabel style={at={(axis cs:5.6,0.0)}, anchor=west},
				ylabel style={at={(axis cs:0.0,4.3)}, anchor=south}
			]
				\draw[thick, accent4] (axis cs:0, 0) -- (axis cs:0, 0.3333333333333333) -- (axis cs:-0.75, 0.3333333333333333);
				\draw[thick, accent4] (axis cs:-2, 0) -- (axis cs:-2, 0.3333333333333333) -- (axis cs:-1.25, 0.3333333333333333);
				\draw[thick, accent4] (axis cs:-0.75, 0.3333333333333333) -- (axis cs:-0.75, 0.6666666666666666) -- (axis cs:-0.9375, 0.6666666666666666);
				\draw[thick, accent4] (axis cs:-1.25, 0.3333333333333333) -- (axis cs:-1.25, 0.6666666666666666) -- (axis cs:-1.0625, 0.6666666666666666);
				\draw[thick, accent4] (axis cs:-0.9375, 0.6666666666666666) -- (axis cs:-0.9375, 1.3333333333333333) -- (axis cs:-0.984375, 1.3333333333333333);
				\draw[thick, accent4] (axis cs:-1.0625, 0.6666666666666666) -- (axis cs:-1.0625, 1.3333333333333333) -- (axis cs:-1.015625, 1.3333333333333333);
				\draw[thick, accent4] (axis cs:-0.984375, 1.3333333333333333) -- (axis cs:-0.984375, 2.6666666666666665) -- (axis cs:-0.99609375, 2.6666666666666665);
				\draw[thick, accent4] (axis cs:-1.015625, 1.3333333333333333) -- (axis cs:-1.015625, 2.6666666666666665) -- (axis cs:-1.00390625, 2.6666666666666665);
				\draw[thick, accent4] (axis cs:-0.99609375, 2.6666666666666665) -- (axis cs:-0.99609375, 4.2) -- (axis cs:-0.9990234375, 4.2);
				\draw[thick, accent4] (axis cs:-1.00390625, 2.6666666666666665) -- (axis cs:-1.00390625, 4.2) -- (axis cs:-1.0009765625, 4.2);
				\draw[thick, accent3] (axis cs:1.5, 0) -- (axis cs:1.5, 0.9428090415820634) -- (axis cs:1.125, 0.9428090415820634);
				\draw[thick, accent3] (axis cs:0.5, 0) -- (axis cs:0.5, 0.9428090415820634) -- (axis cs:0.875, 0.9428090415820634);
				\draw[thick, accent3] (axis cs:1.125, 0.9428090415820634) -- (axis cs:1.125, 1.8856180831641267) -- (axis cs:1.03125, 1.8856180831641267);
				\draw[thick, accent3] (axis cs:0.875, 0.9428090415820634) -- (axis cs:0.875, 1.8856180831641267) -- (axis cs:0.96875, 1.8856180831641267);
				\draw[thick, accent3] (axis cs:1.03125, 1.8856180831641267) -- (axis cs:1.03125, 3.7712361663282534) -- (axis cs:1.0078125, 3.7712361663282534);
				\draw[thick, accent3] (axis cs:0.96875, 1.8856180831641267) -- (axis cs:0.96875, 3.7712361663282534) -- (axis cs:0.9921875, 3.7712361663282534);
				\draw[thick, accent3] (axis cs:1.0078125, 3.7712361663282534) -- (axis cs:1.0078125, 4.2) -- (axis cs:1.001953125, 4.2);
				\draw[thick, accent3] (axis cs:0.9921875, 3.7712361663282534) -- (axis cs:0.9921875, 4.2) -- (axis cs:0.998046875, 4.2);
				\draw[thick, accent3] (axis cs:1.001953125, 4.2) -- (axis cs:1.001953125, 4.2) -- (axis cs:1.00048828125, 4.2);
				\draw[thick, accent3] (axis cs:0.998046875, 4.2) -- (axis cs:0.998046875, 4.2) -- (axis cs:0.99951171875, 4.2);
				\draw[thick, accent4] (axis cs:3.25, 0) -- (axis cs:3.25, 0.6678910949120377) -- (axis cs:3.0625, 0.6678910949120377);
				\draw[thick, accent4] (axis cs:2.75, 0) -- (axis cs:2.75, 0.6678910949120377) -- (axis cs:2.9375, 0.6678910949120377);
				\draw[thick, accent4] (axis cs:3.0625, 0.6678910949120377) -- (axis cs:3.0625, 1.3357821898240754) -- (axis cs:3.015625, 1.3357821898240754);
				\draw[thick, accent4] (axis cs:2.9375, 0.6678910949120377) -- (axis cs:2.9375, 1.3357821898240754) -- (axis cs:2.984375, 1.3357821898240754);
				\draw[thick, accent4] (axis cs:3.015625, 1.3357821898240754) -- (axis cs:3.015625, 2.6715643796481507) -- (axis cs:3.00390625, 2.6715643796481507);
				\draw[thick, accent4] (axis cs:2.984375, 1.3357821898240754) -- (axis cs:2.984375, 2.6715643796481507) -- (axis cs:2.99609375, 2.6715643796481507);
				\draw[thick, accent4] (axis cs:3.00390625, 2.6715643796481507) -- (axis cs:3.00390625, 4.2) -- (axis cs:3.0009765625, 4.2);
				\draw[thick, accent4] (axis cs:2.99609375, 2.6715643796481507) -- (axis cs:2.99609375, 4.2) -- (axis cs:2.9990234375, 4.2);
				\draw[thick, accent3] (axis cs:-2.875, 0) -- (axis cs:-2.875, 1.8890812892256394) -- (axis cs:-2.96875, 1.8890812892256394);
				\draw[thick, accent3] (axis cs:-3.125, 0) -- (axis cs:-3.125, 1.8890812892256394) -- (axis cs:-3.03125, 1.8890812892256394);
				\draw[thick, accent3] (axis cs:-2.96875, 1.8890812892256394) -- (axis cs:-2.96875, 3.778162578451279) -- (axis cs:-2.9921875, 3.778162578451279);
				\draw[thick, accent3] (axis cs:-3.03125, 1.8890812892256394) -- (axis cs:-3.03125, 3.778162578451279) -- (axis cs:-3.0078125, 3.778162578451279);
				\draw[thick, accent3] (axis cs:-2.9921875, 3.778162578451279) -- (axis cs:-2.9921875, 4.2) -- (axis cs:-2.998046875, 4.2);
				\draw[thick, accent3] (axis cs:-3.0078125, 3.778162578451279) -- (axis cs:-3.0078125, 4.2) -- (axis cs:-3.001953125, 4.2);
				\draw[thick, accent3] (axis cs:-2.998046875, 4.2) -- (axis cs:-2.998046875, 4.2) -- (axis cs:-2.99951171875, 4.2);
				\draw[thick, accent3] (axis cs:-3.001953125, 4.2) -- (axis cs:-3.001953125, 4.2) -- (axis cs:-3.00048828125, 4.2);
				\draw[thick, accent4] (axis cs:4.0625, 0) -- (axis cs:4.0625, 1.4095999522231526) -- (axis cs:4.015625, 1.4095999522231526);
				\draw[thick, accent4] (axis cs:3.9375, 0) -- (axis cs:3.9375, 1.4095999522231526) -- (axis cs:3.984375, 1.4095999522231526);
				\draw[thick, accent4] (axis cs:4.015625, 1.4095999522231526) -- (axis cs:4.015625, 2.819199904446305) -- (axis cs:4.00390625, 2.819199904446305);
				\draw[thick, accent4] (axis cs:3.984375, 1.4095999522231526) -- (axis cs:3.984375, 2.819199904446305) -- (axis cs:3.99609375, 2.819199904446305);
				\draw[thick, accent4] (axis cs:4.00390625, 2.819199904446305) -- (axis cs:4.00390625, 4.2) -- (axis cs:4.0009765625, 4.2);
				\draw[thick, accent4] (axis cs:3.99609375, 2.819199904446305) -- (axis cs:3.99609375, 4.2) -- (axis cs:3.9990234375, 4.2);
				\draw[thick, accent3] (axis cs:-3.96875, 0) -- (axis cs:-3.96875, 3.986950739908898) -- (axis cs:-3.9921875, 3.986950739908898);
				\draw[thick, accent3] (axis cs:-4.03125, 0) -- (axis cs:-4.03125, 3.986950739908898) -- (axis cs:-4.0078125, 3.986950739908898);
				\draw[thick, accent3] (axis cs:-3.9921875, 3.986950739908898) -- (axis cs:-3.9921875, 4.2) -- (axis cs:-3.998046875, 4.2);
				\draw[thick, accent3] (axis cs:-4.0078125, 3.986950739908898) -- (axis cs:-4.0078125, 4.2) -- (axis cs:-4.001953125, 4.2);
				\draw[thick, accent3] (axis cs:-3.998046875, 4.2) -- (axis cs:-3.998046875, 4.2) -- (axis cs:-3.99951171875, 4.2);
				\draw[thick, accent3] (axis cs:-4.001953125, 4.2) -- (axis cs:-4.001953125, 4.2) -- (axis cs:-4.00048828125, 4.2);
				\draw[thick, accent4] (axis cs:5.015625, 0) -- (axis cs:5.015625, 2.827884347081491) -- (axis cs:5.00390625, 2.827884347081491);
				\draw[thick, accent4] (axis cs:4.984375, 0) -- (axis cs:4.984375, 2.827884347081491) -- (axis cs:4.99609375, 2.827884347081491);
				\draw[thick, accent4] (axis cs:5.00390625, 2.827884347081491) -- (axis cs:5.00390625, 4.2) -- (axis cs:5.0009765625, 4.2);
				\draw[thick, accent4] (axis cs:4.99609375, 2.827884347081491) -- (axis cs:4.99609375, 4.2) -- (axis cs:4.9990234375, 4.2);
				\draw[thick, accent3] (axis cs:-4.9921875, 0) -- (axis cs:-4.9921875, 4.2) -- (axis cs:-4.998046875, 4.2);
				\draw[thick, accent3] (axis cs:-5.0078125, 0) -- (axis cs:-5.0078125, 4.2) -- (axis cs:-5.001953125, 4.2);
				\draw[thick, accent3] (axis cs:-4.998046875, 4.2) -- (axis cs:-4.998046875, 4.2) -- (axis cs:-4.99951171875, 4.2);
				\draw[thick, accent3] (axis cs:-5.001953125, 4.2) -- (axis cs:-5.001953125, 4.2) -- (axis cs:-5.00048828125, 4.2);
			\end{axis}
		\end{tikzpicture}
		\subcaption{\raggedright Part of the unnormalised \ac{pdf} $\rho$ from \cref{example:SuspensionBridgeExtended}, which consists of truncated, reweighted copies of $\sigma_{\text{even}}$ (in black, centred at $-1$ and $3$, $4$, $\ldots$) and $\sigma_{\text{odd}}$ (in orange, centred at $1$ and $-3$, $-4$, $\ldots$), and proves that meaningful $\swp$-modes violate \rMP.}
		\label{subfig:meaningful_p-modes_violate_MP}
	\end{subfigure}
	\caption{\Cref{example:ps_not_s_not_gw,example:SuspensionBridgeExtended} disprove \rMP\ for all meaningful mode types (\cref{prop:all_modes_violate_MP}). \Cref{example:ps_not_s_not_gw} also serves as a distinguishing example in the lattices of mode types.}
	\label{fig:sigma_even_odd_plots}
\end{figure}

This example disproves \rMP\ for all meaningful non-$\swp$-modes.
To disprove \rMP\ for meaningful $\swp$-modes, we consider a variant with countably many copies of the singularities $\sigma_{\text{even}}$ and $\sigma_{\text{odd}}$.

\begin{example}[Meaningful $\swp$-modes fail \rMP]
	\label[example]{example:SuspensionBridgeExtended}
	Recall the densities $\sigma_{\text{even}}$ and $\sigma_{\text{odd}}$ from \cref{example:ps_not_s_not_gw}, and define truncated versions
	\begin{equation*}
		\sigma_{\text{even}, k}(x) \defeq \sigma_{\text{even}}(x) \chi_{[-2^{-2k}, 2^{-2k}]}(x), \qquad \sigma_{\text{odd}, k}(x) \defeq \sigma_{\text{odd}}(x) \chi_{[-2^{-2k-1}, 2^{-2k-1}]}(x).
	\end{equation*}
	These densities have mass $2^{-k}$ and $2^{-k-\nicefrac{1}{2}}$ respectively.
	Then let $\beta > \bigl(\tfrac{3}{2\sqrt{2}} - 1\bigr)^{-1}$ and let $\mu \in \prob{\bR}$ have the unnormalised \ac{pdf} shown in \Cref{fig:sigma_even_odd_plots}(\subref{subfig:meaningful_p-modes_violate_MP}) and given by
	\begin{equation*}
		\rho(x) \defeq \sigma_{\text{even}}(x + 1) + \sigma_{\text{odd}}(x - 1) + \sum_{k = 1}^{\infty} \biggl(\frac{3}{2\sqrt{2}} - \beta^{-k} \biggr) \Bigl[ \sigma_{\text{even},k}(x - k - 2) + \sigma_{\text{odd},k}(x + k + 2) \Bigr].
	\end{equation*}

	\proofpartorcase{$\mu$ is a probability measure.}
	The normalising constant $Z \defeq \int_{\bR} \rho(x) \,\rd x$ is finite because, owing to the choice of truncation radius, the masses of the $\sigma_{\text{even},k}$ and $\sigma_{\text{odd},k}$ are summable.

	\proofpartorcase{$-1$ is a $\swps$-mode of $\mu|_{A}$, $A = (-\infty, -\frac{1}{2})$.}
	Note that, for $r \leq \frac{1}{4}$, any ball $\ball{u}{r}$, $u \in X$ intersects the support of at most one of the singularities, and if $\ball{u}{r}$ intersects the support of the singularity centred at $k$, then $\mu|_{A}(\ball{u}{r}) \leq \mu|_{A}(\ball{k}{r})$.
	Thus
	\begin{equation*}
		\Ratio{-1}{\sup}{r}{\mu|_{A}} = \sup_{k \in \{-1\} \cup \{-3, -4, \dots\}} \Ratio{-1}{k}{r}{\mu|_{A}} \text{~~for all sufficiently small $r$.}
	\end{equation*}
	But, using the definition of $\rho$ and \eqref{eq:ps_but_neither_gs_nor_w_ball-mass_ratio}, we have, along the distinguished \ns\ $r_{n} \defeq 2^{-2n-1}$ and for $k \in \{-3, -4, \dots\}$ that
	\begin{equation*}
		\Ratio{-1}{k}{r_{n}}{\mu|_{A}} \geq \frac{1}{\bigl(\tfrac{3}{2\sqrt{2}} - \beta^{-k+2}\bigr)} \cdot \frac{\int_{-r_{n}}^{r_{n}} \sigma_{\text{even}}(x) \,\rd x}{\int_{-r_{n}}^{r_{n}} \sigma_{\text{odd}}(x) \,\rd x} \geq \frac{1}{\bigl(\tfrac{3}{2\sqrt{2}} - \beta^{-k+2}\bigr)} \cdot \frac{2\sqrt{2}}{3}.
	\end{equation*}
	From this we conclude that $\liminf_{n \to \infty} \Ratio{-1}{\sup}{r_{n}}{\mu|_{A}} \geq 1$, i.e., that $-1$ is a $\swps$-mode of $\mu|_{A}$.

	\proofpartorcase{$1$ is a $\swps$-mode of $\mu|_{B}$, $B = (\frac{1}{2}, \infty)$.}
	This follows by a similar argument with the \ns\ $r_{n} \defeq 2^{-2n}$.

	\proofpartorcase{$\mu|_{A \cup B}$ has no $\swwgap$-modes.}
	First, note that if $u \notin \bZ \setminus \{0, -2, 2\}$, then $\mu|_{A \cup B}(\ball{u_{n}}{r_{n}}) \in \bigO(r_{n})$ for any \as\ $(u_{n})_{n \in \bN}$ and \ns\ $(r_{n})_{n \in \bN}$ by \cref{lem:mass_near_Lebesgue_point}, whereas $\mu|_{A \cup B}(\ball{1}{r_{n}}) \in \bigTheta(\sqrt{r_{n}})$;
	this shows that no such $u$ can be a $\swwgap$-mode.
	Next we will prove that no $u \in \{1, 3, 4, \dots\}$ is a \swwgap-mode;
	in all cases the constant \as\ of $u$ is optimal.
	By construction we have, for all \ns\ $(r_{n})_{n \in \bN}$, that
	\begin{equation*}
		\liminf_{n \to \infty} \Ratio{1}{-3}{r_{n}}{\mu} = \liminf_{n \to \infty} \frac{1}{\bigl(\tfrac{3}{2\sqrt{2}} - \beta^{-1}\bigr)} \cdot \frac{\int_{-r_{n}}^{r_{n}} \sigma_{\text{odd}}(x) \,\rd x}{\int_{-r_{n}}^{r_{n}} \sigma_{\text{odd}}(x) \,\rd x} < 1,
	\end{equation*}
	showing that $u = 1$ is not a \swwgap-mode.
	Otherwise, if $u \geq 3$, then taking the \cp\ $v = u + 1$ yields
	\begin{equation*}
		\liminf_{n \to \infty} \Ratio{u}{v}{r_{n}}{\mu} = \liminf_{n \to \infty} \frac{\bigl(\tfrac{3}{2\sqrt{2}} - \beta^{-u+2} \bigr)}{\bigl(\tfrac{3}{2\sqrt{2}} - \beta^{-u+1}\bigr)} \cdot \frac{\int_{-r_{n}}^{r_{n}} \sigma_{\text{even}}(x) \,\rd x}{\int_{-r_{n}}^{r_{n}} \sigma_{\text{even}}(x) \,\rd x} < 1.
	\end{equation*}
	A similar argument holds for $u < 0$, proving that $\mu|_{A \cup B}$ has no \swwgap-modes.
	Together, these properties disprove \rMP\ for all meaningful $\swp$-modes (\cref{prop:all_modes_violate_MP}).
	\EndExampleText
\end{example}

Recall that, for any measure with an $\sws$-mode and an \OMexhaustive\ \ac{OM} functional, all $\swps$-modes are $\sws$-modes (\cref{cor:dichotomy_exists_s_CASIO_OMpM}), so \cref{example:ps_not_s_not_gw} cannot admit an \ac{OM} functional satisfying property $M$.
Thus, while the following example, which provides a \swpgs-mode that is not a \swgs-mode, would seem to be weaker than \cref{example:ps_not_s_not_gw}, it is indeed necessary to distinguish between mode types for measures with an \OMexhaustive\ \ac{OM} functional and an $\sws$-mode (\cref{fig:Hasse_dichotomy_2}(\subref{subfig:Hasse_dichotomy_exists_s_OMpM})).

The main idea, which will also be used in \cref{section:distinguishing_wgap--gwap}, is that, for mode types containing $\swg$ and $\swp$, $u \in X$ is a mode if $\mu(\ball{u_{n}}{r_{n}})$ dominates along some \as\ $(u_{n})_{n \in \bN}$ and some \ns\ $(r_{n})_{n \in \bN}$;
but this mass can be spread so that the centred ball mass $\mu(\ball{u}{r_{n}})$ is not dominant along any \ns\ $(r_{n})_{n \in \bN}$.
We exploit this, using a density composed of step functions, to construct a measure $\mu$ with an \OMexhaustive\ \ac{OM} functional $I \colon E \to \bR$ possessing a $\swpgs$-mode $u \notin E$.
While the \ac{OM} functional prevents oscillatory behaviour of the ball mass for points in $E$, it imposes no restriction on points outside of $E$, allowing us to distinguish $\swpgs$- and $\swgs$-modes in this setting.

\begin{example}[$\swpgs$ but not $\swgs$, even with an \sws-mode and an \OMexhaustive\ \ac{OM} functional]
	\label[example]{example:pgs_not_gs_under_s_mode_and_OM}
	Consider $\mu \in \prob{\bR}$ defined through the unnormalised \ac{pdf} shown in \Cref{fig:distinguishing_p}(\subref{subfig:distinguishing_pgs_and_gs}) and given by
	\begin{align*}
		\rho(x)
		\defeq
		\frac{\absval{x-c}^{-1 \mathbin{/} 2}}{2} \chi_{\Bigl[{-\tfrac{\absval{c}}{2}},\tfrac{\absval{c}}{2}\Bigr]}(x-c)
		&+
		\sum_{k \in \bN}
		4^{k} \chi_{[2^{-k}-R_{k}, 2^{-k}+R_{k}]} (x),\quad c \defeq -\tfrac{1}{8},\quad R_{k} \defeq 4^{-2k}.
	\end{align*}
	It is easy to see that $\rho$ is normalisable, i.e., that $Z \defeq \int_{\bR} \rho(x) \, \rd x < \infty$, and that $\mu(\Ball{c}{r}) = \tfrac{2}{Z} \sqrt{r}$ for $0 < r \leq \frac{\absval{c}}{2}$.
	Note that, for $S_{n} \defeq 4^{-2n-1}$ and $u_{n} \in [-\frac{1}{4}, \frac{1}{4}]$, we have
	\begin{align}
		\mu\bigl(\ball{u_{n}}{r}\bigr) &\leq \mu\bigl(\ball{2^{-n}}{r}\bigr) = \tfrac{2}{Z} \cdot 4^{n}r &&\text{for $S_{n} \leq r \leq R_{n}$,} \label{eq:pgs_but_not_gw_1} \\
		\mu\bigl(\ball{u_{n}}{r}\bigr) &\leq \mu\bigl(\ball{2^{-n-1}}{r}\bigr) = \tfrac{2}{Z} \cdot 4^{n+1}R_{n+1} = \tfrac{2}{Z} \cdot 4^{-n-1} &&\text{for $R_{n+1} < r \leq S_{n}$.} \label{eq:pgs_but_not_gw_2}
	\end{align}

	\proofpartorcase{$c$ is an \sws-mode.}
	This follows as the previous facts imply $\mu(\ball{c}{r}) = \sMass{\mu}{r}$ for $r \in (0, \tfrac{\absval{c}}{2})$.

	\proofpartorcase{$u = 0$ is a \swpgs-mode.}
	This follows because, for the \ns\ $R_{n} \defeq 4^{-2n}$ and \as\ $u_{n} \defeq 2^{-n}$, we have
	\[
		\mu(\Ball{u_{n}}{R_{n}})
		=
		\tfrac{2}{Z} \cdot 4^{-n}
		=
		\tfrac{2}{Z} \cdot \sqrt{R_{n}}
		=
		\mu(\Ball{c}{R_{n}})
		=
		\sMass{\mu}{R_{n}} \text{~~for all sufficiently large $n$.}
	\]

	\proofpartorcase{$u = 0$ is not a \swgs-mode.}
	Take the \ns\ $S_{n} \defeq 4^{-2n-1}$ and let $(u_{n})_{n \in \bN}$ be any \as\ of $u = 0$, where we assume without loss of generality that $u_{n} \in [-\frac{1}{4}, \frac{1}{4}]$ for all $n \in \bN$.
	Then \eqref{eq:pgs_but_not_gw_1} and \eqref{eq:pgs_but_not_gw_2} imply that $u = 0$ is not a $\swgs$-mode, because
	\[
		\mu\bigl(\Ball{u_{n}}{S_{n}}\bigr) \leq \tfrac{2}{Z} \cdot 4^{-n-1} = \tfrac{1}{Z} \sqrt{S_{n}} = \tfrac{1}{2} \mu(\ball{c}{S_{n}}) \text{~~for all sufficiently large $n$.}
	\]

	\proofpartorcase{\OMExhaustive\ \ac{OM} functional.}
	An \ac{OM} functional $I \colon E \to \bR$ for $\mu$ on $E = \{ {c} \}$ is trivially given by $I(c) \defeq 0$.
	To prove property $M(\mu, E)$ it is sufficient, by 	\eqref{eq:mass_near_Lebesgue_point}, to compare $c \in E$ to the point $0$, for which
	$\mu(\Ball{0}{r})
	\leq
	\tfrac{8}{3Z}\, 4^{-K} \leq \tfrac{32}{3Z} r^{2}$ and $\mu(\ball{c}{r}) = \tfrac{2}{Z} \sqrt{r}$
	whenever $r \leq 2^{-K}$.
	\EndExampleText
\end{example}

\begin{figure}[t!]
	\centering
	\begin{subfigure}[t]{0.49\textwidth}
		\centering
		\begin{tikzpicture}[scale=0.65]
			\begin{axis}[
				axis lines=middle,
				xlabel={$x$},
				ylabel={$\rho(x)$},
				xtick={-0.125,0, 0.0625, 0.125, 0.25, 0.5},
				xticklabels={$c$,{},$2^{-4}$, $2^{-3}$,$2^{-2}$,$2^{-1}$},
				ytick={16,64,256},
				xmin=-0.13, xmax=0.515,
				ymin=23, ymax=250,
				samples=1024,
				domain=-0.5:3.0,
				restrict y to domain=0:256,
				clip=false,
				enlargelimits,
				xscale=1.5,
				xlabel style={at={(axis cs:0.6,0.0)}, anchor=west},
				ylabel style={at={(axis cs:0.0,270)}, anchor=south}
			]
				\draw[blue, thick] (axis cs:-0.2, 0) -- (axis cs:-0.1875, 0) -- (axis cs:-0.1875, 2);
				\addplot[domain=-0.1875:-0.1251, thick, blue] {0.5 * (-(x+0.125))^(-1/2)};
				\addplot[domain=-0.1249:-0.0625, thick, blue] {0.5 * (x+0.125)^(-1/2)};
				\draw[blue, thick] (axis cs:-0.125, 32) -- (axis cs:-0.125, 270);
				\draw[blue, thick] (axis cs:-0.0625, 2) -- (axis cs:-0.0625, 0) -- (axis cs:0, 0);

				\draw[blue, thick] (axis cs:0.570000, 0) -- (axis cs:0.562500, 0);
				\draw[blue, thick] (axis cs:0.562500, 0) -- (axis cs:0.562500, 4) -- (axis cs:0.437500, 4) -- (axis cs:0.437500, 0);
				\draw[blue, thick] (axis cs:0.437500, 0) -- (axis cs:0.253906, 0);
				\draw[blue, thick] (axis cs:0.253906, 0) -- (axis cs:0.253906, 16) -- (axis cs:0.246094, 16) -- (axis cs:0.246094, 0);
				\draw[blue, thick] (axis cs:0.246094, 0) -- (axis cs:0.125244, 0);
				\draw[blue, thick] (axis cs:0.125244, 0) -- (axis cs:0.125244, 64) -- (axis cs:0.124756, 64) -- (axis cs:0.124756, 0);
				\draw[blue, thick] (axis cs:0.124756, 0) -- (axis cs:0.062515, 0);
				\draw[blue, thick] (axis cs:0.062515, 0) -- (axis cs:0.062515, 256) -- (axis cs:0.062485, 256) -- (axis cs:0.062485, 0);
				\draw[blue, thick] (axis cs:0.062485, 0) -- (axis cs:0.031251, 0);
				\draw[blue, thick] (axis cs:0.031251, 0) -- (axis cs:0.031251, 270) -- (axis cs:0.031249, 270) -- (axis cs:0.031249, 0);
				\draw[blue, thick] (axis cs:0.031249, 0) -- (axis cs:0, 0);
			\end{axis}
		\end{tikzpicture}
		\subcaption{\raggedright Part of the unnormalised \ac{pdf} $\rho$ from \Cref{example:pgs_not_gs_under_s_mode_and_OM}, for which $u = 0$ is a \swpgs-mode but not a \swgs-mode, even in the presence of an \sws-mode at $c = -\frac{1}{8}$. To simplify the plot, only the first five of the countably many step functions around $u = 0$ are shown, and the heights are truncated.}
		\label{subfig:distinguishing_pgs_and_gs}
	\end{subfigure}
	\begin{subfigure}[t]{0.49\textwidth}
		\centering
		\begin{tikzpicture}[scale=0.65]
			\begin{axis}[
				axis lines=middle,
				xlabel={$\log r$},
				xtick={0, 1, 2, 3, 4, 5, 6},
				ytick={0},
				xmin=-0.2, xmax=6.5,
				ymin=0.6, ymax=1.4,
				samples=512,
				domain=0.0:6.5,
				restrict y to domain=0.5:1.5,
				clip=false,
				enlargelimits,
				xscale=1.5,
				xlabel style={at={(axis cs:7.2,0.525)}, anchor=west},
			]
				\fill[gray!10] (axis cs:3.926,0.535) -- (axis cs:3.926,1.4) -- (axis cs:5.497,1.4) -- (axis cs:5.497,0.535) -- cycle;
				\addplot[domain=0.0:6.283, very thick, accent3] {1 + 0.3 * sin(deg(x))};
				\addplot[domain=0.0:6.283, very thick, accent1] {1 + 0.3 * sin(deg(x - 2.094395))};
				\addplot[domain=0.0:6.283, very thick, accent2] {1 + 0.3 * sin(deg(x - 4.188790))};
				\draw[accent1, very thick] (axis cs:1.0,1.5) -- (axis cs:1.5,1.5);
				\node[anchor=west] at (axis cs:1.5,1.5) {$\Ratio{0}{2}{\mu}{r}$};
				\draw[accent2, very thick] (axis cs:3.0,1.5) -- (axis cs:3.5,1.5);
				\node[anchor=west] at (axis cs:3.5,1.5) {$\Ratio{0}{4}{\mu}{r}$};
				\draw[accent3, very thick] (axis cs:5.0,1.5) -- (axis cs:5.5,1.5);
				\node[anchor=west] at (axis cs:5.5,1.5) {$\Ratio{0}{6}{\mu}{r}$};
				\node[anchor=east] at (axis cs:-0.2,1.3) {$1 + \alpha$};
				\draw[dashed] (axis cs:-0.1,1.3) -- (axis cs:6.4,1.3);
				\node[anchor=east] at (axis cs:-0.2,1.0) {$1$};
				\draw[dashed] (axis cs:-0.1,1.0) -- (axis cs:6.4,1.0);
				\node[anchor=east] at (axis cs:-0.2,0.85) {$1 - \tfrac{\alpha}{2}$};
				\draw[dashed] (axis cs:-0.1,0.85) -- (axis cs:6.4,0.85);
				\node[anchor=east] at (axis cs:-0.2,0.7) {$1 - \alpha$};
				\draw[dashed] (axis cs:-0.1,0.7) -- (axis cs:6.4,0.7);
			\end{axis}
		\end{tikzpicture}
		\caption{\raggedright One period of the oscillating ball-mass ratios $\Ratio{0}{v}{r}{\mu}$, $v \in \{2, 4, 6\}$, for the measure $\mu$ of \Cref{example:wp_but_not_pw_or_pgs}, with $\log r \in [ \frac{5 \pi}{4}, \frac{7 \pi}{4} ]$ shaded.
		Any \ns\ $(r_{n})_{n \in \bN}$ with $\liminf_{n \to \infty} \Ratio{0}{v}{r_{n}}{\mu} \geq 1$ for $v = 2$ and $4$ has $\log r_{n} \in [ \frac{5 \pi}{4}, \frac{7 \pi}{4} ] + 2 \pi \bZ$ eventually, so $\liminf_{n \to \infty} \Ratio{0}{6}{r_{n}}{\mu} < 1 - \frac{\alpha}{2}$. This shows that $0$ is a \swwp-mode but neither a \swpw- nor a \swpgs-mode.}
		\label{subfig:wp_but_not_pw_or_pgs}
	\end{subfigure}
	\caption{\Cref{example:pgs_not_gs_under_s_mode_and_OM,example:wp_but_not_pw_or_pgs} serve as examples distinguishing among various $\swp$-modes in the lattice of mode types.}
	\label{fig:distinguishing_p}
\end{figure}

To distinguish $\swwp$-modes from $\swpw$- and $\swpgs$-modes, we give another example with singularities in the spirit of \cref{example:ps_not_s_not_gw}.
In that example, the singularities $\sigma_{\text{even}}$ and $\sigma_{\text{odd}}$ were specified through their ball mass centred at the origin along distinguished sequences $(2^{-2n})_{n \in \bN}$ and $(2^{-2n-1})_{n \in \bN}$, respectively.
To show that the modes we construct are \emph{not} $\swpw$- or $\swpgs$-modes, we require finer control of the ball mass along all null sequences;
we do this by specifying the ball mass  using a trigonometric function chosen to give the desired oscillatory behaviour.

\begin{example}[$\swwp$ but neither $\swpw$ nor $\swpgs$]
	\label[example]{example:wp_but_not_pw_or_pgs}

	Consider the \ac{pdf} $\sigma(x) \defeq \tfrac{1}{2 \sqrt{2}} \absval{x}^{-1 \mathbin{/} 2} \chi_{[-\frac{1}{2}, \frac{1}{2}]} (x)$ with $\int_{-r}^{r} \sigma(x) \, \rd x = \min ( 1, \sqrt{2 r} )$.
	For $\alpha \in (0, \tfrac{1}{3})$ and $\theta \in \bR$, let $\sigma_{\alpha, \theta} \colon \bR \to [0, \infty]$ be the unique symmetric \ac{pdf} satisfying
	\[
		\int_{-r}^{r} \sigma_{\alpha, \theta}(x) \, \rd x = \min \left( 1, \frac{\sqrt{2 r}}{1 + \alpha \sin ( \log r  - \theta )} \right) .
	\]

	Note that the constraints on $\alpha$ ensure that $\sigma_{\alpha, \theta}$ is a non-negative function supported within $[-1, 1]$.
	Now consider $\mu \in \prob{\bR}$ with \ac{pdf} $\rho$ given by
	\[
		\rho(x) \defeq \frac{1}{4} \sigma(x) + \frac{1}{4} \sigma_{\alpha, \frac{2 \pi}{3}} (x - 2) + \frac{1}{4} \sigma_{\alpha, \frac{4 \pi}{3}} (x - 4) + \frac{1}{4} \sigma_{\alpha, 0} (x - 6) .
	\]

	\proofpartorcase{$u = 0$, $2$, $4$, and $6$ are the only \swwgap-modes; optimality of the constant \as\ at \swwgap-modes.}
	The proof that these are the only \swwgap-modes of $\mu$, and that the constant \as\ is optimal at these points, is similar to the arguments used in \Cref{example:ps_not_s_not_gw}, and hence is omitted.
	Moreover, it is straightforward to verify that the constant \as\ is optimal away from these points.

	\proofpartorcase{$u = 0$ is a \swwp-mode.}
	The standard comparison between the mass near the singularity at $u = 0$ and the mass near any $v \in \bR \setminus \{ 0, 2, 4, 6 \}$ shows that $\liminf_{r \to 0} \Ratio{0}{v}{r}{\mu} = \infty$.
	By construction,
	\begin{equation}
		\label{eq:wp_but_not_pw_or_pgs_ratio}
		\Ratio{0}{v}{r}{\mu} = 1 + \alpha \sin \bigl( \log r  - \tfrac{2 v \pi}{3} \bigr) \text{~~for $v = 2$, $4$, and $6$.}
	\end{equation}
	For each such $v$, we may take $r_{n} \defeq \exp\bigl( \tfrac{2 v \pi}{3} - 4 n \pi\bigr)$, along which $\Ratio{0}{v}{r_{n}}{\mu} \equiv 1$.
	Thus, for any $v \in \bR \setminus \{ 0 \}$, there is an \ns\ $(r_{n})_{n \in \bN}$ along which $\liminf_{n \to \infty} \Ratio{0}{v}{r_{n}}{\mu} \geq 1$, showing that $u = 0$ is a \swwp-mode.

	\proofpartorcase{$u = 0$ is not a \swpw-mode.}
	We claim that there is no \ns\ $(r_{n})_{n \in \bN}$ along which $\liminf_{n \to \infty} \Ratio{0}{v}{r_{n}}{\mu} \geq 1$ for all $v \in \{ 2, 4, 6 \}$.
	To see this, suppose without loss of generality that $(r_{n})_{n \in \bN}$ is an \ns\ along which $\liminf_{n \to \infty} \Ratio{0}{v}{r_{n}}{\mu} \geq 1$ for $v = 2$ and $4$.
	From \eqref{eq:wp_but_not_pw_or_pgs_ratio}, this would imply that, for all large enough $n$, $\log r_{n} \in [\frac{5 \pi}{4}, \frac{7 \pi}{4}] + 2 \pi \bZ$;
	however, again from \eqref{eq:wp_but_not_pw_or_pgs_ratio}, for such $r_{n}$, $\Ratio{0}{6}{r_{n}}{\mu} < 1 - \frac{\alpha}{2}$, as shown in \Cref{fig:distinguishing_p}(\subref{subfig:wp_but_not_pw_or_pgs}).
	Thus, if $u = 0$ dominates $2$ and $4$ along some \ns, then it cannot dominate $6$;
	similar arguments apply to the other two cases.
	Hence, $u = 0$ cannot be a \swpw-mode.

	\proofpartorcase{$u = 0$ is not a \swpgs-mode.}
	Since the constant \as\ is optimal at $u = 0$, it is a \swpgs-mode if and only if it is a \swps-mode (\Cref{thm:modes_coincide_CASIO}).
	However, since we have just established that $u = 0$ cannot be a \swpw-mode, it certainly cannot be a \swps-mode, since every \swps-mode is necessarily a \swpw-mode.
	\EndExampleText
\end{example}

\subsection{Distinguishing generalised and non-generalised modes}
\label{sec:distinguishing_g_and_non_g}

The next examples illustrate the difference between generalised and non-generalised modes, even if an \sws-mode is present and the measure has an \OMexhaustive\ \ac{OM} functional.

\begin{example}[$\swgs$ but not $\swwp$, even with an \sws-mode and an \OMexhaustive\ \ac{OM} functional]
	\label[example]{example:gs_not_s_not_wp}

	Consider $\mu \defeq \Uniform[0, 1] \in \prob{\bR}$ with \ac{pdf} $\rho \defeq \chi_{[0, 1]}$.
	Let $E \defeq \supp(\mu) = [0, 1]$.

	\proofpartorcase{Each $u \in (0, 1)$ is an \sws-mode.}
	Let $u \in (0, 1)$ be arbitrary.
	Then, for all small enough $r > 0$, $\mu(\ball{u}{r}) = 2 r = \sMass{\mu}{r}$, showing that $u$ is an \sws-mode.

	\proofpartorcase{$u = 0$ and $u = 1$ are \swgs-modes but not \sws-modes.}
	Fix $u = 0$;
	the case $u = 1$ is similar.
	Since $\mu(\ball{u}{r}) = r$ for all small enough $r$, it follows that $\liminf_{r \to 0} \Ratio{u}{\sup}{r}{\mu} = \frac{1}{2} < 1$, and so $u$ is not an \sws-mode.
	However, given any \ns\ $(r_{n})_{n \in \bN}$, consider the \as\ $(u_{n})_{n \in \bN}$ given by $u_{n} \defeq r_{n}$.
	Eventually, $0 < r_{n} < \frac{1}{2}$ and $\ball{u_{n}}{r_{n}} = (0, 2 r_{n}) \subset [0, 1]$.
	Hence, $u$ is a \swgs-mode, since $\liminf_{n \to \infty} \Ratio{u_{n}}{\sup}{r_{n}}{\mu} = 1$.

	\proofpartorcase{$u = 0$ and $u = 1$ are not \swwp-modes.}
	
	Consider the \cp\ $v \defeq \tfrac{1}{2}$.
	Then, for $r < \tfrac{1}{2}$, $\mu(\ball{u}{r}) = r$ but $\mu(\ball{v}{r}) = 2 r$, and so, for any \ns\ $(r_{n})_{n \in \bN}$, $\liminf_{n \to \infty} \Ratio{u}{v}{r_{n}}{\mu} = \tfrac{1}{2} < 1$.

	\proofpartorcase{\OMExhaustive\ \ac{OM} functional.}
	Note that an \ac{OM} functional $I \colon E \to \bR$ for $\mu$ is given by
	\[
		I(x) \defeq
		\begin{cases}
			0, & \text{for $x \in (0, 1)$,} \\
			\log 2, & \text{for $x \in \{ 0, 1 \}$.}
		\end{cases}
	\]
	Property $M(\mu, E)$ clearly holds, since small enough balls centred outside $E$ have zero mass.
	\EndExampleText
\end{example}

\subsection{\texorpdfstring{Distinguishing \swwgap- and \swgwap-modes}{Distinguishing wgap- and gwap-modes}}
\label{section:distinguishing_wgap--gwap}

\begin{figure}[t!]
	\centering
	\begin{subfigure}[t]{0.63\textwidth}
		\centering
		\begin{tikzpicture}[scale=0.8]
			\begin{axis}[
				axis lines=middle,
				xlabel={$x$},
				ylabel={$\rho(x)$},
				xtick={0,0.046868,0.093636,0.185669,0.345703,0.75, 1, 1.046868,1.093636,1.185669,1.345703,1.75, 2, 2.046868,2.093636,2.185669,2.345703,2.75},
				xticklabels={{}, {}, {}, $m_{2}$, $m_{1}$, $m_{0}$, 1,{},{},{},{},{},2,{},{},{},{},{}},
				ytick={4,8,16,32,64,128,256},
				yticklabels={{},{},16,32,64,128,256},
				xmin=0.2, xmax=2.55,
				ymin=0, ymax=252,
				samples=1024,
				domain=-0.5:3.0,
				restrict y to domain=0.5:256,
				clip=false,
				enlargelimits,
				xscale=1.7,
				yscale=1.3,
				xlabel style={at={(axis cs:2.67,-15)}, anchor=west},
				ylabel style={at={(axis cs:0.0,275)}, anchor=south}
			]
				\draw[dashed,gray!50] (axis cs:0,4) -- (axis cs:2.75, 4);
				\draw[dashed,gray!50] (axis cs:0,8) -- (axis cs:2.75, 8);
				\draw[dashed,gray!50] (axis cs:0,16) -- (axis cs:2.75, 16);
				\draw[dashed,gray!50] (axis cs:0,32) -- (axis cs:2.75, 32);
				\draw[dashed,gray!50] (axis cs:0,64) -- (axis cs:2.75, 64);
				\draw[dashed,gray!50] (axis cs:0,128) -- (axis cs:2.75, 128);
				\draw[dashed,gray!50] (axis cs:0,256) -- (axis cs:2.75, 256);
	
				\draw[blue, thick] (axis cs:0.062485, 0.0) -- (axis cs:-0.05, 0.0);
				\draw[blue, thick] (axis cs:0.75, 0.0) -- (axis cs:1.0, 0.0) -- (axis cs:1.062485, 0.0);
				\draw[blue, thick] (axis cs:1.75, 0.0) -- (axis cs:2.0, 0.0) -- (axis cs:2.062485, 0.0);
				
				\draw[blue, thick] (axis cs:0.750000, 0.000000) -- (axis cs:0.750000, 0.895105) -- (axis cs:0.562500, 0.895105) -- (axis cs:0.562500, 4.000000) -- (axis cs:0.562500, 4.000000) -- (axis cs:0.437500, 4.000000) -- (axis cs:0.437500, 0.895105) -- (axis cs:0.345703, 0.895105);
				\draw[blue, thick] (axis cs:0.345703, 0.895105) -- (axis cs:0.345703, 0.410585) -- (axis cs:0.253906, 0.410585) -- (axis cs:0.253906, 16.000000) -- (axis cs:0.253906, 16.000000) -- (axis cs:0.246094, 16.000000) -- (axis cs:0.246094, 0.410585) -- (axis cs:0.185669, 0.410585);
				\draw[blue, thick] (axis cs:0.185669, 0.410585) -- (axis cs:0.185669, 0.170681) -- (axis cs:0.125244, 0.170681) -- (axis cs:0.125244, 64.000000) -- (axis cs:0.125244, 64.000000) -- (axis cs:0.124756, 64.000000) -- (axis cs:0.124756, 0.170681) -- (axis cs:0.093636, 0.170681);
				\draw[blue, thick] (axis cs:0.093636, 0.170681) -- (axis cs:0.093636, 0.083579) -- (axis cs:0.062515, 0.083579) -- (axis cs:0.062515, 256.000000) -- (axis cs:0.062515, 256.000000) -- (axis cs:0.062485, 256.000000) -- (axis cs:0.062485, 0.083579) -- (axis cs:0.046868, 0.083579);
				\draw[accent3, thick] (axis cs:1.750000, 0.000000) -- (axis cs:1.750000, 1.790210) -- (axis cs:1.562500, 1.790210) -- (axis cs:1.562500, 8.000000) -- (axis cs:1.562500, 8.000000) -- (axis cs:1.437500, 8.000000) -- (axis cs:1.437500, 1.790210) -- (axis cs:1.345703, 1.790210);
				\draw[blue, thick] (axis cs:1.345703, 1.790210) -- (axis cs:1.345703, 0.410585) -- (axis cs:1.253906, 0.410585) -- (axis cs:1.253906, 16.000000) -- (axis cs:1.253906, 16.000000) -- (axis cs:1.246094, 16.000000) -- (axis cs:1.246094, 0.410585) -- (axis cs:1.185669, 0.410585);
				\draw[accent3, thick] (axis cs:1.185669, 0.410585) -- (axis cs:1.185669, 0.341362) -- (axis cs:1.125244, 0.341362) -- (axis cs:1.125244, 128.000000) -- (axis cs:1.125244, 128.000000) -- (axis cs:1.124756, 128.000000) -- (axis cs:1.124756, 0.341362) -- (axis cs:1.093636, 0.341362);
				\draw[blue, thick] (axis cs:1.093636, 0.341362) -- (axis cs:1.093636, 0.083579) -- (axis cs:1.062515, 0.083579) -- (axis cs:1.062515, 256.000000) -- (axis cs:1.062515, 256.000000) -- (axis cs:1.062485, 256.000000) -- (axis cs:1.062485, 0.083579) -- (axis cs:1.046868, 0.083579);
				\draw[blue, thick] (axis cs:2.750000, 0.000000) -- (axis cs:2.750000, 0.895105) -- (axis cs:2.562500, 0.895105) -- (axis cs:2.562500, 4.000000) -- (axis cs:2.562500, 4.000000) -- (axis cs:2.437500, 4.000000) -- (axis cs:2.437500, 0.895105) -- (axis cs:2.345703, 0.895105);
				\draw[accent3, thick] (axis cs:2.345703, 0.895105) -- (axis cs:2.345703, 0.821171) -- (axis cs:2.253906, 0.821171) -- (axis cs:2.253906, 32.000000) -- (axis cs:2.253906, 32.000000) -- (axis cs:2.246094, 32.000000) -- (axis cs:2.246094, 0.821171) -- (axis cs:2.185669, 0.821171);
				\draw[blue, thick] (axis cs:2.185669, 0.821171) -- (axis cs:2.185669, 0.170681) -- (axis cs:2.125244, 0.170681) -- (axis cs:2.125244, 64.000000) -- (axis cs:2.125244, 64.000000) -- (axis cs:2.124756, 64.000000) -- (axis cs:2.124756, 0.170681) -- (axis cs:2.093636, 0.170681);
				\draw[accent3, thick] (axis cs:2.093636, 0.170681) -- (axis cs:2.093636, 0.167158) -- (axis cs:2.062515, 0.167158) -- (axis cs:2.062515, 270.000000) -- (axis cs:2.062515, 270.000000) -- (axis cs:2.062485, 270.000000) -- (axis cs:2.062485, 0.167158) -- (axis cs:2.046868, 0.167158);
			\end{axis}
		\end{tikzpicture}
		\subcaption{\raggedright Part of the unnormalised \ac{pdf} $\rho$ from \Cref{example:wgap_but_not_gwap}.  The density around $0$ is a sum of the functions $\rho_{k}$ (see (\subref{subfig:wgap--gwap_rho_k})), as in \Cref{example:pgs_not_gs_under_s_mode_and_OM}. The densities around $1$ and $2$ are defined similarly, with the $\rho_{k}$ of odd (respectively even) index, shown in orange, receiving double mass. The density $\rho$ also has a $\absval{\quark}^{-1 \mathbin{/} 2}$-singularity centred at $c = -\frac{1}{8}$ (not shown). Only the first four $\rho_{k}$ are shown, and the heights are truncated.}
		\label{subfig:wgap--gwap_density}
	\end{subfigure}
	\begin{subfigure}[t]{0.36\textwidth}
		\centering
		\begin{tikzpicture}[scale=0.8]
			\begin{axis}[
				axis lines=middle,
				xlabel={$x$},
				ylabel={$\rho_{k}(x)$},
				xtick={0, 0.345703, 0.4375, 0.5, 0.5625, 0.75},
				xticklabels = {{}, $m_{k}$, $a_{k}$, $2^{-k}$, $b_{k}$, $m_{k-1}$},
				ytick={0,0.895105,4},
				yticklabels = {{}, $\beta_{k}$, $4^{k}$},
				xmin=0.33, xmax=0.75,
				ymin=0.0, ymax=4.2,
				samples=512,
				domain=0.0:6.5,
				restrict y to domain=0.5:1.5,
				clip=false,
				enlargelimits,
				xscale=0.95,
				yscale=1.3,
				xlabel style={at={(axis cs:0.79,0.0)}, anchor=west},
				ylabel style={at={(axis cs:0.29,4.6)}, anchor=south}
			]
				\fill[gray!10] (axis cs:0.5625,4.40) -- (axis cs:0.5625,0.02) -- (axis cs:0.4375,0.02) -- (axis cs:0.4375,4.40) -- cycle;
				\node[black] at (axis cs:0.5, 4.21) {$A_{k}$};
				\fill[gray!30] (axis cs:0.5625,4.40) -- (axis cs:0.5625,0.02) -- (axis cs:0.75,0.02) -- (axis cs:0.75,4.40) -- cycle;
				\fill[gray!30] (axis cs:0.4375,4.40) -- (axis cs:0.4375,0.02) -- (axis cs:0.345703,0.02) -- (axis cs:0.345703,4.40) -- cycle;
				\node[black] at (axis cs:0.39, 4.21) {$B_{k}$};
				\node[black] at (axis cs:0.65, 4.21) {$B_{k}$};
				\node[black] at (axis cs:0.547, 4.73) {$C_{k}$};
				\draw [decorate,decoration={brace,amplitude=5pt}]
				  (axis cs:0.332,4.42) -- (axis cs:0.763,4.42);
				\draw[blue, thick] (axis cs:0.750000, 0.000000) -- (axis cs:0.750000, 0.895105) -- (axis cs:0.562500, 0.895105) -- (axis cs:0.562500, 4.000000) -- (axis cs:0.562500, 4.000000) -- (axis cs:0.437500, 4.000000) -- (axis cs:0.437500, 0.895105) -- (axis cs:0.345703, 0.895105) -- (axis cs:0.345703, 0.0);
			\end{axis}
		\end{tikzpicture}
		\caption{\raggedright Unnormalised density $\rho_{k}$, which takes value $4^{k}$ on $A_{k} = [a_{k}, b_{k}]$ (light gray), takes value $\beta_{k}$ on $B_{k} = (m_{k}, a_{k}) \cup (b_{k}, m_{k-1}]$ (dark gray), and is zero outside of $C_{k} = A_{k} \cup B_{k}$.}
		\label{subfig:wgap--gwap_rho_k}
	\end{subfigure}
	\caption{\Cref{example:wgap_but_not_gwap} has a \swwgap-mode at $u = 0$ that is not a \swgwap-mode, and serves as a distinguishing example in the lattices of mode types.}
	\label{fig:wgap_but_not_gwap}
\end{figure}

To distinguish \swwgap-modes from \swgwap-modes, we construct an absolutely continuous measure on $X = \bR$ with three important singularities at the points $0$, $1$, and $2$, and a further singularity at $c \defeq -\frac{1}{8}$ not relevant to the main argument, such that:
\begin{enumerate}[label=(\alph*)]
	\item
	\label{item:wgap--gwap_property_not_gwap}
	for any \as\ of $u = 0$, there is a \cpas\ of either $v = 1$ or $v = 2$ that dominates the \as\ along all \ns; but

	\item
	\label{item:wgap--gwap_property_wgap}
	for each \cp\ separately, there is a choice of \as\ and \ns\ that dominates all \cpas.
\end{enumerate}
We wish to do this in the presence of an \OMexhaustive\ \ac{OM} functional, so that the measure we construct can serve as a distinguishing example in \Cref{thm:modes_coincide_OMpM}/\Cref{fig:Hasse_in_special_cases}(\subref{subfig:Hasse_in_special_cases_OMpM}).
Our starting point is the unnormalised \ac{pdf} $\rho$ from \Cref{example:pgs_not_gs_under_s_mode_and_OM}, which took the form
\begin{equation}
	\label{eq:pgs_not_gs_density}
	\rho(x) \defeq \sigma(x - c)\chi_{\Bigl[-\tfrac{\absval{c}}{2}, \tfrac{\absval{c}}{2} \Bigr]}(x - c) + \sum_{k \in \bN} \rho_{k}(x),
\end{equation}
with $\sigma \defeq \tfrac{1}{2} \absval{\quark}^{-1/2}$, $\rho_{k} \defeq 4^{k} \chi_{[2^{-k} - R_{k}, 2^{-k} + R_{k}]}$, $R_{k} \defeq 4^{-2k}$, and $c \defeq -\tfrac{1}{8}$.
Recall that, in that example, an \OMexhaustive\ \ac{OM} functional can be defined on $E = \{c\}$;
in particular, the singularity at $c$ turns out to be an \sws-mode and the singularity at $0$ is merely a \swpgs-mode.

In \Cref{example:wgap_but_not_gwap} that follows, we will modify \eqref{eq:pgs_not_gs_density} by placing, at the points $1$ and $2$, weighted copies of the singularity centred at $0$, with weighting sequences $(w^{\text{odd}}_{k})_{k \in \bN}$ and $(w^{\text{even}}_{k})_{k \in \bN}$, giving the unnormalised \ac{pdf}
\begin{equation} \label{eq:wgap--gwap_density}
	\rho(x) \defeq \sigma(x - c)\chi_{\Bigl[-\tfrac{\absval{c}}{2}, \tfrac{\absval{c}}{2} \Bigr]}(x - v) + \sum_{k \in \bN} \rho_{k}(x) + w^{\text{odd}}_{k} \rho_{k}(x - 1) + w^{\text{even}}_{k} \rho_{k}(x - 2),
\end{equation}
for which we will take the specific choices of weights
\begin{equation} \label{eq:wgap--gwap_weights}
	w^\text{odd}_{k} \defeq \begin{cases}
		2, & \text{if $k$ is odd,}\\
		1, & \text{if $k$ is even,}
	\end{cases}\qquad\qquad
	w^\text{even}_{k} \defeq \begin{cases}
		2, & \text{if $k$ is even,}\\
		1, & \text{if $k$ is odd}.
	\end{cases}
\end{equation}
With some minor modifications, this density gives the properties we desire.
First, $u = 0$ satisfies property \ref{item:wgap--gwap_property_wgap}, making it a \swwgap-mode.
Second, owing to the increased weight of the step functions at $1$ and $2$, the point $v$ is not an \sws-mode, unlike \Cref{example:pgs_not_gs_under_s_mode_and_OM};
this is essential, as otherwise \Cref{thm:s--ps--w_dichotomy} would imply that all \swwgap-modes are \swgwap-modes.
Third, the weighting at $1$ and $2$ is designed to give \ref{item:wgap--gwap_property_not_gwap}, with the intuition that any \as\ can be translated from $u = 0$ to either $v = 1$ or $v = 2$ to give a \cpas\ that receives double the weight infinitely often.
Making this rigorous, particularly when $r$ is much larger than the width of the step function, is technical;
moreover our convention that $\nicefrac{0}{0} = 1$ causes issues in light of the gaps between the supports of the $\rho_{k}$.
To address this we modify the $\rho_{k}$ to fill the gaps between their supports:
taking
\begin{equation} \label{eq:wgap--gwap_notation_1}
	A_{k} \defeq [a_{k}, b_{k}],\quad  a_{k} \defeq 2^{-k} - R_{k},\quad b_{k} \defeq 2^{-k} + R_{k}, \quad R_{k} \defeq 4^{-2k}, \quad k \in \bN,
\end{equation}
we extend the support of $\rho_{k}$ to contain the interval
\begin{equation} \label{eq:wgap--gwap_notation_2}
	C_{k} \defeq (m_{k}, m_{k - 1}],\qquad m_{k} \defeq \tfrac{1}{2} \bigl(a_{k} + b_{k+1}\bigr),\qquad k \in \bN,\qquad m_{0} \defeq \tfrac{3}{4}
\end{equation}
by adding a uniform density of mass $4^{-k}$ to $B_{k} \defeq C_{k} \setminus A_{k}$.
Doing so gives us a measure $\mu \in \prob{\bR}$ satisfying both \ref{item:wgap--gwap_property_wgap} and \ref{item:wgap--gwap_property_not_gwap}, as we now describe.

\begin{example}[$\swwgap$ but not $\swgwap$, even with an \OMexhaustive\ \ac{OM} functional]
	\label[example]{example:wgap_but_not_gwap}
	Using the notation of \eqref{eq:wgap--gwap_notation_1}--\eqref{eq:wgap--gwap_notation_2},
	define $\mu \in \prob{\bR}$ through the unnormalised \ac{pdf} shown in \Cref{fig:wgap_but_not_gwap} and given by \eqref{eq:wgap--gwap_density}--\eqref{eq:wgap--gwap_weights}, with
	\begin{equation*}
		\sigma(x) \defeq \frac{1}{2} \absval{x}^{-1/2},\qquad \rho_{k}(x) \defeq 4^{k} \chi_{A_{k}} + \beta_{k} \chi_{B_{k}},\qquad \beta_{k} \defeq \frac{4^{-k}}{\Leb{1}(B_{k})},\qquad c \defeq -\frac{1}{8}.
	\end{equation*}

	It is easy to see that $\rho$ is normalisable, i.e., that $Z \defeq \int_{\bR} \rho(x) \,\rd x < \infty$.
	Moreover,
	\begin{equation} \label{eq:wgap--gwap_ball_masses}
		\mu(A_{k}) =  \tfrac{2}{Z} \cdot 4^{-k}, \quad
		\mu(B_{k}) = \tfrac{1}{Z} \cdot 4^{-k},\quad
		\mu(C_{k}) = \tfrac{3}{Z} \cdot 4^{-k},\quad
		\mu(\Ball{c}{r}) = \tfrac{1}{Z} \cdot \sqrt{r}
		\text{~~for $r \leq \tfrac{1}{2}$},
	\end{equation}
	and, for $k \geq 2$,
	a direct calculation reveals that $2^{-k} \leq \beta_{k} \leq 2^{-k+2}$.

	\proofpartorcase{$u = 0$ is a \swwgap-mode.}
    Considering first the \cp\ $v=-1$, for which the constant \cpas\ is optimal, choose the \as\ $u_{n} = 2^{-n}$ and the \ns\ $R_{n} = 4^{-2n}$ to obtain
    \[
	\liminf_{n \to \infty} \Ratio{u_{n}}{v_{n}}{R_{n}}{\mu}
    \geq
    \frac{2\cdot 4^{-2n} \cdot 4^{n}}{\sqrt{4^{-2n}}}
    =
    2
	\text{~~for any \cpas\ $(v_{n})_{n \in \bN} \to v$.}
    \]
    By \eqref{eq:mass_near_Lebesgue_point}, the same \as\ and \ns\ can be used for any \cp\ $v \notin \{ -1,0,1,2 \}$.
    For the \cp\ $v = 1$, choose the \as\ $u_{n} = 2^{-2n}$ and the \ns\ $r_{n} = 4^{-4n}$.
    We must show that, for any \cpas\ $v_{n} \to v$, $\liminf_{n \to \infty} \Ratio{u_{n}}{v_{n}}{r_{n}}{\mu} \geq 1$.
   	To see this we will show that $v_{n}^{\ast} = 1 + 2^{-2n}$ is an optimal \as\ of $v$ for this \ns.
	Note that $\mu(\Ball{v_{n}^{\ast}}{r_{n}}) = \tfrac{2}{Z} \cdot 4^{-2n}$ and let $(v_{n})_{n \in \bN}$ be another \as\ of $v=1$;
	without loss of generality, we assume $\ball{v_{n}}{r_{n}} \subseteq [\frac{3}{4}, \frac{3}{2}]$ for all $n \in \bN$.
	If $\ball{v_{n}}{r_{n}} \subseteq [\frac{3}{4}, 1 + m_{2n}]$, then
	\[
	\mu(\Ball{v_{n}}{r_{n}})
	\leq
	\frac{2}{Z} \sum_{k \geq 2n+1} \mu(C_{k})
	=
	\frac{2}{Z} \sum_{k \geq 2n+1} 3 \cdot 4^{-k}
	=
	\frac{2}{Z} \cdot 4^{-2n}
	=
	\mu(\Ball{v_{n}^{\ast}}{r_{n}}).
	\]
	Otherwise it must be the case that $1 + m_{2n} < v_{n} + r_{n}$, which implies that $\ball{v_{n}}{r_{n}} \subseteq [1 + m_{2n} - 2r_{n}, \frac{3}{2}]$.
	Since $1 + m_{2n} - 2r_{n} \geq 1 + R_{2n+1}$, the density $\rho$ takes value $\beta_{2n+1}$ on $\ball{v_{n}}{r_{n}} \cap C_{2n+1}$ and attains its maximum value, $4^{2n}$, within $\ball{v_{n}}{r_{n}} \cap C_{2n}$.
	Thus
    \[
	\mu(\Ball{v_{n}}{r_{n}})
    \leq
	\frac{2}{Z} \cdot 4^{-4n} \sup_{x \in [1+m_{2n}-2r_{n},\nicefrac{3}{2}]} \rho(x)
    =
	\frac{2}{Z} \cdot 4^{-4n} \cdot 4^{2n}
    =
    \mu(\Ball{v_{n}^{\ast}}{r_{n}}).
    \]

    Thus, the \as\ $v_{n}^{\ast}$ is optimal and, since $\Ratio{u_{n}}{v_{n}^{\ast}}{r_{n}}{\mu} = 1$, this proves the claim.
    A similar argument works for the \cp\ $v = 2$ by choosing the \as\ $u_{n} = 2^{-2n-1}$ and the \ns\ $r_{n} = 4^{-4n-2}$.

	\proofpartorcase{$u = 0$ is not a \swgwap-mode}
	To show that $u = 0$ is not a $\swgwap$-mode, we show that
	\begin{equation*}
		\forall \as~(u_{n})_{n \in \bN} \to u,~\exists \cp~v \in X,~\exists \cpas~(v_{n})_{n \in \bN} \to v,~\forall \ns~(r_{n})_{n \in \bN}:\quad \liminf_{n \to \infty} \Ratio{u_{n}}{v_{n}}{r_{n}}{\mu} < 1.
	\end{equation*}
	To this end, let the \as\ $(u_{n})_{n \in \bN} \to u$ be arbitrary.
	Either this sequence satisfies $u_{n} \leq 0$ for all $n$ sufficiently large, or else $u_{n} > 0$ infinitely often.

	\proofpartorcase{Case 1: $u_{n} \leq 0$ for all sufficiently large $n$.}
	Choose the \cp\ $v=1$ and the constant \cpas\ $v_{n} = 1$, let $(r_{n})_{n \in \bN}$ be an arbitrary \ns, and take $k_{n} \defeq \max \set{k \in \bN \text{ even}}{m_{k} \leq r_{n}}$.
	Then a direct calculation of the ball masses yields
	\begin{align*}
		\Ratio{u_{n}}{v_{n}}{r_{n}}{\mu} \leq \Ratio{0}{1}{r_{n}}{\mu}
		 &= \frac{\mu\bigl( [m_{k_{n}}, r_{n}) \bigr) + \sum_{k \geq k_{n} + 1} \mu(C_{k})}{\mu\bigl( [m_{k_{n}}, r_{n}) \bigr) + \sum_{k \geq k_{n} + 1} w^{\text{odd}}_{k} \mu(C_{k})} \\
		 &=  \frac{\mu\bigl( [m_{k_{n}}, r_{n}) \bigr) + \tfrac{1}{Z} \cdot 4^{-k_{n}}}{\mu\bigl( [m_{k_{n}}, r_{n}) \bigr) + \tfrac{9}{5Z} \cdot 4^{-k_{n}}} &&\text{(by \eqref{eq:wgap--gwap_ball_masses}).}
	\end{align*}
	But, noting that the choice of $k_{n}$ ensures that $0 \leq \mu\bigl([m_{k_{n}}, r_{n})\bigr) \leq \tfrac{1}{Z} \bigl(\mu(C_{k_{n}}) + \mu(C_{k_{n} -1})\bigr) = \tfrac{15}{Z} \cdot 4^{-k_{n}}$, we conclude from the previous inequality that
	\begin{equation*}
		\liminf_{n \to \infty} \Ratio{u_{n}}{v_{n}}{r_{n}}{\mu} \leq \max_{x \in [0, 15]}\frac{x + 1}{x + \tfrac{9}{5}} < 1.
	\end{equation*}

	\proofpartorcase{Case 2: $u_{n} > 0$ infinitely often.}
	Passing to a subsequence if necessary, we may assume $u_{n} \in (0, \frac{1}{4}]$ for all $n \in \bN$ and let $k_{n}$ be the unique index such that $u_{n} \in C_{k_{n}}$.
	Either infinitely many $k_{n}$ are odd, or infinitely many are even;
	we prove the odd case with the \cp\ $v = 1$, and note that the even case follows similarly with the \cp\ $v = 2$.

	Passing to a further subsequence if necessary, we may assume that all $k_{n}$ are odd.
	Then, choose the \cp\ $v=1$, the \cpas\ $v_{n} = v + u_{n}$, and let the \ns\ $(r_{n})_{n \in \bN}$ be arbitrary.
	Now let
	\begin{equation*}
		M^{\text{odd}}_{n} \defeq \mu\Biggl( \ball{u_{n}}{r_{n}} \cap \biguplus_{\text{$k$ odd}} C_{k} \Biggr),\qquad M^{\text{even}}_{n} \defeq \mu\Biggl( \ball{u_{n}}{r_{n}} \cap \biguplus_{\text{$k$ even}} C_{k} \Biggr),\qquad \gamma_{n} \defeq \frac{M^{\text{even}}_{n}}{M^{\text{odd}}_{n}}.
	\end{equation*}
	Since $u_{n} \in C_{k_{n}}$, with the index $k_{n}$ being odd, and $r_{n} > 0$, it is clear that $M^{\text{odd}}_{n} > 0$.
	To evaluate $\Ratio{u_{n}}{v_{n}}{r_{n}}{\mu}$, we apply a technical result, proven in \Cref{lemma:wgap_gwap_technical_step}, which shows that for centres $u_{n} \in C_{k_{n}}$ with $k_{n}$ odd, the ratio $\gamma_{n}$ cannot become too large, meaning that $M^{\text{odd}}_{n}$ never becomes negligible compared to $M^{\text{even}}_{n}$, regardless of the radius $r_{n}$.
	Precisely, $\gamma_{n} \leq 8$ for all $n \in \bN$, so
	\[
	\Ratio{u_{n}}{v_{n}}{r_{n}}{\mu}
	=
	\frac{M^{\text{odd}}_{n}  + M^{\text{even}}_{n}}{2 M^{\text{odd}}_{n} + M^{\text{even}}_{n}}
	=
	\frac{(1 + \gamma_{n}) M^{\text{odd}}_{n}}{(2 + \gamma_{n}) M^{\text{odd}}_{n}}
	\leq
	\frac{9}{10},
	\]
	proving that $\liminf_{n \to \infty} \Ratio{u_{n}}{v_{n}}{r_{n}}{\mu} < 1$, as required.

	\proofpartorcase{\OMExhaustive\ \ac{OM} functional.}
	Note that an \ac{OM} functional $I \colon E \to \bR$ for $\mu$ on $E = \{ -1 \}$ is trivially given by $I = 0$.
	To prove property $M(\mu, E)$ it is clearly sufficient, by 	\eqref{eq:mass_near_Lebesgue_point}, to compare $-1 \in E$ to the points $x = 0,1,2$.
	Since
	\[
	\mu(\Ball{x}{r})
	\leq
	\frac{2}{Z} \sum_{k\geq K} \mu(C_{k})
	\leq
	\frac{2}{Z} \sum_{k\geq K} 3\cdot 4^{-k}
	=
	\frac{2}{Z} \cdot 4^{-K+1}
	\quad
	\text{whenever
	$r \leq 2^{-K}$},
	\]
	we have, for sufficiently small $r>0$, $\mu(\Ball{x}{r}) \leq 2\cdot 4^{- \lfloor -\log_{2} r \rfloor +1} \leq 32 r^2$ and thereby
	\[
	\liminf_{r \to 0} \Ratio{x}{-1}{r}{\mu}
	\leq
	\liminf_{r \to 0} \frac{32 r^2}{\sqrt{r}}
	=
	0.
	\EndExampleEquation
	\]
\end{example}

\subsection{\texorpdfstring{Gaussian-dominated measures with distinct \sws-, \swps-, and \swe-modes}{Gaussian-dominated measures with distinct s-, ps-, and e-modes}}
\label{subsec:Gaussian_s_ps_w}

In support of \Cref{thm:Gaussian_prior_combined}\ref{item:Gaussian_prior_continuous_potential}, this section constructs a measure on $X = \ell^{2}(\bN; \bR)$ with a continuous density with respect to a Gaussian measure $\mu_{0}$, and having an \swe-mode (equivalently, \sww-mode) that is not a \swps-mode;
and another measure having a \swps-mode that is not an \sws-mode.
As discussed in \Cref{sec:Gaussian}, such measures must necessarily be defined on infinite-dimensional $X$, so we again work on the sequence space $X = \ell^{2}(\bN; \bR)$.

We will emulate \Cref{example:e_but_not_pgs}'s construction of an absolutely continuous measure $\mu \in \prob{\bR}$ with a $\sww$-mode (and, indeed, an $\swe$-mode) that is not a $\swpgs$-mode.
That $\mu$ has \ac{pdf}
\begin{equation}
	\label{eq:e_but_not_pgs_rho}
	\rho(x) = \sigma(x) + \sum_{n \in \bN} \tau_{n}(x - c_{n}),
\end{equation}
where $\sigma \propto \absval{\quark}^{-1/4} \chi_{\ball{0}{1 \mathbin{/} 2}}$, the densities $(\tau_{n})_{n \in \bN}$ are bounded with support contained in $\ball{0}{1 \mathbin{/} 2}$, and the centres $c_{n} = n \in \bN$ are chosen so that the supports are disjoint.
Since $\sigma$ is a singularity and each $\tau_{n}$ is bounded, $\mu(\ball{0}{r})$ dominates $\mu(\ball{c_{n}}{r})$ for every $n \in \bN$ in the limit $r \to 0$;
but the $\tau_{n}$ are chosen such that, for any fixed $r > 0$, the radius-$r$ ball mass at some centre $c_{n}$ strictly dominates that at zero.
In the following examples we give similar constructions, reweighting the following countable product measure with respect to the canonical basis $(e_{k})_{k \in \bN}$ of $X = \ell^{2}(\bN; \bR)$:
\begin{equation}
	\label{eq:Gaussian_product_measure}
	\mu_{0} = \bigotimes_{k \in \bN} N\bigl(0, k^{-2}\bigr) \in \prob{X}.
\end{equation}
The measure $\mu_{0}$ is a centred non-degenerate Gaussian on $X$ with positive, self-adjoint, trace-class covariance operator $C = \sum_{k \in \bN} k^{-2} (e_{k} \otimes e_{k})$ on $X$.
Writing $x_{k} \defeq \innerprod{x}{e_{k}}$, the Cameron--Martin space $E \subsetneq X$, Cameron--Martin norm $\norm{ \quark }_{E}$, and OM functional $I_{0} \colon E \to \bR$ for $\mu_{0}$ are
\begin{equation} \label{eq:Gaussian_CM_norm}
	E \defeq \bigset{x \in X}{\norm{x}_{E} < \infty},\qquad \norm{x}_{E}^{2} \defeq \bignorm{C^{-1 \mathbin{/} 2} x}_{X}^{2} = \sum_{k \in \bN} k^{2} x_{k}^{2},\qquad I_{0}(x) \defeq \frac{1}{2} \norm{x}_{E}^{2}.
\end{equation}
Instead of specifying a density with respect to $\mu_{0}$ \`a la \eqref{eq:e_but_not_pgs_rho}, we specify a probability measure $\mu \propto \exp(-\Phi) \mu_{0}$ with a continuous potential $\Phi$ with respect to $\mu_{0}$, as in \eqref{eq:density_and_potential}, of the form
\begin{equation} \label{eq:Gaussian_potential}
	\Phi(x) \defeq \sigma(x) + \sum_{n \in \bN} \tau_{n}\bigl(x - c_{n}\bigr) \zeta_{R_{n}}\bigl(\norm{x - c_{n}}_{X}\bigr),
\end{equation}
where $\zeta_{r}$ is a continuous cutoff function taking value one on $[0, R_{n}]$ and value zero outside of $[0, 2R_{n}]$, and with the functions $\sigma$ and $(\tau_{n})_{n \in \bN}$, the centres $(c_{n})_{n \in \bN}$, and the radii $(R_{n})_{n \in \bN}$ to be specified.
In contrast to \eqref{eq:e_but_not_pgs_rho}, the density of $\mu$ with respect to $\mu_{0}$ is greatest when $\Phi$ is negative, and $\mu$ has unit density when $\Phi = 0$.

The choice of $\sigma$ and $\tau_{n}$ is influenced by two factors not present in \cref{example:e_but_not_pgs}.
First, we must take care in translating the $\tau_{n}$ since $\mu_{0}$ is not uniform;
we will prescribe a form for $\tau_{n}$ allowing us to translate along the $e_{1}$-axis with no effect on the mass.
Second, we cannot take $\sigma$ to be a singularity as $\Phi$ must be continuous;
instead we take $\sigma$ to be a smoothed step function bounded below by $-\tfrac{1}{2}$,
and we exploit the infinite dimension of $X$ to construct $\tau_{n}$ which dominate $\sigma$ on balls of large enough radius $r$, but not in the limit $r \to 0$.

To allow $\tau_{n}$ to be translated along the $e_{1}$-axis, we note that, while there is no Lebesgue measure on $X$, the measure $\mu_{0}$ can be viewed as having a fictitious Lebesgue density in terms of the OM functional $I_{0}$, which can be written explicitly using \eqref{eq:Gaussian_CM_norm}:
\begin{equation}
	\label{eq:Gaussian_fictitious_density}
	\mu_{0}(\rd x) \mathrel{\text{``=''}}
	\exp\Bigl(-I_{0}(x) \Bigr) \,\rd x \defeq \exp\Biggl(-\frac{1}{2} \sum_{k = 1}^{\infty} k^{2} x_{k}^{2} \Biggr) \,\rd x.
\end{equation}
Since $I_{0}(x) = \infty$ for $\mu_{0}$-almost all $x \in X$, the identity \eqref{eq:Gaussian_fictitious_density} cannot be interpreted literally, but the calculations that we describe can be made rigorous using the Cameron--Martin theorem (see \cref{lem:w_but_not_s_JK_properties}).
Reweighting \eqref{eq:Gaussian_fictitious_density} by the potential $\tau_{n}(x) \defeq -x_{1}^{2}/2 - \tilde{\tau}_{n}(x_{2}, x_{3}, \dots)$ results in a measure that is `uniform' in the $e_{1}$-axis as it does not depend on $x_{1}$:
\begin{equation*}
	\exp\bigl(-\tau_{n}(x) \bigr) \,\mu_{0}(\rd x)
	\defeq \exp\Biggl( \tilde{\tau}_{n}\bigl(x_{2}, x_{3}, \dots\bigr) -\frac{1}{2} \sum_{k = 2}^{\infty} k^{2} x_{k}^{2} \Biggr) \,\rd x.
\end{equation*}
This allows us to construct elementary potentials $\tau_{n}$ centred at the origin, with the translation in \eqref{eq:Gaussian_potential} having no effect on the ball mass so long as $c_{n}$ lies on the $e_{1}$-axis.
It remains to choose $\tilde{\tau}_{n}$ such that $\tau_{n}$ dominates $\sigma$ at positive radius but not in the small-radius limit.
We do this by 
taking an increasing sequence $(K_{n})_{n \in \bN}$, to be specified shortly, and defining $\tilde{\tau}_{n}$ to be an approximation to $I_{0}$ truncated at $K_{n}$ terms:
\begin{equation*}
	\tilde{\tau}_{n}(x_{2}, x_{3}, \dots) \defeq \min\left(1, \frac{1}{2} \sum_{k = 2}^{K_{n}} k^{2} x_{k}^{2}\right).
\end{equation*}
This creates a tension between two properties of the potentials $\tau_{n}$ that we can exploit.
First, since each $\tau_{n}$ is continuous, and $\tau_{n}(0) = 0$, it behaves like the zero function on balls $\ball{0}{r}$ as $r \to 0$.
Second, as $X$ has infinite dimension, $\norm{x}_{E}^{2} = \infty$ for $\mu_{0}$-almost all $x \in X$, and so the superlevel sets
\begin{equation*}
	A_{n} \defeq \Set{x \in X}{\frac{1}{2} \sum_{k = 2}^{n} k^{2} x_{k}^{2} \geq 1}
\end{equation*}
satisfy $\mu_{0}(A_{n}) \nearrow 1$.
Thus, for fixed $r > 0$, we can choose $K_{n}$ such that $\tau_{n} \leq -1$ on $\ball{0}{r}$ with arbitrarily high $\mu_{0}$-probability.
In \cref{ex:Gaussian_swe_not_swps,ex:Gaussian_swps_not_sws} we will use two different strategies to choose the sequences $(K_{n})_{n \in \bN}$ and $(R_{n})_{n \in \bN}$ to achieve the desired behaviour:
\begin{enumerate}[label=(\alph*)]
	\item
	In \cref{ex:Gaussian_swe_not_swps}, we will choose $R_{n} = n^{-2}$, $n \in \bN$, so that $\mu$ is normalisable;
	then we will choose $(K_{n})_{n \in \bN}$ so that
	\begin{equation} \label{eq:Gaussian_swe_not_swps_K_n}
		r \in (R_{n+1}, R_{n}] \implies \tau_{n} \leq -1 \text{~with $\mu_{0}$-probability at least $1 - (3n)^{-1}$ on $\ball{0}{r}$.}
	\end{equation}
	Since $\sigma \geq -\tfrac{1}{2}$, this ensures that $0$ does not attain the supremal ball mass for any $r > 0$, so it is not a $\swps$-mode along any \ns.

	\item
		In \cref{ex:Gaussian_swps_not_sws} we choose $(R_{n})_{n \in \bN}$, $(K_{n})_{n \in \bN}$, and a further sequence $(r_{n})_{n \in \bN}$ using an algorithm.
		Taking $R_{1} = 1$, we repeat the following steps for $n \in \bN$:
		choose $K_{n}$ such that
	\begin{align}
		r = R_{n} &\implies \tau_{n} \leq -1 \text{~with $\mu_{0}$-probability at least $1 - (3n)^{-1}$ on $\ball{0}{r}$.}  \label{eq:Gaussian_swps_not_sws_K_n}
	\end{align}
	Arguing similarly to the previous example, \eqref{eq:Gaussian_swps_not_sws_K_n} will guarantee that $0$ is not a $\swps$-mode along the \ns\ $(R_{n})_{n \in \bN}$.
	Then, using continuity of $\tau_{n}$, choose $r_{n} < R_{n}$ such that
	\begin{align}
		r \leq r_{n} &\implies \tau_{n} \geq -\tfrac{1}{2} \text{~on $\ball{0}{r}$.} \label{eq:Gaussian_swps_not_sws_r_n}
	\end{align}
	This ensures that $\mu(\ball{0}{r_{n}}) \geq \mu(\ball{c_{m}}{r_{n}})$, $m \leq n$.
	Finally, choose the truncation radius $R_{n + 1} < r_{n}$ small enough that $\mu(\ball{0}{r_{n}})$ dominates an upper bound on the ball masses $\mu(\ball{c_{m}}{r_{n}})$, $m > n$.
	Along with \eqref{eq:Gaussian_swps_not_sws_r_n}, this shows that $0$ is a $\swps$-mode along $(r_{n})_{n \in \bN}$.
\end{enumerate}

\begin{notation}
	\label[notation]{notation:Gaussian_s_ps_w}
	Equip $X = \ell^{2}(\bN; \bR)$ with its usual metric, the standard basis $(e_{n})_{n \in \bN}$, and the
	product measure $\mu_{0}$ defined in \eqref{eq:Gaussian_product_measure}.
	Define $\mu \in \prob{X}$ to be of the form \eqref{eq:density_and_potential} with
	potential \eqref{eq:Gaussian_potential}, taking $c_{n} \defeq 6n e_{1}$, with the sequences $(K_{n})_{n \in \bN} \nearrow +\infty$ and  $R_{n} \leq n^{-2} \searrow 0$ both yet to be specified, and with
	\begin{align}
		\sigma(x) &\defeq -\frac{1}{2} \zeta_{1}\bigl(\norm{x}_{X}\bigr),\\
		\tau_{n}(x) &\defeq -\frac{x_{1}^{2}}{2} - \min\left(1, \frac{1}{2} \sum_{k = 2}^{K_{n}} k^{2} x_{k}^{2} \right) \geq -I_{0}(x),\\
		\zeta_{r}(s)
		&\defeq
		\begin{cases}
			1, &\text{if } 0 \leq s \leq r, \\
			2 - s/r, &\text{if } r \leq s \leq 2r, \\
			0, &\text{otherwise.}
		\end{cases}
	\end{align}
\end{notation}

The centres $c_{n}$ are chosen such that balls of radius $r \leq R_{1}$ intersect at most one of the balls $\ball{0}{2}$ and $\ball{c_{n}}{2R_{n}}$, $n \in \bN$.

\begin{example}[$\swe$ but not $\swps$ for Gaussian-dominated measures with continuous potential]
	\label[example]{ex:Gaussian_swe_not_swps}

	We construct an instance of the measure $\mu$ defined in \cref{notation:Gaussian_s_ps_w} with $R_{n} = n^{-2}$ for $n \in \bN$, choosing $(K_{n})_{n \in \bN}$ such that \eqref{eq:Gaussian_swe_not_swps_K_n} holds.
	We do this by taking $K_{n}$ large enough that $\mu_{0}(A_{K_{n}}^{\complement} \cap \ball{0}{R_{n}}) \leq (3n)^{-1} \mu_{0}(\ball{0}{R_{n+1}})$, which is possible as $\mu_{0}(A_{K_{n}}) \to 1$ as $K_{n} \to \infty$.

	\proofpartorcase{$\mu$ is a probability measure.}
	Using the definition of $\Phi$ and the fact that $R_{n} \leq n^{-2}$, we see that
	\begin{align*}
		 \int_{\ball{c_{n}}{2R_{n}}} \exp(-\Phi(x)) \, \mu_{0}(\rd x)
		&=
		\int_{\ball{0}{2R_{n}}} \exp\Bigl(-\underset{\leq 0}{\underbrace{\tau_{n}(x - c_{n})}} \,\underset{\leq 1}{\underbrace{\zeta_{R_{n}}\bigl(\norm{x - c_{n}}_{X}\bigr)}} \Bigr) \,\mu_{0}(\rd x) \\
		&\leq
		\int_{\ball{0}{2R_{n}}} \exp \big( -\tau_{n}(x) \big) \, \mu_{0}(\rd x) \hspace{4em} \text{(\cref{lem:w_but_not_s_JK_properties})}
		\\
		&\leq
		\int_{[-2R_{n},2R_{n}] \times \bR^{\bN}} \exp \left( \frac{x_{1}^{2}}{2} + 1 \right) \, \mu_{0}(\rd x)
		\leq
		\frac{4}{\sqrt{2\pi}} \exp(1) n^{-2},
	\end{align*}
	where the last line follows from writing $\mu_{0}(\rd x) = \tfrac{1}{\sqrt{2\pi}} \exp(-x_{1}^{2}/2) \Leb{1}(\rd x_{1}) \otimes \tilde{\mu}_{0}(\rd \tilde{x})$ for some probability measure $\tilde{\mu}_{0}$, with $\tilde{x} = (x_{2}, x_{3}, \dots)$.
	From this, and the fact that $\Phi \geq -\tfrac{1}{2}$ outside of the balls $\ball{c_{n}}{2R_{n}}$, it follows that $Z$ is finite, since
	\begin{align*}
		Z
		&=
		\int_{X \setminus \bigcup_{n} \ball{c_{n}}{2R_{n}}} \exp\bigl(-\Phi(x)\bigr) \,\mu_{0}(\rd x) + \sum_{n \in \bN} \int_{\ball{c_{n}}{2R_{n}}} \exp\bigl(-\Phi(x)\bigr) \, \mu_{0}(\rd x)
		\\
		&\leq \exp\bigl(\tfrac{1}{2}\bigr)\mu_{0} \left( X \setminus \bigcup_{n} \ball{c_{n}}{2R_{n}} \right) +
		\frac{4}{\sqrt{2\pi}} \exp\bigl(1\bigr) \sum_{n \in \bN} n^{-2}
		<
		\infty.
	\end{align*}

	\proofpartorcase{The constant \as\ is optimal at all \swwgap-modes; \OMexhaustive\ \ac{OM} functional.}
	By \Cref{lemma:cts_pot_Gaussian_EAI_and_wgap}, the measure $\mu$ has \ac{OM} functional $I(h) = I_{0}(h) + \Phi(x)$ defined on the Cameron--Martin space $E$ of $\mu_{0}$, and property $M(\mu, E)$ holds; moreover the constant \as\ is optimal at all \swwgap-modes.

	\proofpartorcase{$u = 0$ is not a $\swps$-mode.}
	Let $n \in \bN$ and $R_{n + 1} < r \leq R_{n}$.
	Then
	\begin{align*}
		\mu(\ball{c_{n}}{r})
		& = \frac{1}{Z} \int_{\ball{c_{n}}{r}} \exp\Bigl(-\tau_{n} \bigl(x - c_{n}\bigr) \underset{=1}{\underbrace{\zeta_{R_{n}}\bigl(\norm{x - c_{n}}_{X}\bigr)}}\Bigr) \,\mu_{0}(\rd x) \\
		& = \frac{1}{Z} \int_{\ball{0}{r}} \exp\Bigl(-\tau_{n} \bigl(x\bigr) \Bigr) \,\mu_{0}(\rd x) &&\text{(\cref{lem:w_but_not_s_JK_properties})}\\
		& \geq \frac{1}{Z} \int_{\ball{0}{r} \cap A_{K_{n}}} \exp\bigl(1\bigr) \,\mu_{0}(\rd x) &&\text{($\tau_{n} \leq -1$ on $A_{K_{n}}$)} \\
		& \geq \frac{1}{Z} \exp\bigl(1\bigr) \bigl(1 - (3n)^{-1}\bigr) \mu_{0}(\ball{0}{r}).   &&\text{(by \eqref{eq:Gaussian_swe_not_swps_K_n})}
	\end{align*}
	Again using the definition of $\Phi$, and the fact that $\sigma \geq -\tfrac{1}{2}$, we obtain that, for $r \leq R_{n}$,
	\begin{equation*}
		\mu(\ball{0}{r}) = \frac{1}{Z} \int_{\ball{0}{r}} \exp\Bigl( -\sigma(x) \Bigr) \,\mu_{0}(\rd x)  \leq \frac{1}{Z} \exp\bigl(\tfrac{1}{2} \bigr)\mu_{0}(\ball{0}{r}).
	\end{equation*}
	Taking the ratio of these ball masses shows that $u$ is not a $\swps$-mode:
	for any $n \in \bN$,
	\begin{equation*}
		R_{n + 1} < r \leq R_{n} \implies
		\Ratio{0}{\sup}{r}{\mu} \leq \Ratio{0}{c_{n}}{r}{\mu} \leq \frac{\exp\bigl(-\frac{1}{2}\bigr)}{1 - 3^{-1}}<  1.
	\end{equation*}

	\proofpartorcase{$u = 0$ is an $\swe$-mode.}
	Minimisers of $I$ are precisely the \acp{w-mode} of $\mu$ (\Cref{prop:OM_minimisers_w-modes}), and in this setting $\sww$- and $\swe$-modes coincide.
	From the definition \eqref{eq:Gaussian_potential} it follows that $I \geq -\tfrac{1}{2}$, and it is easily verified that $I(0) = -\tfrac{1}{2}$.
	Thus, since $u = 0$ minimises $I$, it is an $\swe$-mode.
	\EndExampleText
\end{example}

\begin{example}[$\swps$ but not $\sws$ for Gaussian-dominated measures with continuous potential]
	\label[example]{ex:Gaussian_swps_not_sws}
	Again, take $\mu$ to be an instance of the measure from \cref{notation:Gaussian_s_ps_w}, with parameters chosen as follows.

	\proofpartorcase{Algorithm to choose $(K_{n})_{n \in \bN}$, $(R_{n})_{n \in \bN}$, and $(r_{n})_{n \in \bN}$.}
	We repeat the following steps, initialising with $n = 1$ and $R_{1} = 1$:

	\begin{enumerate}[label=(\alph*)]
		\item
			Pick $K_{n} \in \bN$ such that $\mu_{0}(A_{K_{n}} \cap \ball{0}{R_{n}}) \geq (1 - (3n)^{-1}) \mu_{0}(\ball{0}{R_{n}})$, so that \eqref{eq:Gaussian_swps_not_sws_K_n} holds.

		\item
			Using that $\tau_{n}$ is continuous and $\tau_{n}(0) = 0$, pick $r_{n} < R_{n}$ such that $\tau_{n} \geq -\tfrac{1}{2}$ on $\ball{0}{r_{n}}$;
		this ensures that \eqref{eq:Gaussian_swps_not_sws_r_n} holds.

		\item
			
			Pick $0 < R_{n + 1} < \min\bigl( r_{n} , (n + 1)^{-2}\bigr)$ small enough that $\mu(\ball{0}{r_{n}})$ dominates a crude upper bound on the ball mass $\mu(\ball{c_{m}}{r_{n}})$, $m > n$:
		\begin{equation}
			\label{eq:w_but_not_s_rn_definition}
			\mu(\ball{0}{r_{n}}) = \frac{1}{Z} \exp\bigl(\tfrac{1}{2}\bigr) \mu_{0}(\ball{0}{r_{n}}) \geq \frac{1}{Z} \exp(5) \mu_{0}(\ball{0}{2R_{n + 1}}) + \frac{1}{Z} \mu_{0}(\ball{0}{r_{n}}).
		\end{equation}
		This is possible because $r \mapsto \mu_{0}(\ball{0}{r})$ is continuous and strictly increasing;
		moreover the upper bound $R_{n} \leq n^{-2}$ ensures that $\mu$ is normalisable.
	\end{enumerate}

	\proofpartorcase{$\mu$ is a probability measure; the constant \as\ is optimal at all \swwgap-modes; \OMexhaustive\ \ac{OM} functional.}
	These claims all follow using the same argument as in \cref{ex:Gaussian_swe_not_swps}.

	\proofpartorcase{$u = 0$ is a $\swps$-mode along the \ns\ $(r_{n})_{n \in \bN}$, and thus an $\swe$-mode.}
	To see this, we show that $\rcdf{0}{r_{n}} \geq \rcdf{x}{r_{n}}$ for any $x \in X$.
	There are three cases to consider, owing to the separation properties of $\Phi$:
	$\ball{x}{r_{n}}$ intersects $\ball{0}{2}$; $\ball{x}{r_{n}}$ intersects $\ball{c_{m}}{2R_{m}}$ for some $m \in \bN$; or $\ball{x}{r}$ intersects no such ball.

	First, suppose that $\ball{x}{r_{n}}$ intersects $\ball{0}{2}$.
	Then, using the definition of $\mu$ and the Anderson inequality \eqref{eq:explicit_Anderson_Gaussian},
	\begin{equation*}
		\rcdf{x}{r_{n}} \leq  \frac{1}{Z} \exp\bigl(\tfrac{1}{2}\bigr) \mu_{0}(\ball{x}{r_{n}}) \leq \rcdf{0}{r_{n}}.
	\end{equation*}
	Second, suppose that $\ball{x}{r_{n}}$ intersects some ball $\ball{c_{m}}{2R_{m}}$, $m \in \bN$.
	If $m > n$, then, using the fact that $\Phi = 0$ on $\ball{x}{r_{n}} \setminus \ball{c_{m}}{2R_{m}}$,
	\begin{align*}
		\rcdf{x}{r_{n}} &\leq \frac{1}{Z} \int_{\ball{c_{m}}{2R_{m}}} \exp\Bigl(-\underset{\leq 0}{\underbrace{\tau_{m}(x)}} \, \underset{\leq 1}{\underbrace{\chi_{R_{m}}\bigl(\norm{x - c_{m}}_{X}\bigr)}} \Bigr) \,\mu_{0}(\rd x) + \frac{1}{Z} \mu_{0}(\ball{x}{r_{n}} \setminus \ball{c_{m}}{2R_{m}}) \\
		&\leq \frac{1}{Z} \int_{\ball{0}{2R_{m}}} \exp\bigl(-\tau_{m}(x)\bigr) \,\mu_{0}(\rd x) + \frac{1}{Z} \mu_{0}(\ball{x}{r_{n}} \setminus \ball{c_{m}}{2R_{m}}) \text{~~~~~~~~~(\cref{lem:w_but_not_s_JK_properties})}\\
		&\leq \frac{1}{Z} \exp(5) \mu_{0}(\ball{0}{2R_{m}}) + \frac{1}{Z} \mu_{0}(\ball{x}{r_{n}})  \text{~~~~~~~~~~~~~~~~~~~~~~~~~~($\tau_{m} \geq -5$ on $\ball{0}{2R_{m}}$)} \\
		&\leq \frac{1}{Z} \exp(5) \mu_{0}(\ball{0}{2R_{n+1}}) + \frac{1}{Z} \mu_{0}(\ball{0}{r_{n}}) \leq \rcdf{0}{r_{n}}. \text{~~~~~~~~~(by \eqref{eq:explicit_Anderson_Gaussian} and \eqref{eq:w_but_not_s_rn_definition})}
	\end{align*}
	On the other hand, if $m \leq n$, then
	\begin{align*}
		\rcdf{x}{r_{n}} &\leq \frac{1}{Z} \int_{\ball{x}{r_{n}}} \exp\Bigl(-\underset{\leq 0}{\underbrace{\tau_{m}(x)}}\, \underset{\leq 1}{\underbrace{\chi_{R_{m}}\bigl(\norm{x - c_{m}}_{X}\bigr) }}\Bigr) \,\mu_{0}(\rd x) \\
		&\leq \frac{1}{Z} \int_{\ball{0}{r_{n}}} \exp\bigl(-\tau_{m}(x) \bigr)\,\mu_{0}(\rd x) \text{~~~~~~~~~~~~~~~~~~~~~~~~~~~~~~~~~~~~~(\cref{lem:w_but_not_s_JK_properties})} \\
		&\leq \frac{1}{Z} \exp\bigl(\tfrac{1}{2} \bigr)\mu_{0}(\ball{0}{r_{n}}) \text{~~~~~~~~~~~~~~~~~~~~~~~~~~~~~~~~~~~~(as $\tau_{m} \geq -\tfrac{1}{2}$ on $\ball{0}{r_{m}}$)} \\
		&\leq \mu(\ball{0}{r_{n}}).
	\end{align*}
	Otherwise, it must be the case that $\Phi = 0$ on $\ball{x}{r_{n}}$, and so
	\begin{equation*}
		\rcdf{x}{r_{n}} = \frac{1}{Z} \mu_{0}(\ball{x}{r_{n}}) \leq \frac{1}{Z} \mu_{0}(\ball{0}{r_{n}}) \leq \rcdf{0}{r_{n}}.
	\end{equation*}
	Thus $\Ratio{u}{x}{r_{n}}{\mu} \geq 1$ for any $x \in X$ and $n \in \bN$, and so $u = 0$ is a $\swps$-mode along $(r_{n})_{n \in \bN}$.
	The fact that $u$ is also an $\swe$-mode follows from the implications discussed in \Cref{thm:Gaussian_prior_combined}\ref{item:Gaussian_prior_continuous_potential}.

	\proofpartorcase{$u = 0$ is not an $\sws$-mode.}
	This follows by an argument analogous to that of \cref{ex:Gaussian_swe_not_swps}, examining the \ns\ $(R_{n})_{n \in \bN}$.
	\EndExampleText
\end{example}

\begin{remark}
	\begin{enumerate}[label=(\alph*)]
		\item
		In \cref{ex:Gaussian_swe_not_swps,ex:Gaussian_swps_not_sws}, the \ac{OM} functional $I \defeq \Phi + I_{0}$ for $\mu$ fails to be $E$-coercive:
		there is no choice of $A \in \bR$ and $c > 0$ for which $A + I(u) \geq c \norm{u}_{E}^{2} \text{~for all $u \in E$.}$
		This follows from considering the sequence $c_{n} \defeq 6ne_{1}$ with $\norm{c_{n}}_{E} \to +\infty$ and $I(c_{n}) = 0$.
		Thus, while the measure $\mu$ does have a \sww-mode, the mild condition on $\Phi$ used in Proposition~4.1 of \citet{Lambley2023} to ensure the existence of \sww-modes does not apply.

		\item
		While the potential $\Phi$ is locally uniformly continuous and indeed Lipschitz on bounded sets, it satisfies neither \ref{item:Phi_uniformly_continuous} nor \ref{item:Phi_lower_bound} in \Cref{thm:Gaussian_prior_combined}.
		First, $\Phi$ is not uniformly continuous \ref{item:Phi_uniformly_continuous}, because the modulus of continuity of the functions $\tau_{n}$ diverges as $n \to \infty$.
		Second, $\Phi$ does not satisfy the lower bound \ref{item:Phi_lower_bound}, because, for the centres $c_{n} \defeq 6n e_{1}$,
		\begin{equation*}
			\Phi(c_{n}) = -\frac{(6n)^{2}}{2} = -\frac{\norm{c_{n}}_{X}^{2}}{2},
		\end{equation*}
		and so it cannot be the case that $\Phi(x) \geq K(\eta) - \eta \norm{x}_{X}^{2}$ for all $\eta > 0$.
	\end{enumerate}
\end{remark}

\subsection{A measure with no optimal approximating sequence}
\label{sec:example_no_optimal_as}

\begin{figure}[tb]
	\newcommand{\alphfig}{15}
	\newcommand{\Radiusfig}{2}
	\newcommand{\ScaleAlphfig}{1.9}
	\newcommand{\ScaleRadiusfig}{3}
	\newcommand{\axisrange}{18}
	\definecolor{accent_1}{HTML}{F6CF00}
	\definecolor{accent_2}{HTML}{56B4E9}
	\definecolor{accent_3}{HTML}{01503B}
	\definecolor{accent_4}{HTML}{D55E00}
	\begin{subfigure}[t]{0.49\textwidth}
		\begin{tikzpicture}[scale=0.95]
			\clip (-1,-1) rectangle + (8.5,8.5);
			\draw[->] (-\axisrange*1em,0) -- (\axisrange*1em + 0.2em,0);
			\node[right] at (\axisrange*1em-1em, -1em) {$x_{1}$}; 
			\draw[->] (0,-\axisrange*1em) -- (0,\axisrange*1em + 0.2em) node[above] {$x_{2}$}; 

			\fill[gray, fill opacity=0.2, even odd rule]
			(0,0) circle (\alphfig*1em + \Radiusfig*1em)
			(0,0) circle (\alphfig*1em - \Radiusfig*1em)
			(0,0) circle (\alphfig/\ScaleAlphfig*1em + \Radiusfig/\ScaleRadiusfig*1em)
			(0,0) circle (\alphfig/\ScaleAlphfig*1em - \Radiusfig/\ScaleRadiusfig*1em)
			(0,0) circle (\alphfig/\ScaleAlphfig^2*1em + \Radiusfig/\ScaleRadiusfig^2*1em)
			(0,0) circle (\alphfig/\ScaleAlphfig^2*1em - \Radiusfig/\ScaleRadiusfig^2*1em)
			(0,0) circle (\alphfig/\ScaleAlphfig^3*1em + \Radiusfig/\ScaleRadiusfig^3*1em)
			(0,0) circle (\alphfig/\ScaleAlphfig^3*1em - \Radiusfig/\ScaleRadiusfig^3*1em);

			\fill[fill=accent_3, fill opacity=0.7, draw=black, dashed, line width=0.1em] (0.4*\alphfig*1em,0.7*\alphfig*1em) circle (\Radiusfig*1em);
			\fill[fill=accent_3, fill opacity=0.7, draw=black, dashed, line width=0.1em] (0.08*\alphfig/\ScaleAlphfig*1em,0.87*\alphfig/\ScaleAlphfig*1em) circle (\Radiusfig/\ScaleRadiusfig*1em);

			\draw (\alphfig*1em,0) circle (\Radiusfig*1em);
			\draw (0,\alphfig*1em) circle (\Radiusfig/\ScaleRadiusfig*1em);
			\draw (0,\alphfig/\ScaleAlphfig*1em) circle (\Radiusfig/\ScaleRadiusfig*1em);

			\node at (\alphfig*1em, \Radiusfig*1em + 0.6em) {$B_{1,1}$};
			\node at (\Radiusfig/\ScaleRadiusfig*1em + 0.5em, \alphfig*1.07em) {$B_{1,2}$};
			\node at (\Radiusfig/\ScaleRadiusfig*1em + 0.5em, \alphfig/\ScaleAlphfig*1.14em) {$B_{2,2}$};

			\node [gray] at (0.85*\alphfig*1em, 0.85*\alphfig*1em) {$A_{1}$};
			\node [gray] at (0.85*\alphfig/\ScaleAlphfig*1em, 0.85*\alphfig/\ScaleAlphfig*1em) {\small $A_{2}$};
			\node [gray] at (0.85*\alphfig/\ScaleAlphfig^2*1em, 0.85*\alphfig/\ScaleAlphfig^2*1em) {\scriptsize $A_{3}$};
			\node [gray] at (0.9*\alphfig/\ScaleAlphfig^3*1em, 0.9*\alphfig/\ScaleAlphfig^3*1em) {\tiny $A_{4}$};

			\draw [line width = 0.20em, accent_1] (\alphfig*1em - \Radiusfig*1em, 0em) -- (\alphfig*1em + \Radiusfig*1em, 0em);
			\draw [line width = 0.20em, accent_2] (0em, \alphfig*1em - \Radiusfig/\ScaleRadiusfig*1em) -- (0em, \alphfig*1em + \Radiusfig/\ScaleRadiusfig*1em);
			\draw [line width = 0.20em, accent_2] (0em, \alphfig/\ScaleAlphfig*1em - \Radiusfig/\ScaleRadiusfig*1em) -- (0em, \alphfig/\ScaleAlphfig*1em + \Radiusfig/\ScaleRadiusfig*1em);

			\draw [thin] (\alphfig*1em, -0.2em) -- (\alphfig*1em, 0.2em);
			\draw [thin] (\alphfig/\ScaleAlphfig*1em, -0.2em) -- (\alphfig/\ScaleAlphfig*1em, 0.2em);
			\draw [thin] (\alphfig/\ScaleAlphfig^2*1em, -0.2em) -- (\alphfig/\ScaleAlphfig^2*1em, 0.2em);

			\node at (\alphfig*1em, 0.5em) {$\alpha_1$};
			\node at (\alphfig/\ScaleAlphfig*1em, 0.5em) {\small $\alpha_2$};
			\node at (\alphfig/\ScaleAlphfig^2*1em, 0.5em) {\scriptsize $\alpha_3$};

			\draw [decorate,decoration={brace,amplitude=5pt,mirror}] (\alphfig*1em - \Radiusfig*1em, 0em) -- (\alphfig*1em + \Radiusfig*1em, 0em) node [black,midway,yshift=-1.1em] {$2 R_1$};
			
			\draw [decorate,decoration={brace,amplitude=5pt}] (0em, \alphfig*1em - \Radiusfig/\ScaleRadiusfig*1em) -- (0em, \alphfig*1em + \Radiusfig/\ScaleRadiusfig*1em) node [black,left,xshift=-0.4em,yshift=-0.6em] {\scriptsize $2 R_2$};
			\draw [decorate,decoration={brace,amplitude=5pt}] (0em, \alphfig/\ScaleAlphfig*1em - \Radiusfig/\ScaleRadiusfig*1em) -- (0em, \alphfig/\ScaleAlphfig*1em + \Radiusfig/\ScaleRadiusfig*1em) node [black,left,xshift=-0.4em,yshift=-0.6em] {\scriptsize $2 R_2$};
		\end{tikzpicture}
		\caption{The support of $\mu$ (shown as thick lines) lies on the coordinate axes $\set{\lambda e_{j}}{\lambda \in \bR}$, $j \in \bN$, and is contained in the balls $B_{i,j}$ (solid black border).
		The balls $B_{i, j}$, $j \geq i$, lie in the annulus $A_{i}$ (in gray).
		Any ball $\ball{u}{r}$ with $r \leq R_{i}$ (green with dashed border) intersecting the annulus $A_{i}$ cannot intersect any other $A_{k}$, $k \neq i$, and can intersect at most one ball $B_{i, j}$.}
	\end{subfigure}
	\begin{subfigure}[t]{0.49\textwidth}
		\includegraphics[clip, trim=0cm 2cm 0cm 0cm, width=0.95\linewidth]{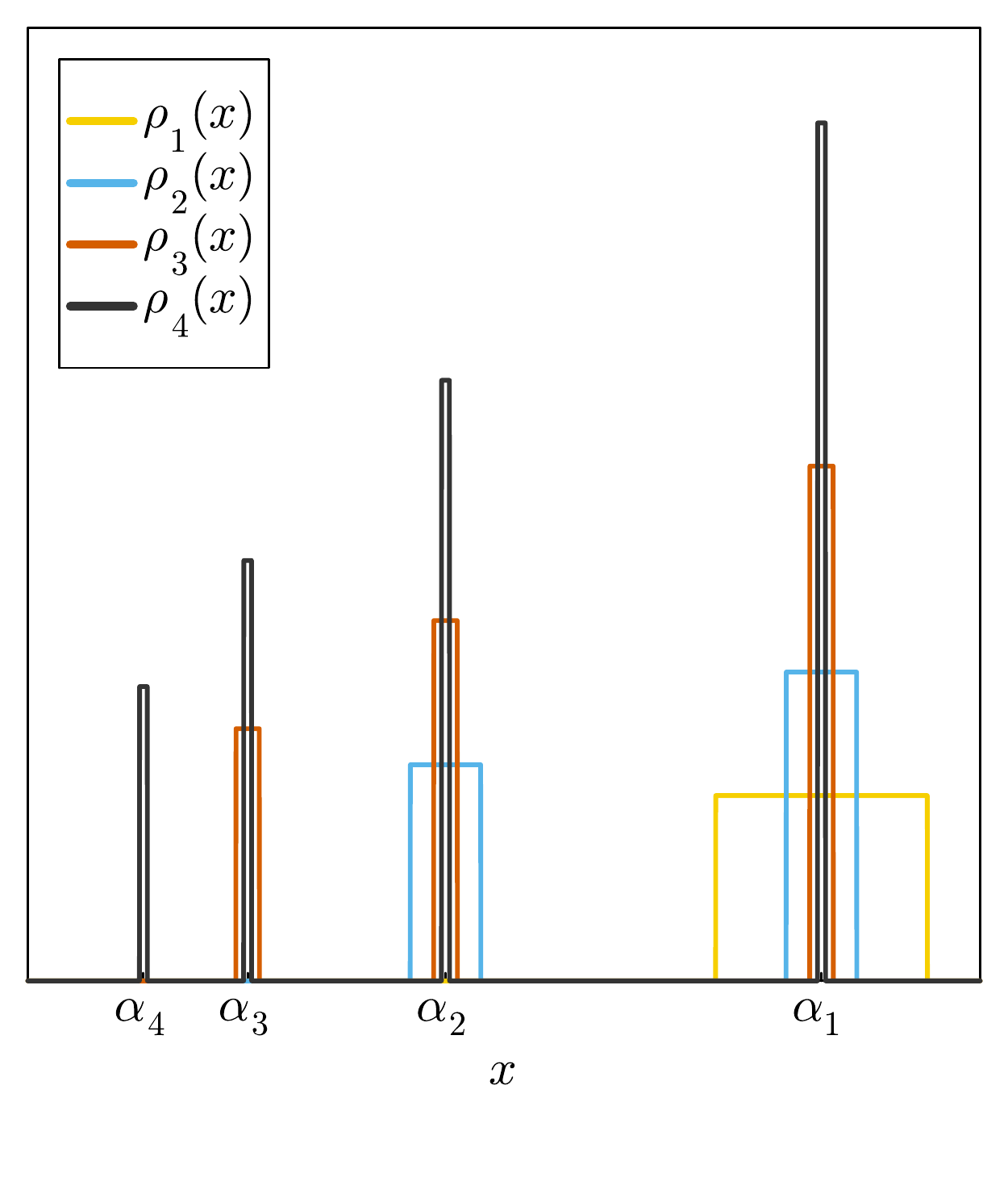}
		\caption{Unnormalised density $\rho_{j}$ of $\mu$ restricted to the coordinate axis ${\set{\lambda e_{j}}{\lambda > 0}}$ for $j = 1,\dots, 4$.
		Each density consists of $j$ step functions of equal width, and the height of the step function at $\alpha_{i} e_{j}$ is $\zeta^{i} \theta^{-j}$.
		Moving a ball centred at $\alpha_{i} e_{j}$ to $\alpha_{i - 1} e_{j}$ gains a factor $\zeta^{-1}$ of mass, and, when the ball radius is small, moving from $\alpha_{i} e_{j}$ to $\alpha_{i} e_{j + 1}$ gains a factor $\theta^{-1}$ of mass.}
	\end{subfigure}
	\label{fig:Example_with_annuli}
	\caption{Schematic diagram (not to scale) of the probability measure $\mu$ in \Cref{ex:no_optimal_approximating_sequence}.}
\end{figure}

To show that the mode types listed in parts \ref{item:violation_CP_d} and \ref{item:violation_CP_e} of \Cref{prop:mode_maps_violating_CP} violate \CP, it is necessary to construct a particular probability measure $\mu$ on $X = \ell^{2}(\bN; \bR)$ such that $u = 0$ is dominant, yet any \as\ of $u = 0$ can be strictly beaten by another.
This is, in some sense, the opposite situation to that considered in \Cref{sec:CASIO}:
not only is the constant \as\ not optimal, there is \emph{no} optimal \as!
Taking a convex combination of two copies of $\mu$ allows us to disprove \CP:
the copy with lesser weight can still dominate using the ability to strictly improve any \as.

Given the standard basis $(e_{n})_{n \in \bN}$ of $\ell^{2}(\bN; \bR)$ and a particular decreasing, positive null sequence $(\alpha_{n})_{n \in \bN}$,  our example involves placing weighted one-dimensional uniform measures of length $2R_{i}$ along the coordinate axes at the centres $\alpha_{i} e_{j}$ with $j \geq i$ and $i \in \bN$.
Each of these has support contained in the $\ell^{2}$-ball $B_{i,j} \defeq \ball{\alpha_{i}e_{j}}{R_{j}}$, and we organise them, through measures $\mu_{i}$, into annuli $A_{i}$, $i \in \bN$ around the origin:
\begin{equation*}
	A_{i}
	\defeq
	\ball{0}{\alpha_{i} + R_{i}} \setminus \ball{0}{\alpha_{i} - R_{i}}.
\end{equation*}
By definition, the mass of $\mu$ within each annulus $A_{i}$ is distributed across the balls $B_{i, j}$, $j \geq i$, and the annuli are separated by a positive distance.
The example is constructed so that balls centred in annuli further from the origin have greater mass;
this creates a tension for approximating sequences $(u_{n})_{n \in \bN}$ of $u = 0$, which must eventually converge to $u = 0$ but forgo mass by doing so.
As we will show shortly, it will be enough to understand the behaviour of the measure $\mu$ for balls centred at $\alpha_{i} e_{j}$, $j \geq i$.
For such balls, we emphasise the two essential properties of the construction.
\begin{enumerate}[label=(\alph*)]
	\item
	\label{item:moving_out_gives_zeta_mass}
	Moving the ball centre outwards from $\alpha_{i} e_{j}$ to $\alpha_{i-1} e_{j}$ increases the ball mass by at least a factor $\zeta^{-1} > 1$, regardless of the radius $r$.

	\item
	\label{item:moving_axis_gives_theta_mass}
	For balls of radius $r \leq R_{j+1}$, moving the ball centre from $\alpha_{i} e_{j}$ to $\alpha_{i} e_{j + 1}$ increases the ball mass by a factor $\theta^{-1} > 1$.
\end{enumerate}
Property \ref{item:moving_out_gives_zeta_mass} can be used to show that any \as\ $(u_{n})_{n \in \bN} \to 0$ can be strictly beaten by another sequence with a factor $\zeta^{-1}$ more mass.
Property \ref{item:moving_axis_gives_theta_mass} can be used to construct sequences $(u_{n})_{n \in \bN} \to 0$ with lots of mass asymptotically:
for any \ns\ $(r_{n})_{n \in \bN}$, there exists $(u_{n})_{n \in \bN} \to u = 0$ with $\orcdf{u_{n}}{r_{n}} \in \littleOmega(r_{n})$, whereas for $(v_{n})_{n \in \bN} \to v \neq 0$, it is always the case that $\orcdf{v_{n}}{r_{n}} \in \bigO(r_{n})$ as $n \to \infty$.

\begin{example}
	\label[example]{ex:no_optimal_approximating_sequence}
	Let $X = \ell^{2}(\bN; \bR)$ with its usual metric and standard basis $(e_{n})_{n \in \bN}$.
	Fix parameters $\theta \in (0, \nicefrac{1}{2}]$ and $\zeta \in (0, \nicefrac{\theta}{4}]$.
	Define for $j \in \bN$ the countable product measures
	\begin{align*}
		\sigma_{j}
		&\defeq
		\theta^{j}
		\Biggl(\bigotimes_{k = 1}^{j - 1} \dirac{0}\Biggr)
		\otimes
		\Uniform \left[ - R_{j}, R_{j} \right]
		\otimes
		\Biggl( \bigotimes_{k = j + 1}^{\infty} \dirac{0} \Biggr),
		&
		R_{j}
		&\defeq
		\tfrac{1}{2} \theta^{2j}.
	\end{align*}
	Moreover, for $i, j \in \bN$, let $B_{i,j} \defeq \ball{\alpha_{i} e_{j}}{R_{j}}$ and set
	\begin{align*}
		\mu_{i}
		&\defeq
		\sum_{j \geq i} \sigma_{j}(\quark - \alpha_{i} e_{j}),
		&
		\alpha_{i}
		&\defeq
		\frac{4}{1 - \theta^{2}}\theta^{2i},
		&
		Z
		&\defeq
		\frac{\zeta \theta}{(1 - \theta)(1 - \zeta \theta)}.
	\end{align*}
	Finally, define $\mu \in \prob{X}$ by
	\begin{equation*}
		\mu
		\defeq
		\frac{1}{Z} \sum_{i \geq 1} \zeta^{i} \mu_{i}
		.
	\end{equation*}

	\proofpartorcase{$\mu$ is a probability measure with finite normalising constant $Z$.}
	This follows because
	\begin{equation*}
		\mu(X)
		=
		\frac{1}{Z} \sum_{i \geq 1} \zeta^{i} \sum_{j \geq i} \underset{\defeq \theta^{j}}{\underbrace{\sigma_{j}(X)}}
		=
		\frac{1}{Z} \sum_{i \geq 1} \zeta^{i} \sum_{j \geq i} \theta^{j}
		=
		\frac{1}{Z(1-\theta)} \sum_{i \geq 1} \zeta^{i} \theta^{i}
		=
		\frac{\zeta\theta}{Z(1-\theta)(1-\zeta\theta)}
		=
		1.
	\end{equation*}

	\proofpartorcase{For any $u \in X$ and $r \leq R_{i}$,  $\ball{u}{r}$ intersects at most one annulus $A_{i}$ and one ball $B_{i,j}$, $j \geq i$.}
	The claim on annuli follows from the fact that $\dist{A_{i-1}}{A_{i}} > 2R_{i-1}$, which arises from an elementary calculation using the values of $\alpha_{j}$ and $R_{j}$.
	The fact on balls follows as the centres $\alpha_{i} e_{j}$ of $B_{i, j}$, $j \geq i$ are all a distance $\sqrt{2} \alpha_{i} > 3R_{i}$ apart, so any ball of radius $r \leq R_{i}$ can intersect at most one of the balls $B_{i, j}$.

	\proofpartorcase{For any \ns\ $(r_{n})_{n \in \bN}$ and any \as\ $(u_{n})_{n \in \bN} \to u \neq 0$, $\mu(\ball{u_{n}}{r_{n}}) \in \bigO(r_{n})$ as $n \to \infty$.}
	Either $u \notin \supp(\mu)$, in which case this is immediate, or else $u \in \closure{B_{i,j}}$ for some $j \geq i$.
	In this case, the separation properties of the previous part imply that $\ball{u_{n}}{r_{n}}$ intersects only $B_{i,j}$ for all large enough $n$.
	Thus $\mu(\ball{u_{n}}{r_{n}}) \leq \frac{2}{Z} \zeta^{i} \theta^{-j} r_{n} \in \bigO(r_{n})$ as $n \to \infty$.

	\proofpartorcase{For any \ns\ $(r_{n})_{n \in \bN}$, there exists an \as\ $(u_{n})_{n \in \bN} \to u = 0$ with $\mu(\ball{u_{n}}{r_{n}}) \in \littleOmega(r_{n})$ as $n \to \infty$.}
	Consider the following sequences, discarding the (at most finitely many) undefined terms:
	\begin{align*}
		j_{n}
		&\defeq
		\max \Set{k \in \bN}{r_{n} \leq R_{k}}
		\xrightarrow[n\to\infty]{} \infty,
		\\
		i_{n}
		&\defeq
		\max \Set{k \in \bN}{\zeta^{k} \geq \theta^{j_{n}/2},~j_{n} \geq k}
		\xrightarrow[n\to\infty]{} \infty,
		\\
		u_{n}
		&\defeq
		\alpha_{i_{n}} e_{j_{n}} \xrightarrow[n\to\infty]{} u=0.
	\end{align*}
	By construction, $\mu(\ball{\alpha_{i_{n}} e_{j_{n}}}{r_{n}})
	=
	\frac{2}{Z} \zeta^{i_{n}} \theta^{-j_{n}} r_{n}
	\geq
	\frac{2}{Z} \theta^{-j_{n}/2} r_{n} \in \littleOmega(r_{n}) \text{~as $n \to \infty$.}$

	\proofpartorcase{Any \as\ of $u = 0$ can be strictly improved.}
	Precisely, for any \ns\ $(r_{n})_{n \in \bN}$ and any \as\ $(u_{n})_{n \in \bN} \to u = 0$, there exists another \as\ $(u_{n}')_{n \in \bN} \to u = 0$ with $(u_{n}')_{n \in \bN} \subseteq \set{\alpha_{i} e_{j}}{j \geq i,~i \in \bN}$
	and with a factor $\zeta^{-1} > 1$ more mass:
	\begin{equation} \label{eq:gain_factor_zeta}
		\mu(\ball{u_{n}}{r_{n}}) \leq \zeta \mu(\ball{u_{n}'}{r_{n}}) \text{~for all large enough $n$.}
	\end{equation}
	This step is technical and proven in \cref{lem:wag_wga_properties}.
	The idea is that, for any \as\ $(u_{n})_{n \in \bN} \to 0$, one can find another \as\ $(\tilde{u}_{n})_{n \in \bN} \subset \set{\alpha_{i}e_{j}}{j \geq i \in \bN}$ with $\mu(\ball{u_{n}}{r_{n}}) \leq \mu(\ball{\tilde{u}_{n}}{r_{n}})$;
	then one can move each centre $\tilde{u}_{n} = \alpha_{i_{n}} e_{j_{n}}$ to $\alpha_{i_{n} - 1} e_{j_{n}}$ to gain a factor $\zeta^{-1}$ of mass, proving \eqref{eq:gain_factor_zeta}.
	\EndExampleText
\end{example}

\begin{remark}[On the necessity of working in non-locally-compact spaces]
	The ball-mass properties around $u = 0$ and $v \neq 0$ mean that, for any \ns\ $(r_{n})_{n \in \bN}$, any \cp\ $v \in X \setminus \{u\}$, and any \cpas\ $(v_{n})_{n \in \bN}$, there exists an \as\ $(u_{n})_{n \in \bN} \to u$ such that
	\begin{equation} \label{eq:no_optimal_approximating_sequence_dominant}
		\liminf_{n \to \infty} \Ratio{u_{n}}{v_{n}}{r_{n}}{\mu} \geq 1.
	\end{equation}
	But there is no optimal choice of \as\ for $u = 0$:
	for any \ns\ $(r_{n})_{n \in \bN}$ and \as\ $(u_{n})_{n \in \bN} \to u$, there exists a further \as\ $(u_{n}')_{n \in \bN} \to u$ such that
	\begin{equation} \label{eq:no_optimal_approximating_sequence_nonoptimal}
		\liminf_{n \to \infty} \Ratio{u_{n}}{u_{n}'}{r_{n}}{\mu} < 1.
	\end{equation}
	\Cref{lem:no_locally_compact_example} shows that there is no probability measure $\mu$ on a locally compact $X$ satisfying both properties.
	
	Thus, it is natural to consider measures on $X = \ell^{2}(\bN; \bR)$, which has infinite dimension and is therefore not locally compact, yet is still amenable to concrete computations.
\end{remark}

\section{Closing remarks}
\label{section:Conclusion}

Defining a mode for a probability measure without a continuous Lebesgue \ac{pdf}, for example in infinite-dimensional Banach spaces or more general metric spaces, is a problem of intrinsic interest in non-smooth analysis and has considerable relevance to applications.

This paper has taken a systematic and axiomatic approach to the definition of small-ball modes, and has shown that there are precisely ten small-ball mode maps that are meaningful in the sense of correctly handling purely atomic measures (\AP), those with continuous Lebesgue \ac{pdf} (\LP), and an elementary cloning property (\CP).
These ten mode maps include the strong mode of \citet{DashtiLawStuartVoss2013}, the weak mode of \citet{HelinBurger2015}, and the generalised strong mode of \citet{Clason2019GeneralizedMI}, along with seven new notions---among them an exotic mode strictly intermediate between the strong and weak modes.
We have completely classified the logical implications among these ten mode maps for general metric probability spaces.
We have also complemented the general theory with a significant, albeit not exhaustive, study of special cases involving well-behaved measures.
In particular, for measures on separable Banach spaces that are absolutely continuous with respect to a non-degenerate Gaussian measure and have a potential that is either uniformly continuous or quadratically lower bounded, all ten mode maps coincide, and the notion of mode becomes unambiguous (at least within the context of \Cref{defn:structured_definition}).

It is remarkable that the atomic property (\AP) and cloning property (\CP) serve to eliminate all but ten definitions of small-ball mode, and that these ten all satisfy the Lebesgue property (\LP) and, for separable metric spaces, the support property (\SP), yet all fail to satisfy the merging property (\rMP).
Therefore, from our axiomatic perspective, there appears to be no reason to privilege any one of the ten as \emph{the} definition of a mode.

We hope that this landscape will motivate the formulation of additional axioms---with either theoretical or applied motivation---that may single out some, or one, of the ten definitions as most appropriate in a well-defined context.
It may also inspire proposals that are not based on small-ball asymptotics, or that approach such asymptotics in fundamentally different ways from \Cref{defn:structured_definition}, potentially yielding mode maps that satisfy properties---such as the merging property (\rMP)---which all ten meaningful small-ball mode maps violate.

\appendix

\section{Proofs and supporting results}
\label[appendix]{appendix:supplementary_material_proofs}

\subsection{Supporting results for \texorpdfstring{\Cref{sec:small-ball_modes}}{Section \ref{sec:small-ball_modes}}}
\label[appendix]{section:Supporting_Technical_Results}

\begin{lemma}
	\label[lemma]{lemma:technical_lemma_for_SP}
	Let $(X, d, \mu)$ be a metric probability space and let $u \in \supp(\mu)$.
	For every null sequence $(\Delta_{n})_{n\in\bN}$ there exist null sequences $(r_{n})_{n\in\bN}$, $(\tilde{r}_{n})_{n\in\bN}$, and $(\varepsilon_{n})_{n\in\bN}$ such that
	\[
		\mu( \Ball{u}{r_{n}-\Delta_{n}} )
		\geq
		(1-\varepsilon_{n}) \, \mu( \Ball{u}{r_{n}} ),
		\qquad
		\mu( \Ball{u}{\tilde{r}_{n}+\Delta_{n}} )
		\leq
		(1+\varepsilon_{n}) \, \mu( \Ball{u}{\tilde{r}_{n}} ).
	\]
\end{lemma}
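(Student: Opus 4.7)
The plan is to exploit the monotonicity and regularity of the ball-mass function $f(r) \defeq \mu(\Ball{u}{r})$ on $(0, \infty)$. Because $u \in \supp(\mu)$, we have $f(r) > 0$ for every $r > 0$; continuity of measure applied to the increasing union $\Ball{u}{r} = \bigcup_{s < r} \Ball{u}{s}$ shows that $f$ is left-continuous; and being non-decreasing, $f$ is continuous at all but countably many points of $(0, \infty)$, so continuity points of $f$ accumulate at $0$. The two sequences $(r_n)$ and $(\tilde{r}_n)$ will be constructed independently by essentially identical diagonal arguments, and a common $(\varepsilon_n)$ is obtained by taking the maximum of the two error sequences produced.

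For the first inequality, I would use left-continuity at the reference points $s_k \defeq 1/k$. For each $k$ there exists $\delta_k \in (0, \tfrac{1}{2k})$ such that $f(s_k - \delta) \geq (1 - \tfrac{1}{k}) f(s_k)$ for all $\delta \in (0, \delta_k)$. Because $\Delta_n \to 0$, the index $N_k \defeq \min\{n \in \bN : \Delta_m < \delta_k \text{ for every } m \geq n\}$ is finite, and after replacing $N_k$ by $\max_{j \leq k} N_j + k$ if needed, we may assume $(N_k)_{k \in \bN}$ is strictly increasing. Setting $k(n) \defeq \max\{k : N_k \leq n\}$ (defined for $n \geq N_1$), the choice $r_n \defeq s_{k(n)}$ and $\varepsilon_n^{(1)} \defeq 1/k(n)$ yields $r_n \to 0$, $\varepsilon_n^{(1)} \to 0$, $r_n - \Delta_n > 0$ (since $\Delta_n < \delta_{k(n)} < \tfrac{1}{2 k(n)} < r_n$), and the required lower bound.

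For the second inequality, I would substitute genuine continuity of $f$ at carefully chosen points for left-continuity. For each $k \in \bN$, I choose a continuity point $\tilde{s}_k \in (0, 1/k)$ of $f$, which exists because continuity points of $f$ are dense in $(0, \infty)$. Right-continuity of $f$ at $\tilde{s}_k$ provides $\gamma_k > 0$ with $f(\tilde{s}_k + \gamma) \leq (1 + \tfrac{1}{k}) f(\tilde{s}_k)$ for every $\gamma \in (0, \gamma_k)$, and defining $\tilde{N}_k$ and $\tilde{k}(n)$ by the analogous construction gives $\tilde{r}_n \defeq \tilde{s}_{\tilde{k}(n)}$ satisfying the required upper bound with error $\varepsilon_n^{(2)} \defeq 1/\tilde{k}(n) \to 0$. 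Finally, setting $\varepsilon_n \defeq \max(\varepsilon_n^{(1)}, \varepsilon_n^{(2)})$ serves both inequalities simultaneously.

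The main obstacle is purely bookkeeping: ensuring the diagonal construction delivers bona fide null sequences $r_n, \tilde{r}_n \to 0$ rather than stagnating. That $k(n) \to \infty$ is what forces $r_n \to 0$ and $\varepsilon_n \to 0$, and this divergence follows because for each fixed $K$, eventually $n \geq N_K$ and hence $k(n) \geq K$; the only mild complication is engineering $N_k$ to be strictly increasing so that $k(n)$ is well-defined and eventually unbounded.
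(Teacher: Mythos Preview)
Your argument is correct and takes a genuinely different route from the paper's. You exploit real-analysis regularity of the monotone function $f(r)=\mu(\Ball{u}{r})$: left-continuity handles the first inequality directly, and the density of continuity points of a monotone function lets you pick reference radii $\tilde{s}_k$ at which $f$ is right-continuous to handle the second. The paper instead uses a pigeonhole argument: it partitions a fixed annulus $[\check{s},\hat{s}]$ into $J=\lceil Qk\rceil$ concentric shells $A_j$ and selects one with $\mu(A_{j^\ast})\leq k^{-1}\mu(\Ball{u}{\check{s}})$, taking $s_k$ and $\tilde{s}_k$ as the outer and inner radii of that single thin shell. Your approach is arguably cleaner and invokes only standard facts, at the cost of running two independent diagonalisations; the paper's approach is more hands-on but yields both sequences from a single construction and avoids appealing to the structure of the discontinuity set of a monotone function.
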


\begin{proof}
	We will first show that there exist null sequences $(s_{k})_{k\in\bN}$ and $(\tilde{s}_{k})_{k\in\bN}$ and a strictly increasing sequence $(n_{k})_{k\in\bN}$ such that, for each $n \geq n_{k}$,
	\begin{equation}
		\label{equ:technical_lemma_for_SP_first_step}
		\mu \bigl(  \Ball{u}{s_{k}} \setminus \Ball{u}{s_{k}-\Delta_{n}}  \bigr)
		\leq
		k^{-1} \mu( \Ball{u}{s_{k}} ),
		\quad
		\mu \bigl(  \Ball{u}{\tilde{s}_{k}+\Delta_{n}} \setminus \Ball{u}{\tilde{s}_{k}}  \bigr)
		\leq
		k^{-1} \mu( \Ball{u}{\tilde{s}_{k}} ).
	\end{equation}
	Since $\mu$ is a finite measure, this holds trivially for $k=1$ with $s_{1} = 1$ and $n_{1} = 1$ and sufficiently large $\tilde{s}_{1}>0$, while for $k\geq 2$ we can construct $s_{k}$ and $n_{k}$ recursively:
	Set $\hat{s} = s_{k-1}/2$, $\check{s} = s_{k-1}/4$, $Q = \mu( \Ball{u}{\hat{s}} )/\mu( \Ball{u}{\check{s}} ) \geq 1$, $J = \lceil{Qk}\rceil$ and $\delta = (\hat{s} - \check{s}) / J > 0$.
	Since the annuli $A_{j} \defeq \Ball{u}{\check{s} + j\delta} \setminus \Ball{u}{\check{s} + (j-1)\delta} \subseteq \Ball{u}{\hat{s}}$, $j=1,\dots,J$, are disjoint,
	\[
		\sum_{j=1}^{J} \mu(A_{j})
		\leq
		\mu(\Ball{u}{\hat{s}})
		=
		Q \, \mu(\Ball{u}{\check{s}}).
	\]
	Hence, there exists $j^{\ast} \in \{1,\dots,J\}$ such that
	$\mu(A_{j^{\ast}})
	\leq
	J^{-1} Q \, \mu(\Ball{u}{\check{s}})$.
	Choose $s_{k} \defeq \check{s} + j^{\ast} \delta$ and $\tilde{s}_{k} \defeq \check{s} + (j^{\ast}-1) \delta$.
	Further, since $u_{n} \to u$ as $n\to\infty$, there exists $n_{k} \geq n_{k-1}$ such that, for each $n\geq n_{k}$, $\Delta_{n} \leq \delta$.
	Since $J^{-1} Q \leq k^{-1}$ and $\Ball{u}{s_{k}} \setminus \Ball{u}{s_{k}-\Delta_{n}} \subseteq A_{j^{\ast}}$ as well as $\Ball{u}{\tilde{s}_{k}+\Delta_{n}} \setminus \Ball{u}{\tilde{s}_{k}} \subseteq A_{j^{\ast}}$ for $n\geq n_{k}$, this proves \eqref{equ:technical_lemma_for_SP_first_step}.
	(Note that $(s_{k})_{k\in\bN}$ and $(\tilde{s}_{k})_{k\in\bN}$ are null sequences since $s_{k} \leq s_{k-1}/2$, $\tilde{s}_{k} \leq s_{k-1}/2$.)

	Now, choosing $r_{n} = s_{k}$, $\tilde{r}_{n} = \tilde{s}_{k}$ and $\varepsilon_{n} = k^{-1}$ whenever $n_{k} \leq n < n_{k+1}$ for some (unique) $k\in\bN$, the above construction yields, for each $n \in \bN$,
	\begin{align*}
		\mu \bigl(  \Ball{u}{r_{n}} \setminus \Ball{u}{r_{n}-\Delta_{n}}  \bigr)
		& \leq
		\varepsilon_{n} \mu( \Ball{u}{r_{n}} )
		\text{ and} \\
		\mu \bigl(  \Ball{u}{\tilde{r}_{n}+\Delta_{n}} \setminus \Ball{u}{\tilde{r}_{n}}  \bigr)
		& \leq
		\varepsilon_{n} \mu( \Ball{u}{\tilde{r}_{n}} ).
	\end{align*}
	The claim then follows because
	\begin{align*}
		\Ball{u}{r_{n}}
		& =
		\Ball{u}{r_{n}-\Delta_{n}} \uplus \bigl( \Ball{u}{r_{n}} \setminus \Ball{u}{r_{n}-\Delta_{n}} \bigr)
		\text{ and} \\
		\Ball{u}{\tilde{r}_{n}+\Delta_{n}}
		& =
		\Ball{u}{\tilde{r}_{n}} \uplus \bigl( \Ball{u}{\tilde{r}_{n}+\Delta_{n}} \setminus \Ball{u}{\tilde{r}_{n}} \bigr). \qedhere
	\end{align*}
\end{proof}

\begin{lemma}[Continuity of ball masses]
	\label[lemma]{lem:ball_masses}
	Let $(X, d, \mu)$ be a metric probability space.
	\begin{enumerate}[label=(\alph*)]
		\item
		\label{item:left_continuity}
		For each $x \in X$ and $s > 0$, $\lim_{r \nearrow s} \mu(\Ball{x}{r}) = \mu(\Ball{x}{s})$ and $\lim_{r \nearrow s} \sMass{\mu}{r} = \sMass{\mu}{s}$.

		\item
		\label{item:lower_semicontinuity}
		For each $r>0$, the map $x \mapsto \mu(\Ball{x}{r})$ is lower semicontinuous on $X$, i.e., for any convergent sequence $(x_{k})_{k\in\bN} \to x$ in $X$, $\liminf_{k\to\infty} \mu(\Ball{x_{k}}{r}) \geq \mu(\Ball{x}{r})$.
	\end{enumerate}
\end{lemma}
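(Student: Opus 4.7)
My plan is to handle part \ref{item:left_continuity} using continuity of measure from below on a suitable increasing family of open balls, and then bootstrap part \ref{item:lower_semicontinuity} from the triangle inequality combined with part \ref{item:left_continuity}.

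For part \ref{item:left_continuity}, the key observation is that the open ball $\Ball{x}{r} = \{y \in X : d(y,x) < r\}$ is monotone in $r$, and $\bigcup_{r < s} \Ball{x}{r} = \Ball{x}{s}$ since $y \in \Ball{x}{s}$ means $d(y,x) < s$, so $y \in \Ball{x}{r}$ for any $r \in (d(y,x), s)$. Taking any sequence $r_{n} \nearrow s$, continuity of measure from below gives $\mu(\Ball{x}{r_{n}}) \nearrow \mu(\Ball{x}{s})$. For the statement about $\sMass{\mu}{r}$, the inequality $\lim_{r \nearrow s} \sMass{\mu}{r} \leq \sMass{\mu}{s}$ is immediate from monotonicity in $r$. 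For the reverse, fix $\varepsilon > 0$ and choose $x \in X$ with $\mu(\Ball{x}{s}) > \sMass{\mu}{s} - \varepsilon$, which is possible by definition of supremum (or trivially if $\sMass{\mu}{s} = 0$). Applying the already-proved left-continuity at this particular $x$ yields $\mu(\Ball{x}{r}) > \sMass{\mu}{s} - \varepsilon$ for all $r$ sufficiently close to $s$ from below, and since $\sMass{\mu}{r} \geq \mu(\Ball{x}{r})$, the conclusion follows by letting $\varepsilon \to 0$.

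For part \ref{item:lower_semicontinuity}, fix $r > 0$ and a convergent sequence $x_{k} \to x$ in $X$, and fix any $\delta \in (0, r)$. The triangle inequality shows that any $y \in \Ball{x}{r - \delta}$ satisfies $d(y, x_{k}) \leq d(y,x) + d(x, x_{k}) < (r - \delta) + d(x, x_{k})$, so $y \in \Ball{x_{k}}{r}$ as soon as $d(x_{k}, x) < \delta$. Consequently $\Ball{x}{r - \delta} \subseteq \Ball{x_{k}}{r}$ for all sufficiently large $k$, giving
\[
	\mu(\Ball{x}{r - \delta}) \leq \liminf_{k \to \infty} \mu(\Ball{x_{k}}{r}).
\]
Letting $\delta \searrow 0$ and invoking part \ref{item:left_continuity} at $s = r$ completes the argument, since the left-hand side converges to $\mu(\Ball{x}{r})$.

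Neither step presents a substantive obstacle; this result is essentially a repackaging of standard facts about open balls in metric spaces and continuity of finite measures. The mildest subtlety is the handling of the supremum $\sMass{\mu}{r}$ in part \ref{item:left_continuity}, where one must remember to choose a near-maximising centre before invoking pointwise left-continuity, rather than trying to exchange the supremum over $x$ with the limit in $r$ directly.
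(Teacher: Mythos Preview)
Your proof is correct and, for part \ref{item:left_continuity}, essentially identical to the paper's: both use continuity of measure from below for the first identity and then pick a near-maximising centre for the supremal mass (the paper phrases this by contradiction, you do it directly, but the content is the same). For part \ref{item:lower_semicontinuity}, the paper simply cites \citet[SM1.1]{LambleySullivan2022}, whereas you supply the standard self-contained argument via the inclusion $\Ball{x}{r-\delta} \subseteq \Ball{x_{k}}{r}$ and then invoke part \ref{item:left_continuity}; this is exactly the argument one would expect to find behind that citation, so your approach is more explicit but not genuinely different.
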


\begin{proof}
	Since $\Ball{x}{r}$ is monotonically increasing in $r$ with $\Ball{x}{s} = \bigcup_{r < s} \Ball{x}{r}$, the first identity follows directly from the continuity of measures.
	Now assume that the second identity does not hold, i.e., there exists $\varepsilon>0$ such that, for each $r < s$, $\sMass{\mu}{r} \leq \sMass{\mu}{s} - \varepsilon$.
	By definition of the supremum, there exists $x_{\ast} \in X$ such that $\mu(\Ball{x_{\ast}}{s}) > \sMass{\mu}{s} - \varepsilon$.
	Hence, using the first identity, we obtain the contradiction
	\[
		\sMass{\mu}{s} - \varepsilon
		\geq
		\limsup_{r \nearrow s} \sMass{\mu}{r}
		\geq
		\lim_{r \nearrow s} \mu(\Ball{x_{\ast}}{r})
		=
		\mu(\Ball{x_{\ast}}{s})
		>
		\sMass{\mu}{s} - \varepsilon,
	\]
	and this proves claim \ref{item:left_continuity}.
	For claim \ref{item:lower_semicontinuity}, see \citet[SM1.1]{LambleySullivan2022}.
\end{proof}

\subsection{Supporting results for \texorpdfstring{\Cref{sec:Mode_definitions_coincide}}{Section \ref{sec:Mode_definitions_coincide}}}

\begin{lemma}[Optimality of the constant approximating sequence for absolutely continuous measures]
	\label[lemma]{lemma:CASIO_for_ac_in_Rm}
	Let $\mu \in \prob{\bR^{m}}$ have Lebesgue \ac{pdf} $\rho \colon \bR^{m} \to [0, \infty)$ and fix $u \in \bR^{m}$
	\begin{enumerate}[label=(\alph*)]
		\item
		\label{item:CASIO_at_local_symmetric_modes}
		If, for some $R > 0$, $\rho|_{\ball{u}{R}}$ is symmetric about $u$ and its superlevel sets are convex, then the constant \as\ of $u$ is optimal in the sense of \eqref{eq:CASIO}, i.e.
		\begin{equation}
			\tag{\ref{eq:CASIO}}
			\forall \ns \, (r_{n})_{n \in \bN} \, \forall \as \, (u_{n})_{n \in \bN} \to u\colon
			\quad
			\liminf_{n \to \infty} \Ratio{u}{u_{n}}{r_{n}}{\mu}
			\geq
			1.
		\end{equation}

		\item
		\label{item:CASIO_outside_support}
		If $u \notin \supp(\mu)$, then \eqref{eq:CASIO} holds.

		\item
		\label{item:CASIO_at_positive_continuity_points}
		If $\rho(u) > 0$ and $\rho$ is continuous at $u$, then \eqref{eq:CASIO} holds.

		\item
		\label{item:no_CASIO_on_boundary_of_support}
		If $u \in \supp(\mu)$ but $\rho(u) = 0$, then \eqref{eq:CASIO} may fail.
	\end{enumerate}
\end{lemma}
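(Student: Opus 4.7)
The plan is to dispatch part~\ref{item:CASIO_at_local_symmetric_modes} using Anderson's inequality, handle parts~\ref{item:CASIO_outside_support} and~\ref{item:CASIO_at_positive_continuity_points} through direct estimates via the support and continuity of $\rho$, and to exhibit a minimal one-dimensional counterexample for part~\ref{item:no_CASIO_on_boundary_of_support}.

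For~\ref{item:CASIO_at_local_symmetric_modes}, I would translate so that $u = 0$ and then apply Anderson's theorem \citep{Anderson1955}: for any integrable, non-negative, centrally symmetric function $f$ on $\bR^{m}$ whose superlevel sets are convex, and for any centrally symmetric convex body $K$ and any $y \in \bR^{m}$, $\int_{K + y} f \leq \int_{K} f$. Taking $f = \rho|_{\ball{0}{R}}$, $K = \ball{0}{r_{n}}$, and $y = u_{n}$, and using $u_{n} \to 0$ together with $r_{n} \to 0$ to ensure that $\ball{u_{n}}{r_{n}} \subseteq \ball{0}{R}$ for all large enough $n$, one obtains $\mu(\ball{u_{n}}{r_{n}}) \leq \mu(\ball{u}{r_{n}})$ eventually, which yields the desired $\liminf_{n \to \infty} \Ratio{u}{u_{n}}{r_{n}}{\mu} \geq 1$.

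For~\ref{item:CASIO_outside_support}, the hypothesis $u \notin \supp(\mu)$ produces some $R > 0$ with $\mu(\ball{u}{R}) = 0$; for all large $n$ both $\ball{u}{r_{n}}$ and $\ball{u_{n}}{r_{n}}$ lie inside $\ball{u}{R}$, so numerator and denominator in the ratio are both zero and the ratio equals $1$ by the convention $\nicefrac{0}{0} \defeq 1$. For~\ref{item:CASIO_at_positive_continuity_points}, continuity of $\rho$ at $u$ with $\rho(u) > 0$ yields, for each $\varepsilon \in (0, \rho(u))$, a radius $\delta > 0$ on which $\rho$ takes values in $[\rho(u) - \varepsilon, \rho(u) + \varepsilon]$; for all large $n$ both $\ball{u}{r_{n}}$ and $\ball{u_{n}}{r_{n}}$ lie in $\ball{u}{\delta}$ and have the same Lebesgue volume, so the ratio is at least $(\rho(u) - \varepsilon)/(\rho(u) + \varepsilon)$, which tends to $1$ as $\varepsilon \to 0$.

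Finally, for~\ref{item:no_CASIO_on_boundary_of_support}, I would take the unnormalised density $\rho(x) \defeq \absval{x}\chi_{[-1,1]}(x)$ on $\bR$: then $u = 0$ lies in $\supp(\mu)$ with $\rho(0) = 0$, while a direct calculation gives $\mu(\ball{0}{r}) \propto r^{2}$ but $\mu(\ball{r}{r}) \propto 2 r^{2}$. Choosing the \as\ $u_{n} \defeq r_{n}$ then gives $\liminf_{n \to \infty} \Ratio{0}{u_{n}}{r_{n}}{\mu} = \tfrac{1}{2} < 1$, violating \eqref{eq:CASIO}. The only mild technical point across~\ref{item:CASIO_at_local_symmetric_modes}--\ref{item:CASIO_at_positive_continuity_points} is ensuring that the approximating balls eventually sit inside the neighbourhood on which the hypothesis on $\rho$ applies, which is routine via $\ball{u_{n}}{r_{n}} \subseteq \ball{u}{r_{n} + d(u_{n}, u)}$ and $r_{n} + d(u_{n}, u) \to 0$; no genuine obstacle arises.
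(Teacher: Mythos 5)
Your proposal is correct and follows essentially the same route as the paper: Anderson's theorem for~\ref{item:CASIO_at_local_symmetric_modes}, the closedness of $\supp(\mu)$ together with the $\nicefrac{0}{0} \defeq 1$ convention for~\ref{item:CASIO_outside_support}, a continuity estimate for~\ref{item:CASIO_at_positive_continuity_points}, and an explicit one-dimensional counterexample for~\ref{item:no_CASIO_on_boundary_of_support}. The only cosmetic differences are that the paper phrases~\ref{item:CASIO_at_positive_continuity_points} via a modulus of continuity rather than an $\varepsilon$--$\delta$ argument, and its counterexample for~\ref{item:no_CASIO_on_boundary_of_support} uses $\rho(x) = x^{2}$ with $u_{n} = \sqrt{r_{n}}$ instead of your (equally valid) $\rho(x) = \absval{x}$ with $u_{n} = r_{n}$.
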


\begin{proof}
	Fix $u \in \bR^{m}$, let $(u_{n})_{n \in \bN}$ be an \as\ of $u$, and let $(r_{n})_{n \in \bN}$ be any \ns.
	\begin{enumerate}[label=(\alph*)]
		\item
		For all large enough $n$, $r_{n} < R$ and $\ball{u_{n}}{r_{n}} \subset \ball{u}{R}$.
		Then, for such $n$,
		\begin{align*}
			\mu(\ball{u_{n}}{r_{n}})
			= \int_{(u_{n} - u) + \ball{u}{r_{n}}} \rho(x) \, \rd x
			\leq \int_{\ball{u}{r_{n}}} \rho(x) \, \rd x
			= \mu(\ball{u}{r_{n}}) ,
		\end{align*}
		where the inequality follows from Anderson's theorem \citep[Theorem~1]{Anderson1955}.
		Hence, for all large enough $n$, $\Ratio{u}{u_{n}}{r_{n}}{\mu} \geq 1$, from which \eqref{eq:CASIO} follows immediately.

		\item
		Suppose that $u \notin \supp(\mu)$.
		Then, since $\supp(\mu)$ is closed, for all large enough $n$, $\ball{u}{r_{n}}$ and $\ball{u_{n}}{r_{n}}$ are both contained within $X \setminus \supp(\mu)$, and then \eqref{eq:CASIO} follows from the convention that $\nicefrac{0}{0} \defeq 1$.

		\item
		Let $\varpi \colon [0, \infty) \to [0, \infty)$ be a modulus of continuity for $\rho$ at $u$.
		Then
		\begin{align*}
			\Ratio{u}{u_{n}}{r_{n}}{\mu}
			= \frac{\int_{\ball{u}{r_{n}}} \rho(x) \, \rd x}{\int_{\ball{u_{n}}{r_{n}}} \rho(x) \, \rd x}
			\geq \frac{\rho(u) - \varpi(r_{n})}{\rho(u_{n})+ \varpi(r_{n})}
			\geq \frac{\rho(u) - \varpi(r_{n})}{\rho(u) + \varpi(\norm{u - u_{n}}) + \varpi(r_{n})} .
		\end{align*}
		Under the assumption that $\rho(u) > 0$, and using the fact that $\lim_{r \to 0} \varpi(r) = 0$, the right-hand side converges to $1$ as $n \to \infty$, which establishes \eqref{eq:CASIO}.

		\item
		Consider the case $m - 1$ and $\rho(x) \defeq x^{2}$ on a sufficiently small interval around $u = 0$, and the $\as$ $u_{n} \defeq \sqrt{r_{n}}$.
		In this situation,
		\begin{equation*}
			\Ratio{u}{u_{n}}{r_{n}}{\mu}
			= \frac{\int_{-r_{n}}^{+r_{n}} x^{2} \, \rd x}{\int_{\sqrt{r_{n}} - r_{n}}^{\sqrt{r_{n}} + r_{n}} x^{2} \, \rd x}
			= \frac{2 r_{n}^{3}}{(\sqrt{r_{n}} + r_{n})^{3} - (\sqrt{r_{n}} - r_{n})^{3}}
			= \frac{r_{n}}{r_{n} + 3}
			\to 0 \text{ as } n \to \infty .
			\qedhere
		\end{equation*}
	\end{enumerate}
\end{proof}

\begin{lemma}[Transfer of ball mass ratios]
	\label[lemma]{lemma:reweighted_ball_mass_ratio}
	Let $(X, d, \mu_{0})$ be a metric probability space.
	Let $\Phi \colon X \to \bR$ be measurable with $Z \defeq \int_{X} \exp(-\Phi(x)) \, \mu_{0}(\rd x)$ finite and define $\mu \in \prob{X}$ by $\mu (\rd x) \defeq Z^{-1} \exp(-\Phi(x)) \, \mu_{0} (\rd x)$.
	Suppose that $\Phi$ is continuous at $u, v \in X$ with modulus of continuity $\varpi$.
	Let $u_{n} \to u$, $v_{n} \to v$, and $r_{n} \to 0$.
	Then, for each $n \in \bN$,
	\begin{align}
		\label{eq:reweighted_ball_mass_ratio_1}
		\Ratio{u_{n}}{v_{n}}{r_{n}}{\mu} & \leq \exp \bigl( 2 \varpi(r_{n}) + \varpi(d(u_{n}, u)) + \varpi(d(v_{n}, v)) \bigr) \frac{\exp(-\Phi(u))}{\exp(-\Phi(v))} \Ratio{u_{n}}{v_{n}}{r_{n}}{\mu_{0}} , \\
		\label{eq:reweighted_ball_mass_ratio_2}
		\Ratio{u_{n}}{v_{n}}{r_{n}}{\mu} & \geq \exp \bigl( - 2 \varpi(r_{n}) - \varpi(d(u_{n}, u)) - \varpi(d(v_{n}, v)) \bigr) \frac{\exp(-\Phi(u))}{\exp(-\Phi(v))} \Ratio{u_{n}}{v_{n}}{r_{n}}{\mu_{0}} .
	\end{align}
	Furthermore, in the limit as $n \to \infty$,
	\begin{align}
		\label{eq:reweighted_ball_mass_ratio_3}
		\liminf_{n \to \infty} \Ratio{u_{n}}{v_{n}}{r_{n}}{\mu} = \frac{\exp(-\Phi(u))}{\exp(-\Phi(v))} \liminf_{n \to \infty} \Ratio{u_{n}}{v_{n}}{r_{n}}{\mu_{0}} ,
	\end{align}
	and similarly for the limit superior and the limit (if it exists).
\end{lemma}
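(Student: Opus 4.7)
The approach is to sandwich each of $\mu(\ball{u_{n}}{r_{n}})$ and $\mu(\ball{v_{n}}{r_{n}})$ between constant multiples of the corresponding $\mu_{0}$-masses using the continuity of $\Phi$, and then form the ratio; the limit statement will then follow because all the $\varpi$-terms appearing in the prefactor vanish as $n \to \infty$.

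Concretely, I would start from $\mu(\ball{u_{n}}{r_{n}}) = Z^{-1} \int_{\ball{u_{n}}{r_{n}}} \exp(-\Phi(x)) \, \mu_{0}(\rd x)$. For every $x \in \ball{u_{n}}{r_{n}}$, the triangle inequality $d(x,u) \leq d(x,u_{n}) + d(u_{n},u) < r_{n} + d(u_{n},u)$ combined with the pointwise modulus-of-continuity bound gives $\absval{\Phi(x) - \Phi(u)} \leq \varpi(r_{n} + d(u_{n},u))$. Assuming $\varpi$ to be subadditive---which costs no generality, since any modulus of continuity can be replaced by its subadditive envelope---this is bounded above by $\varpi(r_{n}) + \varpi(d(u_{n},u))$. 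Exponentiating, the resulting pointwise bound on $\exp(-\Phi(x))$ is a constant in $x$ that can be pulled out of the integral, yielding
\[
	\exp\bigl(-\Phi(u) - \varpi(r_{n}) - \varpi(d(u_{n},u))\bigr) \mu_{0}(\ball{u_{n}}{r_{n}}) \leq Z \mu(\ball{u_{n}}{r_{n}}) \leq \exp\bigl(-\Phi(u) + \varpi(r_{n}) + \varpi(d(u_{n},u))\bigr) \mu_{0}(\ball{u_{n}}{r_{n}}),
\]
and a mirror argument around $v$ produces the analogous sandwich for $\mu(\ball{v_{n}}{r_{n}})$ in terms of $\varpi(r_{n}) + \varpi(d(v_{n},v))$. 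Dividing the upper bound for the numerator by the lower bound for the denominator (and vice versa) makes the normalising factors $Z$ cancel and delivers the two-sided inequalities \eqref{eq:reweighted_ball_mass_ratio_1} and \eqref{eq:reweighted_ball_mass_ratio_2}.

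For the limit identity \eqref{eq:reweighted_ball_mass_ratio_3}, since $r_{n}, d(u_{n},u), d(v_{n},v) \to 0$ and $\varpi(t) \to 0$ as $t \to 0$, both exponential prefactors in \eqref{eq:reweighted_ball_mass_ratio_1}--\eqref{eq:reweighted_ball_mass_ratio_2} converge to $1$. Sandwiching $\liminf_{n \to \infty} \Ratio{u_{n}}{v_{n}}{r_{n}}{\mu}$ between the liminfs of the two bounds---which share the common factor $\exp(-\Phi(u))/\exp(-\Phi(v)) \cdot \liminf_{n \to \infty} \Ratio{u_{n}}{v_{n}}{r_{n}}{\mu_{0}}$---yields the claim, and the identical argument applies to the limsup and to the limit when it exists.

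The only genuinely fiddly point is the handling of the conventions $\nicefrac{c}{0} = \infty$ and $\nicefrac{0}{0} = 1$ embedded in \eqref{eq:Ratio_wz}. Because $\exp(-\Phi) > 0$, one has the equivalence $\mu(\ball{z}{r}) = 0 \iff \mu_{0}(\ball{z}{r}) = 0$ for any $z \in X$ and $r > 0$, so the convention transfers faithfully between the two measures and the sandwich inequalities remain meaningful in the degenerate cases. No integrability of $\Phi$ beyond $Z < \infty$ is used, and the argument is purely local to $u$ and $v$.
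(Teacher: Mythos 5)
Your proof is correct and follows essentially the same route as the paper's: pointwise sandwich $\exp(-\Phi)$ on each ball via the modulus of continuity and the triangle inequality, take the ratio so that $Z$ cancels, and let the $\varpi$-prefactors tend to $1$. Your explicit remarks about replacing $\varpi$ by its subadditive envelope and about the $\nicefrac{0}{0}$ convention are points the paper leaves implicit, and they are handled correctly.
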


\begin{proof}
	For \eqref{eq:reweighted_ball_mass_ratio_1}, observe that
	\begin{align*}
		\Ratio{u_{n}}{v_{n}}{r_{n}}{\mu}
		& = \frac{\int_{\ball{u_{n}}{r_{n}}} \exp(-\Phi(x)) \, \mu_{0}(\rd x)}{\int_{\ball{v_{n}}{r_{n}}} \exp(-\Phi(x)) \, \mu_{0}(\rd x)} \\
		& \leq \frac{\exp(- \Phi(u) + \varpi(r_{n}) + \varpi(d(u_{n}, u)))}{\exp(- \Phi(v) - \varpi(r_{n}) - \varpi(d(v_{n}, v)))} \frac{\mu(\ball{u_{n}}{r_{n}})}{\mu(\ball{v_{n}}{r_{n}})} \\
		& = \exp \bigl( 2 \varpi(r_{n}) + \varpi(d(u_{n}, u)) + \varpi(d(v_{n}, v)) \bigr) \frac{\exp(-\Phi(u))}{\exp(-\Phi(v))} \Ratio{u_{n}}{v_{n}}{r_{n}}{\mu_{0}} .
	\end{align*}
	The lower bound \eqref{eq:reweighted_ball_mass_ratio_2} is proved similarly.
	Now take the limit inferior of both sides of \eqref{eq:reweighted_ball_mass_ratio_1} and \eqref{eq:reweighted_ball_mass_ratio_2}, noting that $\lim_{t \to 0} \varpi(t) = 0$, to obtain \eqref{eq:reweighted_ball_mass_ratio_3}.
\end{proof}

\begin{lemma}[\ac{OM} and optimality properties of Gaussian-dominated measures]
	\label[lemma]{lemma:cts_pot_Gaussian_EAI_and_wgap}
	Let $\mu_{0}$ be a centred, non-degenerate Gaussian measure on a separable Banach space $X$ with Cameron--Martin space $E \subseteq X$ and \ac{OM} functional $I_{0}(h) \defeq \frac{1}{2} \norm{ h }_{E}^{2}$.
	Let $\Phi \colon X \to \bR$ be continuous with $Z \defeq \int_{X} \exp(-\Phi(u)) \, \mu_{0}(\rd u)$ finite and define $\mu \in \prob{X}$ by $\mu (\rd u) \defeq Z^{-1} \exp(-\Phi(u)) \, \mu_{0} (\rd u)$.
	\begin{enumerate}[label=(\alph*)]
		\item
		\label{item:cts_pot_Gaussian_OM_and_M}
		The function $I \defeq \Phi + I_{0} \colon E \to \bR$ is an \OMexhaustive\ \ac{OM} functional for $\mu$.

		\item
		\label{item:cts_pot_Gaussian_wgap}
		Every \swwgap-mode of $\mu$ lies in $E$.

		\item
		\label{item:cts_pot_Gaussian_CASIO}
		The constant \as\ is optimal in the sense of \eqref{eq:CASIO} at every $u \in E$.

		\item
		\label{item:cts_pot_Gaussian_finite_dimension}
		If $X$ has finite dimension, then \eqref{eq:CASIO} holds everywhere in $X = E$.
	\end{enumerate}
\end{lemma}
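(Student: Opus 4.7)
The plan is to transfer small-ball asymptotics from $\mu_{0}$ to $\mu$ via \Cref{lemma:reweighted_ball_mass_ratio} for parts~\ref{item:cts_pot_Gaussian_OM_and_M}--\ref{item:cts_pot_Gaussian_CASIO}, while \ref{item:cts_pot_Gaussian_finite_dimension} will fall out of \Cref{lemma:CASIO_for_ac_in_Rm}. For \ref{item:cts_pot_Gaussian_OM_and_M}, since $I_{0}$ is an \ac{OM} functional for $\mu_{0}$ on $E$ (recalled in the main text), \Cref{lemma:reweighted_ball_mass_ratio} gives, for any $u, v \in E$, that $\lim_{r \to 0} \Ratio{u}{v}{r}{\mu} = \exp(\Phi(v) - \Phi(u)) \exp(I_{0}(v) - I_{0}(u)) = \exp(I(v) - I(u))$; property~$M(\mu, E)$ is inherited from property~$M(\mu_{0}, E)$---itself a consequence of the explicit Anderson inequality \eqref{eq:explicit_Anderson_Gaussian}---by the same transfer argument, using continuity of $\Phi$ at points of $E$ and at any chosen $v \notin E$.

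For \ref{item:cts_pot_Gaussian_wgap}, I would show that no $u \in X \setminus E$ can be a \swwgap-mode by choosing the \cp\ $v = 0$ and the constant \cpas\ $v_{n} = 0$. For any \as\ $(u_{n})_{n \in \bN} \to u$ and any \ns\ $(r_{n})_{n \in \bN}$, the bound \eqref{eq:explicit_Anderson_Gaussian} yields $\Ratio{u_{n}}{0}{r_{n}}{\mu_{0}} \leq \exp(-\min_{h \in E \cap \cball{u_{n}}{r_{n}}} I_{0}(h))$, and \eqref{eq:Lambley2023Corollary3.3a} sends the right-hand side to $0$. Transferring to $\mu$ via \Cref{lemma:reweighted_ball_mass_ratio} gives $\liminf_{n \to \infty} \Ratio{u_{n}}{0}{r_{n}}{\mu} = 0 < 1$, so $u$ is not a \swwgap-mode.

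Part~\ref{item:cts_pot_Gaussian_CASIO} is the substantive step. Fix $u \in E$, an \as\ $(u_{n})_{n \in \bN} \to u$, and a \ns\ $(r_{n})_{n \in \bN}$. By \Cref{lemma:reweighted_ball_mass_ratio} it suffices to prove $\liminf_{n \to \infty} \Ratio{u}{u_{n}}{r_{n}}{\mu_{0}} \geq 1$. The Cameron--Martin shift by $u \in E$, combined with the symmetry of $\mu_{0}$ (which makes the Paley--Wiener integral $T_{u}$ have vanishing average on the symmetric set $\ball{0}{r}$) and Jensen's inequality applied to $\exp$, yields the lower bound $\mu_{0}(\ball{u}{r}) \geq \exp(-I_{0}(u)) \mu_{0}(\ball{0}{r})$, while \eqref{eq:explicit_Anderson_Gaussian} provides the upper bound $\mu_{0}(\ball{u_{n}}{r_{n}}) \leq \exp(-m_{n}) \mu_{0}(\ball{0}{r_{n}})$ with $m_{n} \defeq \min_{h \in E \cap \cball{u_{n}}{r_{n}}} I_{0}(h)$. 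Dividing gives $\Ratio{u}{u_{n}}{r_{n}}{\mu_{0}} \geq \exp(m_{n} - I_{0}(u))$, reducing everything to the inequality $\liminf_{n \to \infty} m_{n} \geq I_{0}(u)$. This is the main obstacle, and I would resolve it by contradiction: supposing $\liminf m_{n} < I_{0}(u)$, I would pick minimisers $h_{n}$ attaining $m_{n}$ along a subsequence where $I_{0}(h_{n})$ is bounded, so that $(h_{n})$ is bounded in $E$; extract an $E$-weakly convergent subsequence $h_{n_{k}} \rightharpoonup h^{\ast}$; use the compact embedding $E \hookrightarrow X$ together with $d(h_{n}, u_{n}) \leq r_{n}$ and $u_{n} \to u$ to identify $h^{\ast} = u$ in $X$ (and hence in $E$); and conclude from the $E$-weak lower semicontinuity of $I_{0}$ that $I_{0}(u) \leq \liminf_{k \to \infty} I_{0}(h_{n_{k}}) < I_{0}(u)$, the desired contradiction.

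Finally, for~\ref{item:cts_pot_Gaussian_finite_dimension}, if $\dim X < \infty$ then $E = X$ and $\mu$ has the strictly positive continuous Lebesgue density $u \mapsto Z^{-1} \exp(-\Phi(u) - I_{0}(u))$, so \eqref{eq:CASIO} holds everywhere by \Cref{lemma:CASIO_for_ac_in_Rm}\ref{item:CASIO_at_positive_continuity_points}.
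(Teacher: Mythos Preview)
Your proposal is correct and follows essentially the same approach as the paper. Both argue part~\ref{item:cts_pot_Gaussian_wgap} by the same choice $v=0$ with constant \cpas\ and \eqref{eq:explicit_Anderson_Gaussian}--\eqref{eq:Lambley2023Corollary3.3a}; for part~\ref{item:cts_pot_Gaussian_CASIO}, the paper factors $\Ratio{u}{u_{n}}{r_{n}}{\mu_{0}}$ through $0$ and invokes the \ac{OM} property of $I_{0}$ for the lower bound on $\Ratio{u}{0}{r_{n}}{\mu_{0}}$, while you obtain the equivalent inequality $\mu_{0}(\ball{u}{r})\geq\exp(-I_{0}(u))\mu_{0}(\ball{0}{r})$ directly via Cameron--Martin and Jensen---but the weak-compactness argument for $\liminf m_{n}\geq I_{0}(u)$ is identical in substance.
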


\begin{proof}
	Claim \ref{item:cts_pot_Gaussian_OM_and_M} is well known:
	see, e.g.\ \citet[Proposition~2.6]{Lambley2023}.

	For \ref{item:cts_pot_Gaussian_wgap}, let $u \in X \setminus E$.
	To see that $u$ is not a \swwgap-mode of $u$, we show that, for the \cp\ $v = 0 \in E$ and every \as\ $(u_{n})_{n \in \bN} \to u$, the constant \cpas\ $(v)_{n \in \bN} \to v$ is such that
	\[
		\liminf_{n \to \infty} \Ratio{u_{n}}{v}{r_{n}}{\mu} < 1 \text{~~for every \ns\ $(r_{n})_{n \in \bN}$}.
	\]
	Recall from \eqref{eq:Lambley2023Corollary3.3a} that, whenever $(u_{n})_{n \in \bN}$ converges in $X$ to a limit $u$ outside $E$, it follows that $\min_{h \in E \cap \cball{u}{r_{n}}} I_{0}(h) \to \infty$.
	Therefore, $u$ is not a $\swwgap$-mode because
	\begin{align*}
		\lim_{n \to \infty} \Ratio{u_{n}}{v}{r_{n}}{\mu}
		& = \frac{\exp(-\Phi(u))}{\exp(-\Phi(0))} \lim_{n \to \infty} \Ratio{u_{n}}{0}{r_{n}}{\mu_{0}} && \text{(by \eqref{eq:reweighted_ball_mass_ratio_3})} \\
		& \leq \frac{\exp(-\Phi(u))}{\exp(-\Phi(0))} \exp \biggl( - \lim_{n \to \infty} \min_{h \in E \cap \cball{u_{n}}{r_{n}}} I_{0}(h) \biggr) && \text{(by \eqref{eq:explicit_Anderson_Gaussian})} \\
		& = 0. && \text{(by \eqref{eq:Lambley2023Corollary3.3a})}
	\end{align*}

	For \ref{item:cts_pot_Gaussian_CASIO}, we first verify that, for $u \in E$ and $u_{n} \to u$ in $X$,
	\begin{equation}
		\label{eq:Gaussian_optimal_sequences_limit}
		\limsup_{n \to \infty} \Ratio{u_{n}}{0}{r_{n}}{\mu_{0}} \leq \exp\left(-\tfrac{1}{2} \norm{u}_{E}^{2} \right).
	\end{equation}
	To do this, take $h_{n} \in \argmin_{h \in E \cap \cball{u_{n}}{r_{n}}} \norm{h}_{E}^{2}$.
	If $(h_{n})_{n \in \bN}$ has no bounded sequence in $E$, then taking limits in \eqref{eq:explicit_Anderson_Gaussian} yields that $\lim_{n \to \infty} \Ratio{u_{n}}{0}{r_{n}}{\mu_{0}} = 0$, thus proving \eqref{eq:Gaussian_optimal_sequences_limit}.
	Otherwise, $(h_{n})_{n \in \bN}$ has a bounded subsequence, and thus we pass to an $E$-weakly convergent subsequence of $(h_{n})_{n \in \bN}$ and corresponding subsequence of $(r_{n})_{n \in \bN}$ without relabelling.
	As $E$ is compactly embedded in $X$, the $E$-weak limit of $(h_{n})_{n \in \bN}$ must be $u$, and thus by the weak lower semicontinuity of the $E$-norm, $\norm{u}_{E}^{2} \leq \liminf_{n \to \infty} \norm{h_{n}}^{2}_{E}$;
	applying this by taking limits in \eqref{eq:explicit_Anderson_Gaussian} proves \eqref{eq:Gaussian_optimal_sequences_limit}.

	We now see that the constant \as\ of $u$ is optimal in the sense of \eqref{eq:CASIO}, because
	\begin{align*}
		\liminf_{n \to \infty} \Ratio{u}{u_{n}}{r_{n}}{\mu}
		& = \frac{\exp(-\Phi(u))}{\exp(-\Phi(u))} \liminf_{n \to \infty} \Ratio{u}{u_{n}}{r_{n}}{\mu_{0}} && \text{(by \eqref{eq:reweighted_ball_mass_ratio_3})} \\
		& = 1 \cdot \lim_{n \to \infty} \Ratio{u}{0}{r_{n}}{\mu_{0}} \cdot \liminf_{n \to \infty} \Ratio{0}{u_{n}}{r_{n}}{\mu_{0}} \\
		& = \left. \exp(-I_{0}(u)) \middle / \limsup_{n \to \infty} \Ratio{u_{n}}{0}{r_{n}}{\mu_{0}} \right. && \text{($I_{0}$ is $\mu_{0}$'s OM functional)} \\
		& \geq 1 && \text{(by \eqref{eq:Gaussian_optimal_sequences_limit}).}
	\end{align*}

	Claim \ref{item:cts_pot_Gaussian_finite_dimension} follows from \Cref{lemma:CASIO_for_ac_in_Rm}, since the finite dimension of $X$, non-degeneracy of $\mu_{0}$, and continuity of $\Phi$ together imply that $\mu$ has a positive and continuous Lebesgue \ac{pdf}.
\end{proof}

\subsection{Supporting results for \texorpdfstring{\Cref{section:Examples}}{Section \protect\ref{section:Examples}}}

\begin{lemma}[Ratio of even and odd contributions to ball mass in \cref{example:wgap_but_not_gwap}]
	\label[lemma]{lemma:wgap_gwap_technical_step}

	Consider the measure $\mu$ from \cref{example:wgap_but_not_gwap} and the associated notation \eqref{eq:wgap--gwap_notation_1}--\eqref{eq:wgap--gwap_notation_2}.
	Suppose that $u \in C_{k}$ for some odd $k > 1$, and define, for all $S \in \Borel{\bR}$,
	\begin{equation*}
		M^{\text{odd}}(S) \defeq \mu\Biggl(S \cap \biguplus_{\text{$j$ odd}} C_{j}\Biggr),\quad
		M^{\text{even}}(S) \defeq \mu\Biggl(S \cap \biguplus_{\text{$j$ even}}  C_{j}\Biggr),\quad
		\gamma(S) \defeq \frac{M^{\text{odd}}(S)}{M^{\text{even}}(S)}.
	\end{equation*}
	Then, for all balls $\ball{u}{r} \subseteq [-\frac{1}{2}, \frac{1}{2}]$ with $r > 0$, we have  $\gamma(\ball{u}{r}) \leq 8$.
\end{lemma}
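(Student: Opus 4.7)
The plan is to bound the stated ratio $\gamma(\ball{u}{r}) = M^{\text{odd}}(\ball{u}{r}) / M^{\text{even}}(\ball{u}{r}) \leq 8$ by a careful geometric decomposition of the ball across the annuli $C_{j}$. I would first record the quantitative data of \cref{example:wgap_but_not_gwap}: by \eqref{eq:wgap--gwap_ball_masses}, $\mu(C_{j}) = \tfrac{3}{Z} \cdot 4^{-j}$, split as $\mu(A_{j}) = \tfrac{2}{Z} \cdot 4^{-j}$ on the narrow ``spike'' (width $2 \cdot 4^{-2j}$, height $4^{j}$) and $\mu(B_{j}) = \tfrac{1}{Z} \cdot 4^{-j}$ on the surrounding ``plateau'' (height $\beta_{j} \in [2^{-j}, 2^{-j+2}]$). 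These masses decay geometrically in $j$ with common ratio $1/4$, which will be the workhorse of every estimate.

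The natural strategy is to let $j_{-}, j_{+}$ be the smallest and largest indices with $\ball{u}{r} \cap C_{j} \neq \varnothing$ (a contiguous block of indices always containing $k$), and then to decompose $M^{\text{odd}}$ and $M^{\text{even}}$ into wholly-covered interior contributions ($j_{-} < j < j_{+}$) and partially-covered boundary contributions ($j \in \{j_{-}, j_{+}\}$). For interior indices, the geometric decay of $\mu(C_{j})$ telescopes, yielding $\sum_{j \text{ odd, interior}} \mu(C_{j}) \leq 4 \sum_{j \text{ even, interior}} \mu(C_{j})$ or the reverse, depending on the parities of $j_{\pm}$. For the boundary contributions at $j_{-}$ and $j_{+}$, I would control the partial ball mass via the explicit density heights $4^{j}$ on $A_{j}$ and $\beta_{j}$ on $B_{j}$, comparing the possibly-missing slice of $C_{j_{\pm}}$ to the wholly-captured portion of $C_{k}$ on the other parity side.

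The main obstacle will be the small-radius regime in which $\ball{u}{r} \subseteq C_{k}$, i.e.\ $j_{-} = j_{+} = k$. There $M^{\text{even}}(\ball{u}{r}) = 0$ while $M^{\text{odd}}(\ball{u}{r}) > 0$, so under the convention $\nicefrac{c}{0} = \infty$ the ratio is $\gamma(\ball{u}{r}) = +\infty$ and the stated bound $\gamma \leq 8$ fails literally. Reconciling this with the usage in \cref{example:wgap_but_not_gwap}---where $\gamma_{n}$ appears in the identity $\Ratio{u_{n}}{v_{n}}{r_{n}}{\mu} = (1 + \gamma_{n}) M^{\text{odd}}_{n}/((2 + \gamma_{n}) M^{\text{odd}}_{n}) \leq 9/10$, which is only a consequence of a bound of the form ``$M^{\text{even}}_{n}/M^{\text{odd}}_{n} \leq 8$''---appears to force one either to restrict the hypothesis to radii $r$ for which $\ball{u}{r} \not\subseteq C_{k}$ or to revise the direction of the inequality in the lemma. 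Once the scope is clarified, the estimate closes by combining the interior telescoping with a direct comparison of the partial boundary masses at even indices adjacent to $k$ against the home contribution from $C_{k}$, and the constant $8$ arises as the worst case when $j_{+}$ is odd and the adjacent even mass barely enters the ball.
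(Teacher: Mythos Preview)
You are right that the lemma as written is inconsistent: the ratio $\gamma$ in the statement is $M^{\text{odd}}/M^{\text{even}}$, but both the usage in \cref{example:wgap_but_not_gwap} (where $\gamma_n = M^{\text{even}}_n/M^{\text{odd}}_n$) and the paper's own proof compute and bound $M^{\text{even}}/M^{\text{odd}}$. With the corrected definition $\gamma(S) = M^{\text{even}}(S)/M^{\text{odd}}(S)$, the small-radius regime $\ball{u}{r}\subseteq C_k$ gives $\gamma=0$ and is indeed trivial, as the paper claims.

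For the corrected statement, your interior/boundary decomposition is in the right spirit but the paper organises the argument differently and more cleanly. In the main case (the ball contains some full $\interior{C_\ell}$ with $\ell$ odd), the paper does not split into interior and boundary at all: it simply moves each endpoint of $(a,b)$ to the nearest $m_j$ so as to \emph{include} any partially covered even annulus and \emph{exclude} any partially covered odd annulus. This yields an interval $(a',b')=C_{2j_1}\cup\cdots\cup C_{2j_2}$ with even index at both ends, only increasing $\gamma$, and then $\gamma((a',b'))$ is a ratio of two finite geometric sums in $4^{-2m}$ bounded by $4\cdot\tfrac{16}{15}\leq 8$. This rounding trick replaces your entire interior-plus-boundary bookkeeping in one step.

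The genuine gap in your plan is the remaining case, where the ball does not contain the interior of any odd annulus (so the rounding above would leave $M^{\text{odd}}=0$). Here the ball meets only $C_k$ together with $C_{k\pm 1}$ and possibly $C_{k\pm 2}$, and the paper's argument is not a generic ``density-height'' estimate but a specific geometric comparison: because $u\in C_k$ and $m_{k-1}$ is the midpoint between $b_k$ and $a_{k-1}$, one shows $\Leb{1}(\ball{u}{r}\cap A_{k-1})\leq 4^2\,\Leb{1}(\ball{u}{r}\cap A_k)$ and $\Leb{1}(\ball{u}{r}\cap B_{k-1})\leq \Leb{1}(\ball{u}{r}\cap B_k)$ (with the analogous inequalities on the $k{+}1$ side). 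Combined with the height ratios $4^{k-1}/4^k=1/4$ and $\beta_{k-1}\leq 8\beta_k$, this gives $M^{\text{even}}\leq 8\,M^{\text{odd}}$. Your sketch does not identify this midpoint/length comparison, and the informal remark that ``the constant $8$ arises when $j_+$ is odd and the adjacent even mass barely enters the ball'' points in the wrong direction: the worst case for $\gamma=M^{\text{even}}/M^{\text{odd}}$ is when even mass is \emph{large} relative to odd, i.e., when the ball reaches well into $C_{k\pm 1}$ (even) while capturing only part of $C_k$.
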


\begin{proof}
	Let $(a, b) \defeq \ball{u}{r}$.
	We will use a simple algorithm to find endpoints $a'$ and $b'$ such that
	\[
		\gamma\bigl(\ball{u}{r}\bigr) = \gamma\bigl((a, b)\bigr) \leq \gamma\bigl((a', b')\bigr) \leq 8,
	\]
	with the aim of choosing $a'$ and $b'$ such that $[a', b'] = C_{2 j_{1}} \cup \cdots \cup C_{2j_{2}} \text{~with $j_{1} < j_{2}$}$;
	in this case the claim $\gamma([a', b']) \leq 8$ readily holds.
	The idea, stated in Case~1, is to move $a' < a$ if $a \in \closure{C_{j}}$, $j$ even, and to move $a' > a$ if $a \in \interior{C_{j}}$, $j$ odd;
	similar steps are used for $b'$.
	This algorithm fails if $\ball{u}{r}$ does not contain the interior of any $C_{j}$, $j$ odd, because the endpoints it would select satisfy $M^{\text{odd}}((a', b')) = 0$, making $\gamma((a', b'))$ ill-defined;
	we handle this separately in Case~2.

	\proofpartorcase{Case 1: $\interior{C_{\ell}} \subseteq \ball{u}{r}$ for some odd $\ell \in \bN$.}
	Set
	\begin{equation*}
		a' = \begin{cases}
			\inf C_{k} = m_{k},  & \text{if $a \in \closure{C_{k}}$, $k$ even,} \\
			\sup C_{k} = m_{k-1}, & \text{if $a \in \interior{C_{k}}$, $k$ odd,}
		\end{cases}\qquad
		b' = \begin{cases}
			\sup C_{k} = m_{k - 1}, & \text{if $b \in \closure{C_{k}}$, $k$ even,} \\
			\inf C_{k} = m_{k}, & \text{if $b \in \interior{C_{k}}$, $k$ odd.}
		\end{cases}
	\end{equation*}
	These choices ensure that $0 < M^{\text{odd}}((a', b')) \leq M^{\text{odd}}((a, b))$ and $M^{\text{even}}((a', b')) \geq M^{\text{even}}((a, b))$;
	moreover,
	by construction, $(a', b') = C_{2 j_{1}}  \cup \cdots \cup C_{2 j_{2}}$ for some $j_{1},j_{2} \in \bN$ with $j_{2} > j_{1}$.
	Hence,
	\[
	\gamma\bigl((a, b) \bigr)
	\leq
	\gamma\bigl((a', b')\bigr)
	=
	\frac{\sum_{m=j_{1}}^{j_{2}} 3 \cdot 4^{-2m}}{\sum_{m=j_{1}}^{j_{2} - 1} 3 \cdot 4^{-(2m+1)}}
	=
	4 \cdot \frac{1 - 16^{-(j_{2}-j_{1}+1)}}{1 - 16^{-(j_{2}-j_{1})}}
	\leq
	4 \cdot \frac{16}{15}
	\leq
	8.
	\]

	\proofpartorcase{Case 2: $\ball{u}{r}$ does not contain the interior of any $C_{j}$, $j$ odd.}
	By construction, $u \in C_{k} = (m_{k-1}, m_{k}]$ but $\ball{u}{r}$ does not contain $\interior{C_{k}}$, so at least one of $a$ and $ b$ must lie in $C_{k}$.
	If both $a$ and $b$ lie in $C_{k}$, then $M^{\text{even}}((a, b)) = 0$, from which the result trivially follows, so two cases remain.

	\proofpartorcase{Case 2a: $a \in C_{k}$ but $b \notin C_{k}$.}
	Since $\ball{u}{r}$ does not contain the interior of any $C_{j}$, $j$ odd, we must have that $b < m_{k-3}$, i.e., $\ball{u}{r}$ intersects at most $C_{k}$, $C_{k-1}$, and $C_{k-2}$.
	Thus
	\begin{align}
		M^{\text{odd}}\bigl(\ball{u}{r}\bigr) &\geq M^{\text{odd}}\bigl(\ball{u}{r} \cap C_{k} \bigr) = 4^{k} \Leb{1}\bigl(\ball{u}{r} \cap A_{k}\bigr) + \beta_{k} \Leb{1}\bigl(\ball{u}{r} \cap B_{k}\bigr), \label{eq:wgap--gwap_technical_F} \\
		M^{\text{even}}\bigl(\ball{u}{r}\bigr) &= M^{\text{even}}\bigl(\ball{u}{r} \cap C_{k-1} \bigr) = 4^{k-1}\Leb{1}\bigl(\ball{u}{r} \cap A_{k-1}\bigr) +  \beta_{k-1} \Leb{1}\bigl(\ball{u}{r} \cap B_{k-1}\bigr). \label{eq:wgap--gwap_technical_G}
	\end{align}
	Next we measure the intersections of $\ball{u}{r}$ with the sets $A_{k} = [a_{k}, b_{k}]$ and $A_{k-1} = [a_{k-1}, b_{k-1}]$;
	recall that the length of $A_{k-1}$ is $4^{2}$-times the length of $A_{k}$.
	First suppose that
	$b \leq a_{k-1} + (b_{k} - a_{k})$;
	then, since $u \leq m_{k - 1}$, and $m_{k-1}$ is the midpoint between the right endpoint $b_{k}$ of $A_{k}$ and the left endpoint $a_{k-1}$ of $A_{k-1}$, we must have
	$\Leb{1}(\ball{u}{r} \cap A_{k-1}) \leq \Leb{1}(\ball{u}{r} \cap A_{k})$.
	Otherwise, $\Leb{1}(\ball{u}{r} \cap A_{k-1}) \leq \Leb{1}(A_{k-1}) = 4^{2} \Leb{1}(A_{k}) = 4^{2}\Leb{1}(\ball{u}{r} \cap A_{k})$.

	Finally we measure the intersections of $\ball{u}{r}$ with $B_{k} = (m_{k}, a_{k}) \cup (b_{k}, m_{k-1}]$ and $B_{k-1} = (m_{k-1}, a_{k-1}) \cup (b_{k-1}, m_{k-2}]$.
	It follows from the facts that $u \in (b_{k}, m_{k-1}]$ and $b_{k} - a_{k} \leq b_{k-1} - a_{k-1}$ that $\Leb{1}(\ball{u}{r} \cap B_{k-1}) \leq \Leb{1}(\ball{u}{r} \cap B_{k})$.

	Using these facts, and that $\beta_{k} \leq 8 \beta_{k-1}$ for $k > 1$, \eqref{eq:wgap--gwap_technical_F} and \eqref{eq:wgap--gwap_technical_G} yield
	\begin{align*}
		\gamma\bigl(\ball{u}{r} \bigr)
		 &\leq \frac{4^{k-1} \cdot 4^{2} \Leb{1}\bigl(\ball{u}{r} \cap A_{k}\bigr) + 8\beta_{k} \Leb{1}\bigl(\ball{u}{r} \cap B_{k}\bigr)}{4^{k} \Leb{1}\bigl(\ball{u}{r} \cap A_{k}\bigr) + \beta_{k} \Leb{1}\bigl(\ball{u}{r} \cap B_{k}\bigr)} \leq 8.
	\end{align*}

	\proofpartorcase{Case 2b: $a \notin C_{k}$ but $b \in C_{k}$.}
	The argument is similar to the previous case:
	we must have that $a > m_{k+2}$, i.e., $\ball{u}{r}$ intersects at most $C_{k}$, $C_{k+1}$, and $C_{k+2}$;
	thus
	\begin{align*}
		M^{\text{odd}}\bigl(\ball{u}{r}\bigr) &\geq M^{\text{odd}}\bigl(\ball{u}{r} \cap C_{k} \bigr) = 4^{k} \Leb{1}\bigl(\ball{u}{r} \cap A_{k}\bigr) + \beta_{k} \Leb{1}\bigl(\ball{u}{r} \cap B_{k}\bigr),\\
		M^{\text{even}}\bigl(\ball{u}{r}\bigr) &= M^{\text{even}}\bigl(\ball{u}{r} \cap C_{k+1} \bigr) = 4^{k+1}\Leb{1}\bigl(\ball{u}{r} \cap A_{k+1}\bigr) +  \beta_{k+1} \Leb{1}\bigl(\ball{u}{r} \cap B_{k+1}\bigr).
	\end{align*}
	This time, as the length of $A_{k+1}$ is less than that of $A_{k}$, we have $\Leb{1}(\ball{u}{r} \cap A_{k+1}) \leq \Leb{1}(\ball{u}{r} \cap A_{k})$ and $\Leb{1}(\ball{u}{r} \cap B_{k+1}) \leq 2 \Leb{1}(\ball{u}{r} \cap B_{k})$.
	Using these facts, and proceeding as in the previous case, we see that $\gamma(\ball{u}{r}) \leq 8$.
\end{proof}

\begin{lemma}
	\label[lemma]{lem:w_but_not_s_JK_properties}

	In the setting of \cref{notation:Gaussian_s_ps_w}:

	\begin{enumerate}[label=(\alph*)]
		\item
		for any continuous potential $\Phi \colon X \to \bR$ of the form $\Phi(x) = -\frac{x_{1}^{2}}{2} - \tilde{\Phi}(x_{2}, x_{3}, \dots)$, and any $\lambda \in \bR$,
		\begin{equation*}
			\int_{\ball{\lambda e_{1}}{r}} \exp\bigl(-\Phi(x) \bigr) \,\mu_{0}(\rd x) = \int_{\ball{0}{r}} \exp\bigl(-\Phi(x) \bigr) \,\mu_{0}(\rd x).
		\end{equation*}

		\item
		for the potential $\tau_{n}$,
		\begin{equation} \label{eq:Gaussian_radius-r_maximiser}
			\int_{\ball{x}{r}} \exp\bigl(-\tau_{n}(x) \bigr)  \,\mu_{0}(\rd x) \leq \int_{\ball{0}{r}} \exp\bigl(-\tau_{n}(x)\bigr) \,\mu_{0}(\rd x) \text{~~for any $x \in X$.}
		\end{equation}

	\end{enumerate}
\end{lemma}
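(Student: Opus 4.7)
For part (a), the plan is to apply the Cameron--Martin theorem to the Gaussian $\mu_0$ with shift $\lambda e_1 \in E$. Note that $\lambda e_1 \in E$ with $\norm{\lambda e_1}_E^2 = \lambda^2$ by \eqref{eq:Gaussian_CM_norm}, and the associated $\mu_0$-measurable linear functional is $\xi_{\lambda e_1}(y) = \lambda y_1$ (since $C^{-1}(\lambda e_1) = \lambda e_1$). Cameron--Martin then gives
\[
\int g(y - \lambda e_1)\,\mu_0(dy) = \int g(y)\exp\bigl(-\lambda y_1 - \tfrac{\lambda^2}{2}\bigr)\,\mu_0(dy)
\]
for any integrable $g$. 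I would apply this with $g(y) = \chi_{\ball{0}{r}}(y)\exp(-\Phi(y + \lambda e_1))$. The special form $\Phi(x) = -x_1^2/2 - \tilde\Phi(\tilde x)$ gives the exact algebraic identity $\Phi(y + \lambda e_1) = \Phi(y) - \lambda y_1 - \lambda^2/2$, so all exponential factors cancel and one obtains $\int_{\ball{\lambda e_1}{r}}\exp(-\Phi)\,d\mu_0 = \int_{\ball{0}{r}}\exp(-\Phi)\,d\mu_0$. (Heuristically, the weight $\exp(-\Phi)$ precisely cancels the Gaussian factor in the $e_1$-direction via the fictitious-density picture \eqref{eq:Gaussian_fictitious_density}.)

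For part (b), I would first use part (a) (or its evident variant in which one shifts by $\lambda e_1$ while the $\tilde x$-component is kept fixed; the proof is identical) to reduce to the case $x = (0,\tilde x)$. Writing $\mu_0 = N(0,1)\otimes \tilde\mu_0$ with $\tilde\mu_0 \defeq \bigotimes_{k \geq 2} N(0,k^{-2})$ and $\tilde\tau_n(\tilde y) \defeq \min\bigl(1, \tfrac12\sum_{k=2}^{K_n} k^2 y_k^2\bigr)$, the weight $\exp(-\tau_n(y)) = \exp(y_1^2/2)\exp(\tilde\tau_n(\tilde y))$ cancels the $N(0,1)$-factor in the $e_1$-direction, so Fubini yields
\[
\int_{\ball{(0,\tilde x)}{r}} \exp(-\tau_n(y))\,\mu_0(dy)
= \int_{\tilde X} F\bigl(\norm{\tilde y - \tilde x}\bigr)\,\tilde\nu(d\tilde y),
\]
where $F(t) \defeq \tfrac{2}{\sqrt{2\pi}}\sqrt{(r^2-t^2)_+}$ and $\tilde\nu(d\tilde y) \defeq \exp(\tilde\tau_n(\tilde y))\tilde\mu_0(d\tilde y)$. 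It therefore suffices to show that $\tilde x \mapsto \int F(\norm{\tilde y - \tilde x})\,\tilde\nu(d\tilde y)$ is maximised at $\tilde x = 0$, which I would prove by invoking Anderson's inequality.

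The measure $\tilde\nu$ is symmetric since both $\tilde\tau_n$ and $\tilde\mu_0$ depend on each coordinate $y_k$ only through $y_k^2$; and it is log-concave, as it factors into a Gaussian $\bigotimes_{k > K_n} N(0,k^{-2})$ on the tail and a finite-dimensional symmetric measure on $(y_2,\dots,y_{K_n})$ whose log-density w.r.t.\ Lebesgue is (up to an additive constant) $\min(0, 1-s) - \tfrac12\sum_{k=2}^{K_n} k^2 y_k^2$ with $s = \tfrac12\sum_{k=2}^{K_n} k^2 y_k^2$: the map $t \mapsto \min(0,1-t)$ is concave and non-increasing, so its composition with the convex $s(\tilde y)$ is concave, and the quadratic subtracted is also concave. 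Since $G(\tilde y) \defeq F(\norm{\tilde y})$ is symmetric and quasi-concave (its superlevel sets are centred balls), the layer-cake representation $\int G(\tilde y - \tilde x)\,\tilde\nu(d\tilde y) = \int_0^\infty \tilde\nu(\tilde x + \{G \geq t\})\,dt$, combined with the Anderson--Borell inequality $\tilde\nu(\tilde x + K) \leq \tilde\nu(K)$ for symmetric convex $K$, gives the required bound. The main obstacle will be justifying Anderson's inequality in this infinite-dimensional, non-Gaussian setting; I would handle it either by citing Borell's extension to symmetric log-concave measures on separable Banach spaces, or by a finite-dimensional approximation argument in which one considers the marginals of $\tilde\nu$ on $\bR^{m-1}$ for $m > K_n$ (which are classical symmetric log-concave measures to which the standard Anderson theorem applies) and passes to the limit by monotone convergence.
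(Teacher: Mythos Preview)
Your proof of part~(a) is essentially the paper's: both apply the Cameron--Martin formula with shift $\lambda e_1$ and use the algebraic identity $\Phi(y+\lambda e_1)=\Phi(y)-\lambda y_1-\tfrac{\lambda^2}{2}$ to obtain exact cancellation with the Cameron--Martin density.

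For part~(b) your route is genuinely different. The paper argues via a pointwise bound: for the ball centre $h\in E$ it asserts that
\[
-\tau_n(x+h)\;\le\;-\tau_n(x)+I_0(h)+\langle C^{-1}h,x\rangle_{\ell^2}
\]
for the integration variable $x$, which together with the Cameron--Martin density would give the result in one line. You instead integrate out the $e_1$-coordinate (where the weight $e^{y_1^2/2}$ exactly cancels the $N(0,1)$ factor) and reduce to an Anderson-type inequality for the symmetric measure $\tilde\nu=e^{\tilde\tau_n}\tilde\mu_0$ on the orthogonal complement. Your route is in fact more robust: the paper's displayed pointwise inequality fails when the $\min$ is saturated at $x$ but not at $x+h$ (e.g.\ $K_n=2$, $x=e_2$, $h=-e_2$ gives left side $0$ and right side $-1$), so that argument as written has a gap, whereas the log-concavity of $\tilde\nu$ is a global structural property that sidesteps this entirely.

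One small correction: the log-density of the finite-dimensional factor of $\tilde\nu$ with respect to Lebesgue measure is, up to a constant, $\min(1,s)-s=\min(0,\,1-s)$, not $\min(0,\,1-s)-s$ as you wrote; the Gaussian quadratic has already been absorbed. This does not affect your conclusion, since both expressions are concave in~$\tilde y$. Your handling of the infinite-dimensional Anderson step---either citing Borell's extension to symmetric log-concave measures on locally convex spaces, or exploiting the factorisation $\tilde\nu=\nu_1\otimes\nu_2$ with $\nu_1$ finite-dimensional log-concave and $\nu_2$ Gaussian to reduce to the classical case---is appropriate.
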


\begin{proof}
	Let $\mu_{h} \defeq \mu_{0}(\quark + h)$ denote the shift of $\mu_{0}$ by $h \in X$, and recall the Cameron--Martin theorem asserts that $\mu_{h}$ and $\mu_{0}$ are equivalent if and only if $h \in E$, with the density of $\mu_{h}$ with respect to $\mu_{0}$ given by the Cameron--Martin formula \citep[Corollary~2.4.3]{Bogachev1998Gaussian}:
	\begin{equation*}
		\mu_{h}(\rd x) = \exp\left(-I_{0}(h) - \innerprod{C^{-1}h}{x}_{\ell^{2}}\right) \, \mu_{0}(\rd x).
	\end{equation*}
	For the first part,
	we use this formula with $h = \lambda e_{1}$ to obtain that
	\begin{align*}
		\int_{\ball{\lambda e_{1}}{r}} \exp\bigl(-\Phi(x) \bigr) \, \mu_{0}(\rd x) &=\int_{\ball{0}{r}} \exp\bigl(-\Phi(x + \lambda e_{1})\bigr)\,\mu_{\lambda e_{1}}(\rd x) \nonumber\\
		&= \int_{\ball{0}{r}} \exp\bigl(-\Phi(x + \lambda e_{1})\bigr) \exp\left(-I_{0}(\lambda e_{1}) - \innerprod{C^{-1} (\lambda e_{1})}{x}_{\ell^{2}}\right) \,\mu_{0}(\rd x) \\
		&= \int_{\ball{0}{r}} \exp\left(\frac{1}{2}\bigl(\lambda^{2} + 2\lambda x_{1}\bigr) - \Phi(x) \right) \exp\left( - \frac{\lambda^{2}}{2} - \lambda x_{1} \right) \,\mu_{0}(\rd x)\\
		&=
		\int_{\ball{0}{r}} \exp \bigl( -\Phi(x) \bigr) \, \mu_{0}(\rd x).
	\end{align*}
	For the second part, it suffices to prove \eqref{eq:Gaussian_radius-r_maximiser} for all $x \in E$ as $E$ is dense in $X$.
	Noting that
	\begin{align*}
		\tau_{n}(x + h) &= \frac{(x_{1} + h_{1})^{2}}{2} + \min\left(1, \sum_{k = 2}^{K_{n}} \frac{k^{2} (x_{k} + h_{k})^{2}}{2}\right) \\
		&\leq \frac{x_{1}^{2}}{2} + \min\left(1, \sum_{k = 2}^{K_{n}} \frac{k^{2} x_{k}^{2}}{2} \right) + \sum_{k = 1}^{K_{n}} \left(\frac{k^{2} h_{k}^{2}}{2} + k^{2} h_{k} x_{k}\right) \\
		&\leq \tau_{n}(x) + I_{0}(h) + \innerprod{C^{-1} h}{x}_{\ell^{2}}.
	\end{align*}
	Using this with the Cameron--Martin formula proves the claim.
\end{proof}

\begin{lemma}
	\label[lemma]{lem:wag_wga_properties}
	With the notation introduced in \Cref{ex:no_optimal_approximating_sequence}, we have the following results:
	\begin{enumerate}[label=(\alph*)]
		\item
		\label{item:wag_wga_counterexample_maximizer_ball_probability}
		\emph{(Maximisers of the radius-$r$ ball mass under $\mu_{i}$ exist.)}
		For any $r \leq R_{i}$, a maximiser of the radius-$r$ ball mass under $\mu_{i}$ exists and is given by
		\begin{equation*}
			u_{i}^{\star}(r)
			\defeq
			\left.
			\begin{cases}
				\alpha_{i} e_{j}, & \text{if } \theta^{-1}R_{j+1} \leq r < \theta^{-1} R_{j} \text{ for some $j > i$,}
				\\
				\alpha_{i} e_{i}, & \text{otherwise,}
			\end{cases}
			\right\}
			\in
			\argmax_{u \in X} \mu_{i}(\ball{u}{r}).
		\end{equation*}

		\item
		\label{item:improving_approximating_sequences_alignment}
		\emph{(Approximating sequences centred away from $\alpha_{i} e_{j}$ can always be matched by those centred only at $\alpha_{i} e_{j}$.)}
		For any \ns\ $(r_{n})_{n \in \bN}$ and any \as\ $(u_{n})_{n \in \bN} \to u = 0$, there exists a further sequence $(u_{n}')_{n \in \bN} \to u$ with $(u_{n}')_{n \in \bN} \subseteq \set{\alpha_{i} e_{j}}{j \geq i,~i \in \bN}$ and
		\begin{equation*}
			\orcdf{u_{n}}{r_{n}} \leq \orcdf{u_{n}'}{r_{n}} \text{~~for all large enough $n$.}
		\end{equation*}

		\item
		\label{item:improving_approximating_sequences_outwards}
		\emph{(Any approximating sequence centred at $\alpha_{i} e_{j}$ can be strictly improved.)}
		For any \ns\ $(r_{n})_{n \in \bN}$ and any \as\ $(u_{n}')_{n \in \bN} \to u$ with $(u_{n}')_{n \in \bN} \subseteq \set{\alpha_{i} e_{j}}{j \geq i,~i \in \bN}$, there exists a further sequence $(v_{n})_{n \in \bN} \to u$ with $(v_{n})_{n \in \bN} \subseteq \set{\alpha_{i} e_{j}}{j \geq i,~i \in \bN}$ and
		\begin{equation*}
			\orcdf{u_{n}'}{r_{n}} \leq \zeta \orcdf{v_{n}}{r_{n}} \text{~~for all large enough $n$.}
		\end{equation*}
	\end{enumerate}
\end{lemma}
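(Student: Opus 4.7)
The proof of part~\ref{item:wag_wga_counterexample_maximizer_ball_probability} is a direct geometric computation, using the structure of $\mu_{i}$ as a sum of one-dimensional uniform segments on disjoint supports $B_{i,j}$, $j \geq i$. Since the centres $\alpha_{i}e_{j}$ of these supports are pairwise at distance $\sqrt{2}\alpha_{i} > 3R_{i}$ (as recorded in \cref{ex:no_optimal_approximating_sequence}), any ball of radius $r \leq R_{i}$ intersects at most one of the $B_{i,j}$, reducing the problem to evaluating $\sigma_{j}(\ball{u - \alpha_{i}e_{j}}{r})$ for a single $j$. Because $\sigma_{j}$ is uniform on a segment of length $2R_{j}$ symmetric about the origin along the $e_{j}$-axis, projecting onto that axis shows that this mass is maximised over $u$ at $u = \alpha_{i}e_{j}$, yielding $\min(r, R_{j})\,\theta^{j}/R_{j}$. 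I would then optimise over $j \geq i$ by distinguishing the linear regime ($r < R_{j}$, mass $2r\theta^{-j}$ increasing in $j$) from the plateau regime ($r \geq R_{j}$, mass $\theta^{j}$ decreasing in $j$); comparing adjacent indices pins the crossover at $r = \theta^{-1}R_{j+1}$, matching the stated formula for $u_{i}^{\star}(r)$.

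For part~\ref{item:improving_approximating_sequences_alignment}, the plan is to identify, for each large $n$, an annulus index $i_{n}$ such that $\ball{u_{n}}{r_{n}}$ meets (or is nearest to) $A_{i_{n}}$, together with an axis index $j_{n}$ such that $\ball{u_{n}}{r_{n}}$ intersects $B_{i_{n},j_{n}}$, and to set $u_{n}' \defeq \alpha_{i_{n}}e_{j_{n}}$; if no intersection occurs, the mass is zero and any convenient choice with $\alpha_{i_{n}} \to 0$ works. Since $u_{n} \to 0$ forces $i_{n} \to \infty$, we automatically have $u_{n}' \to 0$. For $r_{n}$ sufficiently small that $\ball{u_{n}}{r_{n}}$ is confined to $A_{i_{n}}$, the decomposition $\mu = Z^{-1}\sum_{i}\zeta^{i}\mu_{i}$ collapses to the single term $Z^{-1}\zeta^{i_{n}}\mu_{i_{n}}(\ball{u_{n}}{r_{n}})$, and the one-dimensional optimisation behind part~\ref{item:wag_wga_counterexample_maximizer_ball_probability} yields $\mu_{i_{n}}(\ball{u_{n}}{r_{n}}) \leq \mu_{i_{n}}(\ball{u_{n}'}{r_{n}})$. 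For part~\ref{item:improving_approximating_sequences_outwards}, I would take $v_{n} \defeq \alpha_{i_{n}-1}e_{j_{n}}$, which is still a valid centre (since $j_{n} \geq i_{n} > i_{n} - 1$ for large $n$) and again tends to $0$. By construction $v_{n}$ is the translate of $u_{n}'$ along $e_{j_{n}}$ by exactly the separation between annuli $A_{i_{n}-1}$ and $A_{i_{n}}$; the orthogonality of the axes $e_{j}$, combined with $r_{n} \leq R_{i_{n}-1}$, forces only the $j = j_{n}$ term to contribute, giving $\mu_{i_{n}-1}(\ball{v_{n}}{r_{n}}) = \sigma_{j_{n}}(\ball{0}{r_{n}}) = \mu_{i_{n}}(\ball{u_{n}'}{r_{n}})$. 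The advertised factor $\zeta^{-1}$ then emerges directly from the weight ratio $\zeta^{i_{n}-1}/\zeta^{i_{n}}$.

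The principal technical obstacle is to accommodate the regime in which $r_{n}$ is not small relative to the annulus scale $R_{i_{n}}$ (or $R_{i_{n}-1}$ for part~\ref{item:improving_approximating_sequences_outwards}), so that $\ball{u_{n}}{r_{n}}$ or $\ball{v_{n}}{r_{n}}$ straddles several annuli and the clean single-annulus decomposition above breaks down. To handle this case I would need to estimate the full double sum $\sum_{i,j}\zeta^{i}\sigma_{j}(\ball{\,\cdot\, - \alpha_{i}e_{j}}{r_{n}})$ directly, exploiting both the geometric decay in $i$ and the mutual orthogonality of the axes $e_{j}$ to control cross-annulus contributions. A natural strategy is to choose $i_{n}$ jointly with the scale of $r_{n}$, possibly after extracting a subsequence, so that either a single annulus dominates the total mass or else contributions from neighbouring annuli can be paired off under the shift $\alpha_{i} \mapsto \alpha_{i-1}$ to realise the $\zeta^{-1}$ gain uniformly in that pairing.
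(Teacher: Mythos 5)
Your treatment of part~\ref{item:wag_wga_counterexample_maximizer_ball_probability} matches the paper's: reduce to centres $\alpha_i e_j$ via the separation of the $B_{i,j}$, compute $\mu_i(\ball{\alpha_i e_j}{r}) = 2\theta^{-j}\min(r,R_j)$, and compare adjacent $j$. Your choices of $u_n'$ and $v_n$ in parts \ref{item:improving_approximating_sequences_alignment} and \ref{item:improving_approximating_sequences_outwards} (snap to the nearest $\alpha_{i}e_{j}$, then decrement the annulus index to gain the factor $\zeta^{-1}$) are also the right ones, and your single-annulus computations in the "confined" regime are correct.

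However, the case you defer to the final paragraph — $r_n$ not small relative to the relevant annulus scale, so that $\ball{u_n}{r_n}$ reaches across many annuli — is precisely the crux of part~\ref{item:improving_approximating_sequences_alignment}, and your proposed strategy for it does not constitute a proof. "Pairing off contributions from neighbouring annuli under the shift $\alpha_i \mapsto \alpha_{i-1}$" is not carried out, and "possibly after extracting a subsequence" is inadmissible: the conclusion must hold for all large $n$ along the original sequence. The paper resolves this case differently and quantitatively. It sets $j_n \defeq \max\set{k}{r_n \le R_k}$ (an index tied to the \emph{radius}, not to the position of $u_n$) and splits on whether $\norm{u_n} < \alpha_{j_n} - 2R_{j_n}$. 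In that sub-case the ball $\ball{u_n}{r_n}$ misses every annulus $A_k$ with $k \le j_n$, so its mass is at most the \emph{total} mass $(\zeta\theta)^{j_n}$ of all deeper annuli; meanwhile the single candidate centre $u_n' = \alpha_{j_n}e_{j_n}$ already satisfies $\mu(\ball{u_n'}{r_n}) \ge \tfrac{r_n}{R_{j_n}}\mu(B_{j_n,j_n}) > \theta^2 Z^{-1}(\zeta\theta)^{j_n}$, and the inequality $\theta^2 Z^{-1} = (1-\theta)(1-\zeta\theta)\tfrac{\theta}{\zeta} > \tfrac{\theta}{4\zeta} \ge 1$ closes the argument. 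This is where the standing hypothesis $\zeta \le \nicefrac{\theta}{4}$ actually enters; your outline never uses it, which is a sign the hard case has not been engaged. In the complementary sub-case $\norm{u_n} \ge \alpha_{j_n} - 2R_{j_n}$ the separation $\dist(A_{k-1},A_k) > 2R_{k-1}$ forces $\ball{u_n}{r_n}$ to meet at most the one annulus $A_{i_n}$, and part~\ref{item:wag_wga_counterexample_maximizer_ball_probability} applies — this is the regime your argument does cover. You would need to supply the first sub-case (or an equivalent global estimate) for the proof to be complete.
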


\begin{proof}
	\begin{enumerate}[label=(\alph*)]
		\item
	First observe from that any ball $\ball{u}{r}$ can intersect at most one ball $B_{i, j}$, $j \geq i$.
	Either $\ball{u}{r}$ intersects no ball at all, or we may assume that $\ball{u}{r}$ intersects $B_{i,j}$, in which case it suffices to consider the case $u = \alpha_{i} e_{j}$ since $\mu_{i}(\ball{u}{r}) \leq \mu_{i}(\ball{\alpha_{i} e_{j}}{r})$.
	Thus, at least one of the centres $(\alpha_{i} e_{j})_{j \geq i}$ must attain the supremal radius-$r$ ball mass $\sMass{\mu_{i}}{r}$.
	By construction, $\mu_{i}(\ball{\alpha_{i} e_{j}}{r}) = 2 \theta^{-j} \min(r, R_{j})$, so
	\begin{equation*}
		\mu_{i}(\ball{\alpha_{i} e_{j}}{r})
		\geq
		\mu_{i}(\ball{\alpha_{i} e_{j+1}}{r})
		\qquad \Longleftrightarrow \qquad
		r
		\geq
		\theta^{-1} R_{j+1}.
	\end{equation*}
	If $\theta^{-1} R_{j + 1} \leq r < \theta^{-1} R_{j}$ for some $j > i$, then evidently the centre $\alpha_{i} e_{j}$ strictly maximises the radius-$r$ ball mass over all choices $(\alpha_{i} e_{j})_{j \geq i}$;
	otherwise if $r \geq \theta^{-1} R_{j}$ then the maximising centre is $\alpha_{i} e_{i}$, proving the claim.

		\item

	First note that
	\begin{align}
		\label{equ:measure_of_specific_balls}
		\mu(B_{i,j})
		&=
		Z^{-1} \zeta^{i} \mu_{i}(B_{i,j})
		=
		Z^{-1} \zeta^{i} \sigma_{j}(X)
		=
		Z^{-1} \zeta^{i} \theta^{j},
		\\[1ex]
		\label{equ:measure_of_annulus}
		\mu(A_{i})
		&=
		\mu \Biggl( \bigcup_{j \geq i} B_{i, j} \Biggr)
		=
		Z^{-1} \, \frac{(\zeta\theta)^{i}}{1-\theta}
		=
		\frac{1-\zeta\theta}{\zeta\theta}\, (\zeta\theta)^{i}
		=
		(\zeta\theta)^{i-1} - (\zeta\theta)^{i},
		\\
		\label{equ:measure_of_final_annuli}
		\mu \Biggl( \bigcup_{k \geq i} A_{k} \Biggr)
		&=
		(\zeta\theta)^{i-1}.
	\end{align}
	Moreover, for any $u \in X$ and $i,j\in\bN$, $\sigma_{j}(\ball{u}{r}) \in \bigO(r)$ and $\mu_{i}(\ball{u}{r}) \in \bigO(r)$ as $r \to 0$.

	For the finitely many $n \in \bN$ such that $r_{n} > R_{1}$ or $\norm{u_{n}} > \alpha_{1}$, we may set $u_{n}' = u_{n}$, so assume without loss of generality that $r_{n} \leq R_{1}$ and $\norm{u_{n}} \leq \alpha_{1}$ for all $n \in \bN$.
	Then define
	\begin{align*}
		u_{n}'
		&\defeq
		\begin{cases}
			\alpha_{j_{n}} e_{j_{n}}, & \text{if } \norm{u_{n}} < \alpha_{j_{n}} - 2 R_{j_{n}},\\
			u_{i_{n}}^{\star}(r_{n}), & \text{otherwise,}
		\end{cases}\\
		i_{n}
		&\defeq \min \Bigl( \argmin_{k \in \bN} \dist{u_{n}}{A_{k}}\Bigr),
		\\
		j_{n}
		&\defeq
		\max \Set{k \in \bN}{r_{n} \leq R_{k}}.
	\end{align*}
	
	We claim that the sequence $(u_{n}')_{n \in \bN} \to 0$ satisfies the required properties.
	To prove that $\mu(\ball{u_{n}}{r_{n}}) \leq \mu(\ball{u_{n}'}{r_{n}})$, we examine the two cases in the definition of $u_{n}'$ separately.
	First let $\norm{u_{n}} < \alpha_{j_{n}} - 2 R_{j_{n}}$.
	Then $\ball{u_{n}}{r_{n}}$ intersects none of the annuli $A_{k}$, $k \leq j_{n}$, and so
	\begin{equation*}
		\mu(\ball{u_{n}}{r_{n}})
		\leq
		\mu \Biggl( \bigcup_{j \geq j_{n}+1} A_{j} \Biggr)
		=
		(\zeta\theta)^{j_{n}}
	\end{equation*}
	by \eqref{equ:measure_of_final_annuli}.
	On the other hand, using \eqref{equ:measure_of_specific_balls} and the fact that $\theta^{2} = R_{j_{n} + 1}/R_{j_{n}} < r_{n} / R_{j_{n}}$,
	\begin{equation*}
		\mu(\ball{u_{n}'}{r_{n}})
		\geq
		\frac{r_{n}}{R_{j_{n}}} \mu( B_{j_{n},j_{n}} )
		>
		\theta^{2} \cdot Z^{-1} (\zeta \theta)^{j_{n}}.
	\end{equation*}
	With this and the fact that $\theta, \zeta \leq \tfrac{1}{2}$, we obtain the desired inequality from $\theta^{2} Z^{-1} = (1 - \theta)(1 - \zeta\theta) \frac{\theta}{\zeta} > \frac{\theta}{4\zeta} \geq 1$.
	Now suppose that $\norm{u_{n}} \geq \alpha_{j_{n}} - 2 R_{j_{n}}$.
	Using the separation properties of the annuli $A_{i}$ and balls $B_{i,j}$, since $r_{n} \leq R_{j_{n}}$, the ball $\ball{u_{n}}{r_{n}}$ could only possibly intersect one of the annuli $A_{k}$, $1 \leq k \leq j_{n}$, which, by the definition of $i_{n}$, has to be $A_{i_{n}}$ (or none at all, in which case there is nothing to show).
	Hence, by \Cref{lem:wag_wga_properties}\ref{item:wag_wga_counterexample_maximizer_ball_probability},
	\begin{equation*}
		\mu(\ball{u_{n}}{r_{n}})
		=
		Z^{-1} \zeta^{i_{n}} \, \mu_{i_{n}}(\ball{u_{n}}{r_{n}})
		\leq
		Z^{-1} \zeta^{i_{n}} \, \mu_{i_{n}}(\ball{u_{i_{n}}^{\star}(r_{n})}{r_{n}})
		=
		\mu(\ball{u_{n}'}{r_{n}}).
	\end{equation*}

	\item
	Since $(u_{n}')_{n \in \bN} \subseteq \set{\alpha_{i} e_{j}}{i \leq j \in \bN}$, we can write it in the form $u_{n}' = \alpha_{p_{n}} e_{q_{n}}$ for some indices $p_{n},q_{n} \in \bN$ and define
	\begin{equation*}
		v_{n}
		\defeq
		\begin{cases}
			u_{n}', & \text{if } p_{n} = 1,
			\\
			\alpha_{p_{n} - 1} e_{q_{n}}, & \text{otherwise.}
		\end{cases}
	\end{equation*}
	As $u_{n}' \to 0$, it is obvious that for all sufficiently large $n$, we must have $p_{n} > 1$ and thus $\mu(\ball{u_{n}'}{r_{n}}) \leq \zeta \, \mu(\ball{v_{n}}{r_{n}})$, since moving from the annulus $A_{p_{n}}$ to the annulus $A_{p_{n} - 1}$ gains a factor at least $\zeta^{-1}$ of mass. \qedhere
	\end{enumerate}
\end{proof}

\begin{lemma}
	\label[lemma]{lem:no_locally_compact_example}
	Let $X$ be a locally compact metric space, and let $\mu \in \prob{X}$.
	Suppose that $u \in X$ is dominant in the sense of \eqref{eq:no_optimal_approximating_sequence_dominant}.
	
	Then \eqref{eq:no_optimal_approximating_sequence_nonoptimal} fails:
	there exist sequences $(r_{n})_{n \in \bN} \to 0$ and $(u_{n})_{n \in \bN} \to u$ such that, for any $(v_{n})_{n \in \bN} \to u$, $\liminf_{n \to \infty} \Ratio{u_{n}}{v_{n}}{r_{n}}{\mu} \geq 1$.
\end{lemma}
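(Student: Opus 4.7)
The plan is to take $u_n$ to be a maximizer of the closed-ball mass $w \mapsto \mu(\cball{w}{r_n})$ over a fixed compact neighborhood of $u$, and to show that, thanks to dominance, either this sequence already clusters at $u$, or else dominance produces an alternative sequence that does and is essentially as massive.

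By local compactness of $X$, fix $R > 0$ with $\cball{u}{R}$ compact. The map $w \mapsto \mu(\cball{w}{r})$ is upper semicontinuous for each fixed $r$ (if $w_k \to w$ then $\limsup_k \chi_{\cball{w_k}{r}} \leq \chi_{\cball{w}{r}}$ pointwise, and reverse Fatou applies), so for any null sequence $(r_n)$ with $r_n < R$, its supremum over the compact set $\cball{u}{R}$ is attained at some $u_n$. Either some subsequence satisfies $u_{n_k} \to u$, in which case $(u_{n_k}, r_{n_k})$ will serve as the candidate optimal pair, or by compactness a subsequence satisfies $u_{n_k} \to v$ for some $v \in \cball{u}{R}$ with $v \neq u$. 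In the latter case, apply the dominance hypothesis \eqref{eq:no_optimal_approximating_sequence_dominant} with comparison point $v$, comparison sequence $(u_{n_k})$, and null sequence $(r_{n_k})$ to obtain $(\tilde u_k) \to u$ with $\liminf_k \Ratio{\tilde u_k}{u_{n_k}}{r_{n_k}}{\mu} \geq 1$; take $(\tilde u_k, r_{n_k})$ as the candidate.

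To verify optimality against arbitrary $(v_n) \to u$ in the latter case, note that for $k$ large $v_{n_k}, \tilde u_k \in \cball{u}{R}$, and maximality of $u_{n_k}$ gives $\mu(\cball{v_{n_k}}{r_{n_k}}) \leq \mu(\cball{u_{n_k}}{r_{n_k}})$ and $\mu(\cball{\tilde u_k}{r_{n_k}}) \leq \mu(\cball{u_{n_k}}{r_{n_k}})$. Combined with the dominance bound $\mu(\ball{\tilde u_k}{r_{n_k}}) \geq (1 - o(1)) \mu(\ball{u_{n_k}}{r_{n_k}})$, one obtains $\mu(\ball{\tilde u_k}{r_{n_k}}) \geq (1 - o(1)) \mu(\ball{v_{n_k}}{r_{n_k}})$, provided the open- versus closed-ball gap at the relevant centers and radii is negligible; the first case is handled analogously, directly from maximality of $u_{n_k}$.

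The main obstacle is this final open/closed-ball conversion: maximization naturally lives on the usc (closed-ball) side, whereas dominance and the conclusion are phrased in terms of open balls, and in the presence of positive sphere mass the two quantities can differ. The remedy, exploiting our freedom to choose $(r_n)$ in the statement, is to perturb each $r_n$ to a nearby radius $r_n^*$ that is a continuity point of the ball-mass distribution functions $r \mapsto \mu(\ball{w}{r})$ for the countably many centers involved in the argument; for each such center only countably many radii carry positive sphere mass, so arbitrarily small perturbations suffice, and at $r_n^*$ the open- and closed-ball masses agree (by the left-continuity of \Cref{lem:ball_masses}\ref{item:left_continuity} together with right-continuity of the closed-ball mass). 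This upgrades the closed-ball inequalities above into open-ball ones and completes the proof. Without local compactness, the compact extraction producing a cluster point $v \neq u$ is unavailable, and the measure of \Cref{ex:no_optimal_approximating_sequence} shows that the conclusion can genuinely fail in that setting.
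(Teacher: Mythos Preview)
Your overall strategy is exactly the paper's: localise to a precompact ball $\ball{u}{R}$, pick (near-)maximising centres $w_n$, use compactness to extract a convergent subsequence, and invoke dominance \eqref{eq:no_optimal_approximating_sequence_dominant} when the limit is $\neq u$. The case split on whether the cluster point equals $u$ is fine (the paper folds the two cases together, since for $w_n \to u$ one may simply take $u_n = w_n$).

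However, your detour through \emph{closed}-ball maximisers creates a genuine gap that your perturbation fix does not close. You need $\mu(\ball{u_{n_k}}{r_{n_k}}) \geq (1 - o(1))\,\mu(\cball{u_{n_k}}{r_{n_k}})$ to bridge dominance (open balls) and maximality (closed balls). Your remedy is to perturb $r_{n_k}$ to a nearby continuity point $r_{n_k}^{\ast}$ of $r \mapsto \mu(\ball{u_{n_k}}{r})$, but this is circular: the maximiser $u_{n_k}$ is defined in terms of $r_{n_k}$, so after perturbation the closed-ball maximiser at $r_{n_k}^{\ast}$ may be a different point $u_{n_k}^{\ast}$, and there is no reason for $r_{n_k}^{\ast}$ to be a continuity point for \emph{that} centre. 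Perturbing downwards ($r^{\ast} < r$) only recovers $\mu(\cball{u_{n_k}}{r^{\ast}}) \to \mu(\ball{u_{n_k}}{r_{n_k}})$, not $\mu(\cball{u_{n_k}}{r_{n_k}})$, so the sphere-mass gap persists; perturbing upwards forfeits the maximality inequality. The ``countably many centres involved'' cannot include the $v_n$, which are arbitrary and chosen last --- though, as you implicitly observe, inequality in the harmless direction holds there anyway.

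The paper's fix is much simpler: never seek an exact maximiser. Take $w_n \in \ball{u}{R}$ with $\mu(\ball{w_n}{r_n}) \geq (1 - 1/n)\, M_{r_n}^{\mu|_{\ball{u}{R}}}$, which always exists (no upper semicontinuity needed), and the open/closed issue never arises. Compactness still extracts a convergent subsequence, and the chain
\[
\Ratio{u_{n}}{v_{n}}{r_{n}}{\mu|_{\ball{u}{R}}} \;\geq\; \Ratio{u_{n}}{w_{n}}{r_{n}}{\mu|_{\ball{u}{R}}} \cdot \Ratio{w_{n}}{\sup}{r_{n}}{\mu|_{\ball{u}{R}}} \cdot \Ratio{\sup}{v_{n}}{r_{n}}{\mu|_{\ball{u}{R}}}
\]
finishes the argument with all three factors having $\liminf \geq 1$. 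Replacing your usc/closed-ball step with this open-ball near-maximiser immediately repairs the proof.
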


\begin{proof}
	Let $(r_{n})_{n \in \bN} \to 0$ be arbitrary and fix $R > 0$ such that $\ball{u}{R}$ is precompact.
	Take a sequence $(w_{n})_{n \in \bN}$ with
	\begin{equation*}
		\liminf_{n \to \infty} \Ratio{w_{n}}{\sup}{r_{n}}{\mu|_{\ball{u}{R}}} \geq 1 - \frac{1}{n}.
	\end{equation*}
	Then $(w_{n})_{n \in \bN} \subseteq \ball{u}{R}$ has a convergent subsequence in $X$;
	pass to this subsequence and the corresponding subsequence of $(r_{n})_{n \in \bN}$ without relabelling.

	By \eqref{eq:no_optimal_approximating_sequence_dominant}, there must exist a sequence $(u_{n})_{n \in \bN} \to u$ with $\liminf_{n \to \infty} \Ratio{u_{n}}{w_{n}}{r_{n}}{\mu} \geq 1$.
	But this implies that $(u_{n})_{n \in \bN}$ is an optimal \as\ along the sequence of radii $(r_{n})_{n \in \bN}$:
	let $(v_{n})_{n \in \bN} \to u$ be arbitrary and note that we may assume that $\ball{u_{n}}{r_{n}}$ and $\ball{v_{n}}{r_{n}}$ are contained in $\ball{u}{R}$ for all sufficiently large $n$.
	Then
	\begin{align*}
		& \liminf_{n \to \infty} \Ratio{u_{n}}{v_{n}}{r_{n}}{\mu} \\
		& \quad = \liminf_{n \to \infty} \Ratio{u_{n}}{v_{n}}{r_{n}}{\mu|_{\ball{u}{R}}} \\
		& \quad \geq \liminf_{n \to \infty} \Ratio{u_{n}}{w_{n}}{r_{n}}{\mu|_{\ball{u}{R}}}
		\lim_{n \to \infty} \Ratio{w_{n}}{\sup}{r_{n}}{\mu|_{\ball{u}{R}}} \liminf_{n \to \infty} \Ratio{\sup}{v_{n}}{r_{n}}{\mu|_{\ball{u}{R}}}
		= 1.
		\qedhere
	\end{align*}
\end{proof}

\section{Enumeration of grammatically correct small-ball mode definitions}
\label[appendix]{section:Enumeration}

\Cref{table:enumeration} enumerates all 282 definitions of a mode that are `grammatically correct' as per \Cref{defn:structured_definition}.
The first column gives the index $1 \leq i \leq 282$ of each definition, using a lexicographic ordering of the strings of length at most $4$ in the alphabet $\{ \forall \ns, \exists \ns, \forall \as, \exists \as, \forall \cp, \exists \cp, \forall \cpas, \exists \cpas \}$, ignoring those strings that do not satisfy \Cref{defn:structured_definition}.
The resulting definition is then stated in the second column, along with \Cref{defn:letter_notation}'s notation for this mode type, if any.
The third column gives the `logical representative', defined to be the least $j \leq i$ such that definitions $i$ and $j$ are logically equivalent, in the sense that one is obtained from the other by commuting adjacent universal/existential quantifiers over the same terms;
\cmark\ indicates that definition $i$ is its own logical representative, and further information is reported in this case only.
The fourth column shows whether definition $i$ satisfies \LP, \AP, and \CP:
\xmark\ shows that at least one fails, whereas \cmark\ shows that they all hold, by \Cref{prop:mode_maps_violating_AP,prop:mode_maps_violating_CP,prop:mode_map_all_axioms};
no further information is reported for meaningless modes with a \xmark.
Finally, the fifth column gives the `canonical representative', defined to be the least $j \leq i$ such that definitions $i$ and $j$ are equivalent according to \Cref{prop:mode_map_equivalences};
again, \cmark\ indicates that definition $i$ is its own canonical representative.

Thus, the 144 definitions with \cmark\ in the third column are the logical representatives of the grammatically correct definitions.
Those with \cmark\ in the fourth column are the 21 meaningful definitions shown in \Cref{fig:periodic_table}(\subref{fig:table_of_21}).
The ten definitions with \cmark\ in the last column are the canonical representatives listed in \Cref{fig:periodic_table}(\subref{fig:table_of_21}) and whose implication lattice is shown in \Cref{fig:periodic_table}(\subref{fig:Hasse_diagram_main}).

\begin{center}
\scriptsize

\begin{longtable}{|r|lc|c|c|c|}
	\caption{All 282 grammatically correct mode definitions satisfying \Cref{defn:structured_definition}.}
	\label{table:enumeration} \\
	\hline
	$i$ & Definition $i$ & & Logical & \LP, \AP, \CP\ (Propositions & Canonical representative \\
	~ & ~ & ~ & representative & \ref{prop:mode_maps_violating_AP}, \ref{prop:mode_maps_violating_CP}, and \ref{prop:mode_map_all_axioms}) & (\Cref{{prop:mode_map_equivalences}}) \\
	\hline
	\endfirsthead
	\hline
	$i$ & Definition $i$ & & Logical & \LP, \AP, \CP\ (Propositions & Canonical representative \\
	~ & ~ & ~ & representative & \ref{prop:mode_maps_violating_AP}, \ref{prop:mode_maps_violating_CP}, and \ref{prop:mode_map_all_axioms}) & (\Cref{{prop:mode_map_equivalences}}) \\
	\hline
	\endhead
	\hline
	\endfoot
	\hline
	\endlastfoot
	1 & $[ \forall \ns ]$ & (\sws) & \cmark\ & \cmark\ by \ref{prop:mode_map_all_axioms} & \cmark \\
	2 & $[ \exists \ns ]$ & (\swps) & \cmark\ & \cmark\ by \ref{prop:mode_map_all_axioms} & \cmark \\
	3 & $[ \forall \ns \forall \as ]$ & & \cmark\ & \xmark\ by \ref{prop:mode_maps_violating_AP}(\hyperref[item:violation_AP_b]{b}) & --- \\
	4 & $[ \forall \ns \exists \as ]$ & (\swgs) & \cmark\ & \cmark\ by \ref{prop:mode_map_all_axioms} & \cmark \\
	5 & $[ \forall \ns \forall \cp ]$ & (\sww) & \cmark\ & \cmark\ by \ref{prop:mode_map_all_axioms} & \cmark \\
	6 & $[ \forall \ns \exists \cp ]$ & & \cmark\ & \xmark\ by \ref{prop:mode_maps_violating_AP}(\hyperref[item:violation_AP_a]{a}) & --- \\
	7 & $[ \exists \ns \forall \as ]$ & & \cmark\ & \xmark\ by \ref{prop:mode_maps_violating_AP}(\hyperref[item:violation_AP_d]{d}) & --- \\
	8 & $[ \exists \ns \exists \as ]$ & (\swpgs) & \cmark\ & \cmark\ by \ref{prop:mode_map_all_axioms} & \cmark \\
	9 & $[ \exists \ns \forall \cp ]$ & (\swpw) & \cmark\ & \cmark\ by \ref{prop:mode_map_all_axioms} & \cmark \\
	10 & $[ \exists \ns \exists \cp ]$ & & \cmark\ & \xmark\ by \ref{prop:mode_maps_violating_AP}(\hyperref[item:violation_AP_a]{a}) & --- \\
	11 & $[ \forall \as \forall \ns ]$ & & 3 & --- & --- \\
	12 & $[ \forall \as \exists \ns ]$ & & \cmark\ & \cmark\ by \ref{prop:mode_map_all_axioms} & 2 \\
	13 & $[ \exists \as \forall \ns ]$ & & \cmark\ & \cmark\ by \ref{prop:mode_map_all_axioms} & 1 \\
	14 & $[ \exists \as \exists \ns ]$ & & 8 & --- & --- \\
	15 & $[ \forall \cp \forall \ns ]$ & (\sww) & 5 & --- & --- \\
	16 & $[ \forall \cp \exists \ns ]$ & (\swwp) & \cmark\ & \cmark\ by \ref{prop:mode_map_all_axioms} & \cmark \\
	17 & $[ \exists \cp \forall \ns ]$ & & \cmark\ & \xmark\ by \ref{prop:mode_maps_violating_AP}(\hyperref[item:violation_AP_a]{a}) & --- \\
	18 & $[ \exists \cp \exists \ns ]$ & & 10 & --- & --- \\
	19 & $[ \forall \ns \forall \as \forall \cp ]$ & & \cmark\ & \xmark\ by \ref{prop:mode_maps_violating_AP}(\hyperref[item:violation_AP_b]{b}) & --- \\
	20 & $[ \forall \ns \forall \as \exists \cp ]$ & & \cmark\ & \xmark\ by \ref{prop:mode_maps_violating_AP}(\hyperref[item:violation_AP_a]{a}, \hyperref[item:violation_AP_b]{b}) & --- \\
	21 & $[ \forall \ns \exists \as \forall \cp ]$ & (\swgw) & \cmark\ & \xmark\ by \ref{prop:mode_maps_violating_CP}(\hyperref[item:violation_CP_c]{c}) & --- \\
	22 & $[ \forall \ns \exists \as \exists \cp ]$ & & \cmark\ & \xmark\ by \ref{prop:mode_maps_violating_AP}(\hyperref[item:violation_AP_a]{a}) & --- \\
	23 & $[ \forall \ns \forall \cp \forall \as ]$ & & 19 & --- & --- \\
	24 & $[ \forall \ns \forall \cp \exists \as ]$ & (\swwg) & \cmark\ & \xmark\ by \ref{prop:mode_maps_violating_CP}(\hyperref[item:violation_CP_c]{c}) & --- \\
	25 & $[ \forall \ns \forall \cp \forall \cpas ]$ & & \cmark\ & \xmark\ by \ref{prop:mode_maps_violating_CP}(\hyperref[item:violation_CP_a]{a}) & --- \\
	26 & $[ \forall \ns \forall \cp \exists \cpas ]$ & & \cmark\ & \xmark\ by \ref{prop:mode_maps_violating_AP}(\hyperref[item:violation_AP_e]{e}) & --- \\
	27 & $[ \forall \ns \exists \cp \forall \as ]$ & & \cmark\ & \xmark\ by \ref{prop:mode_maps_violating_AP}(\hyperref[item:violation_AP_a]{a}, \hyperref[item:violation_AP_b]{b}) & --- \\
	28 & $[ \forall \ns \exists \cp \exists \as ]$ & & 22 & --- & --- \\
	29 & $[ \forall \ns \exists \cp \forall \cpas ]$ & & \cmark\ & \xmark\ by \ref{prop:mode_maps_violating_AP}(\hyperref[item:violation_AP_a]{a}) & --- \\
	30 & $[ \forall \ns \exists \cp \exists \cpas ]$ & & \cmark\ & \xmark\ by \ref{prop:mode_maps_violating_AP}(\hyperref[item:violation_AP_a]{a}, \hyperref[item:violation_AP_e]{e}) & --- \\
	31 & $[ \exists \ns \forall \as \forall \cp ]$ & & \cmark\ & \xmark\ by \ref{prop:mode_maps_violating_AP}(\hyperref[item:violation_AP_d]{d}) & --- \\
	32 & $[ \exists \ns \forall \as \exists \cp ]$ & & \cmark\ & \xmark\ by \ref{prop:mode_maps_violating_AP}(\hyperref[item:violation_AP_a]{a}, \hyperref[item:violation_AP_d]{d}) & --- \\
	33 & $[ \exists \ns \exists \as \forall \cp ]$ & (\swpgw) & \cmark\ & \xmark\ by \ref{prop:mode_maps_violating_CP}(\hyperref[item:violation_CP_b]{b}) & --- \\
	34 & $[ \exists \ns \exists \as \exists \cp ]$ & & \cmark\ & \xmark\ by \ref{prop:mode_maps_violating_AP}(\hyperref[item:violation_AP_a]{a}) & --- \\
	35 & $[ \exists \ns \forall \cp \forall \as ]$ & & 31 & --- & --- \\
	36 & $[ \exists \ns \forall \cp \exists \as ]$ & (\swpwg) & \cmark\ & \xmark\ by \ref{prop:mode_maps_violating_CP}(\hyperref[item:violation_CP_b]{b}) & --- \\
	37 & $[ \exists \ns \forall \cp \forall \cpas ]$ & (\swpwa) & \cmark\ & \xmark\ by \ref{prop:mode_maps_violating_CP}(\hyperref[item:violation_CP_a]{a}) & --- \\
	38 & $[ \exists \ns \forall \cp \exists \cpas ]$ & & \cmark\ & \xmark\ by \ref{prop:mode_maps_violating_AP}(\hyperref[item:violation_AP_c]{c}) & --- \\
	39 & $[ \exists \ns \exists \cp \forall \as ]$ & & \cmark\ & \xmark\ by \ref{prop:mode_maps_violating_AP}(\hyperref[item:violation_AP_a]{a}, \hyperref[item:violation_AP_d]{d}) & --- \\
	40 & $[ \exists \ns \exists \cp \exists \as ]$ & & 34 & --- & --- \\
	41 & $[ \exists \ns \exists \cp \forall \cpas ]$ & & \cmark\ & \xmark\ by \ref{prop:mode_maps_violating_AP}(\hyperref[item:violation_AP_a]{a}) & --- \\
	42 & $[ \exists \ns \exists \cp \exists \cpas ]$ & & \cmark\ & \xmark\ by \ref{prop:mode_maps_violating_AP}(\hyperref[item:violation_AP_a]{a}, \hyperref[item:violation_AP_c]{c}) & --- \\
	43 & $[ \forall \as \forall \ns \forall \cp ]$ & & 19 & --- & --- \\
	44 & $[ \forall \as \forall \ns \exists \cp ]$ & & 20 & --- & --- \\
	45 & $[ \forall \as \exists \ns \forall \cp ]$ & & \cmark\ & \cmark\ by \ref{prop:mode_map_all_axioms} & 9 \\
	46 & $[ \forall \as \exists \ns \exists \cp ]$ & & \cmark\ & \xmark\ by \ref{prop:mode_maps_violating_AP}(\hyperref[item:violation_AP_a]{a}) & --- \\
	47 & $[ \forall \as \forall \cp \forall \ns ]$ & & 19 & --- & --- \\
	48 & $[ \forall \as \forall \cp \exists \ns ]$ & & \cmark\ & \cmark\ by \ref{prop:mode_map_all_axioms} & 16 \\
	49 & $[ \forall \as \exists \cp \forall \ns ]$ & & \cmark\ & \xmark\ by \ref{prop:mode_maps_violating_AP}(\hyperref[item:violation_AP_a]{a}, \hyperref[item:violation_AP_b]{b}) & --- \\
	50 & $[ \forall \as \exists \cp \exists \ns ]$ & & 46 & --- & --- \\
	51 & $[ \exists \as \forall \ns \forall \cp ]$ & & \cmark\ & \cmark\ by \ref{prop:mode_map_all_axioms} & 5 \\
	52 & $[ \exists \as \forall \ns \exists \cp ]$ & & \cmark\ & \xmark\ by \ref{prop:mode_maps_violating_AP}(\hyperref[item:violation_AP_a]{a}) & --- \\
	53 & $[ \exists \as \exists \ns \forall \cp ]$ & (\swgpw) & 33 & --- & --- \\
	54 & $[ \exists \as \exists \ns \exists \cp ]$ & & 34 & --- & --- \\
	55 & $[ \exists \as \forall \cp \forall \ns ]$ & & 51 & --- & --- \\
	56 & $[ \exists \as \forall \cp \exists \ns ]$ & (\swgwp) & \cmark\ & \xmark\ by \ref{prop:mode_maps_violating_CP}(\hyperref[item:violation_CP_b]{b}) & --- \\
	57 & $[ \exists \as \exists \cp \forall \ns ]$ & & \cmark\ & \xmark\ by \ref{prop:mode_maps_violating_AP}(\hyperref[item:violation_AP_a]{a}) & --- \\
	58 & $[ \exists \as \exists \cp \exists \ns ]$ & & 34 & --- & --- \\
	59 & $[ \forall \cp \forall \ns \forall \as ]$ & & 19 & --- & --- \\
	60 & $[ \forall \cp \forall \ns \exists \as ]$ & (\swwg) & 24 & --- & --- \\
	61 & $[ \forall \cp \forall \ns \forall \cpas ]$ & & 25 & --- & --- \\
	62 & $[ \forall \cp \forall \ns \exists \cpas ]$ & & 26 & --- & --- \\
	63 & $[ \forall \cp \exists \ns \forall \as ]$ & & \cmark\ & \xmark\ by \ref{prop:mode_maps_violating_AP}(\hyperref[item:violation_AP_d]{d}) & --- \\
	64 & $[ \forall \cp \exists \ns \exists \as ]$ & (\swwpg) & \cmark\ & \xmark\ by \ref{prop:mode_maps_violating_CP}(\hyperref[item:violation_CP_b]{b}) & --- \\
	65 & $[ \forall \cp \exists \ns \forall \cpas ]$ & (\swwpa) & \cmark\ & \xmark\ by \ref{prop:mode_maps_violating_CP}(\hyperref[item:violation_CP_a]{a}) & --- \\
	66 & $[ \forall \cp \exists \ns \exists \cpas ]$ & & \cmark\ & \xmark\ by \ref{prop:mode_maps_violating_AP}(\hyperref[item:violation_AP_c]{c}) & --- \\
	67 & $[ \forall \cp \forall \as \forall \ns ]$ & & 19 & --- & --- \\
	68 & $[ \forall \cp \forall \as \exists \ns ]$ & & 48 & --- & --- \\
	69 & $[ \forall \cp \exists \as \forall \ns ]$ & & \cmark\ & \cmark\ by \ref{prop:mode_map_all_axioms} & 5 \\
	70 & $[ \forall \cp \exists \as \exists \ns ]$ & (\swwgp) & 64 & --- & --- \\
	71 & $[ \forall \cp \forall \cpas \forall \ns ]$ & & 25 & --- & --- \\
	72 & $[ \forall \cp \forall \cpas \exists \ns ]$ & (\swwap) & \cmark\ & \cmark\ by \ref{prop:mode_map_all_axioms} & 16 \\
	73 & $[ \forall \cp \exists \cpas \forall \ns ]$ & & \cmark\ & \cmark\ by \ref{prop:mode_map_all_axioms} & 5 \\
	74 & $[ \forall \cp \exists \cpas \exists \ns ]$ & & 66 & --- & --- \\
	75 & $[ \exists \cp \forall \ns \forall \as ]$ & & \cmark\ & \xmark\ by \ref{prop:mode_maps_violating_AP}(\hyperref[item:violation_AP_a]{a}, \hyperref[item:violation_AP_b]{b}) & --- \\
	76 & $[ \exists \cp \forall \ns \exists \as ]$ & & \cmark\ & \xmark\ by \ref{prop:mode_maps_violating_AP}(\hyperref[item:violation_AP_a]{a}) & --- \\
	77 & $[ \exists \cp \forall \ns \forall \cpas ]$ & & \cmark\ & \xmark\ by \ref{prop:mode_maps_violating_AP}(\hyperref[item:violation_AP_a]{a}) & --- \\
	78 & $[ \exists \cp \forall \ns \exists \cpas ]$ & & \cmark\ & \xmark\ by \ref{prop:mode_maps_violating_AP}(\hyperref[item:violation_AP_a]{a}, \hyperref[item:violation_AP_e]{e}) & --- \\
	79 & $[ \exists \cp \exists \ns \forall \as ]$ & & 39 & --- & --- \\
	80 & $[ \exists \cp \exists \ns \exists \as ]$ & & 34 & --- & --- \\
	81 & $[ \exists \cp \exists \ns \forall \cpas ]$ & & 41 & --- & --- \\
	82 & $[ \exists \cp \exists \ns \exists \cpas ]$ & & 42 & --- & --- \\
	83 & $[ \exists \cp \forall \as \forall \ns ]$ & & 75 & --- & --- \\
	84 & $[ \exists \cp \forall \as \exists \ns ]$ & & \cmark\ & \xmark\ by \ref{prop:mode_maps_violating_AP}(\hyperref[item:violation_AP_a]{a}) & --- \\
	85 & $[ \exists \cp \exists \as \forall \ns ]$ & & 57 & --- & --- \\
	86 & $[ \exists \cp \exists \as \exists \ns ]$ & & 34 & --- & --- \\
	87 & $[ \exists \cp \forall \cpas \forall \ns ]$ & & 77 & --- & --- \\
	88 & $[ \exists \cp \forall \cpas \exists \ns ]$ & & \cmark\ & \xmark\ by \ref{prop:mode_maps_violating_AP}(\hyperref[item:violation_AP_a]{a}) & --- \\
	89 & $[ \exists \cp \exists \cpas \forall \ns ]$ & & \cmark\ & \xmark\ by \ref{prop:mode_maps_violating_AP}(\hyperref[item:violation_AP_a]{a}) & --- \\
	90 & $[ \exists \cp \exists \cpas \exists \ns ]$ & & 42 & --- & --- \\
	91 & $[ \forall \ns \forall \as \forall \cp \forall \cpas ]$ & & \cmark\ & \xmark\ by \ref{prop:mode_maps_violating_AP}(\hyperref[item:violation_AP_b]{b}), \ref{prop:mode_maps_violating_CP}(\hyperref[item:violation_CP_a]{a}) & --- \\
	92 & $[ \forall \ns \forall \as \forall \cp \exists \cpas ]$ & & \cmark\ & \xmark\ by \ref{prop:mode_maps_violating_AP}(\hyperref[item:violation_AP_b]{b}, \hyperref[item:violation_AP_e]{e}) & --- \\
	93 & $[ \forall \ns \forall \as \exists \cp \forall \cpas ]$ & & \cmark\ & \xmark\ by \ref{prop:mode_maps_violating_AP}(\hyperref[item:violation_AP_a]{a}, \hyperref[item:violation_AP_b]{b}) & --- \\
	94 & $[ \forall \ns \forall \as \exists \cp \exists \cpas ]$ & & \cmark\ & \xmark\ by \ref{prop:mode_maps_violating_AP}(\hyperref[item:violation_AP_a]{a}, \hyperref[item:violation_AP_b]{b}, \hyperref[item:violation_AP_e]{e}) & --- \\
	95 & $[ \forall \ns \exists \as \forall \cp \forall \cpas ]$ & (\swgwa) & \cmark\ & \xmark\ by \ref{prop:mode_maps_violating_CP}(\hyperref[item:violation_CP_d]{d}) & --- \\
	96 & $[ \forall \ns \exists \as \forall \cp \exists \cpas ]$ & & \cmark\ & \xmark\ by \ref{prop:mode_maps_violating_AP}(\hyperref[item:violation_AP_e]{e}), \ref{prop:mode_maps_violating_CP}(\hyperref[item:violation_CP_c]{c}) & --- \\
	97 & $[ \forall \ns \exists \as \exists \cp \forall \cpas ]$ & & \cmark\ & \xmark\ by \ref{prop:mode_maps_violating_AP}(\hyperref[item:violation_AP_a]{a}) & --- \\
	98 & $[ \forall \ns \exists \as \exists \cp \exists \cpas ]$ & & \cmark\ & \xmark\ by \ref{prop:mode_maps_violating_AP}(\hyperref[item:violation_AP_a]{a}, \hyperref[item:violation_AP_e]{e}) & --- \\
	99 & $[ \forall \ns \forall \cp \forall \as \forall \cpas ]$ & & 91 & --- & --- \\
	100 & $[ \forall \ns \forall \cp \forall \as \exists \cpas ]$ & & 92 & --- & --- \\
	101 & $[ \forall \ns \forall \cp \exists \as \forall \cpas ]$ & (\swwga) & \cmark\ & \xmark\ by \ref{prop:mode_maps_violating_CP}(\hyperref[item:violation_CP_d]{d}) & --- \\
	102 & $[ \forall \ns \forall \cp \exists \as \exists \cpas ]$ & & \cmark\ & \xmark\ by \ref{prop:mode_maps_violating_AP}(\hyperref[item:violation_AP_e]{e}), \ref{prop:mode_maps_violating_CP}(\hyperref[item:violation_CP_c]{c}) & --- \\
	103 & $[ \forall \ns \forall \cp \forall \cpas \forall \as ]$ & & 91 & --- & --- \\
	104 & $[ \forall \ns \forall \cp \forall \cpas \exists \as ]$ & (\swwag) & \cmark\ & \xmark\ by \ref{prop:mode_maps_violating_CP}(\hyperref[item:violation_CP_e]{e}) & --- \\
	105 & $[ \forall \ns \forall \cp \exists \cpas \forall \as ]$ & & \cmark\ & \xmark\ by \ref{prop:mode_maps_violating_AP}(\hyperref[item:violation_AP_b]{b}, \hyperref[item:violation_AP_e]{e}) & --- \\
	106 & $[ \forall \ns \forall \cp \exists \cpas \exists \as ]$ & & 102 & --- & --- \\
	107 & $[ \forall \ns \exists \cp \forall \as \forall \cpas ]$ & & \cmark\ & \xmark\ by \ref{prop:mode_maps_violating_AP}(\hyperref[item:violation_AP_a]{a}, \hyperref[item:violation_AP_b]{b}) & --- \\
	108 & $[ \forall \ns \exists \cp \forall \as \exists \cpas ]$ & & \cmark\ & \xmark\ by \ref{prop:mode_maps_violating_AP}(\hyperref[item:violation_AP_a]{a}, \hyperref[item:violation_AP_b]{b}, \hyperref[item:violation_AP_e]{e}) & --- \\
	109 & $[ \forall \ns \exists \cp \exists \as \forall \cpas ]$ & & 97 & --- & --- \\
	110 & $[ \forall \ns \exists \cp \exists \as \exists \cpas ]$ & & 98 & --- & --- \\
	111 & $[ \forall \ns \exists \cp \forall \cpas \forall \as ]$ & & 107 & --- & --- \\
	112 & $[ \forall \ns \exists \cp \forall \cpas \exists \as ]$ & & \cmark\ & \xmark\ by \ref{prop:mode_maps_violating_AP}(\hyperref[item:violation_AP_a]{a}) & --- \\
	113 & $[ \forall \ns \exists \cp \exists \cpas \forall \as ]$ & & \cmark\ & \xmark\ by \ref{prop:mode_maps_violating_AP}(\hyperref[item:violation_AP_a]{a}, \hyperref[item:violation_AP_b]{b}, \hyperref[item:violation_AP_e]{e}) & --- \\
	114 & $[ \forall \ns \exists \cp \exists \cpas \exists \as ]$ & & 98 & --- & --- \\
	115 & $[ \exists \ns \forall \as \forall \cp \forall \cpas ]$ & & \cmark\ & \xmark\ by \ref{prop:mode_maps_violating_AP}(\hyperref[item:violation_AP_d]{d}), \ref{prop:mode_maps_violating_CP}(\hyperref[item:violation_CP_a]{a}) & --- \\
	116 & $[ \exists \ns \forall \as \forall \cp \exists \cpas ]$ & & \cmark\ & \xmark\ by \ref{prop:mode_maps_violating_AP}(\hyperref[item:violation_AP_c]{c}, \hyperref[item:violation_AP_d]{d}) & --- \\
	117 & $[ \exists \ns \forall \as \exists \cp \forall \cpas ]$ & & \cmark\ & \xmark\ by \ref{prop:mode_maps_violating_AP}(\hyperref[item:violation_AP_a]{a}, \hyperref[item:violation_AP_d]{d}) & --- \\
	118 & $[ \exists \ns \forall \as \exists \cp \exists \cpas ]$ & & \cmark\ & \xmark\ by \ref{prop:mode_maps_violating_AP}(\hyperref[item:violation_AP_a]{a}, \hyperref[item:violation_AP_c]{c}, \hyperref[item:violation_AP_d]{d}) & --- \\
	119 & $[ \exists \ns \exists \as \forall \cp \forall \cpas ]$ & (\swpgwa) & \cmark\ & \xmark\ by \ref{prop:mode_maps_violating_CP}(\hyperref[item:violation_CP_d]{d}) & --- \\
	120 & $[ \exists \ns \exists \as \forall \cp \exists \cpas ]$ & & \cmark\ & \xmark\ by \ref{prop:mode_maps_violating_AP}(\hyperref[item:violation_AP_c]{c}), \ref{prop:mode_maps_violating_CP}(\hyperref[item:violation_CP_b]{b}) & --- \\
	121 & $[ \exists \ns \exists \as \exists \cp \forall \cpas ]$ & & \cmark\ & \xmark\ by \ref{prop:mode_maps_violating_AP}(\hyperref[item:violation_AP_a]{a}) & --- \\
	122 & $[ \exists \ns \exists \as \exists \cp \exists \cpas ]$ & & \cmark\ & \xmark\ by \ref{prop:mode_maps_violating_AP}(\hyperref[item:violation_AP_a]{a}, \hyperref[item:violation_AP_c]{c}) & --- \\
	123 & $[ \exists \ns \forall \cp \forall \as \forall \cpas ]$ & & 115 & --- & --- \\
	124 & $[ \exists \ns \forall \cp \forall \as \exists \cpas ]$ & & 116 & --- & --- \\
	125 & $[ \exists \ns \forall \cp \exists \as \forall \cpas ]$ & (\swpwga) & \cmark\ & \xmark\ by \ref{prop:mode_maps_violating_CP}(\hyperref[item:violation_CP_d]{d}) & --- \\
	126 & $[ \exists \ns \forall \cp \exists \as \exists \cpas ]$ & & \cmark\ & \xmark\ by \ref{prop:mode_maps_violating_AP}(\hyperref[item:violation_AP_c]{c}), \ref{prop:mode_maps_violating_CP}(\hyperref[item:violation_CP_b]{b}) & --- \\
	127 & $[ \exists \ns \forall \cp \forall \cpas \forall \as ]$ & & 115 & --- & --- \\
	128 & $[ \exists \ns \forall \cp \forall \cpas \exists \as ]$ & (\swpwag) & \cmark\ & \xmark\ by \ref{prop:mode_maps_violating_CP}(\hyperref[item:violation_CP_e]{e}) & --- \\
	129 & $[ \exists \ns \forall \cp \exists \cpas \forall \as ]$ & & \cmark\ & \xmark\ by \ref{prop:mode_maps_violating_AP}(\hyperref[item:violation_AP_c]{c}, \hyperref[item:violation_AP_d]{d}) & --- \\
	130 & $[ \exists \ns \forall \cp \exists \cpas \exists \as ]$ & & 126 & --- & --- \\
	131 & $[ \exists \ns \exists \cp \forall \as \forall \cpas ]$ & & \cmark\ & \xmark\ by \ref{prop:mode_maps_violating_AP}(\hyperref[item:violation_AP_a]{a}, \hyperref[item:violation_AP_d]{d}) & --- \\
	132 & $[ \exists \ns \exists \cp \forall \as \exists \cpas ]$ & & \cmark\ & \xmark\ by \ref{prop:mode_maps_violating_AP}(\hyperref[item:violation_AP_a]{a}, \hyperref[item:violation_AP_c]{c}, \hyperref[item:violation_AP_d]{d}) & --- \\
	133 & $[ \exists \ns \exists \cp \exists \as \forall \cpas ]$ & & 121 & --- & --- \\
	134 & $[ \exists \ns \exists \cp \exists \as \exists \cpas ]$ & & 122 & --- & --- \\
	135 & $[ \exists \ns \exists \cp \forall \cpas \forall \as ]$ & & 131 & --- & --- \\
	136 & $[ \exists \ns \exists \cp \forall \cpas \exists \as ]$ & & \cmark\ & \xmark\ by \ref{prop:mode_maps_violating_AP}(\hyperref[item:violation_AP_a]{a}) & --- \\
	137 & $[ \exists \ns \exists \cp \exists \cpas \forall \as ]$ & & \cmark\ & \xmark\ by \ref{prop:mode_maps_violating_AP}(\hyperref[item:violation_AP_a]{a}, \hyperref[item:violation_AP_c]{c}, \hyperref[item:violation_AP_d]{d}) & --- \\
	138 & $[ \exists \ns \exists \cp \exists \cpas \exists \as ]$ & & 122 & --- & --- \\
	139 & $[ \forall \as \forall \ns \forall \cp \forall \cpas ]$ & & 91 & --- & --- \\
	140 & $[ \forall \as \forall \ns \forall \cp \exists \cpas ]$ & & 92 & --- & --- \\
	141 & $[ \forall \as \forall \ns \exists \cp \forall \cpas ]$ & & 93 & --- & --- \\
	142 & $[ \forall \as \forall \ns \exists \cp \exists \cpas ]$ & & 94 & --- & --- \\
	143 & $[ \forall \as \exists \ns \forall \cp \forall \cpas ]$ & & \cmark\ & \xmark\ by \ref{prop:mode_maps_violating_CP}(\hyperref[item:violation_CP_a]{a}) & --- \\
	144 & $[ \forall \as \exists \ns \forall \cp \exists \cpas ]$ & & \cmark\ & \xmark\ by \ref{prop:mode_maps_violating_AP}(\hyperref[item:violation_AP_c]{c}) & --- \\
	145 & $[ \forall \as \exists \ns \exists \cp \forall \cpas ]$ & & \cmark\ & \xmark\ by \ref{prop:mode_maps_violating_AP}(\hyperref[item:violation_AP_a]{a}) & --- \\
	146 & $[ \forall \as \exists \ns \exists \cp \exists \cpas ]$ & & \cmark\ & \xmark\ by \ref{prop:mode_maps_violating_AP}(\hyperref[item:violation_AP_a]{a}, \hyperref[item:violation_AP_c]{c}) & --- \\
	147 & $[ \forall \as \forall \cp \forall \ns \forall \cpas ]$ & & 91 & --- & --- \\
	148 & $[ \forall \as \forall \cp \forall \ns \exists \cpas ]$ & & 92 & --- & --- \\
	149 & $[ \forall \as \forall \cp \exists \ns \forall \cpas ]$ & & \cmark\ & \xmark\ by \ref{prop:mode_maps_violating_CP}(\hyperref[item:violation_CP_a]{a}) & --- \\
	150 & $[ \forall \as \forall \cp \exists \ns \exists \cpas ]$ & & \cmark\ & \xmark\ by \ref{prop:mode_maps_violating_AP}(\hyperref[item:violation_AP_c]{c}) & --- \\
	151 & $[ \forall \as \forall \cp \forall \cpas \forall \ns ]$ & & 91 & --- & --- \\
	152 & $[ \forall \as \forall \cp \forall \cpas \exists \ns ]$ & & \cmark\ & \cmark\ by \ref{prop:mode_map_all_axioms} & 16 \\
	153 & $[ \forall \as \forall \cp \exists \cpas \forall \ns ]$ & & \cmark\ & \xmark\ by \ref{prop:mode_maps_violating_AP}(\hyperref[item:violation_AP_b]{b}) & --- \\
	154 & $[ \forall \as \forall \cp \exists \cpas \exists \ns ]$ & & 150 & --- & --- \\
	155 & $[ \forall \as \exists \cp \forall \ns \forall \cpas ]$ & & \cmark\ & \xmark\ by \ref{prop:mode_maps_violating_AP}(\hyperref[item:violation_AP_a]{a}, \hyperref[item:violation_AP_b]{b}) & --- \\
	156 & $[ \forall \as \exists \cp \forall \ns \exists \cpas ]$ & & \cmark\ & \xmark\ by \ref{prop:mode_maps_violating_AP}(\hyperref[item:violation_AP_a]{a}, \hyperref[item:violation_AP_b]{b}, \hyperref[item:violation_AP_e]{e}) & --- \\
	157 & $[ \forall \as \exists \cp \exists \ns \forall \cpas ]$ & & 145 & --- & --- \\
	158 & $[ \forall \as \exists \cp \exists \ns \exists \cpas ]$ & & 146 & --- & --- \\
	159 & $[ \forall \as \exists \cp \forall \cpas \forall \ns ]$ & & 155 & --- & --- \\
	160 & $[ \forall \as \exists \cp \forall \cpas \exists \ns ]$ & & \cmark\ & \xmark\ by \ref{prop:mode_maps_violating_AP}(\hyperref[item:violation_AP_a]{a}) & --- \\
	161 & $[ \forall \as \exists \cp \exists \cpas \forall \ns ]$ & & \cmark\ & \xmark\ by \ref{prop:mode_maps_violating_AP}(\hyperref[item:violation_AP_a]{a}, \hyperref[item:violation_AP_b]{b}) & --- \\
	162 & $[ \forall \as \exists \cp \exists \cpas \exists \ns ]$ & & 146 & --- & --- \\
	163 & $[ \exists \as \forall \ns \forall \cp \forall \cpas ]$ & & \cmark\ & \xmark\ by \ref{prop:mode_maps_violating_CP}(\hyperref[item:violation_CP_d]{d}) & --- \\
	164 & $[ \exists \as \forall \ns \forall \cp \exists \cpas ]$ & & \cmark\ & \xmark\ by \ref{prop:mode_maps_violating_AP}(\hyperref[item:violation_AP_e]{e}) & --- \\
	165 & $[ \exists \as \forall \ns \exists \cp \forall \cpas ]$ & & \cmark\ & \xmark\ by \ref{prop:mode_maps_violating_AP}(\hyperref[item:violation_AP_a]{a}) & --- \\
	166 & $[ \exists \as \forall \ns \exists \cp \exists \cpas ]$ & & \cmark\ & \xmark\ by \ref{prop:mode_maps_violating_AP}(\hyperref[item:violation_AP_a]{a}, \hyperref[item:violation_AP_e]{e}) & --- \\
	167 & $[ \exists \as \exists \ns \forall \cp \forall \cpas ]$ & (\swgpwa) & 119 & --- & --- \\
	168 & $[ \exists \as \exists \ns \forall \cp \exists \cpas ]$ & & 120 & --- & --- \\
	169 & $[ \exists \as \exists \ns \exists \cp \forall \cpas ]$ & & 121 & --- & --- \\
	170 & $[ \exists \as \exists \ns \exists \cp \exists \cpas ]$ & & 122 & --- & --- \\
	171 & $[ \exists \as \forall \cp \forall \ns \forall \cpas ]$ & & 163 & --- & --- \\
	172 & $[ \exists \as \forall \cp \forall \ns \exists \cpas ]$ & & 164 & --- & --- \\
	173 & $[ \exists \as \forall \cp \exists \ns \forall \cpas ]$ & (\swgwpa) & \cmark\ & \xmark\ by \ref{prop:mode_maps_violating_CP}(\hyperref[item:violation_CP_d]{d}) & --- \\
	174 & $[ \exists \as \forall \cp \exists \ns \exists \cpas ]$ & & \cmark\ & \xmark\ by \ref{prop:mode_maps_violating_AP}(\hyperref[item:violation_AP_c]{c}), \ref{prop:mode_maps_violating_CP}(\hyperref[item:violation_CP_b]{b}) & --- \\
	175 & $[ \exists \as \forall \cp \forall \cpas \forall \ns ]$ & & 163 & --- & --- \\
	176 & $[ \exists \as \forall \cp \forall \cpas \exists \ns ]$ & (\swgwap) & \cmark\ & \cmark\ by \ref{prop:mode_map_all_axioms} & \cmark \\
	177 & $[ \exists \as \forall \cp \exists \cpas \forall \ns ]$ & & \cmark\ & \cmark\ by \ref{prop:mode_map_all_axioms} & 5 \\
	178 & $[ \exists \as \forall \cp \exists \cpas \exists \ns ]$ & & 174 & --- & --- \\
	179 & $[ \exists \as \exists \cp \forall \ns \forall \cpas ]$ & & \cmark\ & \xmark\ by \ref{prop:mode_maps_violating_AP}(\hyperref[item:violation_AP_a]{a}) & --- \\
	180 & $[ \exists \as \exists \cp \forall \ns \exists \cpas ]$ & & \cmark\ & \xmark\ by \ref{prop:mode_maps_violating_AP}(\hyperref[item:violation_AP_a]{a}, \hyperref[item:violation_AP_e]{e}) & --- \\
	181 & $[ \exists \as \exists \cp \exists \ns \forall \cpas ]$ & & 121 & --- & --- \\
	182 & $[ \exists \as \exists \cp \exists \ns \exists \cpas ]$ & & 122 & --- & --- \\
	183 & $[ \exists \as \exists \cp \forall \cpas \forall \ns ]$ & & 179 & --- & --- \\
	184 & $[ \exists \as \exists \cp \forall \cpas \exists \ns ]$ & & \cmark\ & \xmark\ by \ref{prop:mode_maps_violating_AP}(\hyperref[item:violation_AP_a]{a}) & --- \\
	185 & $[ \exists \as \exists \cp \exists \cpas \forall \ns ]$ & & \cmark\ & \xmark\ by \ref{prop:mode_maps_violating_AP}(\hyperref[item:violation_AP_a]{a}) & --- \\
	186 & $[ \exists \as \exists \cp \exists \cpas \exists \ns ]$ & & 122 & --- & --- \\
	187 & $[ \forall \cp \forall \ns \forall \as \forall \cpas ]$ & & 91 & --- & --- \\
	188 & $[ \forall \cp \forall \ns \forall \as \exists \cpas ]$ & & 92 & --- & --- \\
	189 & $[ \forall \cp \forall \ns \exists \as \forall \cpas ]$ & (\swwga) & 101 & --- & --- \\
	190 & $[ \forall \cp \forall \ns \exists \as \exists \cpas ]$ & & 102 & --- & --- \\
	191 & $[ \forall \cp \forall \ns \forall \cpas \forall \as ]$ & & 91 & --- & --- \\
	192 & $[ \forall \cp \forall \ns \forall \cpas \exists \as ]$ & (\swwag) & 104 & --- & --- \\
	193 & $[ \forall \cp \forall \ns \exists \cpas \forall \as ]$ & & 105 & --- & --- \\
	194 & $[ \forall \cp \forall \ns \exists \cpas \exists \as ]$ & & 102 & --- & --- \\
	195 & $[ \forall \cp \exists \ns \forall \as \forall \cpas ]$ & & \cmark\ & \xmark\ by \ref{prop:mode_maps_violating_AP}(\hyperref[item:violation_AP_d]{d}), \ref{prop:mode_maps_violating_CP}(\hyperref[item:violation_CP_a]{a}) & --- \\
	196 & $[ \forall \cp \exists \ns \forall \as \exists \cpas ]$ & & \cmark\ & \xmark\ by \ref{prop:mode_maps_violating_AP}(\hyperref[item:violation_AP_c]{c}, \hyperref[item:violation_AP_d]{d}) & --- \\
	197 & $[ \forall \cp \exists \ns \exists \as \forall \cpas ]$ & (\swwpga) & \cmark\ & \xmark\ by \ref{prop:mode_maps_violating_CP}(\hyperref[item:violation_CP_d]{d}) & --- \\
	198 & $[ \forall \cp \exists \ns \exists \as \exists \cpas ]$ & & \cmark\ & \xmark\ by \ref{prop:mode_maps_violating_AP}(\hyperref[item:violation_AP_c]{c}), \ref{prop:mode_maps_violating_CP}(\hyperref[item:violation_CP_b]{b}) & --- \\
	199 & $[ \forall \cp \exists \ns \forall \cpas \forall \as ]$ & & 195 & --- & --- \\
	200 & $[ \forall \cp \exists \ns \forall \cpas \exists \as ]$ & (\swwpag) & \cmark\ & \xmark\ by \ref{prop:mode_maps_violating_CP}(\hyperref[item:violation_CP_e]{e}) & --- \\
	201 & $[ \forall \cp \exists \ns \exists \cpas \forall \as ]$ & & \cmark\ & \xmark\ by \ref{prop:mode_maps_violating_AP}(\hyperref[item:violation_AP_c]{c}, \hyperref[item:violation_AP_d]{d}) & --- \\
	202 & $[ \forall \cp \exists \ns \exists \cpas \exists \as ]$ & & 198 & --- & --- \\
	203 & $[ \forall \cp \forall \as \forall \ns \forall \cpas ]$ & & 91 & --- & --- \\
	204 & $[ \forall \cp \forall \as \forall \ns \exists \cpas ]$ & & 92 & --- & --- \\
	205 & $[ \forall \cp \forall \as \exists \ns \forall \cpas ]$ & & 149 & --- & --- \\
	206 & $[ \forall \cp \forall \as \exists \ns \exists \cpas ]$ & & 150 & --- & --- \\
	207 & $[ \forall \cp \forall \as \forall \cpas \forall \ns ]$ & & 91 & --- & --- \\
	208 & $[ \forall \cp \forall \as \forall \cpas \exists \ns ]$ & & 152 & --- & --- \\
	209 & $[ \forall \cp \forall \as \exists \cpas \forall \ns ]$ & & 153 & --- & --- \\
	210 & $[ \forall \cp \forall \as \exists \cpas \exists \ns ]$ & & 150 & --- & --- \\
	211 & $[ \forall \cp \exists \as \forall \ns \forall \cpas ]$ & & \cmark\ & \xmark\ by \ref{prop:mode_maps_violating_CP}(\hyperref[item:violation_CP_d]{d}) & --- \\
	212 & $[ \forall \cp \exists \as \forall \ns \exists \cpas ]$ & & \cmark\ & \xmark\ by \ref{prop:mode_maps_violating_AP}(\hyperref[item:violation_AP_e]{e}) & --- \\
	213 & $[ \forall \cp \exists \as \exists \ns \forall \cpas ]$ & (\swwgpa) & 197 & --- & --- \\
	214 & $[ \forall \cp \exists \as \exists \ns \exists \cpas ]$ & & 198 & --- & --- \\
	215 & $[ \forall \cp \exists \as \forall \cpas \forall \ns ]$ & & 211 & --- & --- \\
	216 & $[ \forall \cp \exists \as \forall \cpas \exists \ns ]$ & (\swwgap) & \cmark\ & \cmark\ by \ref{prop:mode_map_all_axioms} & \cmark \\
	217 & $[ \forall \cp \exists \as \exists \cpas \forall \ns ]$ & & \cmark\ & \cmark\ by \ref{prop:mode_map_all_axioms} & 5 \\
	218 & $[ \forall \cp \exists \as \exists \cpas \exists \ns ]$ & & 198 & --- & --- \\
	219 & $[ \forall \cp \forall \cpas \forall \ns \forall \as ]$ & & 91 & --- & --- \\
	220 & $[ \forall \cp \forall \cpas \forall \ns \exists \as ]$ & (\swwag) & 104 & --- & --- \\
	221 & $[ \forall \cp \forall \cpas \exists \ns \forall \as ]$ & & \cmark\ & \xmark\ by \ref{prop:mode_maps_violating_AP}(\hyperref[item:violation_AP_d]{d}) & --- \\
	222 & $[ \forall \cp \forall \cpas \exists \ns \exists \as ]$ & (\swwapg) & \cmark\ & \xmark\ by \ref{prop:mode_maps_violating_CP}(\hyperref[item:violation_CP_e]{e}) & --- \\
	223 & $[ \forall \cp \forall \cpas \forall \as \forall \ns ]$ & & 91 & --- & --- \\
	224 & $[ \forall \cp \forall \cpas \forall \as \exists \ns ]$ & & 152 & --- & --- \\
	225 & $[ \forall \cp \forall \cpas \exists \as \forall \ns ]$ & (\swe) & \cmark\ & \cmark\ by \ref{prop:mode_map_all_axioms} & \cmark \\
	226 & $[ \forall \cp \forall \cpas \exists \as \exists \ns ]$ & (\swwagp) & 222 & --- & --- \\
	227 & $[ \forall \cp \exists \cpas \forall \ns \forall \as ]$ & & \cmark\ & \xmark\ by \ref{prop:mode_maps_violating_AP}(\hyperref[item:violation_AP_b]{b}) & --- \\
	228 & $[ \forall \cp \exists \cpas \forall \ns \exists \as ]$ & & \cmark\ & \xmark\ by \ref{prop:mode_maps_violating_CP}(\hyperref[item:violation_CP_c]{c}) & --- \\
	229 & $[ \forall \cp \exists \cpas \exists \ns \forall \as ]$ & & 201 & --- & --- \\
	230 & $[ \forall \cp \exists \cpas \exists \ns \exists \as ]$ & & 198 & --- & --- \\
	231 & $[ \forall \cp \exists \cpas \forall \as \forall \ns ]$ & & 227 & --- & --- \\
	232 & $[ \forall \cp \exists \cpas \forall \as \exists \ns ]$ & & \cmark\ & \xmark\ by \ref{prop:mode_maps_violating_AP}(\hyperref[item:violation_AP_c]{c}) & --- \\
	233 & $[ \forall \cp \exists \cpas \exists \as \forall \ns ]$ & & 217 & --- & --- \\
	234 & $[ \forall \cp \exists \cpas \exists \as \exists \ns ]$ & & 198 & --- & --- \\
	235 & $[ \exists \cp \forall \ns \forall \as \forall \cpas ]$ & & \cmark\ & \xmark\ by \ref{prop:mode_maps_violating_AP}(\hyperref[item:violation_AP_a]{a}, \hyperref[item:violation_AP_b]{b}) & --- \\
	236 & $[ \exists \cp \forall \ns \forall \as \exists \cpas ]$ & & \cmark\ & \xmark\ by \ref{prop:mode_maps_violating_AP}(\hyperref[item:violation_AP_a]{a}, \hyperref[item:violation_AP_b]{b}, \hyperref[item:violation_AP_e]{e}) & --- \\
	237 & $[ \exists \cp \forall \ns \exists \as \forall \cpas ]$ & & \cmark\ & \xmark\ by \ref{prop:mode_maps_violating_AP}(\hyperref[item:violation_AP_a]{a}) & --- \\
	238 & $[ \exists \cp \forall \ns \exists \as \exists \cpas ]$ & & \cmark\ & \xmark\ by \ref{prop:mode_maps_violating_AP}(\hyperref[item:violation_AP_a]{a}, \hyperref[item:violation_AP_e]{e}) & --- \\
	239 & $[ \exists \cp \forall \ns \forall \cpas \forall \as ]$ & & 235 & --- & --- \\
	240 & $[ \exists \cp \forall \ns \forall \cpas \exists \as ]$ & & \cmark\ & \xmark\ by \ref{prop:mode_maps_violating_AP}(\hyperref[item:violation_AP_a]{a}) & --- \\
	241 & $[ \exists \cp \forall \ns \exists \cpas \forall \as ]$ & & \cmark\ & \xmark\ by \ref{prop:mode_maps_violating_AP}(\hyperref[item:violation_AP_a]{a}, \hyperref[item:violation_AP_b]{b}, \hyperref[item:violation_AP_e]{e}) & --- \\
	242 & $[ \exists \cp \forall \ns \exists \cpas \exists \as ]$ & & 238 & --- & --- \\
	243 & $[ \exists \cp \exists \ns \forall \as \forall \cpas ]$ & & 131 & --- & --- \\
	244 & $[ \exists \cp \exists \ns \forall \as \exists \cpas ]$ & & 132 & --- & --- \\
	245 & $[ \exists \cp \exists \ns \exists \as \forall \cpas ]$ & & 121 & --- & --- \\
	246 & $[ \exists \cp \exists \ns \exists \as \exists \cpas ]$ & & 122 & --- & --- \\
	247 & $[ \exists \cp \exists \ns \forall \cpas \forall \as ]$ & & 131 & --- & --- \\
	248 & $[ \exists \cp \exists \ns \forall \cpas \exists \as ]$ & & 136 & --- & --- \\
	249 & $[ \exists \cp \exists \ns \exists \cpas \forall \as ]$ & & 137 & --- & --- \\
	250 & $[ \exists \cp \exists \ns \exists \cpas \exists \as ]$ & & 122 & --- & --- \\
	251 & $[ \exists \cp \forall \as \forall \ns \forall \cpas ]$ & & 235 & --- & --- \\
	252 & $[ \exists \cp \forall \as \forall \ns \exists \cpas ]$ & & 236 & --- & --- \\
	253 & $[ \exists \cp \forall \as \exists \ns \forall \cpas ]$ & & \cmark\ & \xmark\ by \ref{prop:mode_maps_violating_AP}(\hyperref[item:violation_AP_a]{a}) & --- \\
	254 & $[ \exists \cp \forall \as \exists \ns \exists \cpas ]$ & & \cmark\ & \xmark\ by \ref{prop:mode_maps_violating_AP}(\hyperref[item:violation_AP_a]{a}, \hyperref[item:violation_AP_c]{c}) & --- \\
	255 & $[ \exists \cp \forall \as \forall \cpas \forall \ns ]$ & & 235 & --- & --- \\
	256 & $[ \exists \cp \forall \as \forall \cpas \exists \ns ]$ & & \cmark\ & \xmark\ by \ref{prop:mode_maps_violating_AP}(\hyperref[item:violation_AP_a]{a}) & --- \\
	257 & $[ \exists \cp \forall \as \exists \cpas \forall \ns ]$ & & \cmark\ & \xmark\ by \ref{prop:mode_maps_violating_AP}(\hyperref[item:violation_AP_a]{a}, \hyperref[item:violation_AP_b]{b}) & --- \\
	258 & $[ \exists \cp \forall \as \exists \cpas \exists \ns ]$ & & 254 & --- & --- \\
	259 & $[ \exists \cp \exists \as \forall \ns \forall \cpas ]$ & & 179 & --- & --- \\
	260 & $[ \exists \cp \exists \as \forall \ns \exists \cpas ]$ & & 180 & --- & --- \\
	261 & $[ \exists \cp \exists \as \exists \ns \forall \cpas ]$ & & 121 & --- & --- \\
	262 & $[ \exists \cp \exists \as \exists \ns \exists \cpas ]$ & & 122 & --- & --- \\
	263 & $[ \exists \cp \exists \as \forall \cpas \forall \ns ]$ & & 179 & --- & --- \\
	264 & $[ \exists \cp \exists \as \forall \cpas \exists \ns ]$ & & 184 & --- & --- \\
	265 & $[ \exists \cp \exists \as \exists \cpas \forall \ns ]$ & & 185 & --- & --- \\
	266 & $[ \exists \cp \exists \as \exists \cpas \exists \ns ]$ & & 122 & --- & --- \\
	267 & $[ \exists \cp \forall \cpas \forall \ns \forall \as ]$ & & 235 & --- & --- \\
	268 & $[ \exists \cp \forall \cpas \forall \ns \exists \as ]$ & & 240 & --- & --- \\
	269 & $[ \exists \cp \forall \cpas \exists \ns \forall \as ]$ & & \cmark\ & \xmark\ by \ref{prop:mode_maps_violating_AP}(\hyperref[item:violation_AP_a]{a}, \hyperref[item:violation_AP_d]{d}) & --- \\
	270 & $[ \exists \cp \forall \cpas \exists \ns \exists \as ]$ & & \cmark\ & \xmark\ by \ref{prop:mode_maps_violating_AP}(\hyperref[item:violation_AP_a]{a}) & --- \\
	271 & $[ \exists \cp \forall \cpas \forall \as \forall \ns ]$ & & 235 & --- & --- \\
	272 & $[ \exists \cp \forall \cpas \forall \as \exists \ns ]$ & & 256 & --- & --- \\
	273 & $[ \exists \cp \forall \cpas \exists \as \forall \ns ]$ & & \cmark\ & \xmark\ by \ref{prop:mode_maps_violating_AP}(\hyperref[item:violation_AP_a]{a}) & --- \\
	274 & $[ \exists \cp \forall \cpas \exists \as \exists \ns ]$ & & 270 & --- & --- \\
	275 & $[ \exists \cp \exists \cpas \forall \ns \forall \as ]$ & & \cmark\ & \xmark\ by \ref{prop:mode_maps_violating_AP}(\hyperref[item:violation_AP_a]{a}, \hyperref[item:violation_AP_b]{b}) & --- \\
	276 & $[ \exists \cp \exists \cpas \forall \ns \exists \as ]$ & & \cmark\ & \xmark\ by \ref{prop:mode_maps_violating_AP}(\hyperref[item:violation_AP_a]{a}) & --- \\
	277 & $[ \exists \cp \exists \cpas \exists \ns \forall \as ]$ & & 137 & --- & --- \\
	278 & $[ \exists \cp \exists \cpas \exists \ns \exists \as ]$ & & 122 & --- & --- \\
	279 & $[ \exists \cp \exists \cpas \forall \as \forall \ns ]$ & & 275 & --- & --- \\
	280 & $[ \exists \cp \exists \cpas \forall \as \exists \ns ]$ & & \cmark\ & \xmark\ by \ref{prop:mode_maps_violating_AP}(\hyperref[item:violation_AP_a]{a}, \hyperref[item:violation_AP_c]{c}) & --- \\
	281 & $[ \exists \cp \exists \cpas \exists \as \forall \ns ]$ & & 185 & --- & --- \\
	282 & $[ \exists \cp \exists \cpas \exists \as \exists \ns ]$ & & 122 & --- & --- \\
\end{longtable}

\end{center}

\section*{Acknowledgements}
\addcontentsline{toc}{section}{Acknowledgements}

This work has been partially supported by the Deut\-sche For\-schungs\-ge\-mein\-schaft (DFG) through project \href{https://gepris.dfg.de/gepris/projekt/415980428}{415980428}.
IK was funded by the DFG under Germany's Excellence Strategy (EXC-2046/1, project \href{https://gepris.dfg.de/gepris/projekt/390685689}{390685689}) through projects EF1-10 and EF1-19 of the Berlin Mathematics Research Center MATH+.
HL is supported by the Warwick Mathematics Institute Centre for Doctoral Training and gratefully acknowledges funding from the University of Warwick and the UK Engineering and Physical Sciences Research Council (Grant number: EP/W524645/1).
TJS conducted parts of this work under a Turing Fellowship at the Alan Turing Institute.

For the purpose of open access, the authors have applied a Creative Commons Attribution (CC BY) licence to any Author Accepted Manuscript version arising.

\addcontentsline{toc}{section}{References}
\bibliographystyle{abbrvnat}
\bibliography{myBibliography}

\begin{thebibliography}{21}
\providecommand{\natexlab}[1]{#1}
\providecommand{\url}[1]{\texttt{#1}}
\expandafter\ifx\csname urlstyle\endcsname\relax
  \providecommand{\doi}[1]{doi: #1}\else
  \providecommand{\doi}{doi: \begingroup \urlstyle{rm}\Url}\fi

\bibitem[Aliprantis and Border(2006)]{AliprantisBorder2006}
C.~D. Aliprantis and K.~C. Border.
\newblock \emph{Infinite {Dimensional} {Analysis}: {A} {Hitchhiker's} {Guide}}.
\newblock Springer, Berlin, third edition, 2006.
\newblock \doi{10.1007/3-540-29587-9}.

\bibitem[Anderson(1955)]{Anderson1955}
T.~W. Anderson.
\newblock The integral of a symmetric unimodal function over a symmetric convex
  set and some probability inequalities.
\newblock \emph{Proc. Amer. Math. Soc.}, 6:\penalty0 170--176, 1955.
\newblock \doi{10.2307/2032333}.

\bibitem[Ayanbayev et~al.(2022)Ayanbayev, Klebanov, Lie, and
  Sullivan]{ayanbayev2021gamma}
B.~Ayanbayev, I.~Klebanov, H.~C. Lie, and T.~J. Sullivan.
\newblock {$\Gamma$}-convergence of {Onsager}--{Machlup} functionals: {I}.
  {W}ith applications to maximum \textit{a posteriori} estimation in {Bayesian}
  inverse problems.
\newblock \emph{Inverse Probl.}, 38\penalty0 (2):\penalty0 No.~025005, 32pp.,
  2022.
\newblock \doi{10.1088/1361-6420/ac3f81}.

\bibitem[Bogachev(1998)]{Bogachev1998Gaussian}
V.~I. Bogachev.
\newblock \emph{Gaussian {Measures}}, volume~62 of \emph{Mathematical Surveys
  and Monographs}.
\newblock American Mathematical Society, Providence, RI, 1998.
\newblock \doi{10.1090/surv/062}.

\bibitem[Bogachev(2007)]{Bogachev2007_II}
V.~I. Bogachev.
\newblock \emph{Measure {Theory}}, volume~2.
\newblock Springer, Berlin, Heidelberg, 2007.
\newblock \doi{10.1007/978-3-540-34514-5}.

\bibitem[Clason et~al.(2019)Clason, Helin, Kretschmann, and
  Piiroinen]{Clason2019GeneralizedMI}
C.~Clason, T.~Helin, R.~Kretschmann, and P.~Piiroinen.
\newblock Generalized modes in {Bayesian} inverse problems.
\newblock \emph{SIAM/ASA J. Uncertain. Quantification}, 7\penalty0
  (2):\penalty0 652--684, 2019.
\newblock \doi{10.1137/18M1191804}.

\bibitem[Dashti et~al.(2013)Dashti, Law, Stuart, and
  Voss]{DashtiLawStuartVoss2013}
M.~Dashti, K.~J.~H. Law, A.~M. Stuart, and J.~Voss.
\newblock {MAP} estimators and their consistency in {Bayesian} nonparametric
  inverse problems.
\newblock \emph{Inverse Probl.}, 29\penalty0 (9):\penalty0 No.~095017, 27pp.,
  2013.
\newblock \doi{10.1088/0266-5611/29/9/095017}.

\bibitem[D\"{u}rr and Bach(1978)]{DuerrBach1978}
D.~D\"{u}rr and A.~Bach.
\newblock The {Onsager}--{Machlup} function as {Lagrangian} for the most
  probable path of a diffusion process.
\newblock \emph{Comm. Math. Phys.}, 60\penalty0 (2):\penalty0 153--170, 1978.
\newblock \doi{10.1007/BF01609446}.

\bibitem[Freidlin and Wentzell(1998)]{FreidlinWentzell1998}
M.~I. Freidlin and A.~D. Wentzell.
\newblock \emph{Random Perturbations of Dynamical Systems}, volume 260 of
  \emph{Grundlehren der mathematischen Wissenschaften}.
\newblock Springer-Verlag, New York, second edition, 1998.
\newblock \doi{10.1007/978-1-4612-0611-8}.
\newblock Translation by Joseph Sz\"ucs of \citet{VentcelFrejdlin1979}.

\bibitem[Helin and Burger(2015)]{HelinBurger2015}
T.~Helin and M.~Burger.
\newblock Maximum \textit{a posteriori} probability estimates in
  infinite-dimensional {Bayesian} inverse problems.
\newblock \emph{Inverse Probl.}, 31\penalty0 (8):\penalty0 No.~085009, 22pp.,
  2015.
\newblock \doi{10.1088/0266-5611/31/8/085009}.

\bibitem[Kasanick{\'y} and Mandel(2017)]{KasanickyMandel2017}
I.~Kasanick{\'y} and J.~Mandel.
\newblock On well-posedness of {Bayesian} data assimilation and inverse
  problems in {Hilbert} space, 2017.
\newblock URL \url{https://arxiv.org/abs/1701.08298}.

\bibitem[Lambley(2023)]{Lambley2023}
H.~Lambley.
\newblock Strong maximum \textit{a posteriori} estimation in {Banach} spaces
  with {Gaussian} priors.
\newblock \emph{Inverse Probl.}, 39\penalty0 (12):\penalty0 125010, 22pp.,
  2023.
\newblock \doi{10.1088/1361-6420/ad07a4}.

\bibitem[Lambley and Sullivan(2023)]{LambleySullivan2022}
H.~Lambley and T.~J. Sullivan.
\newblock An order-theoretic perspective on modes and maximum a posteriori
  estimation in {Bayesian} inverse problems.
\newblock \emph{SIAM/ASA J. Uncertain. Quantif.}, 11\penalty0 (4):\penalty0
  1195--1224, 2023.
\newblock \doi{10.1137/22M154243X}.

\bibitem[Lasanen(2012)]{Lasanen2012}
S.~Lasanen.
\newblock Non-{Gaussian} statistical inverse problems. {Part} {I}: Posterior
  distributions.
\newblock \emph{Inverse Probl. Imaging}, 6\penalty0 (2):\penalty0 215--266,
  2012.
\newblock \doi{10.3934/ipi.2012.6.215}.

\bibitem[Li and Li(2021)]{LiLi2021}
T.~Li and X.~Li.
\newblock Gamma-limit of the {Onsager}--{Machlup} functional on the space of
  curves.
\newblock \emph{SIAM J. Math. Anal.}, 53\penalty0 (1):\penalty0 1--31, 2021.
\newblock \doi{10.1137/20M1310539}.

\bibitem[Lie and Sullivan(2018)]{LieSullivan2018}
H.~C. Lie and T.~J. Sullivan.
\newblock Equivalence of weak and strong modes of measures on topological
  vector spaces.
\newblock \emph{Inverse Probl.}, 34\penalty0 (11):\penalty0 No.~115013, 22pp.,
  2018.
\newblock \doi{10.1088/1361-6420/aadef2}.

\bibitem[Lu et~al.(2017)Lu, Stuart, and Weber]{LuStuartWeber2017}
Y.~Lu, A.~Stuart, and H.~Weber.
\newblock {Gaussian} approximations for transition paths in {Brownian}
  dynamics.
\newblock \emph{SIAM J. Math. Anal.}, 49\penalty0 (4):\penalty0 3005--3047,
  2017.
\newblock \doi{10.1137/16M1071845}.

\bibitem[Schmidt(2024)]{Schmidt2024}
B.~Schmidt.
\newblock Most likely balls in {Banach} spaces: existence and non-existence.
\newblock \emph{Bull. Lond. Math. Soc.}, 2024.
\newblock \doi{10.1112/blms.13025}.

\bibitem[Stuart(2010)]{Stuart2010}
A.~M. Stuart.
\newblock Inverse problems: {A} {Bayesian} perspective.
\newblock \emph{Acta Numer.}, 19:\penalty0 451--559, 2010.
\newblock \doi{10.1017/S0962492910000061}.

\bibitem[van~der Vaart and Wellner(1996)]{VanDerVaartWellner1996}
A.~W. van~der Vaart and J.~A. Wellner.
\newblock \emph{Weak Convergence and Empirical Processes}.
\newblock Springer Series in Statistics. Springer, New York, NY, 1996.
\newblock \doi{10.1007/978-1-4757-2545-2}.

\bibitem[Ventcel$'$ and Frejdlin(1979)]{VentcelFrejdlin1979}
A.~D. Ventcel$'$ and M.~I. Frejdlin.
\newblock \emph{Fluktuacii v Dinami\v{c}eskih Sistemah pod Dejstviem Malyh
  Slu\v{c}ajnyh Vozmu\v{s}\v{c}enij}.
\newblock Teorija Verojatnostej i Matemati\v{c}eskaja Statistika. Nauka,
  Moscow, 1979.

\end{thebibliography}

\end{document}